\title{Regularity Preserving Sum of Squares Decompositions}
\author{Sullivan \textsc{MacDonald}}
\begin{document}

\frontmatter
\pagestyle{plain}
\pagenumbering{gobble}

\null\vfill
\begin{center}
REGULARITY PRESERVING SUM OF SQUARES DECOMPOSITIONS
\end{center}
\vspace{15cm}
\newpage

\begin{center}
\vfill\textsc{\Large \ttitle\\}\vfill
By \authorname, B.Sc.\\\vfill
{\large \textit{A Thesis Submitted to the School of Graduate Studies in the Partial Fulfillment of the Requirements for the Degree Master of Science}}\\\vfill
{\large \href{http://www.mcmaster.ca/}{McMaster University} \copyright\, Copyright by \authorname\,  March 1, 2023}\\[4cm]
\end{center}
\newpage

\pagenumbering{roman}
\setcounter{page}{2}
\noindent Master of Science, Mathematics (\the\year) \\
\href{https://www.math.mcmaster.ca/}{Department of Mathematics \& Statistics}\\
\href{http://www.mcmaster.ca/}{McMaster University}\\
Hamilton, Ontario, Canada\\[1.5cm]
Title: \ttitle\\
Author: \authorname, B.Sc.\\
Supervisor: Dr. Eric \textsc{Sawyer}\\ 
Number of pages: \pageref{lastoffront}, \pageref{LastPage}
\newpage

\section*{\Huge Abstract}
\addchaptertocentry{Abstract}

In their study of pseudo-differential operators, Fefferman \& Phong showed in \cite{Fefferman-Phong} that every non-negative function \(f\in C^{3,1}(\mathbb{R}^n)\) can be written as a finite sum of squares of functions in \(C^{1,1}(\mathbb{R}^n)\). In this thesis, we generalize their decomposition result to show that if \(f\in C^{k,\alpha}(\mathbb{R}^n)\) is non-negative for \(0\leq k\leq 3\) and \(0<\alpha\leq 1\), then \(f\) can be written as a finite sum of squares of functions that are each `half' as regular as \(f\), in the sense that they belong to the H\"older space
\[
    C^\frac{k+\alpha}{2}(\mathbb{R}^n)=\begin{cases}\hfil C^{\frac{k}{2},\frac{\alpha}{2}}(\mathbb{R}^n) & k\textrm{ even}, \\ C^{\frac{k-1}{2},\frac{1+\alpha}{2}}(\mathbb{R}^n) & k\textrm{ odd}.
    \end{cases}
\]
We also investigate sufficient conditions for such regularity preserving decompositions to exist when \(k\geq 4\), and we construct examples of functions which cannot be decomposed into finite sums of half-regular squares.

The aforementioned result of \cite{Fefferman-Phong}, as well as its subsequent refinements by Tataru in \cite{Tataru} and Sawyer \& Korobenko in \cite{SOS_I}, have been repeatedly used to investigate the properties of certain differential operators. We discuss similar applications of our generalized decomposition result to several problems in partial differential equations. In addition, we develop techniques for constructing non-negative polynomials which are not sums of squares of polynomials, and we prove related results which could not be found in a review of the literature.

\newpage

\section*{\Huge Acknowledgements}
\addchaptertocentry{Acknowledgements}
My greatest thanks goes to my supervisor, Dr. Eric Sawyer, for his indispensable guidance and helpful advice throughout my graduate studies at McMaster University. His encouragement and kind words have made the preparation of this thesis a thoroughly enjoyable experience, and I have learned a great deal under his mentorship.

I would also like to thank the Department of Mathematics and Statistics at McMaster University for fostering a lively and supportive work environment, and my fellow graduate students for many helpful conversations and plenty of memorable experiences. I also graciously acknowledge my mentor and collaborator, Dr. Scott Rodney at Cape Breton University, for encouraging me to study at McMaster and for providing helpful advice and good conversation along the way. 

Finally, I want to thank all of my family and friends for their unwavering support and affirmations as I conducted this research. I am grateful to have you all in my life.
\newpage

\setcounter{tocdepth}{1}
\tableofcontents
\label{lastoffront}
\mainmatter
\newpage
\pagestyle{thesis}
\chapter{Introduction}\label{chap:intro}

Given a non-negative function \(f:\mathbb{R}^n\rightarrow\mathbb{R}\), it is often useful to decompose \(f\) into a finite sum of squares of real-valued functions and write
\begin{equation}\label{eq:sos}
    f=\sum_{j=1}^m g_j^2.
\end{equation}
Usually, though, it is not straightforward to see what properties the functions \(g_1,\dots,g_m\) can inherit from \(f\). In this thesis we assume that \(f\) belongs to the H\"older space \(C^{k,\alpha}(\mathbb{R}^n)\) for a non-negative integer \(k\) and \(0<\alpha\leq 1\), and we investigate sufficient conditions for there to exist functions \(g_1,\dots,g_m\) which satisfy \eqref{eq:sos} and which belong to the `half-regular' H\"older space
\[
    C^\frac{k+\alpha}{2}(\mathbb{R}^n)=\begin{cases}\hfil C^{\frac{k}{2},\frac{\alpha}{2}}(\mathbb{R}^n) & k\textrm{ even}, \\ C^{\frac{k-1}{2},\frac{1+\alpha}{2}}(\mathbb{R}^n) & k\textrm{ odd}.
    \end{cases}
\]
In other words, we explore when \(f\) can be decomposed into a finite sum of squares of functions that are essentially half as regular as \(f\). Remarkably, we find that \eqref{eq:sos} always holds for some half-regular functions \(g_1,\dots,g_m\) whenever \(k\leq 3\) or \(n=1\). When \(k\geq 4\) and \(n\geq 2\) we are able to construct examples of non-decomposable functions, except in the special case of \(C^{4,\alpha}(\mathbb{R}^2)\).

The problem of finding sum of squares decompositions that preserve regularity is interesting in the setting of classical analysis, and it has important consequences elsewhere in mathematics, especially in the study of partial differential equations. It is shown in \cite{SOS_III} that if the symbol of a second-order differential operator \(L\) can be written as sums of squares with enough regularity, then one can find sufficient and sometimes necessary conditions for \(L\) to be hypoelliptic. This means that a distribution \(u\) must be smooth if \(Lu\) is smooth.

For another application, Guan shows in \cite{Guan} that if a non-negative function \(f\) defined on a domain \(\Omega\subset\mathbb{R}^n\) can be written as a sum of \(C^{1,1}(\Omega)\) squares, then solutions to the Dirichlet problem for the Monge-Amp\`ere equation
\begin{align*}
    D^2u=f & \;\;\textrm{ in }\Omega,\\
    \hfil u=0 & \;\;\textrm{ on }\partial\Omega,
\end{align*}
also belong to \(C^{1,1}(\Omega)\). A similar regularity preserving decomposition is employed by Tataru to study pseudodifferential operators in \cite{Tataru}, and these examples highlight the wide applicability of decompositions like \eqref{eq:sos}. In this thesis we do not present any novel applications in partial differential equations, but we emphasize that our decomposition results may be useful in addressing several open problems in the field; see \Cref{sec:applics} for further discussion.

Regularity preserving sum of squares decompositions have already been studied in several settings. Fefferman \& Phong showed in \cite{Fefferman-Phong} that if \(f\in C^{3,1}(\mathbb{R}^n)\) and \(f\) is non-negative, then there exist functions \(g_1,\dots,g_m\in C^{1,1}(\mathbb{R}^n)\) for which \eqref{eq:sos} holds. This result has been extended to various other H\"older spaces by Bony et al. in \cite{Bony}, Tataru in \cite{Tataru}, and Sawyer \& Korobenko in \cite{SOS_I}. In this work, we generalize the decomposition results from each of these works by studying non-negative \(C^{k,\alpha}(\mathbb{R}^n)\) functions for arbitrary \(k\geq 0\) and \(0<\alpha\leq 1\).

Finding a regularity preserving decomposition is often difficult. At a glance, one might suspect that taking a square root of a non-negative function \(f\in C^{k,\alpha}(\mathbb{R}^n)\) affords the desired decomposition with one function. However, this approach can fail spectacularly -- in \cite{Bony}, Bony gives an example of a non-negative \(C^\infty(\mathbb{R})\) function whose square root is not in \(C^{1,\alpha}(\mathbb{R})\) for any positive \(\alpha\). This example is given by taking \(f(0)=0\) and for \(x\neq0\) defining
\[
    f(x)=e^{-\frac{1}{|x|}}\bigg(\sin^2\bigg(\frac{\pi}{|x|}\bigg)+e^{-\frac{1}{x^2}}\bigg).
\]
Thus, if the functions \(g_1,\dots,g_m\) in \eqref{eq:sos} are to inherit half of the regularity of \(f\), then in general it is necessary that \(m> 1\) even when working with functions on the line. The exception is when \(f\in C^{\alpha}(\mathbb{R}^n)\) or \(C^{1,\alpha}(\mathbb{R}^n)\), in which case \(\sqrt{f}\) is indeed half as regular as \(f\) as we show in \Cref{chap:holderchap}.

There do not always exist decompositions of the form \eqref{eq:sos} which inherit half-regularity. Bony shows in \cite{Bony} that there are \(C^4(\mathbb{R}^4)\) functions that are not sums of squares in \(C^2(\mathbb{R}^4)\), suggesting that additional hypotheses are needed to guarantee the existence of regularity preserving decompositions. We introduce such conditions in \Cref{chap:higher}, to try and understand when functions in \(C^{k,\alpha}(\mathbb{R}^n)\) can be written as sums of half-regular squares when \(k\geq 4\). Though we make progress on this problem, a characterization of the decomposable functions remains elusive.

In \Cref{sec:gen} we also develop some techniques for constructing non-decomposable functions in \(C^{k,\alpha}(\mathbb{R}^n)\) for \(k\geq 4\). Our examples turn out to be polynomials which cannot be written as sums of squares of polynomials. The existence of such polynomials was proved by Hilbert in \cite{Hilbert}, but an explicit example did not appear in the literature until Motzkin found one in \cite{Motzkin} nearly 80 years later. Several more examples have since been found, many in \cite{reznik_extpsd}. Motivated by the question of decomposability of functions in \(C^{k,\alpha}(\mathbb{R}^n)\), we devise a method to construct non-negative polynomials which are not sums of squares in all possible cases.

Our findings concerning the possibility of forming a regularity preserving sum of squares decompositions in \(C^{k,\alpha}(\mathbb{R}^n)\), for every \(k\geq 0\) and \(n\geq 1\), are summarized in the following table.

\medskip
\renewcommand{\arraystretch}{1.3}
\newcommand\highbrace[2]{%
  \left.\rule{0pt}{#1}\right\}\text{#2}}
\setlength{\tabcolsep}{3.5pt}
\begin{center}
Is every non-negative \(C^{k,\alpha}(\mathbb{R}^n)\) function a sum of squares in \(C^\frac{k+\alpha}{2}(\mathbb{R}^n)\) (\(0<\alpha\leq 1\))?\\
\medskip
\begin{tabular}{|c|cc|cc|cc|cc|cc|cc|}
\hline
\diagbox[height=2em,width=3em]{\(n\)}{\(k\)} & \multicolumn{2}{c|}{0} & \multicolumn{2}{c|}{1} & \multicolumn{2}{c|}{2} & \multicolumn{2}{c|}{3} & \multicolumn{2}{c|}{4} & \multicolumn{2}{c|}{\(\geq 4\)}\\
\hline
1 & Yes &\multirow{4}{*}{\kern-0.4em\(\highbrace{7ex}{\makecell{Cor.\\\ref{cor:easycase}}}\)}& Yes &\multirow{4}{*}{\kern-0.4em\(\highbrace{7ex}{\makecell{Thm.\\\ref{thm:rootsin}}}\)}& Yes&\multirow{4}{*}{\kern-0.4em\(\highbrace{7ex}{\makecell{Thm.\\\ref{thm:c3main}}}\)} & Yes&\multirow{4}{*}{\kern-0.4em\(\highbrace{7ex}{\makecell{Thm.\\\ref{thm:c3main}}}\)} & \multicolumn{2}{c|}{Yes (\cite{Bony2})} & \multicolumn{2}{c|}{Yes (\cite{Bony2})} \\
2 & Yes && Yes && Yes && Yes && \multicolumn{2}{c|}{Unknown (\S \ref{sec:maybe})} & No & \multirow{3}{*}{\kern-2em\(\highbrace{5.3ex}{\makecell{Cor.\\\ref{cor:existence}}}\)} \\
3 & Yes && Yes && Yes && Yes && No &\multirow{2}{*}{\quad\(\highbrace{3.5ex}{\makecell{Cor.\\\ref{cor:existence}}}\)}& No &\\
\(\geq 3\) & Yes && Yes && Yes && Yes && No && No &\\
\hline
\end{tabular}
\end{center}
\bigskip

The plan for the remainder of this thesis is as follows. In \Cref{chap:holderchap} we describe the basic properties of H\"older spaces, before specializing to non-negative H\"older functions and studying their pointwise behaviour. Using these preliminary results, we show that the roots of \(C^\alpha(\mathbb{R}^n)\) and \(C^{1,\alpha}(\mathbb{R}^n)\) functions are half-regular. Then  in \Cref{chap:decomps}, we adapt the arguments employed in \cite{Tataru} and \cite{SOS_I} to prove our main result \Cref{thm:c3main}, which shows that non-negative \(C^{2,\alpha}(\mathbb{R}^n)\) and \(C^{3,\alpha}(\mathbb{R}^n)\) functions can be decomposed into sums of half-regular squares for every \(0<\alpha\leq 1\). Following \Cref{chap:poly}, in which we construct non-decomposable polynomials, we seek conditions in \Cref{chap:higher} for functions in \(C^{k,\alpha}(\mathbb{R}^n)\) to be partially decomposable when \(k\geq 4\).

In \Cref{chap:alg} we establish supplementary results which are only tangentially related to our central study of sums of squares. Similarly, we include \Cref{chap:appendix} to collect various standard results in one place for the convenience of the reader. We conclude the thesis with \Cref{chap:last}, in which we summarize our findings and discuss potential applications of our work along with possible directions for future research. In closing this summary, we include a road map of the thesis to help clarify the hierarchy of our main results, and for the reader to refer to later on.

\medskip
{\hspace{-1em}
\begin{tikzpicture}[block/.style={rounded corners, minimum width=3cm, minimum height=2cm, draw}]
\node[block,align=center] (1) {\Cref{thm:IFT}\\[0.5em]Implicit Functions and\\ Their Derivatives};
\node[block,very thick,above right=-1.2 and 1.15 of 1,align=center] (3) {\Cref{sec:reg}\\[0.5em]Localized Roots of\\ \(C^{k,\alpha}(\mathbb{R}^n)\) Functions.};
\node[block,double,very thick,below right=-0.3 and 0.85 of 3,align=center] (5) {\Cref{thm:locdecomp}\\[0.5em] Localized Decompositions of\\ \(C^{k,\alpha}_{\mathrm{loc}}(\Omega)\) Functions for \(k\leq 3\).};
\node[block,double,very thick,below=0.7of 5,align=center] (6) {\Cref{thm:c3main} (Main Result)\\[0.5em]SOS Decompositions\\ in \(C^{k,\alpha}(\mathbb{R}^n)\) for \(k\leq 3\).};
\node[block, below=0.5of 1,align=center] (2) {\Cref{thm:cauchylike}\\[0.5em]Derivative Estimates for\\ \(C^{k,\alpha}(\mathbb{R}^n)\) Functions.};
\node[block,very thick, below=1of 3,align=center] (4) {\Cref{thm:party}\\[0.5em]Partitions of Unity and\\ Slow Variation};
\node[block,double,very thick,below=0.7of 6,align=center] (7) {\Cref{thm:seconddmain}\\[0.5em] Conditional Decompositions\\ in \(C^{k,\alpha}(\mathbb{R}^n)\) for \(k\geq 4 \).}; 
\node[block, above=0.5of 1,align=center] (8) {\Cref{lem:extbyzero}\\[0.5em]Extension of \(C^{k,\alpha}_{\mathrm{loc}}(\Omega)\)\\ Functions by Zero to \(\mathbb{R}^n\).}; 
\node[block, below=0.5of 2,align=center] (11) {\Cref{lem:cptmb}\\[0.5em]Compact Embeddings of\\Bounded H\"older Spaces}; 
\node[block, below=0.5of 11,align=center] (9) {\Cref{cor:hullcond}\\[0.5em]Criterion for Polynomials\\to be Non-Decomposable.}; 
\node[block,below=0.5of 9,align=center] (10) {\Cref{thm:posweights}\\[0.5em]Computational Method \\for Polytope Points}; 
\node[block,very thick, below=1of 4,align=center] (13) {\Cref{cor:existence}\\[0.5em]Non-Decomposable\\\(C^{k,\alpha}(\mathbb{R}^n)\) Functions.};
\node[block,very thick, below=1of 13,align=center] (12) {\Cref{sec:gen}\\[0.5em] Examples of Polynomial\\Non-Sums of Squares}; 
\begin{scope}[->, shorten >=1mm, shorten <=1mm]
\draw (2) to[out=0,in=180] (4);
\draw (4) to[out=350,in=180] (6);
\draw (1) to[out=0,in=180] (3);
\draw (3) to[out=330,in=155] (6);
\draw (2) to[out=20,in=200] (3);
\draw (10) to[out=0,in=190] (12);
\draw (9) to[out=0,in=175] (12);
\draw (6) -- (5);
\draw (6) -- (7);
\draw (8) to[out=0,in=120] (5);
\draw (11) to[out=0,in=180] (13);
\draw (12) -- (13);
\draw (13) to[out=350,in=180] (7);
\end{scope}
\end{tikzpicture}
}
\chapter{Preliminary Results}\label{chap:holderchap}

The H\"older spaces play a central role in this work, so we begin by establishing some of their important properties and collecting results that will be useful in the subsequent chapters. The following expository section outlines the basic characteristics of H\"older spaces -- in Sections \ref{sec:nn} and \ref{sec:malg} we restrict attention to non-negative H\"older continuous functions, which enjoy some special pointwise properties.

\section{Elementary Properties of H\"older Spaces}

Given a connected set \(\Omega\subseteq\mathbb{R}^n\) and a positive number \(\alpha\leq 1\), a real-valued function \(f\) defined on \(\Omega\) is said to be uniformly \(\alpha\)-H\"older continuous if 
\begin{equation}\label{eq:snd}
    [f]_{\alpha,\Omega}=\sup_{\substack{x,y\in \Omega,\\x\neq y}}\frac{|f(x)-f(y)|}{|x-y|^\alpha}<\infty.
\end{equation}
The H\"older space \(C^\alpha(\Omega)\) is defined as the set of uniformly \(\alpha\)-H\"older continuous functions on \(\Omega\). Similarly, \(f\) is said to be locally \(\alpha\)-H\"older on \(\Omega\) if \([f]_{\alpha,K}<\infty\) for every compact subset \(K\) of \(\Omega\), and we denote the class of locally \(\alpha\)-H\"older functions by \(C^{\alpha}_\mathrm{loc}(\Omega)\).

Usually we write \([f]_\alpha\) if the underlying set is unambiguous. Given \(f,g\in C^\alpha(\Omega)\) it follows from \eqref{eq:snd} and the triangle inequality that \([f+g]_\alpha\leq [f]_\alpha+[g]_\alpha\), and for any \(c\in\mathbb{R}\) the homogeneity identity \([cf]_\alpha=|c|[f]_\alpha\) is also easy to verify. Therefore \eqref{eq:snd} defines a semi-norm and \(C^\alpha(\Omega)\) is a vector space under pointwise addition. Note however that this is not a proper norm, since \([f]_{\alpha,\Omega}=0\) whenever \(f\) is a constant function on \(\Omega\).

H\"older continuity is stronger than uniform continuity, and when \(\alpha=1\) inequality \eqref{eq:snd} holds for the uniformly Lipschitz functions on \(\Omega\). We denote the space of Lipschitz functions by \(C^{0,1}(\Omega)\) to avoid confusion with \(C^1(\Omega)\), the space of continuously differentiable functions on \(\Omega\). Analogously, we write \(C^{0,1}_\mathrm{loc}(\Omega)\) to denote the space of locally Lipschitz functions.

For any \(0<\alpha\leq 1\), a simple example of an \(\alpha\)-H\"older continuous function is \(f(x)=|x|^\alpha\). A more interesting example is the Cantor function, which belongs to \(C^{\log(2)/\log(3)}([0,1])\) as we show in \Cref{sec:holdexamples}. If \(\alpha>1\) then the semi-norm \eqref{eq:snd} is only finite if \(f\) is constant; to see this, observe that if \(\alpha>1\) and \(x\in\Omega\) then
\[
    \lim_{|h|\rightarrow0}\frac{|f(x+h)-f(x)|}{|h|}\leq [f]_\alpha\lim_{|h|\rightarrow0}\frac{|h|^\alpha}{|h|}=0.
\]
It follows that all directional derivatives of \(f\) exist and are identically zero, meaning that the space \(C^\alpha(\Omega)\) is comprised of constant functions. Due to this uninteresting behaviour we do not consider \(C^\alpha(\Omega)\) for \(\alpha>1\), and henceforth we restrict our attention to \(C^\alpha(\Omega)\) for \(0<\alpha\leq 1\).

In general, larger values of \(\alpha\) correspond to stronger continuity, with \(C^{0,1}(\Omega)\) functions being differentiable almost everywhere in \(\Omega\) by Rademacher's Theorem; see \Cref{thm:rad}. Further, if \(\Omega\) is bounded then \(C^{\beta}(\Omega)\subset C^{\alpha}(\Omega)\) whenever \(0<\alpha<\beta\). Indeed, if \(f\in C^{\beta}(\Omega)\) and \(d=\sup_{\Omega}|x-y|\) denotes the diameter of \(\Omega\), then for \(x,y\in\Omega\) we have
\[
    |f(x)-f(y)|\leq [f]_\beta|x-y|^\beta\leq [f]_\beta d^{\beta-\alpha}|x-y|^{\alpha}.
\]
Thus \([f]_\alpha\leq[f]_\beta d^{\beta-\alpha}\) and \(f\in C^\alpha(\Omega)\). Even when \(\Omega\) may be unbounded, if \(f\in C^{\alpha}(\Omega)\cap C^{\beta}(\Omega)\) for \(0<\alpha<\beta\) then \(f\in C^{\gamma}(\Omega)\) whenever \(\alpha<\gamma<\beta\). This is because for distinct points \(x,y\in\Omega\),
\begin{equation}\label{eq:sninterp}
    \bigg(\frac{|f(x)-f(y)|}{|x-y|^\gamma}\bigg)^{\beta-\alpha}=\bigg(\frac{|f(x)-f(y)|}{|x-y|^\alpha}\bigg)^{\beta-\gamma}\bigg(\frac{|f(x)-f(y)|}{|x-y|^\beta}\bigg)^{\gamma-\alpha}.
\end{equation}
From this estimate it follows after taking a supremum that \([f]_\gamma^{\beta-\alpha}\leq [f]_\alpha^{\beta-\gamma}[f]_\beta^{\gamma-\alpha}\).

For many applications it is useful to restrict one's attention to the set of bounded \(\alpha\)-H\"older continuous functions defined on \(\Omega\), and we denote the set of these functions by \(C^\alpha_b(\Omega)\). Clearly \(C^\alpha_b(\Omega)\subseteq C^\alpha(\Omega)\), and the two spaces are equal when \(\Omega\) is compact. The vector space of bounded \(\alpha\)-H\"older continuous functions, unlike \(C^\alpha(\Omega)\), can be equipped with the following norm
\[
    \|f\|_{C_b^{\alpha}(\Omega)}=\sup_\Omega|f|+[f]_{\alpha,\Omega}.
\]
Indeed, given this norm \(C_b^{\alpha}(\Omega)\) is a Banach space, as we discuss later in the chapter. 

For the purpose of our main results, a boundedness condition is often needlessly restrictive since it is not satisfied even by simple functions like \(f(x)=|x|^\alpha\) defined over \(\mathbb{R}^n\). Thus, most of the time we work with the simplest H\"older space \(C^\alpha(\Omega)\), and it is only for some results like \Cref{thm:almostthere} that we find it necessary to restrict attention to \(C^\alpha_b(\Omega)\). In passing however, we note that the norm on \(C^\alpha_b(\Omega)\) defined above enjoys the following interpolation property.

\begin{lem}
If \(f\in C_b^\alpha(\Omega)\cap C_b^\beta(\Omega)\) and \(0<\alpha<\gamma<\beta\) then \(\|f\|_{C_b^\gamma(\Omega)}^{\beta-\alpha}\leq \|f\|_{C_b^\alpha(\Omega)}^{\beta-\gamma}\|f\|_{C_b^\beta(\Omega)}^{\gamma-\alpha}\).
\end{lem}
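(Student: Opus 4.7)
The plan is to reduce the norm interpolation inequality to the semi-norm interpolation already established in \eqref{eq:sninterp}, plus an application of Hölder's inequality to combine the supremum part of the norm with the semi-norm part. Setting $p=(\beta-\gamma)/(\beta-\alpha)$ and $q=(\gamma-\alpha)/(\beta-\alpha)$, I note that $p,q>0$ and $p+q=1$, and that raising \eqref{eq:sninterp} to the power $1/(\beta-\alpha)$ and taking a supremum over distinct $x,y\in\Omega$ yields
\[
    [f]_\gamma \;\leq\; [f]_\alpha^{\,p}\,[f]_\beta^{\,q}.
\]
Since raising the target inequality to the power $1/(\beta-\alpha)$ turns it into $\|f\|_{C_b^\gamma(\Omega)}\leq \|f\|_{C_b^\alpha(\Omega)}^{p}\|f\|_{C_b^\beta(\Omega)}^{q}$, it suffices to prove this cleaner multiplicative bound.

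Writing $M=\sup_\Omega|f|$ for brevity, the goal becomes
\[
    M + [f]_\gamma \;\leq\; (M + [f]_\alpha)^p (M + [f]_\beta)^q.
\]
To establish this, I would invoke the two-term form of Hölder's inequality with conjugate exponents $1/p$ and $1/q$ (valid since $p+q=1$), namely
\[
    a_1^p b_1^q + a_2^p b_2^q \;\leq\; (a_1+a_2)^p (b_1+b_2)^q
\]
for any $a_1,a_2,b_1,b_2\geq 0$. Applying this with $a_1=b_1=M$, $a_2=[f]_\alpha$, and $b_2=[f]_\beta$, the left-hand side becomes $M^{p+q}+[f]_\alpha^{p}[f]_\beta^{q}=M+[f]_\alpha^{p}[f]_\beta^{q}$, which dominates $M+[f]_\gamma$ by the semi-norm interpolation from \eqref{eq:sninterp}.

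Chaining these two inequalities gives $\|f\|_{C_b^\gamma(\Omega)}\leq \|f\|_{C_b^\alpha(\Omega)}^{p}\|f\|_{C_b^\beta(\Omega)}^{q}$, and raising to the $(\beta-\alpha)$-th power restores the stated form, using the identities $p(\beta-\alpha)=\beta-\gamma$ and $q(\beta-\alpha)=\gamma-\alpha$. The only mildly subtle point is recognizing the correct two-term Hölder inequality to combine the $M$ terms with the semi-norm terms additively inside the exponentials; once this is in hand the rest is bookkeeping, and there is no genuine analytic obstacle beyond what \eqref{eq:sninterp} has already done.
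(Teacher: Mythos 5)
Your proof is correct and follows essentially the same strategy as the paper: reduce to the semi-norm interpolation \eqref{eq:sninterp} and then combine the $\sup$ and semi-norm contributions. The only difference is packaging — where you invoke the two-term H\"older inequality $a_1^p b_1^q + a_2^p b_2^q \leq (a_1+a_2)^p(b_1+b_2)^q$ directly, the paper derives the same fact in place from concavity of $t\mapsto t^s$ via \Cref{lem:convexity}; your version is arguably cleaner since it names the needed inequality outright, but the underlying content is identical.
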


\begin{proof} The semi-norm interpolation given by \eqref{eq:sninterp} shows that if \(s=\frac{\gamma-\alpha}{\beta-\alpha}\) and \(\lambda=\frac{[f]_\alpha}{\|f\|_{C_b^\alpha(\Omega)}}\), then
\[
    [f]_\gamma+\sup_\Omega|f|\leq [f]_\alpha^{1-s}[f]_\beta^s+\sup_\Omega|f|^{1-s}\sup_\Omega|f|^s=\|f\|_{C_b^\alpha(\Omega)}^{1-s}\bigg(\lambda\bigg(\frac{[f]_\beta}{\lambda}\bigg)^s+(1-\lambda)\bigg(\frac{\sup_\Omega|f|}{1-\lambda}\bigg)^s\bigg).    
\]
The mapping \(t\mapsto t^s\) is concave since \( 0<s< 1\), and as \(\lambda\leq 1\) it follows from \Cref{lem:convexity} that
\[
     \lambda\bigg(\frac{[f]_\beta}{\lambda}\bigg)^s+(1-\lambda)\bigg(\frac{\sup_\Omega|f|}{1-\lambda}\bigg)^s\leq \big([f]_\beta+\sup_\Omega|f|\big)^s=\|f\|_{C_b^\beta(\Omega)}^s.
\]
Therefore \(\|f\|_{C_b^\gamma(\Omega)}\leq \|f\|_{C_b^\alpha(\Omega)}^{1-s}\|f\|_{C_b^\beta(\Omega)}^{s}\), which yields the claimed interpolation.
\end{proof}

Some other norm inequalities for uniformly bounded H\"older functions are worth noting. If \(f,g\in C_b^\alpha(\Omega)\) for a set \(\Omega\subseteq\mathbb{R}^n\) not necessarily bounded, we have for \(x,y\in\Omega\) that
\[
    |f(x)g(x)-f(y)g(y)|\leq \sup_\Omega|f||g(x)-g(y)|+\sup_\Omega|g||f(x)-f(y)|.
\]
It follows that \([fg]_\alpha\leq \sup_\Omega|f|[g]_\alpha+\sup_\Omega|g|[f]_\alpha\) and \(\|fg\|_{C_b^\alpha(\Omega)}\leq \|f\|_{C_b^\alpha(\Omega)}\|g\|_{C_b^\alpha(\Omega)}\), meaning that the norm of \(C_b^\alpha(\Omega)\) is sub-multiplicative. Additionally, if \(\alpha<\beta\) then from Young's inequality (see \Cref{cor:young}) it follows that
\[
    \|g\|_{C_b^\alpha(\Omega)}=\sup_\Omega|g|+\sup_{\substack{x,y\in \Omega,\\x\neq y}}\frac{|g(x)-g(y)|}{|x-y|^\alpha}\leq \frac{\alpha}{\beta}\sup_{\substack{x,y\in \Omega,\\x\neq y}}\frac{|g(x)-g(y)|}{|x-y|^\beta}+\bigg(3-\frac{2\alpha}{\beta}\bigg)\sup_\Omega|g|\leq 3\|g\|_{C_b^\beta(\Omega)}.
\]
Thus if \(f\in C_b^{\alpha}(\Omega)\) and \(g\in C_b^{\beta}(\Omega)\), and if \(\gamma=\min\{\alpha,\beta\}\), then \(\|fg\|_{C_b^{\gamma}(\Omega)}\leq 3\|f\|_{C_b^{\alpha}(\Omega)}\|g\|_{C_b^{\beta}(\Omega)}\). 

Next we discuss higher-order H\"older spaces, denoting by \(C^{k,\alpha}(\Omega)\) the set of \(k\)-times continuously differentiable functions on \(\Omega\) whose \(k^\mathrm{th}\)-order derivatives are \(\alpha\)-H\"older continuous on \(\Omega\). To describe these spaces concisely, it is convenient to introduce multi-index notation. A multi-index of length \(n\) is an \(n\)-tuple of non-negative integers \(\beta=(\beta_1,\dots,\beta_n)\), and addition of multi-indices is defined entry-wise. The order of \(\beta\) is given by \(|\beta|=\beta_1+\cdots+\beta_n\) so that \(|\beta+\gamma|=|\beta|+|\gamma|\), and the \(\beta\)-derivative of a function \(f\) is then defined as the order-\(|\beta|\) derivative given by
\[
    \partial^\beta f=\frac{\partial^{|\beta| }f}{\partial^{\beta_1}x_1\cdots\partial^{\beta_n}x_n}.
\]
If \(|\beta|=0\) then \(\beta=(0,\dots,0)\), and to keep notation simple in this case we set \(\partial^\beta f=f\).

Equipped with multi-index notation, we say that a function \(f\) belongs to \(C^{k,\alpha}(\Omega)\) if \([\partial^\beta f]_{\alpha,\Omega}\) is finite for every multi-index \(\beta\) of order \(|\beta|=k\). Likewise, we define \(C^{k,\alpha}_\mathrm{loc}(\Omega)\) to be the vector space of functions \(f\) for which \([\partial^\beta f]_{\alpha,K}\) is finite whenever \(K\) is a compact subset of \(\Omega\) and \(|\beta|=k\). Finally, we denote by \(C_b^{k,\alpha}(\Omega)\) the subspace of \(C^{k,\alpha}(\Omega)\) comprised of uniformly bounded functions on \(\Omega\). It is easy to see that these three spaces coincide when \(\Omega\) is compact, but in general they are distinct.

By indexing over \(|\beta|\leq k\), we mean to reference every multi-index of a fixed length \(n\) whose order is at most \(k\). Adopting this convention we can define the following norm on \(C_b^{k,\alpha}(\Omega)\),
\begin{equation}\label{eq:holdernorm}
    \|f\|_{C^{k,\alpha}_b(\Omega)}=\sum_{|\beta|\leq k}\sup_\Omega|\partial^\beta f|+\sum_{|\beta|=k}[\partial^\beta f]_{\alpha,\Omega}.
\end{equation}
Once again, for the purposes of our later decomposition results it is unnecessary to restrict attention to bounded \(C^{k,\alpha}(\Omega)\) functions except in some special settings. Wherever a boundedness condition is needed, we state the requirement explicitly.

To express our main results more concisely, we explain exactly what we mean when we refer to `half-regular' spaces of functions. Fixing a H\"older space \(C^{k,\alpha}(\Omega)\), we define
\begin{equation}\label{eq:rootspaces}
    C^{\frac{k+\alpha}{2}}(\Omega)=\begin{cases}
    \hfil C^{\frac{k}{2},\frac{\alpha}{2}}(\Omega) & k\textrm{ even},\\
    C^{\frac{k-1}{2},\frac{1+\alpha}{2}}(\Omega) & k\textrm{ odd}.
    \end{cases}
\end{equation}
This space consists of functions which are essentially `half' as differentiable as those in \(C^{k,\alpha}(\Omega)\), and likewise the highest-order derivatives of said functions retain `half' of the H\"older continuity.

Our main result in the next chapter demonstrates that remarkably, every non-negative \(C^{k,\alpha}(\mathbb{R}^n)\) function can be written as a finite sums of squares of functions belonging to the half-regular space whenever \(k\leq 3\), and \(0<\alpha\leq 1\). This is true in any number of dimensions, and our results further show that the functions \(g_1,\dots,g_m\) in \eqref{eq:sos} and their derivatives can be controlled pointwise in a straightforward way -- see \Cref{thm:c3main} and the ensuing discussion. For \(k\geq 4\) matters are more complicated, as we discuss in \Cref{chap:higher}, but nevertheless we recover some partial results even in that more difficult setting.

In proving the aforementioned decomposition results we will frequently use a sub-product rule for H\"older semi-norms, like that employed by Sawyer \& Korobenko in \cite[Eq. (3.10)]{SOS_I} to prove a \(C^{4,\alpha}(\mathbb{R}^n)\) decomposition result. Before stating this rule, we must introduce some additional multi-index notation. On the collection of multi-indices of a fixed length \(n\), there exists a natural partial ordering given by writing \(\beta\leq \gamma\) for \(\beta=(\beta_1,\dots,\beta_n)\) and \(\gamma=(\gamma_1,\dots,\gamma_n)\) if \(\gamma_j\leq \beta_j\) for each \(j=1,\dots,n\). If \(\gamma\leq \beta\) then \(\beta-\gamma=(\beta_1-\gamma_1,\dots,\beta_n-\gamma_n)\) is also a multi-index, and we can define a generalized binomial coefficient by setting 
\begin{equation}\label{eq:binom}
    \binom{\beta}{\gamma}=\frac{\beta!}{\gamma!(\beta-\gamma)!},
\end{equation}
where \(\beta!=\beta_1!\cdots\beta_n!\) is a multi-index factorial. To indicate that we are summing over all multi-indices \(\gamma\) that are below \(\beta\) in the partial ordering described above, we use the subscript \(\gamma\leq\beta\). The utility of this notation is illustrated by the following well-known generalization of the product rule, whose proof we omit.

\begin{lem}[General Leibniz Rule]\label{lem:Leib}
Let \(f,g\in C^k(\mathbb{R}^n)\), and suppose that \(|\beta|\leq k\). Then
\[
    \partial^\beta(fg)=\sum_{\gamma\leq\beta}\binom{\beta}{\gamma}(\partial^\gamma f)(\partial^{\beta-\gamma}g).
\]
\end{lem}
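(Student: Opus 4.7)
The plan is to argue by induction on the order \(|\beta|\), reducing the multi-index statement to repeated applications of the classical one-variable Leibniz rule.

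The base case \(|\beta|=0\) is trivial, since both sides then equal \(fg\) and the only multi-index \(\gamma\leq \beta\) is \(\gamma=(0,\dots,0)\) with \(\binom{\beta}{\gamma}=1\). For the inductive step, assume the identity holds for every multi-index of order \(k\), and let \(\beta\) have order \(k+1\). Choose an index \(j\in\{1,\dots,n\}\) with \(\beta_j\geq 1\) and write \(\beta=\beta'+e_j\), where \(e_j\) is the standard multi-index with a \(1\) in the \(j^{\mathrm{th}}\) slot and zeros elsewhere, so that \(|\beta'|=k\). Then
\[
    \partial^\beta(fg)=\partial^{e_j}\partial^{\beta'}(fg)=\partial^{e_j}\sum_{\gamma\leq\beta'}\binom{\beta'}{\gamma}(\partial^\gamma f)(\partial^{\beta'-\gamma}g)
\]
by the inductive hypothesis. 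Applying the classical one-variable Leibniz rule in the \(j^\mathrm{th}\) coordinate to each summand gives
\[
    \partial^\beta(fg)=\sum_{\gamma\leq\beta'}\binom{\beta'}{\gamma}\Bigl[(\partial^{\gamma+e_j} f)(\partial^{\beta'-\gamma}g)+(\partial^{\gamma} f)(\partial^{\beta'-\gamma+e_j}g)\Bigr].
\]

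The next step is a change of indices: in the first sum I would set \(\delta=\gamma+e_j\), while the second sum is already indexed by \(\delta=\gamma\). After relabelling, the coefficient of \((\partial^\delta f)(\partial^{\beta-\delta}g)\) becomes \(\binom{\beta'}{\delta-e_j}+\binom{\beta'}{\delta}\), with the convention that \(\binom{\beta'}{\delta-e_j}=0\) whenever \(\delta_j=0\) and \(\binom{\beta'}{\delta}=0\) whenever \(\delta_j>\beta'_j\). The identity then reduces to the multi-index Pascal rule
\[
    \binom{\beta'+e_j}{\delta}=\binom{\beta'}{\delta-e_j}+\binom{\beta'}{\delta},
\]
which I would verify by a direct computation from the factorial definition in \eqref{eq:binom}, noting that all coordinates other than the \(j^\mathrm{th}\) agree on both sides and the \(j^\mathrm{th}\) reduces to the usual scalar Pascal identity.

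The only real bookkeeping obstacle is confirming that the boundary cases of the index range are handled correctly by the vanishing convention on the binomial coefficients; once that is in hand, combining the two sums yields \(\sum_{\delta\leq\beta}\binom{\beta}{\delta}(\partial^\delta f)(\partial^{\beta-\delta}g)\), closing the induction. An alternative, essentially equivalent route would be to iterate the univariate Leibniz rule in each coordinate direction in turn, producing a product \(\prod_{i=1}^n\sum_{\gamma_i=0}^{\beta_i}\binom{\beta_i}{\gamma_i}\) which, after expansion, recovers the multi-index binomial coefficient \(\binom{\beta}{\gamma}=\prod_i\binom{\beta_i}{\gamma_i}\); I would include this remark if space permits, since it makes the combinatorial content transparent.
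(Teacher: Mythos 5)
Your inductive argument is correct: the base case is immediate, the reduction via $\beta=\beta'+e_j$ and the one-variable product rule is sound, and the relabelling plus the multi-index Pascal identity $\binom{\beta'+e_j}{\delta}=\binom{\beta'}{\delta-e_j}+\binom{\beta'}{\delta}$ (which factors coordinate-wise and reduces to the scalar case in the $j^{\mathrm{th}}$ slot) closes the induction, with the boundary conventions you state handling the index-range endpoints properly. The paper omits the proof of this lemma altogether, calling it well known, so there is no argument in the text to compare against; your route is the standard one.
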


Equipped with this result and the versatile notation afforded by multi-indices, we now state and prove the aforementioned sub-product rule of Sawyer \& Korobenko for H\"older semi-norms.

\begin{lem}\label{lem:subprod}
Let \(f,g\in C^{k,\alpha}(\Omega)\) for \(0<\alpha\leq 1\). If \(\beta\) is any multi-index for which \(|\beta|\leq k\) then 
\[
    [\partial^\beta(fg)]_{\alpha,\Omega}\leq\sum_{\gamma\leq\beta}\binom{\beta}{\gamma}([\partial^{\beta-\gamma}f]_{\alpha,\Omega}\sup_{\Omega}|\partial^\gamma g|+[\partial^{\beta-\gamma}g]_{\alpha,\Omega}\sup_{\Omega}|\partial^\gamma f|).
\]
\end{lem}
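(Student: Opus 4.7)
The plan is to reduce the assertion to the elementary sub-product inequality
\[
    [uv]_{\alpha,\Omega}\leq \sup_{\Omega}|u|\,[v]_{\alpha,\Omega}+\sup_{\Omega}|v|\,[u]_{\alpha,\Omega},
\]
which already appeared earlier in the chapter (it was used to show that the norm on \(C_b^{\alpha}(\Omega)\) is sub-multiplicative). This inequality is obtained in one line by writing \(u(x)v(x)-u(y)v(y)=u(x)(v(x)-v(y))+v(y)(u(x)-u(y))\), applying the triangle inequality, dividing by \(|x-y|^{\alpha}\), and taking the supremum over distinct \(x,y\in\Omega\).

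First I would apply the General Leibniz Rule (\Cref{lem:Leib}) to expand
\[
    \partial^\beta(fg)=\sum_{\gamma\leq\beta}\binom{\beta}{\gamma}(\partial^{\beta-\gamma}f)(\partial^\gamma g).
\]
Next, since \([\cdot]_{\alpha,\Omega}\) is a semi-norm, the triangle inequality gives
\[
    [\partial^\beta(fg)]_{\alpha,\Omega}\leq \sum_{\gamma\leq\beta}\binom{\beta}{\gamma}\bigl[(\partial^{\beta-\gamma}f)(\partial^\gamma g)\bigr]_{\alpha,\Omega}.
\]
Applying the elementary sub-product inequality above to each factor \(u=\partial^{\beta-\gamma}f\), \(v=\partial^{\gamma}g\), I obtain
\[
    \bigl[(\partial^{\beta-\gamma}f)(\partial^\gamma g)\bigr]_{\alpha,\Omega}\leq \sup_{\Omega}|\partial^\gamma g|\,[\partial^{\beta-\gamma}f]_{\alpha,\Omega}+\sup_{\Omega}|\partial^{\beta-\gamma}f|\,[\partial^\gamma g]_{\alpha,\Omega}.
\]

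Finally, after substituting this back I would observe that the second summand can be re-indexed using the symmetry \(\binom{\beta}{\gamma}=\binom{\beta}{\beta-\gamma}\): replacing the dummy index \(\gamma\) by \(\beta-\gamma\) turns \(\sup_{\Omega}|\partial^{\beta-\gamma}f|\,[\partial^\gamma g]_{\alpha,\Omega}\) into \(\sup_{\Omega}|\partial^{\gamma}f|\,[\partial^{\beta-\gamma} g]_{\alpha,\Omega}\), which is exactly the second term appearing in the statement. Combining the two sums yields the desired inequality. No real obstacle arises here — the only subtle point is keeping track of which factor carries the supremum and which carries the semi-norm so that the final expression matches the stated form after the index relabeling.
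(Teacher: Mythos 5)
Your argument is correct and amounts to the same approach the paper takes: expand with the General Leibniz Rule, then apply the elementary two-factor sub-product estimate for the \(\alpha\)-semi-norm to each summand. You factor it a little differently — first invoking sub-additivity of \([\cdot]_{\alpha,\Omega}\) across the Leibniz sum and then treating each product \((\partial^{\beta-\gamma}f)(\partial^\gamma g)\), whereas the paper writes out the full telescoping difference \(\partial^\beta(fg)(x)-\partial^\beta(fg)(y)\) before estimating — but these are the same computation. One small point in your favor: the raw estimate naturally lands on \(\sum_\gamma\binom{\beta}{\gamma}\bigl([\partial^{\beta-\gamma}f]_\alpha\sup|\partial^\gamma g|+[\partial^\gamma g]_\alpha\sup|\partial^{\beta-\gamma}f|\bigr)\), which only matches the stated form after reindexing \(\gamma\mapsto\beta-\gamma\) in the second summand using \(\binom{\beta}{\gamma}=\binom{\beta}{\beta-\gamma}\); you make this step explicit, while the paper's proof silently elides it.
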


\begin{proof}
Using \Cref{lem:Leib}, we first expand the difference \(\partial^\beta(fg)(x)-\partial^\beta(fg)(y)\) as follows,
\begin{align*}
    |\partial^\beta(fg)(x)-&\partial^\beta(fg)(y)|=\bigg|\sum_{\gamma\leq\beta}\binom{\beta}{\gamma}(\partial^{\beta-\gamma}f(x)\partial^\gamma g(x)-\partial^{\beta-\gamma}f(y)\partial^\gamma g(y))\bigg|\\
    &=\bigg|\sum_{\gamma\leq\beta}\binom{\beta}{\gamma}((\partial^{\beta-\gamma}f(x)-\partial^{\beta-\gamma}f(y))\partial^\gamma g(x)+(\partial^\gamma g(x)-\partial^\gamma g(y))\partial^{\beta-\gamma} f(y))\bigg|.
\end{align*}
Using the triangle inequality, we see now that \(|\partial^\beta(fg)(x)-\partial^\beta(fg)(y)|\) is bounded from above by
\[
    \sum_{\gamma\leq\beta}\binom{\beta}{\gamma}(|\partial^{\beta-\gamma}f(x)-\partial^{\beta-\gamma}f(y)|\sup_\Omega|\partial^\gamma g|+|\partial^\gamma g(x)-\partial^\gamma g(y)|\sup_\Omega|\partial^{\beta-\gamma} f|).
\]
Dividing this sum by \(|x-y|^\alpha\) and taking a supremum over \(x,y\in\Omega\), the claimed semi-norm estimate for \(\partial^\beta(fg)\) follows immediately from sub-additivity of the supremum. 
\end{proof}

In \cite{Bony2}, Bony introduces a pointwise variant of the \(\alpha\)-H\"older semi-norm, which is also employed by Sawyer \& Korobenko in \cite{SOS_I}. This operator is defined for \(0<\alpha\leq 1\) by setting
\begin{equation}\label{eq:pwop}
    [f]_{\alpha}(x)=\limsup_{y,z\rightarrow x}\frac{|f(y)-f(z)|}{|y-z|^\alpha}.
\end{equation}
We use this pointwise operator, rather than the standard H\"older semi-norm, in order to extend pointwise H\"older estimates to global ones in \Cref{chap:decomps}, without fixing an underlying set. Like the usual H\"older semi-norm, this pointwise variant is subadditive, homogeneous, and it can be used to recover the usual H\"older semi-norm via the identity 
\[
    [f]_{\alpha,\Omega}=\sup_{x\in\Omega}[f]_{\alpha}(x).
\]
Moreover, \eqref{eq:pwop} satisfies a sub-product rule just like the usual H\"older semi-norm. The proof of the following variant of this inequality is identical to that of \Cref{lem:subprod}, so we omit it.

\begin{lem}\label{lem:sharpsubprod}
Let \(f,g\in C^{k,\alpha}(\Omega)\) for \(0<\alpha\leq 1\). If \(\beta\) is any multi-index for which \(|\beta|\leq k\) then
\[
    [\partial^\beta(fg)]_{\alpha}(x)\leq\sum_{\gamma\leq\beta}\binom{\beta}{\gamma}([\partial^{\beta-\gamma}f]_{\alpha}(x)|\partial^\gamma g(x)|+[\partial^{\beta-\gamma}g]_{\alpha}(x)|\partial^\gamma f(x)|).
\]
\end{lem}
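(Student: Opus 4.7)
The plan is to follow the proof of \Cref{lem:subprod} almost verbatim, replacing the global supremum over $\Omega$ with the pointwise limsup operator defined in \eqref{eq:pwop}. First, I would apply the General Leibniz Rule (\Cref{lem:Leib}) to expand
\[
    \partial^\beta(fg)(y)-\partial^\beta(fg)(z)=\sum_{\gamma\leq\beta}\binom{\beta}{\gamma}\bigl(\partial^{\beta-\gamma}f(y)\partial^\gamma g(y)-\partial^{\beta-\gamma}f(z)\partial^\gamma g(z)\bigr),
\]
and rewrite each summand via the standard add-and-subtract trick as
\[
    \bigl(\partial^{\beta-\gamma}f(y)-\partial^{\beta-\gamma}f(z)\bigr)\partial^\gamma g(y)+\bigl(\partial^\gamma g(y)-\partial^\gamma g(z)\bigr)\partial^{\beta-\gamma}f(z).
\]
The triangle inequality then bounds $|\partial^\beta(fg)(y)-\partial^\beta(fg)(z)|$ by a sum of products involving single-factor differences of derivatives, exactly mirroring the decomposition used in \Cref{lem:subprod}.

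Next, I would divide by $|y-z|^\alpha$ and pass to the $\limsup$ as $y,z\rightarrow x$. Subadditivity of the limsup distributes the operator across the finite sum over $\gamma\leq\beta$, so it remains to analyze each individual term of the form
\[
    \limsup_{y,z\rightarrow x}\frac{|\partial^{\beta-\gamma}f(y)-\partial^{\beta-\gamma}f(z)|}{|y-z|^\alpha}|\partial^\gamma g(y)|
\]
and its symmetric counterpart. Since $f,g\in C^{k,\alpha}(\Omega)\subseteq C^k(\Omega)$ and $|\gamma|\leq|\beta|\leq k$, the factor $\partial^\gamma g$ is continuous at $x$, so $|\partial^\gamma g(y)|\rightarrow|\partial^\gamma g(x)|$ as $y\rightarrow x$. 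Using the elementary fact that $\limsup(a_n b_n)\leq (\lim a_n)(\limsup b_n)$ whenever $a_n\geq 0$ converges, the product splits into $|\partial^\gamma g(x)|\,[\partial^{\beta-\gamma}f]_\alpha(x)$ by the definition \eqref{eq:pwop}. Arguing symmetrically for the second term, the claimed pointwise sub-product bound falls out immediately.

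The only nontrivial point in the argument is justifying the limsup-of-product manipulation, which is the main (minor) obstacle. This is handled by continuity of the lower-order derivatives, automatic under the hypothesis $f,g\in C^{k,\alpha}(\Omega)$, so no additional regularity assumption is needed. Since every step parallels the proof of \Cref{lem:subprod} with the supremum replaced by the limsup, the overall argument is essentially the same in structure and length.
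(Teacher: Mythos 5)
Your proof is correct and follows exactly the route the paper intends: the paper explicitly omits this proof, stating it is identical to that of \Cref{lem:subprod}, and your argument carries out that adaptation faithfully. The one place you add detail beyond a verbatim copy — justifying the passage from $\limsup$ of a product to the product of a limit and a limsup via continuity of the lower-order derivatives — is a genuine (if minor) point that the paper leaves implicit, and you handle it correctly.
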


Such objects as the local semi-norm allow us to argue locally in terms of the pointwise behaviour, before combining local estimates and taking a supremum over a larger domain to recover global H\"older estimates. The utility of this approach will become apparent in the proof of \Cref{thm:c3main}.

For completeness of this expository section, we also state some more advanced properties of the higher-order H\"older spaces. The first is a well-known and often useful property of the H\"older spaces of uniformly bounded functions; its proof is a standard exercise in graduate analysis, and we furnish a straightforward argument using the Fundamental Theorem of Calculus.

\begin{thm}
The space \(C_b^{k,\alpha}(\Omega)\) equipped with the norm \eqref{eq:holdernorm} is a Banach space.
\end{thm}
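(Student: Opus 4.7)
The plan is to verify the norm axioms first, which are routine, and then prove completeness, where the substance lies. Subadditivity of each $\sup_\Omega|\partial^\beta\cdot|$ and $[\partial^\beta\cdot]_{\alpha,\Omega}$ gives the triangle inequality for \eqref{eq:holdernorm}; positive homogeneity is immediate; and $\|f\|_{C_b^{k,\alpha}(\Omega)}=0$ forces $\sup_\Omega|f|=0$, so $f\equiv 0$.

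For completeness I would take a Cauchy sequence $\{f_n\}\subset C_b^{k,\alpha}(\Omega)$ and first extract candidate limits. The form of \eqref{eq:holdernorm} forces $\{\partial^\beta f_n\}$ to be Cauchy in the uniform norm on $\Omega$ for every multi-index with $|\beta|\leq k$. Since the space of bounded continuous functions on $\Omega$ is complete under the sup norm, each such sequence converges uniformly to some bounded continuous $f_\beta:\Omega\to\mathbb{R}$, and I set $f:=f_{(0,\dots,0)}$.

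The crux is identifying $f_\beta=\partial^\beta f$, which I would do by induction on $|\beta|$ using the Fundamental Theorem of Calculus. Given $|\beta|<k$, a coordinate direction $e_i$, and a line segment $[x,x+te_i]\subset\Omega$,
\[
    \partial^\beta f_n(x+te_i)-\partial^\beta f_n(x)=\int_0^t\partial^{\beta+e_i}f_n(x+se_i)\,ds.
\]
Uniform convergence transfers this identity to $f_\beta$ and $f_{\beta+e_i}$, and differentiating in $t$ via continuity of $f_{\beta+e_i}$ yields $\partial_i f_\beta=f_{\beta+e_i}$; iterating gives $f_\beta=\partial^\beta f$ for every $|\beta|\leq k$. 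To close the argument I would pass the limit inside each individual difference quotient,
\[
    \frac{|\partial^\beta f(x)-\partial^\beta f(y)|}{|x-y|^\alpha}=\lim_{n\to\infty}\frac{|\partial^\beta f_n(x)-\partial^\beta f_n(y)|}{|x-y|^\alpha}\leq\liminf_{n\to\infty}[\partial^\beta f_n]_{\alpha,\Omega},
\]
which is finite since a Cauchy sequence is norm-bounded, and take a supremum over $x\neq y$ to obtain $f\in C_b^{k,\alpha}(\Omega)$. Applying the same limit-inside-the-difference-quotient trick to $f_m-f_n$, with $m$ fixed and $n\to\infty$, upgrades the Cauchy condition to $\|f_m-f\|_{C_b^{k,\alpha}(\Omega)}\to 0$.

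The main obstacle is the FTC step: it silently uses that $\Omega$ admits enough line segments for coordinate derivatives to recover differences in $f_\beta$, which is automatic when $\Omega$ is open and connected (implicit throughout the exposition) but would require a brief path-connectedness justification in full generality. Once that is handled, the remainder is an exercise in carefully passing uniform limits through suprema and Hölder seminorms.
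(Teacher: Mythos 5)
Your argument follows the same route as the paper's: extract uniform limits $f_\beta$ of $\{\partial^\beta f_n\}$, identify $f_\beta=\partial^\beta f$ by induction via the Fundamental Theorem of Calculus, then pass the limit through the difference quotient to bound $[\partial^\beta f]_\alpha$. You are in fact slightly more careful than the paper in two respects: you explicitly close the loop by showing $\|f_m-f\|_{C_b^{k,\alpha}(\Omega)}\to 0$ (the paper stops at $f\in C_b^{k,\alpha}(\Omega)$ and leaves convergence in norm implicit), and you flag the implicit connectedness assumption needed for the FTC step. Both are reasonable refinements, not deviations.
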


\begin{proof} Recall that a Banach space is a complete normed vector space. We have already established that \(C_b^{k,\alpha}(\Omega)\) is a normed vector space, leaving us to show completeness. If \(\{f_j\}\) is Cauchy in \(C_b^{k,\alpha}(\Omega)\) equipped with the norm \eqref{eq:holdernorm}, and if \(|\beta|\leq k\), then \(\{\partial^\beta f_j\}\) is Cauchy in \(C_b^0(\Omega)\), the space of uniformly continuous functions on \(\Omega\) equipped with the supremum norm. Since \(C^0_b(\Omega)\) is complete, we conclude that \(\{\partial^\beta f_j\}\) converges uniformly to a function \(f_\beta\in C_b^0(\Omega)\). For convenience, we define \(f=\displaystyle\lim_{j\rightarrow\infty}f_j\), noting that \(f\in C^0_b(\Omega)\).

Using induction on \(m\), we next show that \(\partial^\beta f=f_\beta\) for every \(\beta\) of order \(m\leq k\). In other words, the limits of the derivatives of the sequence \(\{f_j\}\) agree with the derivatives of its limits.  If \(m=0\) this follows from our definition of \(f\), giving a base case. Suppose now that \(\partial^\beta f=f_\beta\) whenever \(|\beta|=m\), and let \(\partial^\mu =\partial_i\partial^\beta \) be a derivative of order \(m+1\). Let \(e_i\) denote the \(i^\textrm{th}\) basis vector of \(\mathbb{R}^n\), and for \(x\) belonging to the interior of \(\Omega\) choose \(h\in\mathbb{R}\) small enough that \(x+he_i\in\Omega\). Proceeding with these selections, we see from the Fundamental Theorem of Calculus Part II that
\[
    \partial^\beta f(x+he_i)-\partial^\beta f(x)=\lim_{j\rightarrow\infty}\int_0^h\partial_{i}\partial^\beta f_j(x+te_i)dt
\]
Moreover, since \(\{\partial^\mu f_j\}\) is Cauchy in \(C^{0}_b(\Omega)\) we see that \(\sup_\Omega|\partial_i\partial^\beta f_j|\) is bounded independent of \(j\). It follows from the Dominated Convergence Theorem that
\[
    \partial^\beta f(x+he_i)-\partial^\beta f(x)=\int_0^h\lim_{j\rightarrow\infty}\partial_{i}\partial^\beta f_j(x+te_i)dt=\int_0^h f_\mu(x+te_i)dt.
\]
Taking a derivative of the identity above, the Fundamental Theorem of Calculus Part I shows that \(\partial^\mu f=f_\mu\). Hence \(\partial^\beta f=f_\beta\) whenever \(|\beta|\leq k\), meaning that \(\partial^\beta f_j\) converges uniformly to \(\partial^\beta f\) and the space \(C^k_b(\Omega)\) is complete.

It remains to show that \(\partial^\beta f\in C^\alpha_b(\Omega)\) whenever \(|\beta|=k\). For \(x,y\in\Omega\) we have
\[
        \frac{|\partial^\beta f(x)-\partial^\beta f(y)|}{|x-y|^\alpha}=\lim_{j\rightarrow\infty}\frac{|\partial^\beta f_j(x)-\partial^\beta f_j(y)|}{|x-y|^\alpha}\leq \limsup_{j\rightarrow\infty}\;[\partial^\beta f_j]_{\alpha,\Omega} \leq \lim_{j\rightarrow\infty}\|f_j\|_{C_b^{k,\alpha}(\Omega)}.
\]
The limit on the right converges since \(\{f_j\}\) is Cauchy in \(C_b^{k,\alpha}(\Omega)\), so by taking a supremum over \(x,y\in\Omega\) we find that \([\partial^\beta f]_\alpha<\infty\) whenever \(|\beta|=k\), showing that \(f\in C_b^{k,\alpha}(\Omega)\). Since \(\{f_j\}\) was any Cauchy sequence, it follows that \(C_b^{k,\alpha}(\Omega)\) is complete as claimed.
\end{proof}

Earlier we showed that if \(\alpha< \beta\), then the space \(C^{\beta}(\Omega)\) is embedded in \(C^{\alpha}(\Omega)\), provided that either \(\Omega\) is bounded or that we restrict to uniformly bounded functions. It is worth noting that the latter is a compact embedding, meaning that if \(\{f_j\}\) is a bounded sequence in the norm of \(C_b^\alpha(\Omega)\) then it has a subsequence convergent in the norm of \(C_b^\beta(\Omega)\). More generally we have the following result, which is a standard but more advanced property of H\"older spaces that is stated and proved in \cite[Lemma 6.33]{GilbargTrudinger} as well as in \cite[Theorem 24.14]{Driver}.

\begin{lem}\label{lem:cptmb}
Let \(0<\alpha,\beta\leq 1\), and let \(j\) and \(k\) be non-negative integers. If \(k+\alpha<j+\beta\) then \(C_b^{j,\beta}(\Omega)\) is compactly embedded in \(C_b^{k,\alpha}(\Omega)\).
\end{lem}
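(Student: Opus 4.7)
The plan is to combine the Arzelà--Ascoli theorem with an interpolation estimate that trades uniform smallness against Hölder boundedness. Let $\{f_n\}$ be a sequence bounded in $C_b^{j,\beta}(\Omega)$ by some constant $M$. Since $\alpha,\beta\leq 1$, the hypothesis $k+\alpha<j+\beta$ forces $k\leq j$, so every derivative $\partial^\mu f_n$ of order $|\mu|\leq k$ is well-defined, $\sup_\Omega|\partial^\mu f_n|\leq M$, and the family $\{\partial^\mu f_n\}$ is equicontinuous---the modulus of continuity coming from a Lipschitz bound (via the derivatives of order $|\mu|+1\leq j$) if $|\mu|<j$, or from a $\beta$-Hölder bound if $|\mu|=j$.

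First I would invoke the Arzelà--Ascoli theorem together with a diagonal argument over the finite collection of multi-indices $\{|\mu|\leq k\}$ to extract a subsequence (still denoted $\{f_n\}$) so that $\partial^\mu f_n$ converges uniformly to some limit $g_\mu$ for every $|\mu|\leq k$. An induction on $|\mu|$ using the Fundamental Theorem of Calculus, exactly as in the completeness proof above, identifies $g_\mu=\partial^\mu g$ where $g=\lim_n f_n$; passing to $\limsup$ in the pointwise estimates also shows $g\in C_b^{j,\beta}(\Omega)$ with norm at most $M$.

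The central step is to upgrade the uniform convergence of the top-order derivatives to convergence in the $\alpha$-Hölder semi-norm. Fix $|\mu|=k$ and set $h_n=\partial^\mu f_n-\partial^\mu g$, so $\|h_n\|_{C_b^0(\Omega)}\to 0$ while $[h_n]_{\beta',\Omega}\leq 2M$ for a value $\beta'>\alpha$ (take $\beta'=\beta$ when $k=j$, and $\beta'=1$ when $k<j$; the boundary case $\alpha=1$ with $k<j$ is handled directly via the mean value theorem using the uniform convergence of derivatives of order $k+1$). For any $R>0$ and distinct $x,y\in\Omega$, split on whether $|x-y|\leq R$:
\[
    \frac{|h_n(x)-h_n(y)|}{|x-y|^\alpha}\leq\begin{cases}[h_n]_{\beta',\Omega}\,|x-y|^{\beta'-\alpha}\leq 2MR^{\beta'-\alpha}, & |x-y|\leq R,\\[0.5em]
    2\|h_n\|_{C_b^0(\Omega)}\,R^{-\alpha}, & |x-y|>R.
    \end{cases}
\]
Given $\epsilon>0$, first choose $R$ small enough that the first regime contributes less than $\epsilon/2$, and then take $n$ large enough that the second regime contributes less than $\epsilon/2$; taking a supremum over $x,y$ yields $[h_n]_{\alpha,\Omega}\to 0$ for each fixed $\mu$ of order $k$. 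Summing over the finitely many such $\mu$ and combining with the uniform convergence of lower-order derivatives gives $f_n\to g$ in $C_b^{k,\alpha}(\Omega)$.

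The main obstacle is the interpolation estimate in the last step, which leverages in an essential way the strict inequality $\alpha<\beta'$ guaranteed by the hypothesis $k+\alpha<j+\beta$. A secondary technical point lies in the Arzelà--Ascoli extraction: it applies directly when $\Omega$ is bounded, while for unbounded $\Omega$ one must exhaust $\Omega$ by compact subsets and perform a further diagonal extraction, then exploit the global Hölder bounds to promote the local uniform convergence into convergence in the global $C_b^{k,\alpha}$ norm.
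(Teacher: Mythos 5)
The paper does not prove this lemma; it records the statement and cites \cite[Lemma~6.33]{GilbargTrudinger} and \cite[Theorem~24.14]{Driver}, both of which (as in essentially all textbook treatments) assume $\Omega$ is bounded. For bounded $\Omega$, your Arzel\`a--Ascoli plus interpolation argument is exactly the standard one, and the case split on $|x-y|\lessgtr R$ to exploit the strict gap $\alpha<\beta'$ is the expected device; your proof is sound in that setting, modulo a bookkeeping point in the $\alpha=1$, $k<j$ edge case, where you should extract the diagonal subsequence so that derivatives converge uniformly up to order $k+1$ (not just $k$), since that is what the mean-value argument actually uses.

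The genuine gap is in your final paragraph. The claim that global H\"older bounds ``promote the local uniform convergence into convergence in the global $C_b^{k,\alpha}$ norm'' on unbounded $\Omega$ is false, and in fact the lemma itself fails for unbounded domains. Take $\Omega=\mathbb{R}$, $j=0$, $\beta=1$, $k=0$, $0<\alpha<1$, and $f_n(x)=\phi(x-n)$ for a fixed nonzero bump $\phi\in C_c^\infty(\mathbb{R})$. The sequence is bounded in $C_b^{0,1}(\mathbb{R})$ and converges to $0$ uniformly on every compact subset, yet $\sup_{\mathbb{R}}|f_n-f_m|=\sup|\phi|$ whenever the supports of $f_n$ and $f_m$ are disjoint, so no subsequence is even Cauchy in $C_b^0(\mathbb{R})$, let alone in $C_b^{0,\alpha}(\mathbb{R})$. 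The local-to-global promotion is precisely the step that must fail, and it does: local uniform convergence plus a global H\"older bound gives no control on the sup norm far away, which is what your $R$-split needs when $|x-y|>R$. The clean fix is to state and prove the lemma for bounded $\Omega$ only (or for $\Omega$ of finite diameter); this is what the cited references do, and it is all the thesis actually requires, since the only application of \Cref{lem:cptmb} in the text is inside \Cref{lem:contpoly}, where $\Omega$ is explicitly compact.
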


Indeed, \cite{GilbargTrudinger} and \cite{Driver} both contain a thorough development of the properties of H\"older spaces that go beyond the scope of this thesis, but which are relevant to the widespread applications of H\"older spaces in the study of partial differential equations. The reader wishing to explore these spaces further is encouraged to consult the works cited throughout this chapter, and also to see \Cref{sec:holdexamples} for some interesting examples of \(\alpha\)-H\"older continuous functions.

Next, we establish a useful property of the Taylor polynomials of H\"older continuous functions.

\begin{lem}\label{lem:Taylorest}
Let \(f\in C^{k,\alpha}(\Omega)\). If \(|\beta|<k\) then there exists a constant \(C\) such that for any \(x,y\in\Omega\) the following inequality holds,
\begin{equation}\label{eq:taylorest}
    |\partial^\beta f(x)-\partial^\beta f(y)|\leq \sum_{0\leq |\gamma|\leq k-|\beta|}\frac{1}{\gamma!}|x-y|^{|\gamma|}|\partial^{\beta+\gamma} f(x)|+C|x-y|^{k-|\beta|+\alpha}.    
\end{equation}
\end{lem}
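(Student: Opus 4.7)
The plan is to apply the integral form of Taylor's theorem to $\partial^\beta f$, which inherits membership in $C^{k-|\beta|,\alpha}(\Omega)$ from $f\in C^{k,\alpha}(\Omega)$, and then to control the remainder using the $\alpha$-H\"older continuity of the top-order derivatives. Setting $m := k - |\beta|$, the multivariate Taylor expansion of $\partial^\beta f$ at $y$ about $x$ gives
\[
\partial^\beta f(y) = \sum_{|\gamma| \leq m-1}\frac{(y-x)^\gamma}{\gamma!}\,\partial^{\beta+\gamma} f(x) + m\sum_{|\gamma|=m}\frac{(y-x)^\gamma}{\gamma!}\int_0^1 (1-t)^{m-1}\,\partial^{\beta+\gamma} f(x+t(y-x))\,dt,
\]
which is valid provided the segment from $x$ to $y$ lies in $\Omega$ (as is the case in all the applications of interest, where $\Omega$ is convex and in particular $\Omega=\mathbb{R}^n$).

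Next I would add and subtract $\partial^{\beta+\gamma} f(x)$ inside each integral. Using the identity $m\int_0^1 (1-t)^{m-1}dt = 1$, the constant piece combines with the polynomial sum to complete it through order $m$, leaving the remainder
\[
R(x,y) = m\sum_{|\gamma|=m}\frac{(y-x)^\gamma}{\gamma!}\int_0^1 (1-t)^{m-1}\bigl[\partial^{\beta+\gamma}f(x+t(y-x))-\partial^{\beta+\gamma}f(x)\bigr]\,dt.
\]
Applying the H\"older estimate $|\partial^{\beta+\gamma} f(x+t(y-x))-\partial^{\beta+\gamma} f(x)| \leq [\partial^{\beta+\gamma} f]_{\alpha,\Omega}\,t^\alpha |y-x|^\alpha$ and evaluating the beta integral $\int_0^1 (1-t)^{m-1}t^\alpha dt = B(\alpha+1,m)$, one obtains $|R(x,y)| \leq C|x-y|^{m+\alpha}$ with $C$ depending only on $m$, $\alpha$, and the semi-norms $[\partial^{\beta+\gamma} f]_{\alpha,\Omega}$ for $|\gamma|=m$.

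Subtracting $\partial^\beta f(x)$ from both sides of the Taylor expansion and applying the triangle inequality then yields
\[
|\partial^\beta f(y) - \partial^\beta f(x)| \leq \sum_{1\leq|\gamma|\leq m}\frac{|y-x|^{|\gamma|}}{\gamma!}|\partial^{\beta+\gamma}f(x)| + C|x-y|^{m+\alpha},
\]
and since enlarging the sum to include the $|\gamma|=0$ term only adds the nonnegative quantity $|\partial^\beta f(x)|$ to the right-hand side, the claimed bound follows \emph{a fortiori}. The only real subtlety is the convexity requirement for the Taylor integral representation; outside the convex setting one must work along a polygonal path joining $x$ to $y$ inside $\Omega$, but for $\mathbb{R}^n$ no such complication arises and the proof is essentially a direct bookkeeping exercise.
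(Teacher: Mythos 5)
Your proof is correct and follows essentially the same route as the paper's: apply the integral form of Taylor's theorem to $\partial^\beta f$, split the top-order integral by adding and subtracting $\partial^{\beta+\gamma}f(x)$ (the paper phrases this as rewriting the integral, but it is the same algebraic manipulation), and bound the resulting H\"older remainder, which produces the $|x-y|^{k-|\beta|+\alpha}$ term with $C$ a sum of the top-order semi-norms. The only cosmetic differences are that you evaluate the Beta integral exactly where the paper simply bounds it by $1/|\gamma|$, and that you flag the convexity hypothesis needed for the Taylor representation, a point the paper's proof leaves implicit.
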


\begin{proof}
Let \(\beta\) be any multi-index of length \(n\) and order \(|\beta|<k\). We begin by using \Cref{thm:taylorthm} to form a Taylor expansion for \(\partial^\beta f\) evaluated at \(y\in\Omega\) and centred at \(x\in\Omega\),
\[
    \partial^\beta f(y)=\sum_{0\leq |\gamma|<k-|\beta|}\frac{1}{\gamma!}\partial^{\beta+\gamma} f(x)(y-x)^\gamma+\sum_{|\gamma|=k-|\beta|}\frac{|\gamma|}{\gamma!}(y-x)^\gamma\int_0^1(1-t)^{|\gamma|-1}\partial^{\beta+\gamma}f(x+t(y-x))dt.
\]
To refine this, we observe that the integral on the right-hand side can be rewritten in the form
\[
    \frac{1}{|\gamma|}\partial^{\beta+\gamma}f(x)+\int_0^1(1-t)^{|\gamma|-1}(\partial^{\beta+\gamma}f(x+t(y-x))-\partial^{\beta+\gamma}f(x))dt.
\]
Consequently, if \(|\beta|+|\gamma|=k\) we may use that \(f\in C^{k,\alpha}(\Omega)\) by assumption to make the estimate 
\[
    |\gamma|\int_0^1(1-t)^{|\gamma|-1}\partial^{\beta+\gamma}f(x+t(y-x))dt\leq |\partial^{\beta+\gamma}f(x)|+[\partial^{\beta+\gamma}f]_{\alpha,\Omega}|\gamma||x-y|^\alpha\int_0^1(1-t)^{|\gamma|-1}t^\alpha dt.
\]
The integral on the right-hand side is bounded by \(1/|\gamma|\), meaning that the item on the left is bounded by \(\partial^{\beta+\gamma}f(x)+[\partial^{\beta+\gamma}f]_{\alpha,\Omega}|x-y|^\alpha\). Rearranging our initial Taylor expansion and applying this estimate affords \eqref{eq:taylorest} with \(C=\sum_{|\gamma|=k-|\beta|}[\partial^{\beta+\gamma}f]_{\alpha,\Omega}/\gamma!\).
\end{proof}

Much can be gleaned from the Taylor polynomials of functions belonging to H\"older spaces, and in the next section we exploit several useful properties of the Taylor polynomials of non-negative H\"older continuous functions. To close this section, we prove a useful `recombination' result which will be important in our later construction of sum of squares decompositions. Recall that two functions are said to have disjoint supports if they are not both nonzero at any point.

\begin{lem}\label{lem:recomb}
Let \(\{f_j\}\) be a countable collection of non-negative \(C^{k,\alpha}(\Omega)\) functions with pairwise disjoint supports. Assume that for each \(\beta\) with \(|\beta|\leq k \), the bound \([\partial^\beta f_j]_\alpha\leq C\) holds for a \(C\) independent of \(j\). Then the following function is also in \(C^{k,\alpha}(\Omega)\),
\[
    F=\sum_{j=1}^\infty f_j.
\]
\end{lem}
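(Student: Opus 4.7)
The plan is to show that $F$ is $k$-times continuously differentiable on $\Omega$ with derivatives $\partial^\beta F(x) = \sum_j \partial^\beta f_j(x)$ for every $|\beta| \leq k$, and that these derivatives have $\alpha$-H\"older seminorm at most $2C$. I would begin with a key vanishing observation: because each $f_j$ is non-negative and vanishes on the open set $\Omega \setminus \mathrm{supp}(f_j)$, every derivative $\partial^\beta f_j$ with $|\beta| \leq k$ vanishes there too, and by continuity of the derivatives also on $\partial \mathrm{supp}(f_j)$. Combined with the disjoint supports hypothesis, this forces the formal pointwise sum $G_\beta(x) := \sum_j \partial^\beta f_j(x)$ to contain at most one nonzero term at each $x \in \Omega$, namely the contribution from the unique $j$ (if any) for which $x$ lies in the interior of $\mathrm{supp}(f_j)$. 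In particular $G_\beta$ is well-defined on all of $\Omega$.

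Next, I would establish the uniform bound $[G_\beta]_{\alpha,\Omega} \leq 2C$ for each $|\beta| \leq k$ through a case analysis on two distinct points $x, y \in \Omega$. When the nonzero contributions to $G_\beta(x)$ and $G_\beta(y)$ come from the same $f_j$ (or when both values vanish), the hypothesis $[\partial^\beta f_j]_\alpha \leq C$ applies directly. When the contributions come from distinct indices $i \neq j$, the crucial observation is that $\partial^\beta f_i(y) = 0 = \partial^\beta f_j(x)$ by the vanishing from the first step, which allows one to add and subtract these zeros and split the difference into two H\"older increments, producing the factor $2C$. The mixed case in which only one contribution is nonzero is similar. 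Altogether each $G_\beta$ is $\alpha$-H\"older continuous on $\Omega$, which already gives the entire conclusion when $k = 0$.

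The main obstacle is confirming that $\partial^\beta F = G_\beta$ as classical derivatives, which I would prove by induction on $|\beta|$. The base case $F = G_0$ is immediate. For the inductive step I write $\beta = \gamma + e_i$ with $|\gamma| \leq k-1$ and aim to show $\partial_i G_\gamma = G_\beta$ pointwise. At any $x \in \mathrm{int}(\mathrm{supp}(f_j))$ both $G_\gamma$ and $G_\beta$ coincide locally with $\partial^\gamma f_j$ and $\partial^\beta f_j$, so the identity follows by classical differentiation of $f_j$. The delicate case is when $x$ lies outside the interior of every support but may still be an accumulation point of the family of supports; here $G_\gamma(x) = 0$ and for small $h$ one has $G_\gamma(x + he_i) = \partial^\gamma f_{j(h)}(x + he_i)$ for some index $j(h)$, or zero. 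Since all derivatives $\partial^\mu f_{j(h)}(x)$ with $|\mu| \leq k$ vanish at $x$, \Cref{lem:Taylorest} applied to $f_{j(h)}$ yields
\[
    |\partial^\gamma f_{j(h)}(x + he_i)| \leq C' |h|^{k - |\gamma| + \alpha},
\]
with $C'$ depending only on $C$ and in particular independent of $j(h)$. Because $|\gamma| \leq k - 1$ the exponent $k - |\gamma| + \alpha - 1$ is strictly positive, so dividing by $|h|$ and letting $h \to 0$ forces $\partial_i G_\gamma(x) = 0 = G_\beta(x)$, closing the induction. Combined with the H\"older bound from the second step, this shows $F \in C^{k,\alpha}(\Omega)$ as claimed.
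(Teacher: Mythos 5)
Your proof is correct, and the core seminorm estimate (your second step) is the same two-point case analysis that the paper uses, yielding the constant $2C$ by adding and subtracting the vanishing terms. What you do differently is to supply a justification that the paper elides: the paper's proof begins with the $k$-th order derivatives of $F$ already in hand, tacitly assuming that the infinite sum $\sum_j f_j$ is $k$-times classically differentiable with $\partial^\beta F = \sum_j \partial^\beta f_j$ even at points where the supports accumulate. Your first step (all derivatives of $f_j$ up to order $k$ vanish on the complement of $\mathrm{int}(\mathrm{supp}\,f_j)$, and distinct indices cannot contribute simultaneously because disjointness of the open sets $\{f_i\neq 0\}$ forces disjointness of the interiors of the supports) and your third step (the inductive argument via \Cref{lem:Taylorest}, exploiting that the exponent $k-|\gamma|+\alpha-1\ge\alpha>0$ when $|\gamma|\le k-1$ and that the constant $C'$ is uniform in $j(h)$ because the $[\partial^\beta f_j]_\alpha$ are uniformly bounded) fill this gap cleanly. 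The paper's version is shorter but leaves differentiability at accumulation points of the supports unaddressed; yours costs three paragraphs but is genuinely complete, which is worthwhile since the lemma is applied to countable partitions of unity where such accumulation is the typical situation rather than a pathology.
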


\begin{proof}

Fix a multi-index \(\beta\) of order \(k\) and let \(x,y\in\Omega\). Assume without loss of generality that \(F(x)=f_i(x)\) and \(F(y)=f_j(y)\) for some \(i,j\in\mathbb{N}\). If \(i=j\) then 
\[
    \frac{|\partial^\beta F(x)-\partial^\beta F(y)|}{|x-y|^\alpha}=\frac{|\partial^\beta f_i(x)-\partial^\beta f_i(x)|}{|x-y|^\alpha}\leq C.
\]
On the other hand, if \(i\neq j\) then \(\partial^\beta F(x)=\partial^\beta f_i(x)+\partial^\beta f_j(x)\) and \(\partial^\beta F(y)=\partial^\beta f_i(y)+\partial^\beta f_j(y)\), since the functions comprising \(f\) have pairwise disjoint support. Thus the triangle inequality,
\[
    \frac{|\partial^\beta F(x)-\partial^\beta F(y)|}{|x-y|^\alpha}\leq \frac{|\partial^\beta f_i(x)-\partial^\beta f_i(y)|}{|x-y|^\alpha}+\frac{|\partial^\beta f_j(x)-\partial^\beta f_j(y)|}{|x-y|^\alpha}\leq 2C.
\]
Since \(x\) and \(y\) were arbitrary points in \(\Omega\), we conclude that \([\partial^\beta F]_\alpha\leq 2C\) as required. 
\end{proof}

\section{Non-Negative Functions in H\"older Spaces}\label{sec:nn}

For the remainder of this thesis, we work to decompose non-negative functions in H\"older spaces -- accordingly, in this section we investigate the pointwise properties of such functions. Perhaps the most useful result we obtain is a set of inequalities which essentially show that the functions we study are controlled pointwise by their even-order derivatives. An early result of this type is an inequality of Malgrange, which is often attributed to Glaeser who published it in \cite{Glaeser}. Malgrange found that if \(f\in C^2(\mathbb{R})\) is non-negative and \(\sup_\mathbb{R}|f''|\leq M\), then
\begin{equation}\label{eq:ogmalg}
    |f'(x)|\leq \sqrt{2Mf(x)}    
\end{equation}
for every \(x\in\mathbb{R}\). By working in H\"older spaces we can refine this estimate and remove the boundedness condition. Moreover, we can obtain generalizations of \eqref{eq:ogmalg} to higher dimensions. These estimates will be critical to our study of \(C^{1,\alpha}(\mathbb{R}^n)\) functions.

Before moving on to more advanced estimates, we establish some basic properties of non-negative H\"older continuous functions. We begin with the observation that if \(f\in C^\alpha(\Omega)\) for an arbitrary domain \(\Omega\), then \(|f|\in C^\alpha(\Omega)\) with \([|f|]_\alpha\leq [f]_\alpha\). This is because
\[
    ||f(x)|-|f(y)||\leq |f(x)-f(y)|\leq [f]_\alpha|x-y|^\alpha.
\]
For \(k\geq 1\) however, it is not true in general that \(|f|\in C^{k,\alpha}(\Omega)\) whenever \(f\in C^{k,\alpha}(\Omega)\). Indeed if \(f\) is a function in \(C^{1,\alpha}(\Omega)\) which has a zero that is not also a local minimum point, then \(|f|\) will have a cusp. It follows that a first-order derivative of \(|f|\) may not even be continuous, let alone \(\alpha\)-H\"older continuous for some \(\alpha>0\).

We avoid this difficulty by restricting attention to non-negative functions for the remainder of this chapter.Our first result concerning the behaviour of the powers and roots of non-negative H\"older continuous functions is the following.

\begin{lem}\label{lem:smallpow}
Suppose that \(f\) is a non-negative function and let \(0\leq\beta\leq 1\). Then for any \(x,y\) in the domain of \(f\),
\begin{equation}\label{eq:concave}
    |f(x)^\beta-f(y)^\beta|\leq |f(x)-f(y)|^\beta.
\end{equation}
\end{lem}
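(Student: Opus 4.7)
The plan is to reduce this two-point inequality to a purely algebraic statement about non-negative real numbers, and then to extract it from the subadditivity of the power function \(t\mapsto t^\beta\). Fixing \(x,y\) in the domain of \(f\) and writing \(a=f(x)\geq 0\) and \(b=f(y)\geq 0\), the conclusion \eqref{eq:concave} becomes the scalar inequality \(|a^\beta-b^\beta|\leq |a-b|^\beta\), so the whole statement loses its dependence on \(f\) and on the dimension.

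By symmetry of the claim in \(a\) and \(b\), I may assume \(a\geq b\geq 0\). The endpoint cases \(\beta=0\) and \(\beta=1\) are immediate (with the usual convention \(0^0=1\) when relevant, or by noting that the inequality is trivial when \(a=0\)), so the substance lies in \(0<\beta<1\), where the goal reduces to showing
\[
    a^\beta\leq b^\beta+(a-b)^\beta.
\]
This is exactly the subadditivity statement \((u+v)^\beta\leq u^\beta+v^\beta\) applied to \(u=a-b\geq 0\) and \(v=b\geq 0\), so the entire lemma follows once subadditivity is in hand.

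To establish subadditivity I would normalize: if \(u+v=0\) the inequality is trivial, and otherwise dividing through by \((u+v)^\beta\) and setting \(s=u/(u+v)\in[0,1]\) reduces the claim to \(1\leq s^\beta+(1-s)^\beta\). Since \(0\leq s\leq 1\) and \(\beta\leq 1\), the elementary monotonicity \(t^\beta\geq t\) for \(t\in[0,1]\) gives \(s^\beta\geq s\) and \((1-s)^\beta\geq 1-s\), and these bounds add to the required \(1\). (Alternatively, one can read off the same inequality from concavity of \(t\mapsto t^\beta\) on \([0,\infty)\) together with \(0^\beta=0\), but the normalization argument is self-contained and avoids invoking external facts.)

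No step here looks genuinely hard; the only real design choice is whether to derive subadditivity from concavity or from the pointwise bound \(t^\beta\geq t\) on \([0,1]\), and I would take the second route because it uses only the hypothesis \(\beta\leq 1\) in its most direct form. The main thing to be careful about is reducing cleanly to one variable before invoking the scalar inequality, so that the proof does not become cluttered with cases depending on the sign of \(f(x)-f(y)\).
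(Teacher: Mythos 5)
Your proof is correct and follows essentially the same route as the paper: reduce to the scalar subadditivity $(u+v)^\beta\leq u^\beta+v^\beta$ and deduce it from the bound $t^\beta\geq t$ on $[0,1]$ when $\beta\leq 1$. The only difference is cosmetic -- you normalize by $u+v$ (setting $s=u/(u+v)$) whereas the paper normalizes by $(a^\beta+b^\beta)^{1/\beta}$ (setting $k=a^\beta/(a^\beta+b^\beta)$), but both reduce to the same elementary inequality, and your version is if anything a touch cleaner.
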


\begin{proof}
First we show that if \(\beta\leq 1\) then \((a+b)^\beta\leq a^\beta+b^\beta\) for \(a,b\geq 0\). Set \(k=a^\beta/(a^\beta+b^\beta)\), and observe that \(k\leq k^\beta\) and \((1-k)\leq (1-k)^\beta\) since \(k\leq 1\). Therefore
\[
    \frac{(a+b)^\beta}{a^\beta+b^\beta}=(k^\frac{1}{\beta}+(1-k)^\frac{1}{\beta})^\beta\leq (k+1-k)^\beta=1,
\]
giving the claimed estimate. Now we take \(a=|f(x)-f(y)|\) and \(b=f(y)\) to obtain the bound
\[
    f(x)^\beta \leq (|f(x)-f(y)|+|f(y)|)^\beta\leq |f(x)-f(y)|^\beta+f(y)^\beta.
\]
Thus \(f(x)^\beta-f(y)^\beta\leq |f(x)-f(y)|^\beta\), and interchanging \(x\) and \(y\) gives inequality \eqref{eq:concave}.
\end{proof}

\begin{cor}\label{cor:easycase}
If \(f\in C^\alpha(\Omega)\) is non-negative and \(\beta\leq 1\), then \(f^\beta\in C^{\alpha\beta}(\Omega)\) and \([f^\beta]_{\alpha\beta}\leq [f]_\alpha^\beta\). In particular, if \(f\in C^\alpha(\Omega)\) then \(\sqrt{f}\in C^\frac{\alpha}{2}(\Omega)\).
\end{cor}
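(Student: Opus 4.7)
The plan is to read off the corollary as an essentially immediate consequence of \Cref{lem:smallpow}. First I would fix \(x,y\in\Omega\) with \(x\neq y\), and directly apply the pointwise inequality \(|f(x)^\beta-f(y)^\beta|\leq |f(x)-f(y)|^\beta\) supplied by the preceding lemma. Dividing both sides by \(|x-y|^{\alpha\beta}\) and rewriting the right-hand side as the \(\beta\)-th power of \(|f(x)-f(y)|/|x-y|^\alpha\) should produce
\[
\frac{|f(x)^\beta-f(y)^\beta|}{|x-y|^{\alpha\beta}}\leq \left(\frac{|f(x)-f(y)|}{|x-y|^\alpha}\right)^{\!\beta}\leq [f]_{\alpha}^{\beta},
\]
where the last inequality uses the definition \eqref{eq:snd} of the H\"older semi-norm and the fact that \(t\mapsto t^\beta\) is monotone on \([0,\infty)\) for \(\beta\in[0,1]\).

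Taking the supremum over distinct \(x,y\in\Omega\) on the left-hand side then yields \([f^\beta]_{\alpha\beta,\Omega}\leq [f]_{\alpha,\Omega}^\beta<\infty\), establishing that \(f^\beta\in C^{\alpha\beta}(\Omega)\) with the stated semi-norm bound. The special case \(\sqrt{f}\in C^{\alpha/2}(\Omega)\) then follows by specialising \(\beta=1/2\).

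There is essentially no obstacle here: the non-trivial content of the argument is entirely contained in \Cref{lem:smallpow}, and the corollary amounts to recognising that the inequality \eqref{eq:concave} is exactly in the shape required to feed into the definition of \([\,\cdot\,]_{\alpha\beta}\). The only point worth pausing over is the case \(\beta=0\), which is trivial since \(f^0\equiv 1\) and \([1]_0=0\), and the case \(\beta=1\) which is a tautology.
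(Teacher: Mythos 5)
Your argument is exactly the one the paper intends: the corollary is stated without proof immediately after \Cref{lem:smallpow}, and the intended derivation is precisely to divide inequality \eqref{eq:concave} by \(|x-y|^{\alpha\beta}\), recognize the right-hand side as the \(\beta\)-th power of the H\"older difference quotient, and take a supremum. Your proof is correct and follows the same route, with the boundary cases \(\beta=0,1\) handled sensibly.
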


We can obtain a similar estimate, albeit a slightly more complicated one, when \(\beta>1\).

\begin{lem}\label{lem:almostLipschitz}
Suppose that \(f\in C^{\alpha}(\Omega)\) is non-negative and let \(\beta>1\) and \(\varepsilon>0\). Then for any two points \(x,y\in\Omega\),
\[
    |f(x)^\beta-f(y)^\beta|\leq \frac{(1+\varepsilon)[f]_\alpha^\beta}{((1+\varepsilon)^\frac{1}{\beta-1}-1)^{\beta-1}}|x-y|^{\alpha\beta}+\varepsilon\max\{f(x)^\beta,f(y)^\beta\}.
\]
\end{lem}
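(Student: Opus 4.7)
The plan is to reduce the estimate to a scalar convex-combination inequality. Without loss of generality assume \(f(x)\geq f(y)\), and set \(a=f(x)\), \(b=f(y)\), and \(\delta=[f]_\alpha|x-y|^\alpha\). The H\"older hypothesis gives \(a-b\leq\delta\), so \(a\leq b+\delta\); since \(\beta>1\) the map \(t\mapsto t^\beta\) is monotone increasing on \([0,\infty)\), yielding \(a^\beta\leq(b+\delta)^\beta\). It therefore suffices to prove the scalar estimate
\[
    (b+\delta)^\beta\leq (1+\varepsilon)b^\beta+\frac{1+\varepsilon}{\bigl((1+\varepsilon)^{1/(\beta-1)}-1\bigr)^{\beta-1}}\delta^\beta,
\]
for then subtracting \(b^\beta\) and using \(\max\{a^\beta,b^\beta\}=a^\beta\geq b^\beta\) recovers the lemma.

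The scalar estimate I would prove by convexity of \(t\mapsto t^\beta\) on \([0,\infty)\). Pick the weight \(\mu\in(0,1)\) defined by \(\mu=(1+\varepsilon)^{-1/(\beta-1)}\), chosen precisely so that \(\mu^{1-\beta}=1+\varepsilon\). Writing
\[
    b+\delta=\mu\cdot\frac{b}{\mu}+(1-\mu)\cdot\frac{\delta}{1-\mu}
\]
as a convex combination and applying Jensen's inequality for the convex function \(t^\beta\) gives
\[
    (b+\delta)^\beta\leq\mu^{1-\beta}b^\beta+(1-\mu)^{1-\beta}\delta^\beta.
\]
A short algebraic manipulation shows \(1-\mu=\lambda/(1+\lambda)\) for \(\lambda=(1+\varepsilon)^{1/(\beta-1)}-1\), and hence
\[
    (1-\mu)^{1-\beta}=\bigl((1+\lambda)/\lambda\bigr)^{\beta-1}=(1+\lambda)^{\beta-1}/\lambda^{\beta-1}=(1+\varepsilon)/\lambda^{\beta-1},
\]
which is exactly the coefficient appearing in the lemma.

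There is no real analytic obstacle in this argument; the only subtle step is guessing the correct convex-combination weight, and once the ansatz \(\mu^{1-\beta}=1+\varepsilon\) is made, the arithmetic is routine. The degenerate case \(b=0\) causes no trouble because the coefficient \((1-\mu)^{1-\beta}\) is at least \(1\) (as \(1-\mu\leq 1\) and \(1-\beta\leq 0\)), so the inequality still holds. Combining the two paragraphs above yields
\[
    a^\beta-b^\beta\leq\varepsilon\,b^\beta+\frac{(1+\varepsilon)[f]_\alpha^\beta}{\bigl((1+\varepsilon)^{1/(\beta-1)}-1\bigr)^{\beta-1}}|x-y|^{\alpha\beta}\leq\varepsilon\max\{a^\beta,b^\beta\}+C(\varepsilon)|x-y|^{\alpha\beta},
\]
which is the claimed estimate.
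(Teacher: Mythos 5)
Your argument is correct and follows essentially the same route as the paper's: both prove a scalar convexity estimate by applying Jensen's inequality to a cleverly weighted decomposition of \(f(y)+[f]_\alpha|x-y|^\alpha\), with the weight chosen so that the coefficient on the \(f(y)^\beta\) term normalizes to \(1+\varepsilon\). Your \(\mu\) is exactly the paper's \(1-\lambda\), and your presentation (writing \(b+\delta\) directly as a convex combination) is a cleaner way to reach the same inequality than the paper's intermediate factoring through \(\lambda^\beta\).
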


\begin{proof}
For a number \(0<\lambda<1\) to be chosen momentarily, observe that Jensen's inequality gives
\[
    f(x)^\beta\leq\frac{1}{\lambda^\beta}(\lambda|f(x)-f(y)|+\lambda f(y))^\beta\leq \frac{1}{\lambda^{\beta-1}}|f(x)-f(y)|^\beta+\frac{1}{(1-\lambda)^{\beta-1}} f(y)^\beta.
\]
Rearranging and applying the H\"older estimate \(|f(x)-f(y)|^\beta\leq [f]_\alpha^\beta|x-y|^{\alpha\beta}\) thus shows that
\[
    f(x)^\beta-f(y)^\beta\leq \frac{[f]_\alpha^\beta}{\lambda^{\beta-1}}|x-y|^{\alpha\beta}+\bigg(\frac{1}{(1-\lambda)^{\beta-1}}-1\bigg)f(y)^\beta.
\]
Next select \(\lambda=1-(1+\varepsilon)^{-\frac{1}{\beta-1}}\), so that the coefficient on the last term is exactly \(\varepsilon\) and
\[
    f(x)^\beta-f(y)^\beta\leq \frac{(1+\varepsilon)[f]_\alpha^\beta}{((1+\varepsilon)^\frac{1}{\beta-1}-1)^{\beta-1}}|x-y|^{\alpha\beta}+\varepsilon f(y)^\beta.
\]
Interchanging \(x\) and \(y\) and then taking a maximum, we obtain the desired result.
\end{proof}

By taking \(\beta=\frac{1}{\alpha}\) we obtain the following special case that will be useful to us in \Cref{chap:decomps}.

\begin{cor}
If \(f\in C^\alpha(\Omega)\) is non-negative, then \(f^\frac{1}{\alpha}\) is `almost' Lipschitz, in the sense that for every \(\varepsilon>0\) there exists a constant \(C_\varepsilon\) such that for all \(x,y\) in the domain of \(f\), 
\[
    |f(x)^\frac{1}{\alpha}-f(y)^\frac{1}{\alpha}|\leq C_\varepsilon|x-y|+\varepsilon\max\{f(x),f(y)\}^\frac{1}{\alpha}.
\]
\end{cor}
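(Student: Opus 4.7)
The plan is to derive this corollary as a direct specialization of the preceding Lemma \ref{lem:almostLipschitz}, taking $\beta = 1/\alpha$. Since that lemma requires $\beta > 1$, I first split into cases according to whether $\alpha = 1$ or $\alpha < 1$. The case $\alpha = 1$ is immediate: then $f$ is itself Lipschitz, so $|f(x) - f(y)| \leq [f]_1 |x-y|$, and we may take $C_\varepsilon = [f]_1$ with the error term discarded.

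For $0 < \alpha < 1$, we have $\beta = 1/\alpha > 1$, so Lemma \ref{lem:almostLipschitz} applies. Substituting this choice, two simplifications occur: first, $\alpha\beta = 1$, so the factor $|x-y|^{\alpha\beta}$ becomes simply $|x-y|$; second, since $t \mapsto t^{1/\alpha}$ is monotone on $[0,\infty)$, we have
$$\max\{f(x)^{1/\alpha}, f(y)^{1/\alpha}\} = \max\{f(x), f(y)\}^{1/\alpha}.$$
With these two substitutions, the bound from Lemma \ref{lem:almostLipschitz} becomes exactly the claimed inequality, with the explicit constant
$$C_\varepsilon = \frac{(1+\varepsilon)[f]_\alpha^{1/\alpha}}{\bigl((1+\varepsilon)^{\alpha/(1-\alpha)} - 1\bigr)^{(1-\alpha)/\alpha}},$$
obtained from the closed form in the lemma by rewriting $1/(\beta - 1) = \alpha/(1-\alpha)$ and $\beta - 1 = (1-\alpha)/\alpha$.

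Because the result is a genuine corollary, there is essentially no obstacle to overcome; the only point worth checking is that $C_\varepsilon$ is indeed finite for $\alpha < 1$, which holds because $(1+\varepsilon)^{\alpha/(1-\alpha)} > 1$ whenever $\varepsilon > 0$, so the denominator is strictly positive. The dependence $C_\varepsilon \to \infty$ as $\varepsilon \to 0^+$ is expected and reflects the fact that for $\alpha < 1$, $f^{1/\alpha}$ need not be genuinely Lipschitz without the error term.
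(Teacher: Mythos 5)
Your proof is correct and takes the same route the paper intends: the paper's entire proof consists of the one-line remark ``By taking $\beta=\frac{1}{\alpha}$ we obtain the following special case,'' i.e.\ direct substitution into Lemma~\ref{lem:almostLipschitz}. Your additional care in separating the boundary case $\alpha=1$ (where $\beta=1/\alpha=1$ falls outside the lemma's hypothesis $\beta>1$, but the claim is trivial by Lipschitzness of $f$) is a small improvement the paper glosses over, and your explicit computation of $C_\varepsilon$ is accurate.
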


A critical property of non-negative H\"older functions is that they satisfy some pointwise derivative estimates. Before making this idea precise in \Cref{thm:cauchylike}, we introduce some helpful notation that will be employed throughout the rest of this work. 

By \(\nabla^kf\) we mean a vector comprised of all \(k^\mathrm{th}\)-order derivatives of \(f\), and for our purposes their ordering within the vector is not important. Given \(\xi\in\mathbb{R}^n\) with \(|\xi|=1\), we denote by \(\partial^k_\xi f\) the \(k^{th}\)-order directional derivative of \(f\) in the direction of \(\xi\). Explicitly this directional operator can be written using in the form
\begin{equation}\label{eq:directionalop}
    \partial^k_\xi=(\xi\cdot\nabla)^k=\sum_{|\beta|=k}\frac{k!}{\beta!}\xi^\beta\partial^\beta,
\end{equation}
where \(\nabla\) is the usual gradient operator and \(\xi^\beta=\xi_1^{\beta_1}\cdots\xi_n^{\beta_n}\). For a real-valued function \(f\) we define the positive part of \(f\) as \([f]_+=\max\{f,0\}\). Using this notation we can compactly express the maximal positive \(k^\mathrm{th}\)-order directional derivative of \(f\) at a point \(x\in\mathbb{R}^n\) by writing
\[
    \sup_{|\xi|=1}[\partial^j_\xi f(x)]_+.
\]
This quantity appears frequently in the next chapter. It is also worth noting that the functions \(|\nabla^k f(x)|\) and \(\displaystyle\sup_{|\xi|=1}|\partial^j_\xi f(x)|\) are equivalent, in the sense that
\[
    c|\nabla^k f(x)|\leq \sup_{|\xi|=1}|\partial^k_\xi f(x)|\leq C|\nabla^k f(x)|
\]
for \(x\) in the domain of \(f\) and for constants \(c\) and \(C\) which depend only on \(n\) and \(k\).

Following a standard convention in analysis, we henceforth use \(C\) to denote a positive finite constant which depends only on fixed parameters such as \(n\), \(k\) and \(\beta\). The value of such constants may change from line-to-line, and our only concern is that they are finite. This is because the constants we encounter are often complicated, unwieldy, and far from sharp -- for our purposes, their values are also usually unimportant.

Momentarily we state the main result of this section, which allows us to control the derivatives of non-negative functions in higher-order H\"older spaces by their even-order derivatives. This idea is not new; Fefferman \& Phong point out in \cite{Fefferman-Phong} that the even-order terms that appear in the Taylor series of non-negative function control the odd-order terms, and it is a straightforward consequence of this fact that similar control estimates hold for the Taylor polynomials of H\"older continuous functions. There are results to this effect in \cite{Tataru}, \cite{SOS_I}, and \cite{Bony2}, and in many other works.

Our next result makes this control explicit for non-negative \(C^{k,\alpha}(\mathbb{R}^n)\) functions and it generalizes several results in the aforementioned works. The main difficulty in its proof is the selective cancellation of odd-order terms from Taylor polynomials, which we overcome by employing the very useful weighted summation identities that we establish in \Cref{thm:specialodds}.

\begin{thm}\label{thm:cauchylike}
If \(f\in C^{k,\alpha}(\mathbb{R}^n)\) is non-negative, then for every \(\ell\leq k\) there exists a constant \(C\) for which the following estimate holds on \(\mathbb{R}^n\),
\[
    |\nabla^\ell f(x)|\leq C\max_{\substack{0\leq j\leq k\\j\;\mathrm{even}}}\bigg\{\sup_{|\xi|=1}[\partial^j_\xi f(x)]_+^\frac{k-\ell+\alpha}{k-j+\alpha}\bigg\}.
\]
\end{thm}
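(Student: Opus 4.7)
The plan is to reduce to a one-dimensional Taylor estimate along a directional slice and then exploit the weighted summation identities of \Cref{thm:specialodds} to isolate the $\ell$-th directional derivative. By the formula \eqref{eq:directionalop} together with the equivalence noted immediately before the theorem, $|\nabla^\ell f(x)|$ is comparable to $\sup_{|\xi|=1}|\partial^\ell_\xi f(x)|$, so it suffices to fix an arbitrary unit vector $\xi$ and bound $|\partial^\ell_\xi f(x)|$ by the claimed right-hand side, with the inner supremum over $\eta$ permitted to range over all unit vectors.

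Having fixed $x$ and $\xi$, consider the non-negative function $g(t)=f(x+t\xi)$, which lies in $C^{k,\alpha}(\mathbb{R})$ with $g^{(j)}(0)=\partial^j_\xi f(x)$. Applying \Cref{lem:Taylorest} to $g$ at the origin one writes $g(t)=P(t)+R(t)$, where $P(t)=\sum_{j=0}^k (t^j/j!)g^{(j)}(0)$ is the Taylor polynomial of degree $k$ and $|R(t)|\leq A|t|^{k+\alpha}$ for a constant $A$ controlled by the $C^{k,\alpha}(\mathbb{R}^n)$ semi-norm of $f$. Next I would invoke \Cref{thm:specialodds} to produce non-negative weights $\{c_i\}$ and real scalars $\{\tau_i\}$ such that $\sum_i c_i\tau_i^j=0$ for every odd $j\leq k$ with $j\neq\ell$, while $A_\ell=\sum_i c_i\tau_i^\ell$ is nonzero of a prescribed sign. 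Because $g\geq 0$ and $c_i\geq 0$, the combination $\sum_i c_i g(t\tau_i)$ is non-negative for every $t>0$; expanding via the Taylor decomposition and the cancellation identities yields, after rearrangement,
\[
    -A_\ell\,\frac{t^\ell}{\ell!}\,g^{(\ell)}(0)\leq\sum_{\substack{0\leq j\leq k\\j\text{ even}}}A_j\,\frac{t^j}{j!}\,[g^{(j)}(0)]_++A'\,t^{k+\alpha},
\]
where $A_j=\sum_i c_i\tau_i^j\geq 0$ automatically for even $j$ and $A'$ is finite; the passage to $[g^{(j)}(0)]_+$ on the right is legal precisely because $A_j\geq 0$.

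For $\ell$ odd, replacing $\tau_i\mapsto -\tau_i$ flips the sign of $A_\ell$ while preserving the even-$j$ coefficients, furnishing the opposite inequality and hence a two-sided bound on $g^{(\ell)}(0)$. For $\ell$ even, the estimate above controls $-g^{(\ell)}(0)$, and combining it with the trivial upper bound $g^{(\ell)}(0)\leq\sup_{|\eta|=1}[\partial^\ell_\eta f(x)]_+$ (which matches the $j=\ell$ term of the right-hand side with exponent $1$) controls $|g^{(\ell)}(0)|$. In either case, bounding $[g^{(j)}(0)]_+\leq\sup_{|\eta|=1}[\partial^j_\eta f(x)]_+$ and letting $M$ denote the right-hand side of the theorem, one selects the scale $t=M^{1/(k-\ell+\alpha)}$. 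The defining property of $M$ as a maximum then forces $t^{j-\ell}\sup_{|\eta|=1}[\partial^j_\eta f(x)]_+\leq M$ for every even $j\leq k$ and $t^{k-\ell+\alpha}=M$, so dividing through by $|A_\ell|t^\ell/\ell!$ yields $|g^{(\ell)}(0)|\leq CM$ as required.

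\textbf{Main obstacle.} The delicate ingredient is the construction supplied by \Cref{thm:specialodds}: producing non-negative weights on prescribed real nodes that selectively annihilate every odd Taylor monomial of order $\leq k$ except the $\ell$-th while normalizing the value at that single order. The non-negativity of the weights is exactly what forces the even-$j$ coefficients $A_j$ to be non-negative, and this sign-preserving feature is essential—it is what allows the right-hand side of the estimate to involve only the positive parts $[\partial^j_\eta f]_+$ and never the full unsigned quantities. Supplementary care is also needed to treat the parity of $\ell$ separately, since the symmetrization $\tau_i\mapsto-\tau_i$ that upgrades the inequality to a two-sided bound works cleanly only when $\ell$ is odd.
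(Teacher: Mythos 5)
Your reduction to a one-dimensional slice, the use of non-negativity via $\sum_i c_i g(t\tau_i)\geq 0$, and the final optimization over the scale $t$ all mirror the paper's argument faithfully. The gap is in what you claim \Cref{thm:specialodds} delivers. That theorem, applied with a given odd $\ell$, produces $s=\frac{\ell+1}{2}$ nodes $\eta_1,\dots,\eta_s$ and non-negative weights $x_1,\dots,x_s$ satisfying $\sum_i x_i\eta_i^j=0$ for odd $j<\ell$ and $\sum_i x_i\eta_i^\ell=1$; it makes no statement about odd exponents $j$ with $\ell<j\leq k$. Your proposal asserts cancellation ``for every odd $j\leq k$ with $j\neq\ell$,'' which is stronger than the theorem provides whenever $\ell$ is not the largest odd integer $\leq k$. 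In that case the weighted combination $\sum_i c_i g(t\tau_i)$ retains residual terms $\bigl(\sum_i c_i\tau_i^j\bigr)\frac{t^j}{j!}\partial^j_\xi f(x)$ for each odd $j\in\{\ell+2,\ell+4,\dots\}\cap[0,k]$, with coefficients of no controlled sign, and the one-shot argument stalls.

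The paper handles this by an iteration you omit: it first invokes \Cref{thm:specialodds} with $\ell'$ equal to the largest odd integer $\leq k$, producing the bound $|\lambda^{\ell'}\partial^{\ell'}_\xi f(x)|\leq CF_\lambda(x)$; then, when treating $\ell'-2$ with a fresh set of weights, the surviving $\ell'$-order odd term is absorbed using the bound already established, and so on downward through the odd exponents. You need to add this cascading step. A minor secondary point: for even $\ell$, \Cref{thm:specialodds} as stated does not apply (it requires $\ell$ odd), but your outline survives because the trivial symmetrization $c_1=c_2=1$, $\tau_1=1$, $\tau_2=-1$ already annihilates every odd Taylor coefficient and yields $A_\ell=2>0$; it would be cleaner to say so explicitly rather than route the even case through \Cref{thm:specialodds}.
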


\textit{Remark}: Recall that for notational convenience we regard \(f\) as a zero-order derivative of itself. If \(k=1\) the inequality above reduces to the Malgrange inequality for \(C^{1,\alpha}(\mathbb{R}^n)\), a sharper form of which is proved in \Cref{sec:malg}. The bound above is somewhat complicated, however we actually apply this result using the considerably simpler inequality of \Cref{cor:controlbound}.

\begin{proof}
For an even number \(\ell\leq k\), we begin by bounding the negative part of the \(\ell^\mathrm{th}\)-order directional derivatives of \(f\). To do this we use non-negativity of the Taylor polynomial of \(f\) to obtain the following inequality for \(\lambda\in\mathbb{R}\) and \(\mathbb{\xi}\in\mathbb{R}^n\) satisfying \(|\xi|=1\),
\[
    0\leq f(x+\lambda\xi)\leq  \sum_{j=0}^k\frac{\lambda^j}{j!}\partial^j_\xi f(x)+\frac{1}{k!}[\nabla^k f]_\alpha|\lambda|^{k+\alpha}.
\]
Additionally we can use the fact that \(0\leq f(x-\lambda\xi)\), to see that the same estimate holds when \(\lambda\) is replaced with \(-\lambda\), and summing the Taylor polynomials of \(f\) at \(x+\lambda\xi\) and \(x-\lambda\xi\), we see for any \(\lambda>0\) that the following estimate holds,
\[
    0\leq \sum_{\substack{0\leq j\leq k,\\ j\;\textrm{even}}}\frac{\lambda^j}{j!}\partial^j_\xi f(x)+\frac{2}{k!}[\nabla^k f]_\alpha\lambda^{k+\alpha}\leq \sum_{\substack{0\leq j\leq k,\\ j\;\textrm{even}}}\frac{\lambda^j}{j!}[\partial^j_\xi f(x)]_++\frac{2}{k!}[\nabla^k f]_\alpha\lambda^{k+\alpha}.
\]
The second inequality above holds since \(\partial^j_\xi f(x)\leq [\partial^j_\xi f(x)]_+\) for all \(j\) an \(x\in\mathbb{R}^n\). Rearranging the inequality above gives
\[
    -\partial_\xi^\ell f(x)\leq\sum_{\substack{0\leq j\leq k,\\ j\;\textrm{even},\;j\neq\ell}}\frac{\ell!}{j!}\lambda^{j-\ell}[\partial^j_\xi f(x)]_++\frac{2\ell!}{k!}[\nabla^k f]_\alpha\lambda^{k-\ell+\alpha}
\]
for even \(\ell\leq k\).
Taking \(\lambda=\displaystyle\max_{\substack{0\leq j\leq k,\\ j\;\textrm{even},\;j\neq\ell}}[\partial^j_\xi f(x)]_+^\frac{1}{k-j+\alpha}\), we have \(\lambda^{j-\ell}[\partial^j_\xi f(x)]_+\leq \lambda^{k-\ell+\alpha}\) and so
\[
    -\partial_\xi^\ell f(x)\leq C\lambda^{k-\ell+\alpha}=C\max_{\substack{0\leq j\leq k,\\ j\;\textrm{even},\;j\neq\ell}}\bigg\{[\partial^j_\xi f(x)]_+^\frac{k-\ell+\alpha}{k-j+\alpha}\bigg\}.
\]
Taking a supremum over all \(\xi\in\mathbb{R}^n\) such that \(|\xi|=1\), and using equivalence of the norms \(|\nabla^kf(x)|\) and \(\sup_{|\xi|=1}|\partial^k_\xi f(x)|\), we see now that for even values of \(\ell\)
\[
    |\nabla^\ell f(x)|\leq C\sup_{|\xi|=1}[\partial_\xi^\ell f(x)]_++C\max_{\substack{0\leq j\leq k,\\ j\;\textrm{even},\;j\neq\ell}}\bigg\{\sup_{|\xi|=1}[\partial^j_\xi f(x)]_+^\frac{k-\ell+\alpha}{k-j+\alpha}\bigg\}.
\]
The the first term on the right-hand side above can be absorbed into the second, at the expense of making \(C\) larger. It follows that the claimed estimate holds for derivatives of even order.

For derivatives of odd order, matters are slightly more complicated since we cannot begin by cancelling out all odd terms simultaneously. Once again we begin by using a Taylor expansion and non-negativity of \(f\) to see that for any \(\lambda\in\mathbb{R}\), the following holds when \(|\xi|=1\),
\[
    0\leq \sum_{j=0}^k\frac{\lambda^j}{j!}\partial^j_\xi f(x)+\frac{1}{k!}[\nabla^kf]_\alpha|\lambda|^{k+\alpha}.
\]
This inequality continues to hold if we replace \(\lambda\) with \(\eta\lambda\) for any \(\eta\in\mathbb{R}\), since \(f\) is non-negative everywhere. First let \(\ell\) be the largest odd number that is less than or equal to \(k\) and set \(s=\frac{\ell+1}{2}\). Replacing  \(\lambda\) with \(\eta_1\lambda\) through \(\eta_s\lambda\) for some real numbers \(\eta_1,\dots,\eta_s\) to be chosen momentarily, and adding the resulting inequalities scaled by positive constants \(q_1,\dots,q_s\) which we also select below, we find that
\[
    0\leq \sum_{j=0}^k\bigg(\sum_{i=1}^sq_i\eta_i^j\bigg)\frac{\lambda^j}{j!}\partial^j_\xi f(x)+C\lambda^{k+\alpha}.
\]
The preceding inequality continues to hold if we replace \(\lambda\) with \(-\lambda\), allowing us to write
\[
    \bigg|\sum_{\substack{1\leq j\leq \ell,\\ j\;\textrm{odd}}}\bigg(\sum_{i=1}^sq_i\eta_i^j\bigg)\frac{\lambda^j}{j!}\partial^j_\xi f(x)\bigg|\leq C\bigg(\sum_{\substack{0\leq j\leq k,\\ j\;\textrm{even}}}\lambda^j\partial^j_\xi f(x)+\lambda^{k+\alpha}\bigg)
\]
for every \(\lambda>0\). For brevity we denote by \(F_\lambda(x)\) the function on the right-hand side above, and we next wish to choose the numbers \(q_i\) and \(\eta_i\) so that all but one term in the sum on the left vanishes. This affords a pointwise bound on an odd-order derivative by \(F_\lambda\). Using the technical \Cref{thm:specialodds}, we choose the numbers \(\eta_1,\dots,\eta_s\in\mathbb{R}\) and \(q_1,\dots,q_s\geq 0\) so that \(\sum_{i=1}^sq_i\eta_i^j=0\) for every odd \(j<\ell\), and so that \(\sum_{i=1}^sq_i\eta_i^\ell=1\). With these selections, get \(|\lambda^\ell\partial^\ell_\xi f(x)|\leq C F_\lambda(x)\).

Equipped with this estimate, we can repeat the argument above using different constants \(\tilde{q}_1,\dots,\tilde{q}_{s-1}\geq 0\) and \(\tilde{\eta}_1,\dots\tilde{\eta}_{s-1}\in\mathbb{R}\), and by combining non-negative Taylor polynomials we get
\[
    \bigg|\sum_{\substack{1\leq j\leq \ell-2,\\ j\;\textrm{odd}}}\bigg(\sum_{i=1}^{s-1}\tilde{q}_i\tilde{\eta}_i^j\bigg)\frac{\lambda^j}{j!}\partial^j_\xi f(x)\bigg|\leq C|\lambda^\ell\partial^\ell_\xi f(x)|+CF_\lambda(x)\leq CF_\lambda(x).\\
\]
Using \Cref{thm:specialodds} once more, we can choose the constants \(\tilde{q}_i\) and \(\tilde{\eta}_i\) so that the right-hand side above reduces to \(|\lambda^{\ell-2}\partial^{\ell-2}_\xi f(x)|\). This gives \(|\lambda^{\ell-2}\partial^{\ell-2}_\xi f(x)|\leq CF_\lambda(x)\), and we see by repeating this argument that for each odd \(\ell\leq k\) and \(\lambda>0\),
\[
    |\partial_\xi^\ell f(x)|\leq  CF_\lambda(x)\leq  C\bigg(\sum_{\substack{0\leq j\leq k,\\ j\;\textrm{even}}}\lambda^{j-\ell}[\partial^j_\xi f(x)]_++\lambda^{k-\ell+\alpha}\bigg).
\]
Taking \(\lambda=\displaystyle\max_{\substack{0\leq j\leq k,\\ j\;\textrm{even}}}\{|\partial_\xi^jf(x)|^\frac{1}{k-j+\alpha}\}\) in this estimate gives \(|\partial_\xi^\ell f(x)|\leq C\lambda^{k-\ell+\alpha}\), and it follows that
\[
    |\nabla^\ell f(x)|\leq C\sup_{|\xi|=1}|\partial^\ell_\xi f(x)|\leq C\sup_{|\xi|=1}\lambda^{k-\ell+\alpha}\leq C\max_{\substack{0\leq j\leq k\\j\;\mathrm{even}}}\bigg\{\sup_{|\xi|=1}[\partial^j_\xi f(x)]_+^\frac{k-\ell+\alpha}{k-j+\alpha}\bigg\}.
\]
Thus the claimed derivative estimates hold for every order \(\ell\leq k\), and we are finished.
\end{proof}

We note in passing that for the small values of \(k\), it is possible to avoid invoking \Cref{thm:specialodds} in the preceding proof, and instead to combine inequalities methodically and thereby achieve the appropriate cancellation of odd-order terms. We chose to prove the result above in greater generality than is needed in \Cref{chap:holderchap}, since \Cref{thm:cauchylike} is useful when generalizing sum of squares theorems to \(C^{k,\alpha}(\mathbb{R}^n)\). The result has applications elsewhere, for instance in studying the coefficients of non-negative polynomials.

\section{Generalized Malgrange Inequalities}\label{sec:malg}

In this section we study pointwise control of derivatives of non-negative \(C^{1,\alpha}(\mathbb{R}^n)\) functions. The classical Malgrange inequality in \(\mathbb{R}\) states that \(|f'(x)|\leq C\sqrt{f(x)}\) provided that \(f''(x)\) exists and is bounded on the real line. As we noted above, this inequality is often attributed to Glaeser in the literature, however Glaeser credits it to Malgrange in \cite{Glaeser}, so we refer to the similar pointwise estimates we prove in this section as generalized Malgrange inequalities.

The inequality we now prove was also found by Ghisi \& Gobbino in \cite{Gobbino}, and their work was made known to the author after it was discovered independently in this work. Our result for functions defined on \(\mathbb{R}\) is not as strong as that in \cite{Gobbino}, however our proof is considerably shorter and our subsequent generalizations do appear to be new to the literature.

\begin{thm}\label{thm:malgrangeineq}
Suppose that \(f\in C^{1,\alpha}(\mathbb{R})\) is non-negative. Then for each \(x\in\mathbb{R}\), 
\begin{equation}\label{eq:malg}
    |f'(x)|\leq \frac{\alpha+1}{\alpha^\frac{\alpha}{1+\alpha}}[f']_\alpha^\frac{1}{1+\alpha}f(x)^\frac{\alpha}{1+\alpha}.    
\end{equation}
\end{thm}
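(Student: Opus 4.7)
The plan is to exploit non-negativity of \(f\) by pairing it with a controlled Taylor-type expansion, and then optimize over a one-parameter family of test inequalities. Since \(f\in C^{1,\alpha}(\mathbb{R})\), the Fundamental Theorem of Calculus and the H\"older bound \(|f'(x+t)-f'(x)|\leq [f']_\alpha|t|^\alpha\) give, for \(h>0\),
\[
    f(x+h)-f(x)=\int_0^h f'(x+t)\,dt\leq f'(x)h+\frac{[f']_\alpha}{\alpha+1}h^{\alpha+1},
\]
together with the analogous lower estimate \(f(x+h)-f(x)\geq f'(x)h-\frac{[f']_\alpha}{\alpha+1}h^{\alpha+1}\). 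A parallel pair of estimates holds for \(f(x-h)\).

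Next I would combine these with non-negativity of \(f\), namely \(f(x\pm h)\geq 0\), to remove the sign ambiguity of \(f'(x)\). Choosing \(h>0\) with sign opposite to that of \(f'(x)\) in the appropriate expansion, both cases collapse into the single inequality
\[
    |f'(x)|\,h\leq f(x)+\frac{[f']_\alpha}{\alpha+1}h^{\alpha+1},\qquad h>0.
\]
Dividing by \(h\), this yields
\[
    |f'(x)|\leq \frac{f(x)}{h}+\frac{[f']_\alpha}{\alpha+1}h^{\alpha}
\]
for every \(h>0\); if \(f(x)=0\) we can conclude by letting \(h\to 0^+\) together with an argument that the right side must stay bounded (so \(|f'(x)|=0\)), and otherwise we move on to the optimization.

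The final step is to select \(h\) so as to minimize, or at least control, the right-hand side. Setting \(h=(\alpha f(x)/[f']_\alpha)^{1/(\alpha+1)}\) makes both terms comparable, and a short calculation (essentially Young's inequality with exponents \(\alpha+1\) and \((\alpha+1)/\alpha\)) gives
\[
    |f'(x)|\leq \frac{\alpha+1}{\alpha^{\alpha/(1+\alpha)}}[f']_\alpha^{1/(1+\alpha)}f(x)^{\alpha/(1+\alpha)},
\]
which is the desired inequality \eqref{eq:malg}. The only subtle step is the optimization bookkeeping with fractional exponents; the derivation itself is routine once the sign issue is handled by playing \(f(x+h)\) and \(f(x-h)\) against each other.
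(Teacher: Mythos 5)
Your approach is essentially the same as the paper's: form a Taylor-type expansion, exploit non-negativity at \(x\pm h\) to bound \(|f'(x)|h\) by \(f(x)\) plus a remainder, divide by \(h\), and optimize. The approach is sound, and the intermediate inequality
\[
    |f'(x)| \leq \frac{f(x)}{h} + \frac{[f']_\alpha}{\alpha+1}h^\alpha \qquad (h>0)
\]
is correct — indeed it is slightly sharper than the paper's, which drops the \(\frac{1}{\alpha+1}\) factor and works instead with \(|f'(x)|\leq \frac{f(x)}{h}+[f']_\alpha h^\alpha\).

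However, the final substitution does not quite close the argument as written. The choice \(h=(\alpha f(x)/[f']_\alpha)^{1/(\alpha+1)}\) is the optimizer for the paper's cruder inequality \(\frac{f}{h}+[f']_\alpha h^\alpha\), not for your sharper one. Plugging it into your inequality produces the constant
\[
    \alpha^{-1/(1+\alpha)}\cdot\frac{2\alpha+1}{\alpha+1},
\]
which is \emph{not} equal to \(\frac{\alpha+1}{\alpha^{\alpha/(1+\alpha)}}\), and for small \(\alpha\) (e.g.\ \(\alpha=1/2\)) is actually \emph{larger} than the target constant. So as written, the computation does not reproduce \eqref{eq:malg}. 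The repair is easy in either direction: if you keep your choice of \(h\), replace the remainder by the cruder \([f']_\alpha h^{1+\alpha}\) and you land exactly on \eqref{eq:malg}, matching the paper; if you keep the sharper remainder, use the true minimizer \(h=\bigl((\alpha+1)f(x)/(\alpha[f']_\alpha)\bigr)^{1/(\alpha+1)}\), which yields the smaller constant \(\bigl(\tfrac{\alpha+1}{\alpha}\bigr)^{\alpha/(\alpha+1)}\leq \frac{\alpha+1}{\alpha^{\alpha/(1+\alpha)}}\), from which \eqref{eq:malg} follows a fortiori. Either way the theorem holds; the mismatch is purely in pairing the right \(h\) with the right remainder.
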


\begin{proof} 
For any \(x,h\in\mathbb{R}\), we first have by Taylor's Theorem and non-negativity of \(f\) that
\[
    0\leq f(x+h)\leq f(x)+hf'(x)+[f']_\alpha|h|^{1+\alpha}.
\]
Since \(0\leq f(x-h)\), we can replace \(h\) with \(-h\) to get \(|hf'(x)|\leq f(x)+|h|^{1+\alpha}[f']_\alpha\). Taking \(h=(f(x)/[f']_\alpha)^\frac{1}{1+\alpha}\alpha^{-\frac{1}{1+\alpha}}\), we find that
\[
    |f'(x)|\leq \frac{f(x)}{|h|}+|h|^\alpha[f']_\alpha=[f']_\alpha^\frac{1}{1+\alpha}f(x)^\frac{\alpha}{1+\alpha}(\alpha^{\frac{1}{1+\alpha}}+\alpha^{-\frac{\alpha}{1+\alpha}}).
\]
Simplifying the right-hand side gives inequality \eqref{eq:malg}.
\end{proof}

In fact, a simple modification of the preceding proof shows that for any \(\eta>0\) and \(x,\xi\in\mathbb{R}\) the following slightly stronger estimate actually holds:
\[
    |f'(\xi)|\leq \frac{1}{\eta}f(x)^\frac{\alpha}{1+\alpha}+[f']_\alpha(\eta f(x)^\frac{1}{1+\alpha}+|x-\xi|)^\alpha.
\]
\Cref{thm:malgrangeineq} follows as a special case. Taking \(\alpha=1\) we get \(|f'(x)|\leq 2\sqrt{[f']_1f(x)}\) whenever \(f\in C^{1,1}(\Omega)\), which resembles the classical Malgrange inequality. Later, we will find conditional extensions of this inequality to higher dimensions as well as to higher-order derivatives. We note in passing that we cannot simply apply the preceding estimate to \(f''(x)\) and iterate \eqref{eq:malg} for higher-order derivative estimates, since this would require \(f\) to be monotone.

This generalization of Malgrange's inequality yields several interesting consequences. An application of Young's inequality to \eqref{eq:malg} affords the pointwise bound \(|f'(x)|\leq [f']_\alpha+f(x)\), from which it follows that \(\|f'\|_\infty\leq [f']_\alpha+\|f\|_\infty\). Hence the norm on \(C^{1,\alpha}_b(\mathbb{R})\) defined in \eqref{eq:holdernorm} is equivalent to \(\|f\|_\infty+[f']_\alpha\) on the cone of non-negative \(C^{1,\alpha}(\mathbb{R})\) functions. Using the Malgrange inequality, we are also able to discuss the regularity (i.e. the degree of smoothness) of square roots of \(C^{1,\alpha}(\mathbb{R})\) functions.

\begin{thm}\label{thm:rootsin}
Let \(f\in C^{1,\alpha}(\mathbb{R})\) be non-negative. Then \(\sqrt{f}\in C^{\frac{1+\alpha}{2}}(\mathbb{R})\).
\end{thm}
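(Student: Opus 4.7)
Since $k=1$ is odd, the target space unpacks to $C^{(1+\alpha)/2}(\mathbb{R})=C^{0,(1+\alpha)/2}(\mathbb{R})$, so the goal is to bound $|\sqrt{f(x)}-\sqrt{f(y)}|$ by $C|x-y|^{(1+\alpha)/2}$. My plan is to introduce the auxiliary function $F:=f^{1/(1+\alpha)}$, prove that $F$ is Lipschitz on $\mathbb{R}$, and then notice that $\sqrt{f}=F^{(1+\alpha)/2}$ where $(1+\alpha)/2\leq 1$. Because $F$ is non-negative, Lemma \ref{lem:smallpow} then delivers
\[
    |\sqrt{f(x)}-\sqrt{f(y)}|=|F(x)^{(1+\alpha)/2}-F(y)^{(1+\alpha)/2}|\leq |F(x)-F(y)|^{(1+\alpha)/2}\leq M^{(1+\alpha)/2}|x-y|^{(1+\alpha)/2},
\]
where $M$ is the Lipschitz constant of $F$. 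So everything reduces to showing $F$ is Lipschitz.

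On the open set $U=\{f>0\}$ where $F$ is genuinely $C^1$, a direct computation gives $F'(x)=f'(x)/((1+\alpha)f(x)^{\alpha/(1+\alpha)})$. Invoking Theorem \ref{thm:malgrangeineq}, which yields $|f'(x)|\leq C_\alpha[f']_\alpha^{1/(1+\alpha)}f(x)^{\alpha/(1+\alpha)}$, the $f^{\alpha/(1+\alpha)}$ factor cancels and I obtain a uniform pointwise bound $|F'(x)|\leq M:=M_\alpha[f']_\alpha^{1/(1+\alpha)}$ on $U$. The main obstacle is then to upgrade this derivative bound on $U$ to a global Lipschitz estimate, since $F$ may fail to be differentiable on the zero set $\{f=0\}$.

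I would handle this by a sup-argument. Fix $x<y$ and set $t^*=\sup\{t\in[x,y]:|F(t)-F(x)|\leq M(t-x)\}$, which is well-defined and attains the inequality by continuity of $F$. If $t^*<y$, two cases arise. If $f(t^*)>0$, continuity of $f$ ensures $f>0$ on a neighborhood of $t^*$, so the bound $|F'|\leq M$ applies there and extends the Lipschitz estimate past $t^*$, contradicting its definition. If $f(t^*)=0$, then $t^*$ is a global minimum of $f$, forcing $f'(t^*)=0$; the H\"older bound $|f'(s)-f'(t^*)|\leq [f']_\alpha|s-t^*|^\alpha$ integrates to $f(t)\leq [f']_\alpha|t-t^*|^{1+\alpha}/(1+\alpha)$, which on taking a $(1+\alpha)$-th root and enlarging $M_\alpha$ if necessary gives $F(t)\leq M|t-t^*|$ for $t$ near $t^*$. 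Since $F(t^*)=0$, the triangle inequality yields $|F(t)-F(x)|\leq M(t-t^*)+M(t^*-x)=M(t-x)$, again contradicting the definition of $t^*$. Hence $t^*=y$, $F$ is $M$-Lipschitz on $\mathbb{R}$, and the first paragraph's application of Lemma \ref{lem:smallpow} finishes the proof.
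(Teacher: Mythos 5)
Your proof is correct and follows the same strategy as the paper's: show that $F:=f^{1/(1+\alpha)}$ is Lipschitz with a constant controlled by $[f']_\alpha$ via the Malgrange inequality, then apply \Cref{lem:smallpow} with exponent $(1+\alpha)/2$ to conclude that $\sqrt{f}=F^{(1+\alpha)/2}$ lies in $C^{0,(1+\alpha)/2}(\mathbb{R})$. The genuine difference lies in your treatment of the zero set $\{f=0\}$. The paper applies the Mean Value Theorem directly to $F$ on $[x,y]$ and then invokes \eqref{eq:malg} at the resulting intermediate point $\xi$, but this tacitly assumes $F$ is differentiable on $(x,y)$, and that can fail at zeros of $f$: for instance $f(x)=|x|^{1+\alpha}$ is non-negative and belongs to $C^{1,\alpha}(\mathbb{R})$, yet $F(x)=|x|$ is not differentiable at the origin and the MVT identity does not hold on any interval straddling it. Your supremum argument, combined with the observation that any zero $t^*$ of $f$ forces $f'(t^*)=0$ and hence $F(t)\leq C|t-t^*|$ after integrating the H\"older estimate on $f'$, closes this gap cleanly, so your version is actually the more rigorous of the two. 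One cosmetic remark: the phrase ``enlarging $M_\alpha$ if necessary'' inside the case analysis risks circularity, since $M$ enters the definition of the set whose supremum is $t^*$; you should fix $M$ once at the outset as the larger of the Malgrange-derived constant and $([f']_\alpha/(1+\alpha))^{1/(1+\alpha)}$ (in fact the former dominates, so no enlargement is ever needed).
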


\begin{proof}
We first use the Malgrange inequality to show that \(f^\frac{1}{1+\alpha}\in C^{0,1}(\mathbb{R})\). To this end, we apply \Cref{lem:smallpow} to get
\[
    |\sqrt{f(x)}-\sqrt{f(y)}|^\frac{2}{1+\alpha}\leq |f(x)^\frac{1}{1+\alpha}-f(y)^\frac{1}{1+\alpha}|.
\]
Applying the mean value theorem on the right-hand side and then applying inequality \eqref{eq:malg} at some intermediate point \(\xi\) between \(x\) and \(y\) gives
\[
    |f(x)^\frac{1}{1+\alpha}-f(y)^\frac{1}{1+\alpha}|= \frac{1}{1+\alpha}\frac{|f'(\xi)|}{f(\xi)^\frac{\alpha}{1+\alpha}}|x-y|\leq \frac{[f']_\alpha^\frac{1}{1+\alpha}}{\alpha^\frac{1}{1+\alpha}}|x-y|.
\]
Combining the preceding inequalities we get \(|\sqrt{f(x)}-\sqrt{f(y)}|^\frac{2}{1+\alpha}\leq[f']_\alpha^\frac{1}{1+\alpha}|x-y|/\alpha^\frac{1}{1+\alpha}\), which gives the required estimate after taking a root.
\end{proof}

In addition to showing that \(\sqrt{f}\) is half as regular as \(f\) in the sense of \eqref{eq:rootspaces}, the proof above also yields the semi-norm estimate 
\[
    \big[\sqrt{f}\big]_{\frac{1+\alpha}{2}}\leq \sqrt{\frac{[f']_\alpha}{\alpha}}.
\]

As a special case of the preceding theorem, we also have the following result.

\begin{cor}
If \(f\) is non-negative and \(f'\) is Lipschitz then \(\sqrt{f}\) is also Lipschitz.
\end{cor}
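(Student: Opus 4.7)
The plan is to observe that this corollary is essentially the $\alpha = 1$ endpoint of \Cref{thm:rootsin}, since saying that $f'$ is Lipschitz is exactly the statement that $f \in C^{1,1}(\mathbb{R})$. Once this observation is made, the result reduces to unpacking the definition \eqref{eq:rootspaces} of the half-regular space at $k=1$, $\alpha=1$, which gives
\[
    C^{\frac{1+1}{2}}(\mathbb{R}) = C^{\frac{1-1}{2},\frac{1+1}{2}}(\mathbb{R}) = C^{0,1}(\mathbb{R}),
\]
i.e.\ the space of Lipschitz functions on $\mathbb{R}$.

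More concretely, I would first verify that the hypothesis ``$f'$ is Lipschitz'' places $f$ inside $C^{1,1}(\mathbb{R})$ in the sense used in the preceding theorem, so that $[f']_1 < \infty$. Then I would cite \Cref{thm:rootsin} with $\alpha = 1$ to conclude directly that $\sqrt{f} \in C^{(1+1)/2}(\mathbb{R}) = C^{0,1}(\mathbb{R})$, which by definition means $\sqrt{f}$ is Lipschitz. For a quantitative flourish, I would note that the semi-norm estimate established right after the proof of \Cref{thm:rootsin}, namely $\bigl[\sqrt{f}\bigr]_{(1+\alpha)/2} \leq \sqrt{[f']_\alpha / \alpha}$, specializes at $\alpha = 1$ to the clean Lipschitz bound $\bigl[\sqrt{f}\bigr]_1 \leq \sqrt{[f']_1}$.

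There is no real obstacle here, since everything has been done in the proof of \Cref{thm:rootsin}: the Malgrange inequality \eqref{eq:malg} controls $|f'|$ by $f^{\alpha/(1+\alpha)}$, and combining this with \Cref{lem:smallpow} at exponent $\beta = 1/(1+\alpha)$ is what produces the Lipschitz bound on $f^{1/(1+\alpha)}$, which then gives H\"older regularity of $\sqrt{f}$. The only thing to verify is that the endpoint $\alpha = 1$ is genuinely covered by the statement of \Cref{thm:rootsin}, which it is since that theorem is stated for all admissible values $0 < \alpha \leq 1$. Consequently the corollary can be disposed of in a single sentence.
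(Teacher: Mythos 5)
Your proposal is correct and is exactly the paper's route: the paper presents this corollary without further proof as the $\alpha=1$ special case of \Cref{thm:rootsin}, just as you do. Your additional remark that the semi-norm estimate $\bigl[\sqrt{f}\bigr]_{(1+\alpha)/2}\leq\sqrt{[f']_\alpha/\alpha}$ specializes at $\alpha=1$ to $\bigl[\sqrt{f}\bigr]_1\leq\sqrt{[f']_1}$ is a correct and welcome quantitative supplement.
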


In summary, if \(f\in C^{\alpha}(\mathbb{R})\) then \(\sqrt{f}\in C^\frac{\alpha}{2}(\mathbb{R})\), while if \(f\in C^{1,\alpha}(\mathbb{R})\) then \(\sqrt{f}\in C^\frac{1+\alpha}{2}(\mathbb{R})\). This result is essentially sharp, in the sense that it does not hold in higher-order H\"older spaces. Recall from \Cref{chap:intro} that in \cite{Bony}, Bony gives an example of a \(C^\infty(\mathbb{R})\) function whose root is not \(C^{1,\alpha}(\mathbb{R})\) for any \(\alpha>0\).

Indeed, if \(f\in C^\infty(\mathbb{R})\) them \(\sqrt{f}\) need not even belong to \(C^1(\mathbb{R})\). It follows that if \(f\in C^{k,\alpha}(\mathbb{R})\) for \(k\geq 2\) then we cannot even expect continuity of the derivative of \(\sqrt{f}\), let alone H\"older continuity. For instance, the function 
\begin{equation}\label{eq:badroot}
    f(x)=\begin{cases}
    x^2e^\frac{1}{x^2-1} & |x|<1,\\
    0 & |x|\geq 1
    \end{cases}
\end{equation}
is a bounded and non-negative \(C^\infty(\mathbb{R})\) function, however \(\sqrt{f}\) is not continuously differentiable at the origin. In \(\mathbb{R}^n\) we can similarly define \(F(x)=f(x_1)\cdots f(x_n)\) for \(f\) as above to obtain a bounded and non-negative \(C^\infty(\mathbb{R}^n)\) function whose square root is not in \(C^1(\mathbb{R}^n)\).

In any number of dimensions, a sum of squares decomposition of \(f\in C^{k,\alpha}(\mathbb{R}^n)\) for \(k\geq 2\) will in general require more than one root function if those roots are to retain half of the regularity of \(f\). Before moving on to the task of constructing such decompositions, we first verify that the \(C^{1,\alpha}(\mathbb{R})\) results proved in this section hold also in \(\mathbb{R}^n\) for \(n\geq 2\).

\begin{thm}
Suppose that \(f\in C^{1,\alpha}(\mathbb{R}^n)\) is non-negative everywhere. Then \(f\) satisfies 
\[
    |\nabla f(x)|\leq \frac{\alpha+1}{\alpha^\frac{\alpha}{1+\alpha}}[\nabla f]_\alpha^\frac{1}{1+\alpha}f(x)^\frac{\alpha}{1+\alpha}.
\]
\end{thm}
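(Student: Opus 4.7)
The plan is to reduce the multi-dimensional statement to the one-dimensional Malgrange inequality already established in \Cref{thm:malgrangeineq} by restricting \(f\) to a well-chosen line through the point \(x\). This is the natural reduction, since the one-dimensional argument already encodes all of the analytic content, and the statement we want is genuinely pointwise in \(x\).

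Fix \(x\in\mathbb{R}^n\). If \(\nabla f(x)=0\) the inequality is trivial, so assume otherwise and let \(\xi=\nabla f(x)/|\nabla f(x)|\). Define \(g:\mathbb{R}\to\mathbb{R}\) by \(g(t)=f(x+t\xi)\). Then \(g\) is non-negative and, since \(|\xi|=1\), the chain rule gives \(g'(t)=\nabla f(x+t\xi)\cdot\xi\), so that \(g'(0)=|\nabla f(x)|\). First I would verify that \(g\in C^{1,\alpha}(\mathbb{R})\) with \([g']_\alpha\leq[\nabla f]_\alpha\); this is immediate from Cauchy--Schwarz and the fact that the line parametrization is an isometry, since
\[
    |g'(s)-g'(t)|=|(\nabla f(x+s\xi)-\nabla f(x+t\xi))\cdot\xi|\leq[\nabla f]_\alpha|s-t|^\alpha.
\]

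Next I would apply \Cref{thm:malgrangeineq} to \(g\) at the origin, which yields
\[
    |\nabla f(x)|=|g'(0)|\leq\frac{\alpha+1}{\alpha^{\alpha/(1+\alpha)}}[g']_\alpha^{1/(1+\alpha)}g(0)^{\alpha/(1+\alpha)}\leq\frac{\alpha+1}{\alpha^{\alpha/(1+\alpha)}}[\nabla f]_\alpha^{1/(1+\alpha)}f(x)^{\alpha/(1+\alpha)}.
\]
This is exactly the asserted inequality.

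I do not expect any serious obstacle: the only subtlety is making sure that the H\"older semi-norm of the restricted function is controlled by \([\nabla f]_\alpha\) with the correct constant (which is \(1\), not some dimensional constant), and this follows from the unit-length choice of the direction \(\xi\). Note that choosing \(\xi\) along \(\nabla f(x)\) is essential here, since otherwise we would only recover a bound on the directional derivative \(\partial_\xi f(x)\), whereas aligning \(\xi\) with the gradient forces \(g'(0)\) to equal the full magnitude \(|\nabla f(x)|\).
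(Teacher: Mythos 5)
Your proof is correct. The key step — restricting \(f\) to the line through \(x\) in the direction \(\xi=\nabla f(x)/|\nabla f(x)|\), verifying \([g']_\alpha\leq[\nabla f]_\alpha\) via Cauchy--Schwarz (which works with constant exactly \(1\) because the line is parametrized by arc length), and then invoking the already-proved one-dimensional Malgrange inequality at \(t=0\) — is sound, and the trivial case \(\nabla f(x)=0\) is correctly dispatched first.

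The paper proves the same statement by re-running the one-dimensional argument directly in \(\mathbb{R}^n\): it applies the multivariate Mean Value Theorem, bounds \(|h\cdot(\nabla f(\xi)-\nabla f(x))|\leq[\nabla f]_\alpha|h|^{1+\alpha}\) by Cauchy--Schwarz and the H\"older condition, exploits non-negativity at both \(x+h\) and \(x-h\) to get \(|h\cdot\nabla f(x)|\leq f(x)+[\nabla f]_\alpha|h|^{1+\alpha}\), and then specializes \(h=\lambda\nabla f(x)\) and optimizes over \(\lambda\). So both proofs isolate the gradient direction as the critical one; the difference is purely structural. Your version is more modular, reducing the \(n\)-dimensional case to the proved one-dimensional result rather than re-deriving the optimization, which avoids repeating the choice-of-\(\lambda\) computation and makes the reduction transparent. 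The paper's route is self-contained in \(\mathbb{R}^n\) and slightly more explicit about where the constant comes from, but carries no additional analytic content.
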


\begin{proof}
Using a Taylor expansion in \(\mathbb{R}^n\) for non-negative \(f\), we have for any \(x,h\in\mathbb{R}^n\) and some \(\xi\) on the line segment between \(x\) and \(x+h\) that
\[
    0\leq f(x+h)=f(x)+h\cdot\nabla f(\xi)=f(x)+h\cdot\nabla f(x)+h\cdot(\nabla f(\xi)-\nabla f(x)).
\]
Applying the Cauchy-Schwarz inequality on the last term above and also using the fact that \(|\nabla f|\in C^{\alpha}(\mathbb{R}^n)\), we get
\[
    0\leq f(x)+h\cdot\nabla f(x)+|h||\nabla f(x)-\nabla f(\xi)|\leq f(x)+h\cdot\nabla f(x) +[\nabla f]_\alpha|h|^{1+\alpha}.
\]
Since \(f\) is non-negative everywhere, an identical estimate holds if we replace \(h\) with \(-h\), showing that \(|h\cdot\nabla f(x)|\leq f(x) +[\nabla f]_\alpha|h|^{1+\alpha}\). Taking \(h=\lambda\nabla f(x)\) for \(\lambda>0\) gives \(h\cdot\nabla f(x)=\lambda|\nabla f(x)|^2\), and we find that
\[
    \lambda|\nabla f(x)|^2\leq f(x) +\lambda^{1+\alpha}[\nabla f]_\alpha|\nabla f(x)|^{1+\alpha}.
\]
Now choosing \(\lambda=(f(x)/\alpha[\nabla f]_\alpha)^\frac{1}{1+\alpha}/|\nabla f(x)|\) gives \(|\nabla f(x)|\leq [\nabla f]_\alpha^\frac{1}{1+\alpha}f(x)^\frac{\alpha}{1+\alpha}(\alpha^\frac{1}{1+\alpha}+\alpha^{-\frac{\alpha}{1+\alpha}})\). Hence \eqref{eq:malg} continues to hold in \(\mathbb{R}^n\) as claimed.
\end{proof} 

As in the one-dimensional case, this pointwise estimate gives us information about the square roots of non-negative \(C^{1,\alpha}(\mathbb{R}^n)\) functions.

\begin{cor}
Let \(f\in C^{1,\alpha}(\mathbb{R}^n)\). If \(f\) is non-negative then \(\sqrt{f}\in C^{\frac{1+\alpha}{2}}(\mathbb{R}^n)\).
\end{cor}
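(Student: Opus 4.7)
The plan is to reduce the $n$-dimensional statement to the one-dimensional case already established in Theorem \ref{thm:rootsin}, by restricting $f$ to the line segment between any two points. This exploits the fact that the only hypothesis used in the one-dimensional proof is non-negativity together with a pointwise $C^{1,\alpha}$ semi-norm, both of which behave well under affine restriction.

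First I would fix distinct points $x,y \in \mathbb{R}^n$ and define the one-dimensional auxiliary function $\phi(t) = f(y + t(x-y))$ for $t \in [0,1]$. Since $f \geq 0$, so is $\phi$; and since $f \in C^{1,\alpha}(\mathbb{R}^n)$, the chain rule gives $\phi'(t) = \nabla f(\gamma(t)) \cdot (x-y)$, where $\gamma(t) = y + t(x-y)$. The key pointwise computation is then
\[
    |\phi'(t) - \phi'(s)| \leq |\nabla f(\gamma(t)) - \nabla f(\gamma(s))|\,|x-y| \leq [\nabla f]_\alpha\, |\gamma(t)-\gamma(s)|^\alpha\, |x-y| = [\nabla f]_\alpha\, |t-s|^\alpha\, |x-y|^{1+\alpha},
\]
so $\phi \in C^{1,\alpha}([0,1])$ with $[\phi']_{\alpha,[0,1]} \leq [\nabla f]_\alpha\, |x-y|^{1+\alpha}$.

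Next I would apply Theorem \ref{thm:rootsin} to the non-negative function $\phi$ on $\mathbb{R}$ (or more precisely, after an obvious bounded extension to all of $\mathbb{R}$ that preserves the $C^{1,\alpha}$ semi-norm; one may equally well repeat the one-dimensional argument verbatim on the compact interval $[0,1]$). The semi-norm estimate stated immediately after Theorem \ref{thm:rootsin} yields $\big[\sqrt{\phi}\,\big]_{(1+\alpha)/2} \leq \sqrt{[\phi']_\alpha / \alpha}$. Evaluating at the endpoints $t=0$ and $t=1$, and substituting the bound on $[\phi']_\alpha$, gives
\[
    \big|\sqrt{f(x)} - \sqrt{f(y)}\big| = \big|\sqrt{\phi(1)} - \sqrt{\phi(0)}\big| \leq \sqrt{\frac{[\phi']_\alpha}{\alpha}} \leq \sqrt{\frac{[\nabla f]_\alpha}{\alpha}}\, |x-y|^{(1+\alpha)/2}.
\]
Since $x,y \in \mathbb{R}^n$ were arbitrary, taking the supremum produces $\big[\sqrt{f}\,\big]_{(1+\alpha)/2,\mathbb{R}^n} \leq \sqrt{[\nabla f]_\alpha / \alpha}$, which establishes $\sqrt{f} \in C^{(1+\alpha)/2}(\mathbb{R}^n)$ as required.

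There is no serious obstacle here, since all of the analytic work was already done in Theorem \ref{thm:rootsin}; the only technical point is verifying that the restriction of a $C^{1,\alpha}(\mathbb{R}^n)$ function to a line segment inherits the $C^{1,\alpha}$ semi-norm with the correct scaling in $|x-y|$, which is precisely the factor that upgrades a Lipschitz bound in $t$ into a $(1+\alpha)/2$-Hölder bound in $|x-y|$. In particular, note that this slicing argument bypasses any need to invoke the $n$-dimensional Malgrange inequality directly; had one preferred to work intrinsically, the alternative would be to apply the $n$-dimensional Malgrange estimate to a mean-value expression for $f^{1/(1+\alpha)}$ along the segment, handling zeros of $f$ by a harmless regularization $f + \varepsilon$ and letting $\varepsilon \to 0^+$.
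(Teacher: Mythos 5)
Your proof is correct, and it takes a genuinely different route from the paper's. The paper first proves the \(n\)-dimensional Malgrange inequality \(|\nabla f(x)|\leq C[\nabla f]_\alpha^{1/(1+\alpha)}f(x)^{\alpha/(1+\alpha)}\) as a standalone theorem, and then proves the corollary by declaring the argument ``identical to that of Theorem~\ref{thm:rootsin}'' --- i.e.\ it re-runs the one-dimensional proof (Lemma~\ref{lem:smallpow}, mean value theorem along the segment, then the \(n\)-dimensional Malgrange estimate at the intermediate point). Your slicing argument instead applies the already-proved \emph{one-dimensional} Theorem~\ref{thm:rootsin} to the restriction \(\phi(t)=f(y+t(x-y))\), having checked that \([\phi']_{\alpha}\leq[\nabla f]_\alpha|x-y|^{1+\alpha}\); the extra power of \(|x-y|\) is exactly what upgrades the result to the correct H\"older exponent after evaluating at \(t=0,1\). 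This is a cleaner reduction and bypasses the \(n\)-dimensional Malgrange inequality entirely. What the paper's route buys is that the \(n\)-dimensional Malgrange inequality is itself a result worth recording (it is stated as its own theorem and is of independent interest), whereas your route produces the corollary as a pure corollary of the one-dimensional case.

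One small quibble: you hedge about needing a ``bounded extension'' of \(\phi\) to \(\mathbb{R}\), or repeating the one-dimensional argument on \([0,1]\). Neither is needed, and neither is quite right as stated. Simply define \(\phi(t)=f(y+t(x-y))\) for \emph{all} \(t\in\mathbb{R}\); since \(f\) is defined on all of \(\mathbb{R}^n\), this is already a non-negative \(C^{1,\alpha}(\mathbb{R})\) function (generally unbounded, which is fine since Theorem~\ref{thm:rootsin} does not require boundedness), and the same computation bounds \([\phi']_{\alpha,\mathbb{R}}\). By contrast, genuinely re-running the Malgrange proof on the compact interval \([0,1]\) would not be verbatim: the choice of \(h\) in that proof can push \(\xi\pm h\) outside \([0,1]\). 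Restricting to the full line sidesteps this.
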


The proof is identical to that of \Cref{thm:rootsin}, so we do not repeat it. Further generalizations of the Malgrange inequality, in particular to a general modulus of continuity, have been obtained by the author. However, such estimates go beyond the scope of this thesis, so we do not discuss them here.

We close this section by noting that we may weaken the hypothesis of non-negativity in the Malgrange inequality, and assume simply that \(f\) is bounded below. Specifically, given \(f\in C^{1,\alpha}(\mathbb{R}^n)\), not necessarily non-negative but bounded below, we can replace \(f\) with \(f-\inf_{\mathbb{R}^n}f\), and this is a non-negative \(C^{1,\alpha}(\mathbb{R}^n)\) function. An application of inequality thus \eqref{eq:malg} gives
\[
    |\nabla f(x)|\leq C\big(f(x)-\inf_{\mathbb{R}^n} f\big)^\frac{\alpha}{1+\alpha}
\]
for every \(x\in\mathbb{R}^n\). For non-negative functions this is actually stronger than \eqref{eq:malg}, and a similar generalization can easily be obtained for functions that are bounded above.

In summary, we have shown that if \(f\in C^{k,\alpha}(\mathbb{R}^n)\) is non-negative then \(\sqrt{f}\in C^\frac{k+\alpha}{2}(\mathbb{R}^n)\) for \(k=0\) and \(k=1\). Now we are free to focus on the cases \(k=2\) and \(k=3\), in which taking a single root does not suffice for a decomposition to preserve regularity.

\section{Multivariate Calculus Identities}

For the remainder of this section we generalize the chain rule from elementary differential calculus, and adapt it to arbitrary derivatives of functions defined on \(\mathbb{R}^n\). The identities we prove are critical for showing that the functions \(g_1,\dots,g_m\) appearing in \Cref{thm:c3main} of \Cref{chap:decomps} have the desired regularity properties.

The main technique we use in this section involves expressing derivatives of compositions of functions by summing over multi-index partitions. This idea was introduced by Hardy in \cite{hardy} in a study of the combinatorics induced by higher-order derivatives. A similar and equally effective result is given in \cite[Eq. (3.4)]{SOS_I} and proved in \cite{masur}, and we emphasize that many expressions for these higher-order derivatives can be found in the literature and are fairly well-known. Our contribution is to make the computation of the corresponding coefficients explicit, rather than defining them recursively or via a difficult combinatorial exercise.

Further, we generalize our result to compositions of two functions of multiple variables, whereas many well-known versions of the chain rule consider compositions of multivariate functions with a function of a single variable. In particular, we require this generalization to study implicitly defined functions and their derivatives.

To state these general variants of the chain rule in closed form and prove them using appropriate combinatorial arguments, we introduce some notation. Recall that a multi-index \(\beta\) of length \(n\) is an \(n\)-tuple of non-negative integers. A multi-set \(\Gamma\) that is comprised of multi-indices is called a partition of \(\beta\) if each \(\gamma\in\Gamma\) is a proper multi-index (i.e. each entry of \(\gamma\) is a non-negative integer and at least one is non-zero), and if
\[
    \sum_{\gamma\in\Gamma}\gamma=\beta.
\]
Multi-sets permit repetition, so several multi-indices in the sum above can be identical. 

We identify the support of a partition \(\Gamma\), denoted \(\mathrm{supp}(\Gamma)\), as the set obtained by omitting repetition from \(\Gamma\). We let \(m(\Gamma,\gamma)\) be the number of times that a multi-index \(\gamma\) appears in \(\Gamma\), and counting repetition, we let \(|\Gamma|\) denote the cardinality of the multi-set \(\Gamma\) so that
\[
    \sum_{\gamma\in\mathrm{supp}(\Gamma)}m(\Gamma,\gamma)=\sum_{\gamma\in\Gamma}1=|\Gamma|.
\]
Finally we denote by \(P(\beta)\) the collection of all unordered partitions of \(\beta\). For instance, if \(\beta=(1,2)\) then this partition set is given explicitly by
\[
    P(\beta)=\big\{\{(1,2)\},\{(1,1),(0,1)\},\{(1,0),(0,2)\}\\
    \{(1,0),(0,1),(0,1)\}
    \big\}.
\]
Thankfully, for our purposes it is unnecessary to actually compute the partition set of a given multi-index; we simply use their intrinsic properties and the concise formulas they afford.

Briefly we recall some notation introduced in the previous section, which will be employed heavily to prove the subsequent results. Given a multi-index \(\beta=(\beta_1,\dots,\beta_n)\) of length \(n\) we write \(\beta!=\beta_1!\cdots\beta_n!\), and if \(\gamma\) is another multi-index we say that \(\gamma\leq\beta\) if \(\gamma_j\leq\beta_j\) for each \(j=1,\dots,n\), and \(\gamma<\beta\) if \(\gamma\leq\beta\) and \(\gamma\neq\beta\). Given \(\gamma\leq\beta\) we can define a generalized binomial coefficient as in \eqref{eq:binom}, and with this notation we express the following result.

\begin{lem}[Generalized Chain Rule]\label{lem:genchain}
Let \(f:\mathbb{R}^{n+1}\rightarrow\mathbb{R}\) and \(g:\mathbb{R}^{n}\rightarrow\mathbb{R}\) both be \(k\) times differentiable, and define \(h:\mathbb{R}^n\rightarrow\mathbb{R}\) by setting \(h(x)=f(x,g(x))\). For any multi-index \(\beta\) of order \(|\beta|\leq k\) and length \(n\), there exist constants \(C_{\beta,\Gamma}\) for which
\begin{equation}\label{eq:holycow}
    \partial^\beta h =\sum_{0\leq\eta\leq\beta}\sum_{\Gamma\in P(\eta)}C_{\beta,\Gamma}(\partial^{\beta-\eta}\partial^{|\Gamma|}_{n+1}f)\prod_{\gamma\in\Gamma}\partial^\gamma g,
\end{equation}
where \(\partial^\beta h\) is evaluated at \(x\) and the functions on the right-hand side are evaluated at \((x,g(x))\). Moreover, the constant \(C_{\beta,\Gamma}\) in \eqref{eq:holycow} is given explicitly by the formula
\begin{equation}\label{eq:cm}
    C_{\beta,\Gamma}=\eta!\binom{\beta}{\eta}\bigg(\prod_{\gamma\in\Gamma}\gamma!\bigg)^{-1}\bigg(\prod_{\gamma\in\mathrm{supp}(\Gamma)}m(\Gamma,\gamma)!\bigg)^{-1}.
\end{equation}
\end{lem}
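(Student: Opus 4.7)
The plan is to prove \eqref{eq:holycow} together with the explicit formula \eqref{eq:cm} simultaneously by induction on $|\beta|$. The base case $|\beta|=0$ is immediate: the only partition of the zero multi-index is the empty multi-set $\emptyset$, for which $|\emptyset|=0$ and the empty product equals $1$, so \eqref{eq:holycow} reduces to $h(x)=f(x,g(x))$ with $C_{0,\emptyset}=0!\binom{0}{0}=1$. This matches both the definition of $h$ and formula \eqref{eq:cm}.

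For the inductive step, I would fix $j\in\{1,\ldots,n\}$ and apply $\partial_{x_j}$ to the identity at order $|\beta|$. Each summand
\[
    C_{\beta,\Gamma}\,\bigl(\partial^{\beta-\eta}\partial^{|\Gamma|}_{n+1}f\bigr)\prod_{\gamma\in\Gamma}\partial^{\gamma}g
\]
(with the $f$-factor evaluated at $(x,g(x))$) generates three kinds of descendants, via the product rule together with the standard chain rule applied to the composition through the $(n+1)$-st argument of $f$: (A) the $f$-factor is differentiated directly in its $j$-th argument, leaving the partition $\Gamma$ unchanged and contributing at $\eta'=\eta$; (B) the chain rule hits the $(n+1)$-st argument of $f$, introducing an extra factor $\partial_{x_j}g$ and producing the partition $\Gamma\cup\{e_j\}$ at $\eta'=\eta+e_j$; (C) for each $\gamma\in\Gamma$, the product rule bumps one copy of $\partial^{\gamma}g$ up to $\partial^{\gamma+e_j}g$, replacing that copy of $\gamma$ by $\gamma+e_j$ and producing a partition of $\eta+e_j$. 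For each target partition $\Gamma'$ of each $\eta'\leq\beta+e_j$, I would then tally all contributions of types (A), (B), (C) whose output is $\Gamma'$, and verify that the total coefficient equals $C_{\beta+e_j,\Gamma'}$ as given by \eqref{eq:cm}. After substituting \eqref{eq:cm} and cancelling common factorial ratios, this matching reduces to the simple identity $\eta'_j=\sum_{\gamma^*\in\Gamma'}\gamma^*_j$, which is just the $j$-th coordinate of the definition $\sum_{\gamma^*\in\Gamma'}\gamma^*=\eta'$.

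The main obstacle is the bookkeeping of multiplicities in case (C). When one copy of $\gamma^*-e_j$ in a source partition $\Gamma$ is bumped up to $\gamma^*$, the number of distinct source partitions producing a given target $\Gamma'$, and the corresponding factors $\prod_{\gamma\in\mathrm{supp}(\Gamma)}m(\Gamma,\gamma)!$ appearing in the denominator of \eqref{eq:cm}, depend delicately on whether $\gamma^*-e_j$ and $\gamma^*$ already coexist in $\Gamma'$. A careful case analysis — separating the subcases $\gamma^*\in\mathrm{supp}(\Gamma')\setminus\{e_j\}$ versus $\gamma^*=e_j$, and within the former tracking whether $\gamma^*-e_j\in\mathrm{supp}(\Gamma')$ — shows that the ratio $m(\Gamma,\gamma^*-e_j)\,\prod m(\Gamma,\gamma)!/\prod m(\Gamma',\gamma)!$ always simplifies to the right factor. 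Combined with the contributions from (A) and (B), these three sources collectively reconstruct exactly $C_{\beta+e_j,\Gamma'}$, closing the induction.
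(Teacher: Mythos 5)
Your proposal is essentially the paper's own argument: both induct on the derivative order, split the new first-order differentiation into the same three contribution types (your (A)/(B)/(C) matching exactly the three sums in \eqref{eq:lots}), and close the induction via the coordinate identity $\sum_{\gamma\in\Gamma'}\gamma=\eta'$. The only differences are cosmetic — the paper anchors the base case at $k=1$ instead of $|\beta|=0$, and it carries out the multiplicity bookkeeping explicitly through the recursive identity \eqref{eq:murecur}, a step you describe in outline rather than execute.
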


\begin{proof}
We use induction on \(k\) to prove that \eqref{eq:holycow} holds with the appropriate constants. The case \(k=1\) follows from the usual chain rule. For an inductive hypothesis assume that the claimed formula holds for derivatives up to order \(k\) and let \(\mu\) be a derivative of order \(k+1\), so that we can write \(\partial^\mu=\partial^\nu\partial^\beta\) for \(|\nu|=1\) and \(|\beta|=k\). By the inductive hypothesis and linearity we have
\[
    \partial^\mu h=\partial^\nu\partial^\beta h=\sum_{0\leq \eta\leq\beta}\sum_{\Gamma\in P(\eta)}C_{\beta,\Gamma}\partial^\nu\bigg((\partial^{\beta-\eta}\partial^{|\Gamma|}_n f)\prod_{\gamma\in\Gamma}\partial^\gamma g\bigg),
\]
where each function on the right-hand side is evaluated at \((x,g(x))\). The derivative with respect to \(\nu\) on the right-hand side is first-order, so applying the standard form of the product rule gives
\[
    \partial^\mu h=\sum_{0\leq\eta\leq\beta}\sum_{\Gamma\in P(\eta)}C_{\beta,\Gamma}\bigg(\partial^\nu(\partial^{\beta-\eta}\partial^{|\Gamma|}_nf)\prod_{\gamma\in\Gamma}\partial^\gamma g+(\partial^{\beta-\eta}\partial^{|\Gamma|}_n f)\prod_{\gamma\in\Gamma}\partial^\gamma g\sum_{\gamma\in\Gamma}\frac{\partial^{\gamma+\nu }g}{\partial^\gamma g}\bigg).
\]
To simplify this, we first observe that by the standard chain rule we have the identity
\[
    \partial^\nu(\partial^{\beta-\eta}\partial^{|\Gamma|}_n f(x,g(x)))=\partial^{\mu-\eta}\partial^{|\Gamma|}_n f(x,g(x))+\partial^{\beta-\eta}\partial^{|\Gamma|+1}_nf(x,g(x))\partial^\nu g(x).
\]
Using this identity together with the calculation preceding it, we obtain the expansion
\begin{equation}\label{eq:lots}
\begin{split}
    \partial^\mu h&=\sum_{0\leq\eta\leq\beta}\sum_{\Gamma\in P(\eta)}C_{\beta,\Gamma}(\partial^{\mu-\eta}\partial^{|\Gamma|}_n f)\prod_{\gamma\in\Gamma}\partial^\gamma g\\
    &\qquad+\sum_{0\leq\eta\leq\beta}\sum_{\Gamma\in P(\eta)}C_{\beta,\Gamma}(\partial^{\mu-(\eta+\nu)}\partial^{|\Gamma|+1}_nf)\partial^\nu g\prod_{\gamma\in\Gamma}\partial^\gamma g\\
    &\qquad+\sum_{0\leq\eta\leq\beta}\sum_{\Gamma\in P(\eta)}C_{\beta,\Gamma}(\partial^{\mu-(\eta+\nu)}\partial^{|\Gamma|}_n f)\prod_{\gamma\in\Gamma}\partial^\gamma g\sum_{\gamma\in\Gamma}\frac{\partial^{\gamma+\nu }g}{\partial^\gamma g}.
\end{split}
\end{equation}

Each term in \eqref{eq:lots} takes the form \(\partial^{\mu-\eta}\partial^{|\Gamma|}_nf\prod_{\gamma\in\Gamma}\partial^\gamma g\) for some \(\eta\leq \mu\) and \(\Gamma\in P(\eta)\), meaning that after grouping common terms we can write \(\partial^\mu h\) in the form of identity \eqref{eq:holycow} for some new constants which we denote by \(C_{\mu,\Gamma}\). Now we compute these coefficients and show that they satisfy \eqref{eq:cm}. To this end we fix \(\eta\leq\mu\) and \(\Gamma\in P(\eta)\), and for brevity, we also define 
\[
    G=\prod_{\gamma\in\Gamma}\gamma!\qquad\textrm{and}\qquad M=\prod_{\gamma\in\mathrm{supp}(\Gamma)}m(\Gamma,\gamma)!.
\]
Consider the term of the form \(\partial^{\mu-\eta}\partial^{|\Gamma|}_nf\prod_{\gamma\in\Gamma}\partial^\gamma g\) in \eqref{eq:lots}, and note that for each \(\gamma\in\mathrm{supp}(\Gamma)\) such that \(\gamma>\nu\), the constant \(C_{\beta,\Gamma\setminus\{\gamma\}\cup\{\gamma-\nu\}}\) appears as a coefficient to this term in the third sum in \eqref{eq:lots} once for every time \(\gamma-\nu\) appears in the partition \(\Gamma\setminus\{\gamma\}\cup\{\gamma-\nu\}\) of \(\beta\). There are exactly \(m(\Gamma\setminus\{\gamma\}\cup\{\gamma-\nu\},\gamma-\eta)\) of these terms. Additionally, \(C_{\Gamma\setminus\{\nu\}}\) appears once as a coefficient from the second sum, and \(C_{\beta,\Gamma}\) once as a coefficient in the first sum of \eqref{eq:lots}. Altogether then, we have the following recursive identity for \(C_{\mu,\Gamma}\),
\begin{equation}\label{eq:murecur}
    C_{\mu,\Gamma}=\sum_{\substack{\gamma\in\mathrm{supp}(\Gamma)\\\gamma>\nu}}m(\Gamma\setminus\{\gamma\}\cup\{\gamma-\nu\},\gamma-\eta)C_{\beta,\Gamma\setminus\{\gamma\}\cup\{\gamma-\nu\}}+C_{\beta,\Gamma\setminus\{\nu\}}+C_{\beta,\Gamma}.
\end{equation}
Fix \(\tilde{\gamma}\in\mathrm{supp}(\Gamma)\) and write \(\tilde{\Gamma}=\Gamma\setminus\{\tilde{\gamma}\}\cup\{\tilde{\gamma}-\nu\}\). By our inductive hypothesis we have
\[
    C_{\beta,\tilde{\Gamma}}=\frac{\beta!}{(\mu-\eta)!}\bigg(\prod_{\gamma\in\tilde{\Gamma}}\gamma!\bigg)^{-1}\bigg(\prod_{\gamma\in\mathrm{supp}(\tilde{\Gamma})}m(\tilde{\Gamma},\gamma)!\bigg)^{-1}.
\]
To simplify \eqref{eq:murecur}, we set \(\nu=(0,\dots,1,\dots,0)\), with the one in position \(\ell\), so that \(\prod_{\gamma\in\tilde{\Gamma}}\gamma!= \frac{G}{\tilde{\gamma}_\ell}\). Similarly, by counting multiplicities of multi-indices in \(\Gamma\) and \(\tilde{\Gamma}\) respectively, we find that
\[
    m(\tilde{\Gamma},\gamma)=\begin{cases}
    m(\Gamma,\gamma)+1 & \gamma=\tilde{\gamma}-\nu,\\
    m(\Gamma,\gamma)-1 & \gamma=\tilde{\gamma},\\
    m(\Gamma,\gamma) & \mathrm{otherwise}.
    \end{cases}
\]
Therefore we have the identity \(\prod_{\gamma\in\mathrm{supp}(\tilde{\Gamma})}m(\tilde{\Gamma},\gamma)!=\frac{m(\Gamma,\tilde{\gamma}-\nu)+1}{m(\Gamma,\tilde{\gamma})}M\). Equipped with these formulas, we see that the coefficients of the sum in the recursive identity \eqref{eq:murecur} can be written as \(m(\tilde{\Gamma},\tilde{\gamma}-\nu)C_{\beta,\tilde{\Gamma}}=\frac{\beta!\tilde{\gamma}_\ell m(\Gamma,\tilde{\gamma})}{(\mu-\eta)!GM}\). Using the inductive hypothesis with \eqref{eq:cm} we also find that
\[
     C_{\beta,\Gamma\setminus\{\nu\}}=\frac{\beta!}{(\mu-\eta)!}\bigg(\prod_{\gamma\in\Gamma\setminus\{\nu\}}\gamma!\bigg)^{-1}\bigg(\prod_{\gamma\in\mathrm{supp}(\Gamma\setminus\{\nu\})}m(\Gamma\setminus\{\nu\},\gamma)!\bigg)^{-1}.
\]
Since \(\nu!=1\) and \(\prod_{\gamma\in\mathrm{supp}(\Gamma\setminus\{\nu\})}m(\Gamma\setminus\{\nu\},\gamma)!=\frac{M}{m(\Gamma,\nu)}\), we have \(C_{\beta,\Gamma\setminus\{\nu\}}=\frac{\beta!m(\Gamma,\nu)}{(\mu-\eta)!GM}\).

Similarly, the inductive hypothesis and \eqref{eq:cm} give \(C_{\beta,\Gamma}=\frac{\beta!(\mu_\ell-\eta_\ell)}{(\mu-\eta)!GM}\), and from \eqref{eq:murecur} we get
\[
    C_{\mu,\Gamma}=\frac{\beta!}{(\mu-\eta)!GM}\bigg(\sum_{\gamma\in\mathrm{supp}(\Gamma)}\tilde{\gamma}_\ell m(\Gamma,\tilde{\gamma})+\mu_\ell-\eta_\ell\bigg)=\frac{\mu!}{(\mu-\eta)!GM}.
\]
The right-hand side takes the form of \eqref{eq:cm}, and since \(\Gamma\) and \(\eta\) were arbitrary we see that \eqref{eq:holycow} holds with \(\mu\) in place of \(\beta\). Moreover, \(\mu\) was any multi-index of order \(k+1\), and it follows by induction that \eqref{eq:holycow} holds for all derivatives of \(h\).
\end{proof}

For the remainder of this section, we explore some direct consequences of \Cref{lem:genchain}. The first of these is the following somewhat less general form of the chain rule.

\begin{cor}[Chain Rule]\label{cor:chain2}
Let \(f:\mathbb{R}\rightarrow\mathbb{R}\) and  \(g:\mathbb{R}^n\rightarrow\mathbb{R}\) both be \(k\) times differentiable, and define \(h:\mathbb{R}^n\rightarrow\mathbb{R}\) by setting \(h(x)=f(g(x))\). If \(|\beta|\leq k\), then 
\begin{equation}\label{eq:orderedcr}
    \partial^\beta h =\sum_{\Gamma\in P(\beta)}C_{\beta,\Gamma}f^{(|\Gamma|)}(g)\prod_{\gamma\in\Gamma}\partial^\gamma g,
\end{equation}
where the constants \(C_{\beta,\Gamma}\) are given by \eqref{eq:cm}.
\end{cor}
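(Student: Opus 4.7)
The plan is to obtain this corollary as an immediate specialization of \Cref{lem:genchain}. To do this, I would first recast $h(x)=f(g(x))$ in the setting of that lemma by promoting $f$ to a function of $n+1$ variables that is independent of the first $n$. Specifically, define $\tilde{f}:\mathbb{R}^{n+1}\rightarrow\mathbb{R}$ by $\tilde{f}(x_1,\dots,x_n,y)=f(y)$. Then $\tilde{f}$ is $k$ times differentiable whenever $f$ is, and $h(x)=\tilde{f}(x,g(x))$, putting us in a position to apply \Cref{lem:genchain}.

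Next I would apply \eqref{eq:holycow} to $h$ with $\tilde{f}$ in place of $f$, giving
\[
    \partial^\beta h(x) = \sum_{0\leq\eta\leq\beta}\sum_{\Gamma\in P(\eta)}C_{\beta,\Gamma}\,(\partial^{\beta-\eta}\partial^{|\Gamma|}_{n+1}\tilde{f})(x,g(x))\prod_{\gamma\in\Gamma}\partial^\gamma g(x).
\]
The key observation is that because $\tilde{f}$ depends only on its last argument, $\partial^{\beta-\eta}\partial^{|\Gamma|}_{n+1}\tilde{f}$ vanishes identically unless the multi-index $\beta-\eta$ (which acts on the first $n$ coordinates) is zero. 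This forces $\eta=\beta$ in the outer sum, collapsing it to a single contribution from the partitions of $\beta$ itself.

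On this surviving term, $\partial^{|\Gamma|}_{n+1}\tilde{f}(x,g(x))=f^{(|\Gamma|)}(g(x))$ by definition of $\tilde{f}$, so the identity reduces to
\[
    \partial^\beta h(x) = \sum_{\Gamma\in P(\beta)}C_{\beta,\Gamma}\,f^{(|\Gamma|)}(g(x))\prod_{\gamma\in\Gamma}\partial^\gamma g(x),
\]
which is \eqref{eq:orderedcr}. The constants $C_{\beta,\Gamma}$ inherited from \Cref{lem:genchain} are exactly those given by \eqref{eq:cm}, since only the coefficients corresponding to $\eta=\beta$ enter. There is no real obstacle here — the entire difficulty was handled in the combinatorial induction of \Cref{lem:genchain}, and this corollary is just the observation that setting $\tilde{f}$ independent of the first $n$ variables eliminates all terms except those with $\eta=\beta$.
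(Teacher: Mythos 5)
Your argument is correct and is precisely the specialization the paper has in mind when it calls \Cref{cor:chain2} a ``direct consequence'' of \Cref{lem:genchain}: promote $f$ to $\tilde f(x,y)=f(y)$, apply \eqref{eq:holycow}, and observe that $\partial^{\beta-\eta}\tilde f\equiv 0$ unless $\eta=\beta$, which collapses the outer sum. The paper supplies no explicit proof, so your write-up simply fills in the detail the author leaves implicit, and the identification of the surviving constants $C_{\beta,\Gamma}$ with those of \eqref{eq:cm} at $\eta=\beta$ is handled correctly.
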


By taking the function \(f\) in this corollary to be a square root, we get the following.

\begin{cor}\label{cor:rootscor}
Let \(g:\mathbb{R}^n\rightarrow\mathbb{R}\) be non-negative and \(k\) times differentiable. For \(|\beta|\leq k\),
\[
    \partial^\beta \sqrt{g} =\sum_{\Gamma\in P(\beta)}C_{\beta,\Gamma}g^{\frac{1}{2}-|\Gamma|}\prod_{\gamma\in\Gamma}\partial^\gamma g,
\]
where the constants \(C_{\beta,\Gamma}\) are given by
\[
    C_{\beta,\Gamma}=\frac{(-1)^{1+|\Gamma|}\beta!\left(2|\Gamma|-2\right)!}{2^{2|\Gamma|-1}(|\Gamma|-1)!}\bigg(\prod_{\gamma\in\Gamma}\gamma!\bigg)^{-1}\bigg(\prod_{\gamma\in\mathrm{supp}(\Gamma)}m(\Gamma,\gamma)!\bigg)^{-1}.
\]
\end{cor}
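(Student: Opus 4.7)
The plan is to derive this corollary as a direct specialization of \Cref{cor:chain2} to the outer function $f(t) = \sqrt{t}$. Wherever $g(x) > 0$, the function $f(t) = t^{1/2}$ is infinitely differentiable at $t = g(x)$, so \eqref{eq:orderedcr} applies at $x$ and gives
\[
    \partial^\beta\sqrt{g} \;=\; \sum_{\Gamma \in P(\beta)} \widetilde{C}_{\beta,\Gamma}\, f^{(|\Gamma|)}(g) \prod_{\gamma\in\Gamma}\partial^\gamma g,
\]
where, because the sum in \Cref{cor:chain2} ranges over partitions of $\beta$ itself, formula \eqref{eq:cm} reduces (with $\eta = \beta$) to
\[
    \widetilde{C}_{\beta,\Gamma} \;=\; \beta!\bigg(\prod_{\gamma\in\Gamma}\gamma!\bigg)^{-1}\bigg(\prod_{\gamma\in\mathrm{supp}(\Gamma)}m(\Gamma,\gamma)!\bigg)^{-1}.
\]
All that remains is to replace $f^{(|\Gamma|)}(g)$ with an explicit expression in $g$ and collect the resulting scalar with $\widetilde{C}_{\beta,\Gamma}$.

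To that end, I would compute $f^{(m)}(t)$ in closed form for $m\geq 1$ by iterating the power rule, obtaining
\[
    f^{(m)}(t) \;=\; \bigg(\prod_{j=0}^{m-1}\bigl(\tfrac{1}{2}-j\bigr)\bigg)\, t^{1/2-m} \;=\; \frac{1}{2^m}\prod_{j=0}^{m-1}(1-2j)\;t^{1/2-m}.
\]
The product $\prod_{j=0}^{m-1}(1-2j) = 1\cdot(-1)\cdot(-3)\cdots(3-2m)$ equals $(-1)^{m-1}(2m-3)!!$ (using $(-1)!! = 1$), and the standard identity $(2m-3)!!\,2^{m-1}(m-1)! = (2m-2)!$ rewrites this as
\[
    f^{(m)}(t) \;=\; \frac{(-1)^{m-1}(2m-2)!}{2^{2m-1}(m-1)!}\, t^{1/2-m} \;=\; \frac{(-1)^{1+m}(2m-2)!}{2^{2m-1}(m-1)!}\, t^{1/2-m}.
\]
Sanity-checking the first few cases ($m = 1, 2, 3$) against direct differentiation of $\sqrt{t}$ will confirm both the sign and the constant.

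Substituting this expression with $m = |\Gamma|$ and $t = g$ into the chain rule identity factors $g^{1/2-|\Gamma|}$ out of the product over $\Gamma$ and multiplies $\widetilde{C}_{\beta,\Gamma}$ by the scalar $(-1)^{1+|\Gamma|}(2|\Gamma|-2)!/(2^{2|\Gamma|-1}(|\Gamma|-1)!)$, producing exactly the claimed formula for $C_{\beta,\Gamma}$. The only real obstacle in the argument is the bookkeeping for $f^{(m)}(t)$: tracking the sign through the alternating factors and performing the double-factorial-to-factorial conversion. Everything else is a mechanical substitution into \Cref{cor:chain2}, and no new analytic input (beyond the assumption $g > 0$, which is where the negative powers of $g$ are interpreted) is needed.
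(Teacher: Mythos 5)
Your proposal is correct and takes exactly the paper's approach: the paper simply states that \Cref{cor:rootscor} follows from \Cref{cor:chain2} by taking $f$ to be the square root, and you've supplied the bookkeeping — the closed form for $f^{(m)}(t)$ via the double-factorial-to-factorial identity $(2m-3)!!\,2^{m-1}(m-1)! = (2m-2)!$ and the observation that $\eta = \beta$ collapses \eqref{eq:cm} to $\beta!(\prod\gamma!)^{-1}(\prod m(\Gamma,\gamma)!)^{-1}$ — that the paper leaves implicit.
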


Ultimately the form of the constants above is unimportant for our work; we only include explicit formulas for the convenience of the reader interested in using them for other calculations. The supplementary calculus results established in this section are sufficient for the remainder of the work, and we now move on to establishing results needed to prove out main theorems.

One such result is a recursive formula for the derivatives an implicitly defined function. For completeness, we state the well-known Implicit Function Theorem before providing this recursive formula. In the subsequent proof and henceforth in this thesis, we use the notation \(x=(x',x_n)\) for \(x\in\mathbb{R}^n\).

\begin{thm}[Implicit Function Theorem]\label{thm:IFT}
Let \(G\in C^k(\mathbb{R}^n)\) and let \(x_0\in\mathbb{R}^n\) be a point at which \(G(x_0)=0\) and \(\frac{\partial G}{\partial x_n}(x_0)>0\). There exists a unique \(g\in C^k(U)\), for some neighbourhood \(U\subset\mathbb{R}^{n-1}\) of \(x_0'\), such that \(G(x',g(x'))=0\) for every \(x'\in U\). Further, the derivatives of \(g\) are given recursively by
\[
    \partial^\beta g=-\frac{1}{\partial_{n}G}\sum_{0\leq\eta\leq\beta}\sum_{ \substack{\Gamma\in P(\eta),\\\Gamma\neq \{\beta\}}}C_{\beta,\Gamma}(\partial^{\beta-\eta}\partial^{|\Gamma|}_nG)\prod_{\gamma\in\Gamma}\partial^\gamma g,
\]
where for functions on the right are evaluated at \((x',g(x'))\) and \(C_{\beta,\Gamma}\) is given by \eqref{eq:cm}.
\end{thm}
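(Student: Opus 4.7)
The plan is to handle the two assertions of the theorem separately: first, the existence, uniqueness, and \(C^k\) regularity of the implicit function \(g\); and second, the recursive formula for its derivatives, which I would derive by differentiating the defining identity \(G(x',g(x'))=0\) with the Generalized Chain Rule (Lemma \ref{lem:genchain}).

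For the classical part, I would argue that because \(\partial_n G\) is continuous and strictly positive at \(x_0\), it remains strictly positive on some neighborhood of \(x_0\). On this neighborhood the map \(t\mapsto G(x',t)\) is strictly monotone, so the intermediate value theorem yields a unique \(g(x')\) satisfying \(G(x',g(x'))=0\) for each \(x'\) near \(x_0'\). The \(C^k\) regularity of \(g\) is a standard inductive consequence of the \(C^k\) regularity of \(G\) and the non-vanishing of \(\partial_n G\); I would defer full details of this step to any standard analysis reference, since it is not the main content of the theorem.

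For the recursive formula, set \(h(x'):=G(x',g(x'))\), which vanishes identically on the neighborhood \(U\). Applying Lemma \ref{lem:genchain} (with the present \(G\) playing the role of \(f\) and \(g\) its own role) yields, for every multi-index \(\beta\) with \(1\leq|\beta|\leq k\),
\[
    0=\partial^\beta h=\sum_{0\leq\eta\leq\beta}\sum_{\Gamma\in P(\eta)}C_{\beta,\Gamma}(\partial^{\beta-\eta}\partial^{|\Gamma|}_nG)\prod_{\gamma\in\Gamma}\partial^\gamma g,
\]
where the functions of \(G\) are evaluated at \((x',g(x'))\). The key observation is that in any term of this sum, each factor \(\partial^\gamma g\) satisfies \(|\gamma|\leq|\eta|\leq|\beta|\), with equality \(|\gamma|=|\beta|\) forcing \(\gamma=\eta=\beta\) and hence \(\Gamma=\{\beta\}\). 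Consequently, the only term involving \(\partial^\beta g\) is the one indexed by \(\eta=\beta\) and \(\Gamma=\{\beta\}\), and using the explicit formula \eqref{eq:cm} one computes \(C_{\beta,\{\beta\}}=\beta!\binom{\beta}{\beta}(\beta!)^{-1}(1!)^{-1}=1\); since \(|\{\beta\}|=1\) the accompanying derivative of \(G\) is simply \(\partial_nG\). Isolating this term and dividing through by \(\partial_nG\) (which is nonzero by the hypothesis) produces the claimed recursive formula.

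The main obstacle is the bookkeeping needed to isolate the \(\partial^\beta g\) term cleanly: one must carefully distinguish the singleton partition \(\Gamma=\{\beta\}\) (whose sole element is the top-order multi-index \(\beta\)) from partitions containing several lower-order multi-indices whose orders add up to \(|\beta|\). The order inequality \(|\gamma|\leq|\eta|\leq|\beta|\) resolves this uniquely, and the coefficient computation from \eqref{eq:cm} then falls out immediately.
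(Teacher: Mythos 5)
Your proof is correct and follows essentially the same route as the paper: cite (or sketch) the classical IFT for existence, uniqueness, and $C^k$ regularity, then differentiate the identity $G(x',g(x'))\equiv 0$ with Lemma~\ref{lem:genchain} and isolate the unique term containing $\partial^\beta g$. Your explicit check that $C_{\beta,\{\beta\}}=1$ via \eqref{eq:cm} and the order argument $|\gamma|\le|\eta|\le|\beta|$ forcing $\Gamma=\{\beta\}$ are both correct and make the isolation step slightly more transparent than the paper's terser version.
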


\begin{proof}
The first part is the standard Implicit Function Theorem, a proof of which can be found in \cite[Theorem 3.2.1]{IFTref}. Since \(G(x',g(x'))\) is identically zero on \(U\), so too are all if its derivatives, meaning that \(\partial^\beta G(x',g(x'))=0\). On the other hand, using \Cref{lem:genchain} we can write 
\[
    \partial^\beta G(x',g(x'))=\sum_{\eta\leq\beta}\sum_{\Gamma\in P(\eta)}C_{\beta,\Gamma}(\partial^{\beta-\eta}\partial^{|\Gamma|}_nG(x',g(x')))\prod_{\gamma\in\Gamma}\partial^\gamma g(x').
\]
The derivative \(\partial^\beta g\) appears in only one term on the right-hand side, namely when \(\eta=\beta\) and \(\Gamma=\{\beta\}\), meaning we can combine the two identities for \(\partial^\beta G(x',g(x'))\) and rearrange to get the claimed derivative formula.
\end{proof}

Due to the recursive nature of the formula above it is a straightforward exercise, albeit a computationally difficult one, to solve for the derivatives of \(g\) in terms of \(G\) alone. For our purposes, such a closed form is unnecessary and the recursive formula given above suffices. 
\chapter{Sum of Squares Decompositions}\label{chap:decomps}

Given a non-negative function \(f\in C^{k,\alpha}(\mathbb{R}^n)\) for \(k\leq 3\) and \(0<\alpha\leq 1\), we now begin working to show that \(f\) can be decomposed as a sum of squares of functions in the H\"older space of functions with `half' the regularity of \(f\). By this, we mean that we can write \(f=g_1^2+\cdots+g_m^2\) for functions \(g_1,\dots,g_m\) which belong to the H\"older space
\begin{equation}\label{eq:halfreg}
    C^\frac{k+\alpha}{2}(\mathbb{R}^n)=
    \begin{cases}
    \hfil C^{\frac{k}{2},\frac{\alpha}{2}}(\mathbb{R}^n) & k\;\textrm{even,}\\
    C^{\frac{k-1}{2},\frac{1+\alpha}{2}}(\mathbb{R}^n) & k\;\textrm{odd}.
    \end{cases}    
\end{equation}
Earlier, we showed that if \(f\in C^{k,\alpha}(\mathbb{R}^n)\) for \(k=0\) or \(k=1\) and \(0<\alpha\leq 1\) then \(\sqrt{f}\in C^\frac{k+\alpha}{2}(\mathbb{R}^n)\). Thus, when \(k\leq 1\) the desired decomposition holds with just one square given by \(g_1=\sqrt{f}\).

Moving on to \(k\geq 2\), the techniques required to prove the existence of a sum of squares decomposition are altogether different and considerably more elaborate. This is because taking a single root does not suffice to preserve regularity for \(k\geq 2\), as we showed with \eqref{eq:badroot}. Our main result in this thesis, which we prove using these more advanced techniques, is the following.

\begin{thm}\label{thm:c3main}
If \(0<\alpha\leq1\) and \(f\in C^{2,\alpha}(\mathbb{R}^n)\) is non-negative, then there exist functions \(g_1,\dots, g_{m_n}\in C^{1,\frac{\alpha}{2}}(\mathbb{R}^n)\) which satisfy
\begin{equation}\label{eq:decompident}
    f=\sum_{j=1}^{m_n}g_j^2.
\end{equation}
Similarly, if \(f\in C^{3,\alpha}(\mathbb{R}^n)\) then \eqref{eq:decompident} holds for functions \(g_1,\dots, g_m\in C^{1,\frac{1+\alpha}{2}}(\mathbb{R}^n)\). Moreover, the number of squares \(m_n\) depends only on the dimension \(n\). 
\end{thm}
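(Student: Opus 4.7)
The plan is to follow the Fefferman-Phong-Tataru-Sawyer-Korobenko template: introduce a slowly varying ``scale function'' $\rho$ associated with $f$, build a Whitney-type cover and partition of unity adapted to $\rho$, decompose $f$ locally on each patch into a bounded number of half-regular squares, and reassemble via a colouring argument together with the recombination lemma \Cref{lem:recomb}. Throughout I fix $k \in \{2,3\}$ and abbreviate $s = k + \alpha$, so the target regularity is $C^{s/2}(\mathbb{R}^n)$ as in \eqref{eq:halfreg}.

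The first step is to define a scale function $\rho : \mathbb{R}^n \to (0, \infty]$ that measures how close $f$ is to vanishing at $x$ relative to its top derivatives. A natural choice, made precise through the pointwise control estimates of \Cref{thm:cauchylike}, is to set $\rho(x)$ so that $f$ is comparable to $\rho(x)^s$ on the ball $B(x, \rho(x))$ whenever $\rho(x) < \infty$. The control estimates then deliver slow variation: there is a universal constant $c_0 > 0$ with $\rho(y) \approx \rho(x)$ whenever $|x - y| \leq c_0 \rho(x)$. This slow variation permits the construction of a locally finite cover $\{B_j\}$ of $\{f > 0\}$ by balls $B_j = B(x_j, \rho(x_j))$, together with a smooth partition of unity $\{\psi_j\}$ subordinate to $\{2 B_j\}$ and satisfying $\sum_j \psi_j^2 \equiv 1$ on $\{f > 0\}$ as well as $|\partial^\beta \psi_j| \leq C \rho(x_j)^{-|\beta|}$.

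The second step is a local decomposition on each ball, producing a uniformly bounded family of functions $h_{j,1}, \ldots, h_{j,m}$ supported in $2 B_j$ with $\psi_j^2 f = \sum_i h_{j,i}^2$. I would argue by a dichotomy. In the \emph{small} case, when $\sup_{2B_j} f \leq C \rho(x_j)^s$, I would take the single root $h_{j,1} = \psi_j \sqrt{f}$; \Cref{cor:easycase} together with the partition-of-unity estimates and a $\rho$-based rescaling give membership of $\psi_j \sqrt{f}$ in $C^{s/2}(\mathbb{R}^n)$ with norm independent of $j$. In the \emph{large} case, when $f \geq c \rho(x_j)^s$ somewhere in $2B_j$, the pointwise control of \Cref{thm:cauchylike} propagates a lower bound $f \geq c' \rho(x_j)^s$ to a neighbourhood of $2B_j$, so \Cref{thm:IFT} applied to $G(x, t) = t^2 - f(x)$ produces a smooth local square root of $f$, and \Cref{cor:rootscor} combined with the sub-product estimates of \Cref{lem:subprod} and \Cref{lem:sharpsubprod} bounds $\psi_j \sqrt{f}$ in $C^{s/2}(\mathbb{R}^n)$ uniformly in $j$.

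Finally, since the cover $\{2 B_j\}$ has overlap bounded by a constant $N = N(n)$, a colouring argument regroups the indices into $N$ classes within each of which the supports of the corresponding $h_{j,i}$ are pairwise disjoint; \Cref{lem:recomb} then assembles each class into a single function in $C^{s/2}(\mathbb{R}^n)$, yielding at most $m_n = m N$ global half-regular functions whose squares sum to $\sum_j \psi_j^2 f = f$. The principal obstacle I anticipate is the large case of the local step: \Cref{cor:rootscor} generates a proliferation of terms involving negative powers $f^{1/2 - |\Gamma|}$, and confirming that each such term is exactly counterbalanced by the control $|\partial^\gamma f| \leq C \rho(x_j)^{s - |\gamma|}$ from \Cref{thm:cauchylike} together with the lower bound $f \geq c' \rho(x_j)^s$ requires careful scale-by-scale bookkeeping at every multi-index order. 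The additional wrinkle for $k=3$, where the target is $C^{1,(1+\alpha)/2}$ rather than $C^{1,\alpha/2}$, is absorbed by the same scheme once one keeps track of odd-order derivative terms using the corresponding portion of \Cref{thm:cauchylike}.
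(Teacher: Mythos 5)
Your dichotomy is reversed in a way that makes the argument collapse, and it also misses the dimensional induction that is the crux of the theorem.

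The crucial case in this decomposition is when $f$ is \emph{small} on a ball, and it is precisely there that a single square root fails to be half-regular. You propose to handle the small case (when $\sup_{2B_j} f \leq C\rho(x_j)^s$) by taking $\psi_j\sqrt{f}$ and invoking \Cref{cor:easycase}. But \Cref{cor:easycase} is a statement about $C^\alpha = C^{0,\alpha}$ functions only: it shows $\sqrt{f}\in C^{\alpha/2}$, not $\sqrt{f}\in C^{1,\alpha/2}$ as the $k=2$ case requires. There is no rescaling that rescues this, and \eqref{eq:badroot} is an explicit counterexample showing that a bounded, nonnegative, smooth $f$ (thus as small as you like on a fixed ball) need not have a $C^1$ square root. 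What actually happens when $f$ is small is that the paper applies the Implicit Function Theorem to $\partial_{x_n} f$ (not to $t^2 - f(x)$, as you do in your large case) to produce the minimizer curve $X(y')$, writes $f = (y_n - X(y'))^2 H(y) + F(y')$ on the ball, shows $\psi_j\sqrt{f - F}\in C^{\frac{k+\alpha}{2}}$, and then handles the remainder $F$ — a function of $n-1$ variables — by induction on dimension. Your proposal contains no such inductive step, which is fatal, because the dependence of $m_n$ on $n$ arises exactly from iterating the split-off of one variable. Your ``large case'' (where $f$ is bounded below by $c\rho(x_j)^s$) is the paper's easy case, and taking $\sqrt{f}$ there does work once the lower bound is used to control the negative powers of $f$ in the Fa\`a di Bruno expansion of \Cref{cor:rootscor}; applying the IFT to $t^2-f$ is an unnecessary detour but not wrong.

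A secondary issue is the scale function. You define $\rho$ so that $f \approx \rho^s$ locally, but this makes $\rho$ essentially $f^{1/s}$, which is not slowly varying and cannot detect the regime where $f$ is very small but $\partial^2_\xi f$ is large — exactly the regime where the IFT construction is needed. The paper's control function \eqref{eq:controlfunc} is a maximum over \emph{all even-order} directional derivatives of $f$, including the second-order ones, and slow variation (\Cref{lem:slowvar}) depends on having those terms in the maximum. With your $\rho$ there is no dichotomy to run: either $\rho$ is everywhere comparable to $f^{1/s}$, in which case you cannot establish slow variation near high-order zeros of $f$, or your two cases collapse into one.
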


Our proof of this result actually goes further, to show that the functions \(g_1,\dots,g_m\) and their derivatives are controlled pointwise in the same way that the derivatives of \(f\) are by \Cref{thm:cauchylike}. Specifically, we show that when \(f\in C^{2,\alpha}(\mathbb{R}^n)\), each \(g_j\) in \eqref{eq:decompident} satisfies the pointwise estimates
\[
    |g_j(x)|\leq C\max\bigg\{\sqrt{f(x)},\sup_{|\xi|=1}[\partial^2_\xi f(x)]_+^\frac{2+\alpha}{2\alpha}\bigg\}\textrm{ and }|\nabla g_j(x)|\leq C\max\bigg\{f(x)^\frac{\alpha}{4+2\alpha},\sup_{|\xi|=1}[\partial^2_\xi f(x)]_+^\frac{1}{2}\bigg\}.
\]
Similar estimates hold when \(f\in C^{3,\alpha}(\mathbb{R}^n)\), and these bounds show that the functions \(g_1,\dots,g_m\) inherit some of the pointwise structure of \(f\). In \Cref{sec:ext} we extend \Cref{thm:c3main} to the local spaces \(C^{2,\alpha}_\mathrm{loc}(\Omega)\) and \(C^{3,\alpha}_\mathrm{loc}(\Omega)\) for any open set \(\Omega\subseteq\mathbb{R}^n\), see \Cref{thm:locdecomp}.

The classical result of Fefferman \& Phong appearing in \cite{Fefferman-Phong}, which we record here for completeness, follows as a special case of \Cref{thm:c3main}.

\begin{cor}[Fefferman \& Phong]\label{cor:FP}
If \(f\in C^{3,1}(\mathbb{R}^n)\) is non-negative, then it can be written as a finite sum of squares of \(C^{1,1}(\mathbb{R}^n)\) functions.
\end{cor}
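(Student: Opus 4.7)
The plan is to prove this by induction on dimension $n$, following the approach of Fefferman--Phong in \cite{Fefferman-Phong} and its refinements by Tataru \cite{Tataru} and Sawyer--Korobenko \cite{SOS_I}. The argument combines a Whitney-type decomposition of $\mathbb{R}^n$ adapted to $f$ with a local two-case analysis, then recombines via a partition of unity.

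First I would define a critical scale function $r(x)$ as the largest $s > 0$ for which $f(x) \leq s^{k+\alpha}$ and $\sup_{|\xi|=1}[\partial^2_\xi f(x)]_+ \leq s^{k+\alpha-2}$, where $k \in \{2,3\}$. Theorem~\ref{thm:cauchylike} ensures $r$ is Lipschitz with small constant, and Theorem~\ref{thm:party} then yields a locally finite cover $\{B_j = B(x_j, r(x_j))\}$ with bounded overlap together with a subordinate partition of unity $\{\phi_j\}$ satisfying $|\partial^\beta \phi_j| \leq C r(x_j)^{-|\beta|}$. On each $B_j$ the definition of $r$ forces one of two cases: (A) $f \gtrsim r(x_j)^{k+\alpha}$, or (B) $|\partial^2_{\xi_j} f(x_j)| \gtrsim r(x_j)^{k+\alpha-2}$ for some unit vector $\xi_j$.

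In Case A, $f$ is bounded uniformly below on $B_j$, so the root formula of Corollary~\ref{cor:rootscor} combined with Theorem~\ref{thm:cauchylike} places $\sqrt{f}$ in the half-regular space with semi-norms uniform in $j$; the local contribution is then $\phi_j \sqrt{f}$. In Case B, rotate so $\xi_j = e_n$. The Malgrange estimate of Theorem~\ref{thm:cauchylike} applied with $\ell = 1$, together with the size of $\partial^2_n f(x_j)$, forces $\partial_n f$ to vanish somewhere in $B_j$ and to be monotonic in $x_n$ on a sub-ball. The Implicit Function Theorem~\ref{thm:IFT} then yields a function $\psi_j$ on a ball in $\mathbb{R}^{n-1}$ with $\partial_n f(x', \psi_j(x')) = 0$, and a second-order expansion of $f$ in $x_n$ around $\psi_j$ combined with non-negativity of $f$ gives the factorization
\[
    f(x) = h_j(x) \bigl( x_n - \psi_j(x') \bigr)^2 + F_j(x'),
\]
where $F_j(x') = f(x', \psi_j(x'))$ is a non-negative $C^{k,\alpha}$ function on a ball in $\mathbb{R}^{n-1}$, and $h_j \in C^{k-2,\alpha}$ is bounded below by a multiple of $r(x_j)^{k+\alpha-2}$. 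The first summand is a single half-regular square with root $\sqrt{h_j}(x_n - \psi_j)$, and by the inductive hypothesis in the local form Theorem~\ref{thm:locdecomp} the residual $F_j$ splits into at most $m_{n-1}$ half-regular squares on its domain.

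Finally, multiplying each local half-regular root by $\phi_j$ and exploiting the bounded overlap of the cover, a coloring argument groups the resulting compactly supported functions into at most $m_n = C_n(m_{n-1} + 1)$ classes with pairwise disjoint supports. Lemma~\ref{lem:recomb} combines each class into a global function in $C^{(k+\alpha)/2}(\mathbb{R}^n)$, and Lemma~\ref{lem:subprod} together with the partition-of-unity derivative bounds propagates the semi-norm estimates uniformly. The main obstacle is Case B: one must verify that $\psi_j$, $h_j$, and $F_j$ truly inherit the full regularity of $f$ with semi-norms that scale correctly in $r(x_j)$, which requires combining the derivative recursion of Theorem~\ref{thm:IFT} with the generalized chain rule of Lemma~\ref{lem:genchain} and careful bookkeeping against the scale. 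For $k = 3$ there is the added subtlety that $\sqrt{h_j}$ must lie in $C^{1,(1+\alpha)/2}$, which uses the uniform lower bound $h_j \gtrsim r(x_j)^{1+\alpha}$ from the definition of $r$ together with Corollary~\ref{cor:rootscor}. The base case $n = 1$ is handled by the same scheme: the residual $F_j$ is then a non-negative constant and so trivially a single square.
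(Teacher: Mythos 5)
Your sketch is a correct high-level outline of the proof of Theorem~\ref{thm:c3main}, but that is overkill for this statement: the paper derives Corollary~\ref{cor:FP} as an immediate special case. Setting $k=3$, $\alpha=1$ in Theorem~\ref{thm:c3main}, the half-regular space $C^{\frac{3+1}{2}}(\mathbb{R}^n)$ is exactly $C^{1,1}(\mathbb{R}^n)$, and the theorem directly yields a finite decomposition $f=\sum_{j=1}^{m_n}g_j^2$ with each $g_j\in C^{1,1}(\mathbb{R}^n)$; no further argument is needed. What you have written instead re-derives the general theorem, and at the level of detail given it does match the paper's proof in \Cref{sec:c3proof}: the critical-scale function $r$ agrees with \eqref{eq:controlfunc}, the two-case local analysis corresponds to Lemmas~\ref{lem:local1} (root of $f$ when $f(x_j)\gtrsim r_j^{k+\alpha}$) and \ref{lem:minia}--\ref{lem:nice} (implicit-function minimizer and the factorization $f-F_j=h_j(x_n-\psi_j)^2$ otherwise), the recombination uses the bounded-overlap and coloring content of Theorem~\ref{thm:party} together with Lemma~\ref{lem:recomb}, and the induction on dimension is the paper's. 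Two small imprecisions in your version: the paper establishes slow variation of $r$ via Lemma~\ref{lem:slowvar} rather than a Lipschitz bound, which is what is actually used; and the inductive step in Case~B requires Lemma~\ref{lem:crudeext} to extend the localized residual $F_j$ from $B_j'$ to a genuine non-negative function in $C^{k,\alpha}(\mathbb{R}^{n-1})$ before the $(n-1)$-dimensional hypothesis can be invoked --- you fold this into ``careful bookkeeping'' but it is a distinct and necessary step. Since Theorem~\ref{thm:c3main} is already proved, the efficient route to this corollary is simply to cite it.
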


There is a self-contained proof of this result in \cite{Guan} which Guan attributes to Fefferman. The key idea underlying the argument communicated by Guan is that the domain of \(f\) can be partitioned into regions on which \(f\) is well-behaved, and others where \(f\) does not have a half-regular root. The same approach is used to prove the generalizations of \Cref{cor:FP} in \cite[Proposition 1.1]{Tataru} and \cite[Theorem 4.5]{SOS_I}.  In each partition region, separate techniques are employed to recover an appropriate local decomposition of \(f\). Then, using an inductive argument, these local estimates are combined to obtain a global sum of squares decomposition.

Following suit, we begin by using the properties of non-negative \(C^{k,\alpha}(\mathbb{R}^n)\) functions established in \Cref{chap:holderchap} to partition the domain of \(f\) and argue locally. Our main instrument to this end is a partition of unity that we construct in \Cref{sec:pou}, which takes the form
\begin{equation}\label{eq:pou}
    \sum_{j=1}^\infty\psi_j^2=\begin{cases}
    1 & \textrm{if }f\textrm{ or one of its derivatives is nonzero},\\
    0 & \textrm{where }f\textrm{ and all of its derivatives  vanish}.
    \end{cases}
\end{equation}
The functions \(\psi_j\) in \eqref{eq:pou} are smooth, and they have special properties relating to \(f\) which we summarize in \Cref{thm:party}. Combining this partition with the localized results that we establish in the following section, we go on to prove \Cref{thm:c3main} in \Cref{sec:c3proof}.

\section{Local Regularity of Roots}\label{sec:reg}

Now we show that the localization of a non-negative function \(f\in C^{k,\alpha}(\mathbb{R}^n)\) to a sufficiently small ball either has a square root that belongs to the half-regular H\"older space \eqref{eq:halfreg}, or it can be decomposed into a sum of two functions; one which has a half-regular root, and another which depends on fewer variables than \(f\). These local decompositions are performed in the supports of the partition functions \(\psi_j\) written in \eqref{eq:pou}, and they always exist when \(k=2\) and \(k=3\). If \(k\geq4\) then \(f\) can be decomposed locally in the same way, but doing so requires additional hypotheses which are discussed in \Cref{chap:higher} and in \cite{SOS_I}.

The results of this section are established in full generality, and we do not restrict to the cases \(k\leq 3\), since we also use these results to study \(C^{k,\alpha}(\mathbb{R}^n)\) for \(k\geq 4\) later in the work. This generality sometimes comes at the expense of cumbersome notation, but our efforts are rewarded by a much clearer picture of the behaviour of \(C^{k,\alpha}(\mathbb{R}^n)\) functions. Unless stated  otherwise, throughout this section it is understood that \(k\) is any non-negative integer and \(0<\alpha\leq 1\).

Our primary instrument in this chapter is a function \(r\) that controls the derivatives of a function \(f\) pointwise. It was originally identified by Fefferman \& Phong in \cite{Fefferman-Phong} in the case \(k=3\) and \(\alpha=1\), and modified by Sawyer \& Korobenko in \cite{SOS_I} to work for \(k=4\) and \(0<\alpha\leq1\). For our purposes, we require a generalized form of this control function which is suitable for any pairing of \(k\) and \(\alpha\). This object lets us identify balls on which \(f\) is well-behaved in a manner that permits decomposition. Fixing a non-negative function \(f\in C^{k,\alpha}(\mathbb{R}^n)\), we define
\begin{equation}\label{eq:controlfunc}
    r(x)=\max_{\substack{0\leq j\leq k,\\j\;\mathrm{even}}}\bigg\{\sup_{|\xi|=1}[\partial^j_\xi f(x)]_+^\frac{1}{k-j+\alpha}\bigg\}.
\end{equation}
From this definition and \Cref{thm:cauchylike}, we see that \(r\) controls \(f\) pointwise in the following way.
\begin{cor}\label{cor:controlbound} 
For \(r\)  as in \eqref{eq:controlfunc} and for \(\ell\leq k\), the following pointwise derivative bounds hold:
\[
    |\nabla^\ell f(x)|\leq Cr(x)^{k-\ell+\alpha}.
\]
\end{cor}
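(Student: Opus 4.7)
The plan is to derive this corollary as a direct consequence of \Cref{thm:cauchylike}, essentially by unpacking the definition of \(r(x)\). Starting from the conclusion of \Cref{thm:cauchylike}, namely
\[
    |\nabla^\ell f(x)|\leq C\max_{\substack{0\leq j\leq k\\j\;\mathrm{even}}}\bigg\{\sup_{|\xi|=1}[\partial^j_\xi f(x)]_+^\frac{k-\ell+\alpha}{k-j+\alpha}\bigg\},
\]
I would observe that, by the very definition \eqref{eq:controlfunc} of \(r\), the inequality \(\sup_{|\xi|=1}[\partial^j_\xi f(x)]_+^{1/(k-j+\alpha)}\leq r(x)\) holds for every even \(j\leq k\). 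Since \(k-\ell+\alpha\geq 0\), raising both sides of this pointwise bound to the power \(k-\ell+\alpha\) preserves the inequality and gives
\[
    \sup_{|\xi|=1}[\partial^j_\xi f(x)]_+^\frac{k-\ell+\alpha}{k-j+\alpha}\leq r(x)^{k-\ell+\alpha}
\]
uniformly in \(j\). The right-hand side no longer depends on \(j\), so taking the maximum over all even \(j\leq k\) on the left yields the same upper bound. Combining this with the estimate from \Cref{thm:cauchylike} gives the desired inequality \(|\nabla^\ell f(x)|\leq Cr(x)^{k-\ell+\alpha}\), with the same constant \(C\) as in \Cref{thm:cauchylike}.

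There is no real obstacle here; the work of controlling odd-order derivatives by even-order ones (via the cancellation identities of \Cref{thm:specialodds} and the non-negative Taylor polynomial argument) was already carried out in the proof of \Cref{thm:cauchylike}. The role of \Cref{cor:controlbound} is simply to repackage that theorem into a form where the right-hand side is a single common power of one control function \(r(x)\), which is the form actually used in the subsequent partition of unity construction and the proof of \Cref{thm:c3main}.
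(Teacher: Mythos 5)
Your proof is correct and is exactly the intended (if unstated) reasoning in the paper: the corollary is presented without a separate argument, as an immediate consequence of \Cref{thm:cauchylike} and the definition \eqref{eq:controlfunc}, and your unpacking of the definition — using that each term \(\sup_{|\xi|=1}[\partial^j_\xi f(x)]_+^{1/(k-j+\alpha)}\) is bounded by \(r(x)\) and that raising to the nonnegative power \(k-\ell+\alpha\) preserves the bound — is precisely the step being elided. The only point worth being explicit about, which you implicitly rely on, is that \(\sup_\xi g(\xi)^p = (\sup_\xi g(\xi))^p\) for nonnegative \(g\) and \(p>0\), so the power can be pulled through the supremum.
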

Following \cite{Tataru} and \cite{SOS_I}, we show that \(r\) is essentially constant on sufficiently small balls. This property, which we call slow variation, is critical to our ensuing arguments.

\begin{lem}\label{lem:slowvar}
There exists a constant \(\nu>0\) such that \(|r(x)-r(y)|\leq \frac{1}{4}r(x)\) when \(|x-y|\leq \nu r(x)\).
\end{lem}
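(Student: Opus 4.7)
The strategy is to compare \(r(x)\) and \(r(y)\) term by term in the defining maximum: fix an even \(j\) with \(0\leq j\leq k\) and a unit vector \(\xi\), and estimate how much \([\partial^j_\xi f]_+\) can change between \(x\) and \(y\). Since \(\partial^j_\xi f\in C^{k-j,\alpha}(\mathbb{R}^n)\), a Taylor expansion centred at \(x\) and evaluated at \(y\) gives
\[
|\partial^j_\xi f(y)-\partial^j_\xi f(x)|\leq C\sum_{m=1}^{k-j}|y-x|^m|\nabla^{m+j}f(x)|+C|y-x|^{k-j+\alpha}.
\]
The key is then to invoke \Cref{cor:controlbound}, which bounds each intermediate derivative by \(Cr(x)^{k-j-m+\alpha}\). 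Assuming \(|y-x|\leq\nu r(x)\) with \(\nu\leq 1\), the inequalities \(\nu^m\leq\nu\leq\nu^\alpha\) (valid for \(m\geq 1\) and \(0<\alpha\leq 1\)) collapse every term to a uniform estimate
\[
|\partial^j_\xi f(y)-\partial^j_\xi f(x)|\leq C\nu^\alpha r(x)^{k-j+\alpha}.
\]

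For the upper bound on \(r(y)\), I would apply the elementary inequality \([a]_+\leq [b]_++|a-b|\) together with \([\partial^j_\xi f(x)]_+\leq r(x)^{k-j+\alpha}\) to get \([\partial^j_\xi f(y)]_+\leq(1+C\nu^\alpha)r(x)^{k-j+\alpha}\). Raising to the \(1/(k-j+\alpha)\) power and majorizing this exponent by the uniform value \(1/\alpha\), I obtain
\[
[\partial^j_\xi f(y)]_+^{1/(k-j+\alpha)}\leq(1+C\nu^\alpha)^{1/\alpha}r(x),
\]
and taking a maximum over \(j\) and \(\xi\) yields \(r(y)\leq(1+C\nu^\alpha)^{1/\alpha}r(x)\). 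For the matching lower bound, I would select \(j_0\) and \(\xi_0\) realizing the supremum in the definition of \(r(x)\), so that \(r(x)^{k-j_0+\alpha}=[\partial^{j_0}_{\xi_0}f(x)]_+\). The same Taylor estimate then reads \((1-C\nu^\alpha)r(x)^{k-j_0+\alpha}\leq[\partial^{j_0}_{\xi_0}f(y)]_+\leq r(y)^{k-j_0+\alpha}\), which gives \(r(y)\geq(1-C\nu^\alpha)^{1/\alpha}r(x)\).

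To finish, I would choose \(\nu\) small enough that \((1+C\nu^\alpha)^{1/\alpha}\leq \frac{5}{4}\) and \((1-C\nu^\alpha)^{1/\alpha}\geq \frac{3}{4}\), both of which are possible since each quantity tends to \(1\) as \(\nu\to 0\); this immediately yields \(|r(x)-r(y)|\leq \frac{1}{4}r(x)\) as required. The principal subtlety, and the only place where care is really needed, is that the exponent \(1/(k-j+\alpha)\) varies with \(j\) and can be as large as \(1/\alpha\), so a naive application of Taylor's theorem would produce bounds that depend poorly on \(j\); the observation \(1/(k-j+\alpha)\leq 1/\alpha\) lets me select a single \(\nu\) that works uniformly across all even \(j\leq k\). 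Controlling the Taylor remainder this way — and tracking that \(\nu^{k-j+\alpha}\leq\nu^\alpha\) rather than \(\nu\) when \(j=k\) — is the only part of the argument where one must resist the temptation to collapse things too quickly.
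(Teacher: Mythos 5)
Your proof is correct, and it takes a genuinely different route from the paper's. The paper reduces to bounding $\bigl|[\partial^j_\xi f(x)]_+^{1/(k-j+\alpha)}-[\partial^j_\xi f(y)]_+^{1/(k-j+\alpha)}\bigr|$ directly, which forces a case split: when $j<k$ the exponent is $\leq 1$ and the subadditivity bound $|a^\beta-b^\beta|\leq|a-b|^\beta$ (the paper's Lemma on small powers) applies, but when $j=k$ the exponent is $1/\alpha\geq 1$ and one has to fall back on the ``almost Lipschitz'' estimate, which introduces an auxiliary $\varepsilon$ and forces the paper to solve a self-referential inequality with $|r(x)-r(y)|$ appearing on both sides. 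You sidestep all of that by postponing the power: you first compare $[\partial^j_\xi f(y)]_+$ to $[\partial^j_\xi f(x)]_+$ additively via $|[a]_+-[b]_+|\leq|a-b|$, land on $(1\pm C\nu^\alpha)r(x)^{k-j+\alpha}$, and only then raise to the $1/(k-j+\alpha)$ power — at which point the uniform majorization $1/(k-j+\alpha)\leq 1/\alpha$ (using monotonicity of $t\mapsto a^t$ on each side of $a=1$) lets one constant work for every even $j$. This eliminates both the case split and the bootstrapping step, so your argument is shorter and cleaner; what the paper's version buys instead is reuse of two lemmas it has already proved and needs again elsewhere. One small point worth flagging: the paper's Lemma~\ref{lem:Taylorest} as displayed keeps the $|\gamma|=0$ term on the right, which would spoil the $\nu$-dependence; you correctly use the sharper version that begins at $m=1$ (which is what the paper's own proof of that lemma actually establishes and what its slow-variation argument implicitly relies on).
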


\textit{Remark}: Throughout this chapter we will continue to put size restrictions on the parameter \(\nu\). Our conditions ensure that in the end, \(\nu\) is a very small positive constant. The actual value of \(\nu\) turns out to be inconsequential for our final construction, thanks to \Cref{thm:party}, so we encounter no issue in occasionally asking that \(\nu\) be smaller than previously assumed.

\begin{proof}
Fix a non-negative function \(f\in C^{k,\alpha}(\mathbb{R}^n)\) and let \(r\) be as in \eqref{eq:controlfunc}. Given any two points \(x,y\in\mathbb{R}^n\) for which \(|x-y|\leq \nu r(x)\), we employ Lemmas \ref{lem:maxproplem} and \ref{lem:supprop} to first make the estimate
\begin{equation}\label{eq:intermed4}
    |r(x)-r(y)|\leq \max_{\substack{0\leq j\leq k,\\j\;\mathrm{even}}}\big\{\sup_{|\xi|=1}\big|[\partial^j_\xi f(x)]_+^\frac{1}{k-j+\alpha}-[\partial^j_\xi f(y)]_+^\frac{1}{k-j+\alpha}\big|\big\}.
\end{equation}
To proceed, first assume that \(j<k\), so that \(k-j+\alpha> 1\). Then \Cref{lem:smallpow} applies, and we get
\[
    \big|[\partial^j_\xi f(x)]_+^\frac{1}{k-j+\alpha}-[\partial^j_\xi f(y)]_+^\frac{1}{k-j+\alpha}\big|\leq |\partial^j_\xi f(x)-\partial^j_\xi f(y)|^\frac{1}{k-j+\alpha}.
\]
To estimate the right-hand side above, we expand the directional operator \(\partial_\xi^j\) from \eqref{eq:directionalop} to write
\[
    |\partial^j_\xi f(x)-\partial^j_\xi f(y)|=\bigg|\sum_{|\beta|=j}\frac{\beta!}{j!}\xi^\beta(\partial^\beta f(x)-\partial^\beta f(y))\bigg|\leq \sum_{|\beta|=j}\frac{\beta!}{j!}|\partial^\beta f(x)-\partial^\beta f(y)|.
\]
Then, \Cref{lem:Taylorest} together with the bounds \(|\nabla^\ell f(x)|\leq Cr(x)^{k-\ell+\alpha}\) and \(|x-y|\leq \nu r(x)\) gives us
\[
    |\partial^\beta f(x)-\partial^\beta f(y)|\leq C\sum_{0\leq |\gamma|\leq k-|\beta|}\nu^{|\gamma|}r(x)^{k-|\beta|+\alpha}+C\nu^{k-|\beta|+\alpha}r(x)^{k-|\beta|+\alpha}.
\]
Every power of \(\nu\) above is larger than one, since \(|\beta|=j<k\). Thus, assuming that \(\nu\leq 1\), we find there exists a constant \(C\) which depends only on \(k\), \(\alpha\) and \(|\beta|\) for which
\begin{equation}\label{eq:omegaeqn}
    |\partial^\beta f(x)-\partial^\beta f(y)|\leq C\nu r(x)^{k-|\beta|+\alpha}.
\end{equation}
Therefore \(|\partial^j_\xi f(x)-\partial^j_\xi f(y)|\leq C\nu r(x)^{k-|\beta|+\alpha}\), and since this bound is independent of the direction \(\xi\) we can choose a small number \(\nu\) which is independent of \(x\) such that the following holds,
\[
    \sup_{|\xi|=1}\big|[\partial^j_\xi f(x)]_+^\frac{1}{k-j+\alpha}-[\partial^j_\xi f(y)]_+^\frac{1}{k-j+\alpha}\big|\leq  C\nu^\frac{1}{k-j+\alpha}r(x)\leq \frac{1}{4}r(x).
\]

It remains to consider the case \(j=k\), which arises when \(k\) is even. This time, we can employ \Cref{lem:almostLipschitz}, taking \(\beta=\frac{1}{\alpha}\) and \(\varepsilon=\nu^\alpha\). Combining with the inequality \(|x-y|\leq \nu r(x)\), we get
\[
    \sup_{|\xi|=1}\big|[\partial^k_\xi f(x)]_+^\frac{1}{\alpha}-[\partial^k_\xi f(y)]_+^\frac{1}{\alpha}\big|\leq\frac{C\nu (1+\nu^\alpha)r(x)}{((1+\nu^\alpha)^{\frac{\alpha}{1-\alpha}}-1)^\frac{1-\alpha}{\alpha}}+\nu^\alpha\max\{r(x),r(y)\}.
\]
If \(\nu\leq 1\) the quotient above is bounded by \(C\nu^\alpha r(x)\), for a constant \(C\) that depends only on \(\alpha\) and the H\"older semi-norms of \(f\). Since \(\max\{r(x),r(y)\}\leq r(x)+|r(x)-r(y)|\) by non-negativity of \(r\), when \(\nu\) is small enough we  have
\[
    \sup_{|\xi|=1}\big|[\partial^k_\xi f(x)]_+^\frac{1}{\alpha}-[\partial^k_\xi f(y)]_+^\frac{1}{\alpha}\big|\leq C\nu^\alpha r(x)+\nu^\alpha|r(x)-r(y)|\leq \frac{1}{8}r(x)+\frac{1}{2}|r(x)-r(y)|.
\]
It follows from these bounds and \eqref{eq:intermed4} that \(|r(x)-r(y)|\leq \frac{1}{4}r(x)\), as we wished to show.
\end{proof}

\textit{Remark}: The choice of the constant \(\frac{1}{4}\) in the preceding lemma is largely arbitrary; any constant between \(0\) and \(1\) will suffice, and we only choose \(\frac{1}{4}\) to simplify some later calculations and avoid introducing a new parameter unnecessarily.

Our main analysis takes place in the balls \(B(x,\nu r(x))\) for \(\nu\) as above, and the preceding lemma shows that \(r\) is essentially constant in these sets. Explicitly, for every \(y\in B(x,\nu r(x))\) the preceding lemma shows that \(\frac{3}{4}r(x)\leq r(y)\leq \frac{5}{4}r(x)\). The following result goes further, ensuring that \(f\) and its second derivatives also vary slowly on suitably small balls.

\begin{cor}\label{cor:supplementary}
Let \(f\in C^{k,\alpha}(\mathbb{R}^n)\) be non-negative for \(k\geq 2\), and let \(r\) be as in \eqref{eq:controlfunc}. If \(\nu\) is a sufficiently small positive constant, then there exists another constant \(\omega\) for which
\begin{itemize}
    \item[(1)] \(|f(x)-f(y)|\leq \frac{1}{2}\omega \nu r(x)^{k+\alpha}\) whenever \(|x-y|\leq \nu r(x)\), and
    \item[(2)] \(|\partial^\beta f(x)-\partial^\beta f(y)|\leq \frac{1}{2}r(x)^{k-2+\alpha}\) whenever \(|x-y|\leq \sqrt{\nu^2+3\nu\omega}r(x)\) and \(|\beta|=2\).
\end{itemize}
\end{cor}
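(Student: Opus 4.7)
The plan is to deduce both items from the slow variation of \(r\) established in \Cref{lem:slowvar} and the pointwise derivative bounds from \Cref{cor:controlbound}, together with an integral form of the mean value theorem. The key quantifier order is: first fix \(\omega\) in terms of the data of \(f\) (so that \(\omega\) is independent of the final choice of \(\nu\)), and then shrink \(\nu\) as needed so that both parts succeed.

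For (1), I would write
\[
    f(y)-f(x)=\int_{0}^{1}\nabla f(x+t(y-x))\cdot(y-x)\,dt.
\]
By \Cref{lem:slowvar}, one has \(r(z)\leq \frac{5}{4}r(x)\) for every \(z\in B(x,\nu r(x))\), and by \Cref{cor:controlbound} applied with \(\ell=1\), \(|\nabla f(z)|\leq C r(z)^{k-1+\alpha}\leq C r(x)^{k-1+\alpha}\) for a constant \(C\) depending only on \(f\). Combined with \(|y-x|\leq \nu r(x)\), this yields \(|f(x)-f(y)|\leq C\nu r(x)^{k+\alpha}\), so taking \(\omega:=2C\) delivers (1). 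Crucially, this \(\omega\) depends only on the H\"older data of \(f\) and not on the eventual smallness of \(\nu\).

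For (2), set \(\nu':=\sqrt{\nu^{2}+3\nu\omega}\). Since \(\omega\) has already been fixed, \(\nu'\to 0\) as \(\nu\to 0\), so after a first shrinking of \(\nu\) I may assume the slow variation conclusion of \Cref{lem:slowvar} remains valid on the larger ball \(B(x,\nu' r(x))\), giving \(r(z)\leq \frac{5}{4}r(x)\) throughout. When \(k\geq 3\), I would apply the fundamental theorem of calculus to \(\partial^{\beta}f\) with \(|\beta|=2\) and use the bound \(|\nabla\partial^{\beta}f(z)|\leq C r(z)^{k-3+\alpha}\leq C r(x)^{k-3+\alpha}\) from \Cref{cor:controlbound} to obtain \(|\partial^{\beta}f(x)-\partial^{\beta}f(y)|\leq C\nu' r(x)^{k-2+\alpha}\). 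When \(k=2\), the \(\alpha\)-H\"older continuity of \(\partial^{\beta}f\) directly gives \(|\partial^{\beta}f(x)-\partial^{\beta}f(y)|\leq [\partial^{\beta}f]_{\alpha}(\nu')^{\alpha} r(x)^{\alpha}\). In both cases, a final shrinking of \(\nu\) makes the right-hand side at most \(\frac{1}{2} r(x)^{k-2+\alpha}\), yielding (2).

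The main obstacle is the circular appearance of \(\nu\), \(\omega\), and \(\nu'\): item (1) produces \(\omega\), while item (2) requires slow variation on a ball whose radius is controlled by \(\omega\). Getting the order of quantifiers correct (fix \(\omega\) from a \(\nu\)-independent constant, then choose \(\nu\) small enough to preserve slow variation on the enlarged ball and to absorb the constants in (2)) is the only subtlety; once it is arranged, the remaining estimates are standard consequences of the derivative bounds already established.
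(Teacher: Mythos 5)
Your argument is correct and mirrors the paper's proof in both structure and content. The paper's proof is a one-liner that cites the intermediate estimate \eqref{eq:omegaeqn} (derived inside the proof of \Cref{lem:slowvar} from the Taylor bound of \Cref{lem:Taylorest}) with \(|\beta|=0\) for item \textit{(1)} and \(|\beta|=2\) for item \textit{(2)}, then shrinks \(\nu\). You reach the same bound by the fundamental theorem of calculus, controlling \(\nabla f\) and \(\nabla\partial^\beta f\) along the segment via \Cref{cor:controlbound} together with slow variation. The paper's route through \Cref{lem:Taylorest} expresses the difference in terms of derivatives evaluated only at the base point \(x\), which bypasses any appeal to slow variation on the enlarged ball \(B\bigl(x,\sqrt{\nu^{2}+3\nu\omega}\,r(x)\bigr)\) --- a minor economy, not a different idea. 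One respect in which your proof is actually more careful than the paper's: inequality \eqref{eq:omegaeqn} is only derived under the assumption \(|\beta|<k\), so the paper's citation of it with \(|\beta|=2\) silently omits the boundary case \(k=2\); you notice this and supply the correct bound \(|\partial^\beta f(x)-\partial^\beta f(y)|\leq[\partial^\beta f]_\alpha(\nu')^{\alpha}r(x)^{\alpha}\) directly from the \(\alpha\)-H\"older continuity of \(\partial^\beta f\), which is exactly how the gap should be filled. Your resolution of the quantifier order --- fix \(\omega\) from a \(\nu\)-independent constant, then shrink \(\nu\) so that both the slow variation and the absorption of constants succeed on the enlarged ball --- is precisely the mechanism implicit in the paper's phrase ``once again choosing \(\nu\) small enough.''
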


\begin{proof}
Estimate \textit{(1)} follows by taking \(\omega=2C\) for \(C\) as in \eqref{eq:omegaeqn} when \(|\beta|=0\), while \textit{(2)} is given by replacing \(\nu\) with \(\sqrt{\nu^2+3\nu\omega}\) in \eqref{eq:omegaeqn} when \(|\beta|=2\), and once again choosing \(\nu\) small enough.
\end{proof}

For the remainder of this section, we fix \(x\in\mathbb{R}^n\) and the corresponding ball \(B=B(x,\nu r(x))\). We study the behaviour of \(f\) on \(B\) in two exhaustive cases: when \(f(x)\geq \omega\nu r(x)^{k+\alpha}\), and when \(f(x)<\omega\nu r(x)^{k+\alpha}\). In the first case, \(f\) has a half-regular square root on \(B\), as we now show.

\begin{lem}\label{lem:local1}
Let \(f\), \(r\), and \(\nu\) be as above and assume that \(f(x)\geq \omega\nu r(x)^{k+\alpha}\). Then for any multi-index \(\beta\) of order \(|\beta|\leq \frac{k+\alpha}{2}\), and for any \(y\in B\), the following pointwise estimates hold:
\begin{itemize}
    \item[(1)] \(|\partial^\beta\sqrt{f(y)}|\leq Cr(y)^{\frac{k+\alpha}{2}-|\beta|}\),
    \item[(2)] \([\partial^\beta\sqrt{f}]_{\frac{\alpha}{2}}(y)\leq Cr(y)^{\frac{k}{2}-|\beta|}\) if \(k\) is even,
    \item[(3)] \([\partial^\beta\sqrt{f}]_{\frac{1+\alpha}{2}}(y)\leq Cr(y)^{\frac{k-1}{2}-|\beta|}\) if \(k\) is odd.
\end{itemize}
Recall that \([f]_\alpha(y)\) denotes the pointwise variant of the \(\alpha\)-H\"older semi-norm, defined in \eqref{eq:pwop}. 
\end{lem}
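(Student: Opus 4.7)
The first step is to use the hypothesis $f(x) \geq \omega\nu r(x)^{k+\alpha}$ together with Corollary \ref{cor:supplementary}(1) and the slow variation of $r$ (Lemma \ref{lem:slowvar}) to deduce $f(y) \gtrsim r(y)^{k+\alpha}$ uniformly on $B$. Explicitly, $|f(y)-f(x)| \leq \tfrac{1}{2}\omega\nu r(x)^{k+\alpha}$ forces $f(y) \geq \tfrac{1}{2}\omega\nu r(x)^{k+\alpha}$, and $r(y) \in [\tfrac{3}{4}r(x),\tfrac{5}{4}r(x)]$ converts this into a lower bound in terms of $r(y)$. Because $f > 0$ throughout $B$, $\sqrt{f}$ inherits the $C^{k,\alpha}$ regularity of $f$ there, and its derivatives may be computed classically via Corollary \ref{cor:rootscor}.

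To prove (1), I would substitute the chain-rule expansion
\[
    \partial^\beta \sqrt{f(y)} \;=\; \sum_{\Gamma \in P(\beta)} C_{\beta,\Gamma}\, f(y)^{1/2 - |\Gamma|} \prod_{\gamma \in \Gamma} \partial^\gamma f(y),
\]
and bound each factor using Corollary \ref{cor:controlbound} together with the lower bound on $f$: $|\partial^\gamma f(y)| \leq C r(y)^{k-|\gamma|+\alpha}$ and $f(y)^{1/2-|\Gamma|} \leq C r(y)^{(k+\alpha)(1/2-|\Gamma|)}$. Thanks to the partition identities $\sum_{\gamma\in\Gamma}|\gamma| = |\beta|$ and $|\Gamma| = \sum_{\gamma\in\Gamma} 1$, the $r(y)$-exponents in each summand telescope to the common value $(k+\alpha)/2 - |\beta|$, independent of $\Gamma$. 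Since $P(\beta)$ is a finite set, summing the estimates produces (1).

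For parts (2) and (3), let $s$ denote the relevant H\"older exponent, namely $\alpha/2$ when $k$ is even and $(1+\alpha)/2$ when $k$ is odd. The chain-rule expression shows that $\partial^\beta\sqrt{f}$ is at least $C^{k-|\beta|,\alpha}$ on $B$, so the pointwise $s$-semi-norm $[\partial^\beta\sqrt{f}]_s(y)$ at the interior point $y$ vanishes in every situation except when $k$ is odd and $\alpha = 1$ (where $s = 1$). In that borderline setting one has $[\partial^\beta\sqrt{f}]_1(y) = |\nabla\partial^\beta\sqrt{f}(y)|$: if $|\beta|+1 \leq (k+1)/2 = (k+\alpha)/2$, part (1) applied at order $|\beta|+1$ immediately gives the bound $C r(y)^{(k-1)/2 - |\beta|}$; otherwise $|\beta| = (k+1)/2$, and one obtains the same bound by differentiating the chain-rule expansion once more, using Corollary \ref{cor:controlbound} for the subcritical terms and, when $k = 1$, the Lipschitz property of $\partial^k f$ (available because $\alpha = 1$) for the critical term.

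The principal technical hurdle is the exponent bookkeeping in part (1)—confirming that the $r(y)$-exponents genuinely coincide across all chain-rule summands, which requires careful tracking of the partition identities. Once that calculation is in hand, the H\"older estimates in (2) and (3) follow with little more effort, being either trivial from the intrinsic regularity of $\sqrt{f}$ on the open ball $B$ or a single further application of the same chain-rule scheme already developed for (1).
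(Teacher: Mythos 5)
Your treatment of part (1) follows the paper's argument: expand $\partial^\beta\sqrt{f}$ via the chain rule for square roots (Corollary \ref{cor:rootscor}), bound each factor using the lower bound $f\gtrsim r^{k+\alpha}$ on $B$ and the derivative estimates of Corollary \ref{cor:controlbound}, and check that the $r(y)$-exponents telescope to $\frac{k+\alpha}{2}-|\beta|$. This is correct.

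For parts (2) and (3) you take a genuinely different and much shorter route, exploiting the fact that the pointwise seminorm \eqref{eq:pwop} vanishes at any interior point $y$ whenever the function is strictly more regular than $\lambda$-H\"older near $y$; in particular $[g]_\lambda(y)=0$ whenever $g$ is Lipschitz near $y$ and $\lambda<1$. That observation is correct, and since $\sqrt{f}$ inherits full $C^{k,\alpha}$ regularity on $B$ (because $f$ is bounded below there), (2) and (3) become trivially true except when $k$ is odd and $\alpha=1$, which you handle separately via differentiability and one extra application of (1). The difficulty is that this reveals the stated inequality to be weaker than what is actually proved and used. The paper's argument does not merely control a limsup: it establishes the uniform difference-quotient bound $|\partial^\beta\sqrt{f}(w)-\partial^\beta\sqrt{f}(z)|\le Cr(y)^{\Lambda-|\beta|}|w-z|^\lambda$ for \emph{all} $w,z\in B$, from which the local seminorm bound $[\partial^\beta\sqrt{f}]_{\lambda,B}\le Cr(x)^{\Lambda-|\beta|}$ and the ensuing Corollary \ref{cor:theniceone} ($\sqrt{f}\in C^{(k+\alpha)/2}(B)$) follow. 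The limsup-based pointwise seminorm does \emph{not} recover this: for any $C^1$ function $g$ and $\lambda<1$ one has $[g]_\lambda(y)=0$ at every interior $y$ while $[g]_{\lambda,B}$ is generally positive, so the identity $[g]_{\lambda,\Omega}=\sup_{y\in\Omega}[g]_\lambda(y)$ asserted after \eqref{eq:pwop} fails for smooth functions, and the remark after the lemma that the pointwise estimates ``imply local variants'' by taking a supremum over $y\in B$ is not a valid inference. Your proof is therefore sound for the lemma as literally stated, but by trivializing the limsup it does not deliver the ball-uniform H\"older estimate that the recombination Lemma \ref{lem:recomb} and the proof of Theorem \ref{thm:c3main} genuinely rely on; for that, the paper's more laborious estimate over all of $B$ is what does the work, and the statement of the lemma should really be read as asserting that stronger uniform bound.
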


\begin{proof}
Using \Cref{cor:rootscor} we bound derivatives of \(f\) at \(y\in B\), estimating pointwise to first get
\[
    |\partial^\beta\sqrt{f(y)}|=\bigg|\sum_{\Gamma\in P(\beta)}C_{\beta,\Gamma}f(y)^{\frac{1}{2}-|\Gamma|}\prod_{\gamma\in\Gamma}\partial^\gamma f(y)\bigg|\leq C\sum_{\Gamma\in P(\beta)}f(y)^{\frac{1}{2}-|\Gamma|}\prod_{\gamma\in\Gamma}|\partial^\gamma f(y)|.
\]
Observe that \(f(y)\geq \frac{1}{2}\omega\nu r(x)^{k+\alpha}\) for any \(y\in B\) by estimate \textit{(1)} of \Cref{cor:supplementary}. Additionally, \Cref{cor:controlbound} and slow variation of \(r\) on \(B\) together show that
\[
    |\partial^\beta\sqrt{f(y)}|\leq C\sum_{\Gamma\in P(\beta)}r(y)^{(k+\alpha)(\frac{1}{2}-|\Gamma|)}\prod_{\gamma\in\Gamma}r(y)^{k+\alpha-|\gamma|}= Cr(y)^{\frac{k+\alpha}{2}-|\beta|}.
\]
Since this holds pointwise on \(B\) we conclude that item \textit{(1)} holds.

For the semi-norm estimates \textit{(2)} and \textit{(3)}, we treat the odd and even cases simultaneously by letting \(\Lambda\) denote the integer part of \(\frac{k}{2}\) and setting \(\lambda=\frac{k+\alpha}{2}-\Lambda\). To prove the claimed results it suffices to show that \([\partial^\beta\sqrt{f}]_{\lambda}(y)\leq Cr(y)^{\Lambda-|\beta|}\) whenever \(|\beta|\leq \Lambda\) and \(y\in B\). Let \(\beta\) be given and observe that by \Cref{cor:rootscor} together with the triangle inequality, we have for \(w,z\in B\) that
\begin{equation}\label{eq:ctrl}
\begin{split}
    |\partial^\beta\sqrt{f(w)}-\partial^\beta\sqrt{f(z)}|&\leq C\sum_{\Gamma\in P(\beta)} f(w)^{\frac{1}{2}-|\Gamma|}\bigg|\prod_{\gamma\in\Gamma}\partial^\gamma f(w)-\prod_{\gamma\in\Gamma}\partial^\gamma f(z)\bigg|\\
    &\qquad+C\sum_{\Gamma\in P(\beta)} |f(w)^{\frac{1}{2}-|\Gamma|}-f(z)^{\frac{1}{2}-|\Gamma|}|\prod_{\gamma\in\Gamma}|\partial^\gamma f(z)|.    
\end{split}
\end{equation}
We treat the terms on the right-hand side above separately, first using the Mean Value Theorem and slow variation of \(r\) on \(B\) to obtain the bound
\[
    |f(w)^{\frac{1}{2}-|\Gamma|}-f(z)^{\frac{1}{2}-|\Gamma|}|\leq Cr(y)^{\frac{k+\alpha}{2}-1-|\Gamma|(k+\alpha)}|w-z|\leq Cr(y)^{\frac{k+\alpha}{2}-|\Gamma|(k+\alpha)-\lambda}|w-z|^\lambda.
\]
Explicitly, we have used here that \(f\) is bounded below by a multiple of \(r(y)^{k+\alpha}\) on \(B\), followed by the fact that \(|w-z|\leq Cr(y)^{1-\lambda}|w-z|^\lambda\) for \(w,z\in B\). Bounding the other derivatives of \(f\) on \(B\) in the same fashion, we see that
\[
    |f(w)^{\frac{1}{2}-|\Gamma|}-f(z)^{\frac{1}{2}-|\Gamma|}|\prod_{\gamma\in\Gamma}|\partial^\gamma f(z)|\leq Cr(y)^{\frac{k+\alpha}{2}-|\Gamma|(k+\alpha)-\lambda}|w-z|^\lambda\prod_{\gamma\in\Gamma}r(y)^{k+\alpha-|\gamma|}.
\]
Since \(\Gamma\in P(\beta)\), we can simplify the right-hand side above to \(Cr(y)^{\Lambda-|\beta|}|w-z|^\lambda\). To estimate the remaining term of \eqref{eq:ctrl}, we use \Cref{lem:proddiff} to produce the bound
\begin{equation}\label{eq:sub}
    \bigg|\prod_{\gamma\in\Gamma}\partial^\gamma f(w)-\prod_{\gamma\in\Gamma}\partial^\gamma f(z)\bigg|\leq \sum_{\gamma\in\Gamma}\bigg(\prod_{\mu\in\Gamma\setminus\{\gamma\}}\sup_{B}|\partial^\mu f|\bigg)|\partial^\gamma f(w)-\partial^\gamma f(z)|.    
\end{equation}

Arguing as above, we can control the product in \eqref{eq:sub} by \(Cr(y)^{(k+\alpha)(|\Gamma|-1)-|\beta|+|\gamma|}\). Further, since \(|\beta|\leq\Lambda< \frac{k+\alpha}{2}\), whenever \(\gamma\in P(\beta)\) we must have \(|\gamma|<k\), meaning that we can use the Mean Value Theorem and \Cref{cor:controlbound} to get for \(w,z\in B\) that
\[
    |\partial^\gamma f(w)-\partial^\gamma f(z)|\leq Cr(y)^{k+\alpha-|\gamma|-\lambda}|w-z|^\lambda.
\]
Therefore the left-hand side of \eqref{eq:sub} is bounded by \(Cr(y)^{(k+\alpha)|\Gamma|-|\beta|-\lambda}|w-z|^\lambda\). Consequently,
\[
    f(w)^{\frac{1}{2}-|\Gamma|}\bigg|\prod_{\gamma\in\Gamma}\partial^\gamma f(w)-\prod_{\gamma\in\Gamma}\partial^\gamma f(z)\bigg|\leq Cr(y)^{\frac{k+\alpha}{2}-|\beta|-\lambda}|y-z|^\lambda=Cr(y)^{\Lambda-|\beta|}|y-z|^\lambda.
\]
Now from \eqref{eq:ctrl} and definition \eqref{eq:pwop} of the pointwise semi-norm, we observe for any \(y\in B\) that
\[
    [\partial^\beta\sqrt{f}]_\lambda(y)=\limsup_{w,z\rightarrow y}\frac{|\partial^\beta\sqrt{f(w)}-\partial^\beta\sqrt{f(z)}|}{|w-z|^\lambda}\leq Cr(y)^{\Lambda-|\beta|},
\]
where critically, the constant \(C\) is independent of \(B\). Hence items \textit{(2)} and \textit{(3)} hold as claimed.
\end{proof}

In the proof above we constructed pointwise \(\alpha\)-H\"older semi-norm estimates, rather than global semi-norm estimates, since the former are needed to prove \Cref{thm:c3main}. These pointwise estimates actually imply local variants, since \textit{(2)} and \textit{(3)} together with slow variation of \(r\) on the ball \(B=B(x,\nu r(x))\) give \([\partial^\beta\sqrt{f}]_{\frac{\alpha}{2}}(y)\leq Cr(x)^{\frac{k}{2}-|\beta|}\) and \([\partial^\beta\sqrt{f}]_{\frac{1+\alpha}{2}}(y)\leq Cr(x)^{\frac{k-1}{2}-|\beta|}\), respectively when \(k\) is even and odd. Taking a supremum over \(y\in B\) yields the local bounds
\[
    [\partial^\beta\sqrt{f}]_{\frac{\alpha}{2},B}\leq Cr(x)^{\frac{k}{2}-|\beta|}\qquad\mathrm{and}\qquad[\partial^\beta\sqrt{f}]_{\frac{1+\alpha}{2},B}\leq Cr(x)^{\frac{k-1}{2}-|\beta|},
\]
again when \(k\) is even and odd respectively. Since \(x\) and \(B\) are fixed, we can conclude the following.

\begin{cor}
Let \(f\in C^{k,\alpha}(\mathbb{R}^n)\) be non-negative and let \(r\) be as in \eqref{eq:controlfunc}. If \(\nu\) is a sufficiently small positive constant and \(f(x)\geq \omega\nu r(x)^{k+\alpha}\), then \(\sqrt{f}\in C^\frac{k+\alpha}{2}(B(x,\nu r(x)))\).
\end{cor}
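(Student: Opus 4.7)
The plan is to combine the pointwise estimates from \Cref{lem:local1} with the slow variation property of \(r\) established in \Cref{lem:slowvar}, converting pointwise bounds into uniform estimates on \(B=B(x,\nu r(x))\), and then upgrading pointwise H\"older semi-norms to global ones over \(B\) via the identity \([g]_{\lambda,\Omega}=\sup_{y\in\Omega}[g]_\lambda(y)\) recorded earlier in the chapter.

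First I would set \(\Lambda=\lfloor k/2\rfloor\) and \(\lambda=\frac{k+\alpha}{2}-\Lambda\), so that by definition \eqref{eq:halfreg} we have \(C^{\frac{k+\alpha}{2}}(B)=C^{\Lambda,\lambda}(B)\) in both the even and odd cases. To conclude membership in this space it suffices to show two things: that \(\partial^\beta\sqrt{f}\) is uniformly bounded on \(B\) for every \(|\beta|\leq\Lambda\), and that \([\partial^\beta\sqrt{f}]_{\lambda,B}<\infty\) for every \(|\beta|=\Lambda\).

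For the uniform bounds, I would invoke item \textit{(1)} of \Cref{lem:local1} to obtain \(|\partial^\beta\sqrt{f(y)}|\leq Cr(y)^{\frac{k+\alpha}{2}-|\beta|}\) pointwise on \(B\), and then use \Cref{lem:slowvar} (which forces \(r(y)\leq\frac{5}{4}r(x)\) for \(y\in B\)) to upgrade this to \(\sup_B|\partial^\beta\sqrt{f}|\leq Cr(x)^{\frac{k+\alpha}{2}-|\beta|}\). Since \(r(x)\) is a fixed finite number, this supremum is finite for every admissible \(\beta\). For the top-order H\"older estimate, items \textit{(2)} and \textit{(3)} of \Cref{lem:local1} give \([\partial^\beta\sqrt{f}]_\lambda(y)\leq Cr(y)^{\Lambda-|\beta|}\) pointwise; applying slow variation again bounds this uniformly by \(Cr(x)^{\Lambda-|\beta|}\). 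Taking a supremum over \(y\in B\) and using the identity \([\partial^\beta\sqrt{f}]_{\lambda,B}=\sup_{y\in B}[\partial^\beta\sqrt{f}]_\lambda(y)\) yields the required global semi-norm bound.

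There is no real obstacle here; the corollary is essentially a bookkeeping consequence of \Cref{lem:local1} combined with the slow variation of \(r\). The only minor subtlety is making sure that both parities of \(k\) are handled uniformly by working with \(\Lambda\) and \(\lambda\) as defined above, and verifying that the constants arising from the pointwise bounds remain finite when converted to uniform bounds --- but this is immediate because \(r(x)\) is fixed and \(r(y)/r(x)\) is bounded above and below on \(B\).
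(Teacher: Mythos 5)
Your proposal is correct and matches the paper's own argument: the paper derives the corollary in the paragraph immediately preceding it by taking the pointwise semi-norm estimates from \Cref{lem:local1}, using slow variation (\Cref{lem:slowvar}) to replace \(r(y)\) with \(r(x)\) on \(B\), and then taking a supremum over \(y\in B\) via the identity \([g]_{\lambda,B}=\sup_{y\in B}[g]_\lambda(y)\). Your uniform setup with \(\Lambda=\lfloor k/2\rfloor\) and \(\lambda=\frac{k+\alpha}{2}-\Lambda\) packages both parities cleanly, and the extra verification of the uniform bounds on lower-order derivatives is harmless (though, under the paper's definition of \(C^{k,\alpha}(\Omega)\), only the finiteness of the top-order semi-norm is formally required).
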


On the other hand, if \(f(x)<\omega\nu r(x)^{k+\alpha}\) then we can no longer conclude that the localization of \(f\) to \(B\) has a half-regular square root. Rather, the best we can do is form a local decomposition of \(f\) on \(B\) that takes the form
\begin{equation}\label{eq:loooooocal}
    f=g^2+F,
\end{equation}
where \(g\) is half as regular as \(f\) and the remainder \(F\) can be handled using other techniques. Specifically, \(F\) can be made to depend on only \(n-1\) variables and it inherits the regularity of \(f\).

Constructing the local decomposition described above is a laborious task, which comprises the remainder of this section. Lemmas \ref{lem:minia}, \ref{lem:minia2}, \ref{lem:nice}, and \ref{lem:implicitest} are devoted to constructing \(F\) and verifying that it has the desired properties -- our sequence of results is patterned after the decomposition arguments in \cite{Tataru} and \cite{SOS_I}, but we go to greater lengths to obtain estimates for every \(k\geq 0\). As such, the pages which follow are the most technically difficult of this thesis.

In what follows, we partition a variable \(x\in\mathbb{R}^n\) by writing \(x=(x',x_n)\) for \(x'\in\mathbb{R}^{n-1}\) and \(x_n\in\mathbb{R}\). Given a ball \(B\) we also write \(B'=\{x':x\in B\}\) to denote the projection of \(B\) onto \(\mathbb{R}^{n-1}\). Equipped with notation we have the following result.

\begin{lem}\label{lem:minia}
Assume that a non-negative function \(f\in C^{k,\alpha}(\mathbb{R}^n)\) satisfies \(f(x)<\omega \nu r(x)^{k+\alpha}\), and let \(r\) satisfy the following pointwise bound on \(B=B(x,\nu r(x))\),
\vspace{-0.25em}
\begin{equation}\label{eq:needed}
    r(y)\leq \max\bigg\{f(y)^\frac{1}{k+\alpha},\sup_{|\xi|=1}[\partial^2_\xi f(y)]_+^\frac{1}{k-2+\alpha}\bigg\}.
\end{equation}
Then after a suitable coordinate rotation centred at \(x\), there exists a function \(X\in C^{k-1}(B')\) which enjoys the following properties for \(y\in B\),
\begin{itemize}
    \item[(1)] \(|x_n-X(y')|\leq r(x)\),
    \item[(2)] \(f(y',X(y'))\leq f(y)\),
    \item[(3)] \(\partial_{x_n}f(y',X(y'))=0\).
\end{itemize}
\end{lem}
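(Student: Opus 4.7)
The plan is to realize $X(y')$ as the $x_n$-coordinate of the critical point of $f(y', \cdot)$ nearest to $x_n$, and to build it via the Implicit Function Theorem \Cref{thm:IFT} applied to $G(y) = \partial_{x_n} f(y)$. The preparation has three parts: pick the rotation so that $\partial_{x_n}^2 f$ is strictly positive on a large ball around $x$, show that $\partial_{x_n} f$ must vanish close to $x$ on the $x_n$-axis, and then extend the resulting implicit solution to all of $B'$.

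First I would use the smallness of $f(x)$ to arrange the rotation. Since $f(x)^{1/(k+\alpha)} < (\omega\nu)^{1/(k+\alpha)} r(x) < r(x)$ for $\nu$ small, the maximum in \eqref{eq:needed} must be attained by the second-derivative term, giving $\sup_{|\xi|=1}[\partial_\xi^2 f(x)]_+ \geq r(x)^{k-2+\alpha}$. Compactness of the unit sphere supplies a direction $\xi_0$ realizing this supremum; rotating coordinates so that $\xi_0 = e_n$ yields $\partial_{x_n}^2 f(x) \geq r(x)^{k-2+\alpha}$. Then \Cref{cor:supplementary}(2) propagates the lower bound, so on the larger ball $B_* = B(x, \sqrt{\nu^2 + 3\nu\omega}\, r(x))$ one has $\partial_{x_n}^2 f \geq \tfrac{1}{2} r(x)^{k-2+\alpha} > 0$.

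Next I would bound $|\partial_{x_n} f(x)|$ by a Malgrange-type argument. Taylor expanding $f$ at $x$ along $\pm t\, e_n$, using $f(x \pm t\, e_n) \geq 0$, and controlling the higher-order terms by \Cref{cor:controlbound}, the sum of the two inequalities gives, for $|t| \leq r(x)$,
\[
    |\partial_{x_n} f(x)| \leq \frac{f(x)}{|t|} + C|t|\, r(x)^{k-2+\alpha}.
\]
Optimizing in $|t|$ while invoking $f(x) < \omega\nu\, r(x)^{k+\alpha}$ yields $|\partial_{x_n} f(x)| \leq C\sqrt{\nu}\, r(x)^{k-1+\alpha}$. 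After possibly shrinking $\nu$, this is strictly smaller than the net change $\tfrac{1}{2} r(x)^{k-2+\alpha} \cdot \sqrt{\nu^2+3\nu\omega}\, r(x)$ that $\partial_{x_n} f$ undergoes across $B_*$ in the $e_n$ direction, and by the intermediate value theorem $\partial_{x_n} f$ vanishes at some point $(x', X(x'))$ in $B_*$. Since $\partial_{x_n} G = \partial_{x_n}^2 f > 0$ throughout $B_*$ and $G \in C^{k-1,\alpha}$, \Cref{thm:IFT} produces $X \in C^{k-1}$ solving $\partial_{x_n} f(y', X(y')) = 0$ locally; I would extend this to all of $B'$ by noting that for every $y' \in B'$ the map $x_n \mapsto \partial_{x_n} f(y', x_n)$ is strictly increasing on the fiber $I_{y'} = \{x_n : (y', x_n) \in B_*\}$, and, by slow variation of $\partial_{x_n} f$ combined with the bound above, still undergoes a sign change across $I_{y'}$.

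Property (3) is immediate by construction, and property (1) follows because $(y', X(y')) \in B_*$ forces $|X(y') - x_n| \leq \sqrt{\nu^2 + 3\nu\omega}\, r(x) \leq r(x)$ for $\nu$ small. For property (2), fix $y = (y', y_n) \in B$; both $(y', y_n)$ and $(y', X(y'))$ lie in $B_*$, so on the segment joining them in the $x_n$-direction $\partial_{x_n}^2 f > 0$, making $x_n \mapsto f(y', x_n)$ strictly convex there with a unique critical point at $X(y')$, whence $f(y', X(y')) \leq f(y', y_n) = f(y)$. The main obstacle I expect is the calibration of constants — ensuring that the Malgrange bound on $|\partial_{x_n} f(x)|$ is genuinely smaller than the linear growth of $\partial_{x_n} f$ on $B_*$, and that $X(y')$ remains in the region where $\partial_{x_n}^2 f$ is bounded below — which will require successive shrinkings of $\nu$, a flexibility permitted by the remark accompanying \Cref{lem:slowvar}.
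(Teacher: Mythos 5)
Your overall structure — rotate so $\partial_{x_n}^2 f$ is large, show $\partial_{x_n}f$ has a zero $X(y')$ on each vertical fiber, invoke the Implicit Function Theorem, and then read off (1)--(3) from strict convexity of $t\mapsto f(y',t)$ — is close in spirit to the paper's, but the step where you locate the zero is genuinely different, and it contains a gap.

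The paper argues by contradiction: it sets $h$ proportional to $\sqrt{\omega\nu}\,r(x)$, assumes the minimum of $t \mapsto f(y',t)$ over $[x_n-h, x_n+h]$ sits at the left endpoint (so $\partial_{x_n}f(y',x_n-h)\geq 0$), and Taylor-expands $f(y',x_n)$ about that endpoint. The first two terms are $\geq 0$ by non-negativity and the endpoint-minimum assumption, so $f(y',x_n)\geq \tfrac{1}{2}h^2\partial_{x_n}^2 f(y',\xi)\gtrsim h^2 r(x)^{k-2+\alpha}$, which contradicts $f(y',x_n)<\tfrac{3}{2}\omega\nu\, r(x)^{k+\alpha}$ once $h^2/r(x)^2$ is a suitable multiple of $\omega\nu$. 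The key point is that the derivative $\partial_{x_n}f$ at the endpoint never needs to be bounded; only its sign is used.

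Your route instead bounds $|\partial_{x_n}f(x)|$ by a Malgrange-type argument and then invokes the intermediate value theorem. The bound you obtain is $|\partial_{x_n}f(x)| \leq 2\sqrt{C_1\,\omega\nu}\,r(x)^{k-1+\alpha}$ where $C_1$ is the constant from \Cref{cor:controlbound} (entering through the Taylor-remainder terms you control). Meanwhile, the total increase of $\partial_{x_n}f$ across the vertical fiber of $B_*$ is at least $\tfrac{1}{2}r(x)^{k-2+\alpha}\cdot \sqrt{\nu^2+3\nu\omega}\,r(x) \approx \tfrac{1}{2}\sqrt{3\omega\nu}\,r(x)^{k-1+\alpha}$ for small $\nu$. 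Both quantities scale as $\sqrt{\nu}\,r(x)^{k-1+\alpha}$, so the $\sqrt{\omega\nu}$ factors cancel and the comparison reduces to $2\sqrt{C_1} < \tfrac{1}{2}\sqrt{3}$, i.e., $C_1 < 3/16$. Shrinking $\nu$ does not help here, contrary to what you assert, because it affects both sides identically; and the constant $C_1$ from \Cref{cor:controlbound} has no reason to be small. The fix would be to enlarge the radius factor in \Cref{cor:supplementary} from $\sqrt{\nu^2 + 3\nu\omega}$ to $\sqrt{\nu^2 + M\nu\omega}$ for an $M$ chosen \emph{after} $C_1$ is known (the proof of that corollary permits this since it only requires $\nu$ small), but as stated, with the factor $3$ pinned down, the IVT step does not close. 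The paper's contradiction argument avoids any bound on $\partial_{x_n}f$ and so never encounters this comparison; the only constant it has to manage is the relationship between the interval half-length $h$ and the radius of the ball where \Cref{cor:supplementary}(2) applies, both of which are proportional to $\sqrt{\omega\nu}\,r(x)$ with tunable coefficients.

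The remainder of your argument — using the uniqueness part of the IFT to patch the local $C^{k-1}$ solutions across $B'$, and deriving (1)--(3) from strict convexity — is sound and matches the paper.
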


\medskip

\textit{Remark}: Put simply, this result states that \(f\) has a unique minimum along each vertical ray in a cylinder of height \(2r(x)\) that contains \(B\). Moreover, the collection of these minima, viewed as a function on the base \(B'\), actually defines a function belonging to \(C^{k-1}(B')\). The required pointwise bound \eqref{eq:needed} holds automatically when \(k=2\) and \(k=3\), while if \(k\geq 4\) then additional conditions must be placed on \(f\) for \eqref{eq:needed} to hold; see \Cref{chap:higher}.

\begin{proof}
Let \(y'\) satisfy \(|y'-x'|\leq \nu r(x)\) so that \(y'\in B'\) and set \(h=\sqrt{3\omega\nu}\), where \(\nu\) is chosen small enough that \(\omega\nu\leq \frac{1}{3}\). Our aim is to show that the mapping \(t\mapsto f(y',t)\) has a unique minimum in the open interval \((x_n-h,x_n+h)\), so that items \textit{(1)-(3)} above follow as direct consequences. Observe that if \(f(x)<\omega\nu r(x)^{k+\alpha}< r(x)^{k+\alpha}\) then \(r(x)\) is strictly positive, and by \eqref{eq:needed} we have
\[
    r(x)=\sup_{|\xi|=1}[\partial^2_\xi f(x)]_+^\frac{1}{k-2+\alpha}.
\]
After a rotation of coordinates centred at \(x\), we can assume without loss of generality that the supremum above is attained by the \(x_n\) directional derivative of \(f\), giving \(r(x)^{k-2+\alpha}=\partial^2_{x_n}f(x)\). We note that non-negativity of \(f\) as well as its inclusion in \(C^{k,\alpha}(\mathbb{R}^n)\) are preserved by such coordinate changes.

For each \(t\in [x_n-h,x_n+h]\) we have \(\partial^2_{x_n}f(y',t)\geq\frac{1}{2}r(x)^{k-2+\alpha}\) by item \textit{(2)} of \Cref{cor:supplementary}, meaning that \(t\mapsto f(x',t)\) has a unique minimum on the compact interval \([x_n-h ,x_n+h]\).  Assume toward a contradiction that the minimum on this interval occurs at the left endpoint \(x_n-h\), so that \(\partial_{x_n}f(x',x_n-h)\geq 0\). Additionally, note that \(f(y',x_n)<\frac{3}{2}\omega\nu r(x)^{k+\alpha}\) by item \textit{(1)} of \Cref{cor:supplementary}. So for some \(\xi\in (x_n-h,x_n)\) we have from our choice of \(h\) that
\[
     \frac{3}{2}\omega\nu r(x)^{k+\alpha}>f(y',x_n)=f(y',x_n-h)+h\partial_{x_n}f(x',x_n-h)+\frac{1}{2}h^2\partial^2_{x_n}f(y',\xi)\geq \frac{3}{2}\omega\nu r(x)^{k+\alpha}.
\]
This is a contradiction since \(r(x)>0\), and an identical contradiction arises if we assume that the minimum occurs at \(x_n+h\). It follows that \(t\mapsto f(y',t)\) has a unique minimum in \((x_n-h,x_n+h)\) which we call \(X(y')\). Properties \textit{(1)-(3)} follow from our construction, and as \(y'\) was any point satisfying \(|y'-x'|\leq \nu r(x)\), we see that \(X\) is well-defined as a function on all of \( B'\).

To see that \(X\in C^{k-1}(B')\), we observe that for each \(y'\) we have \(\partial_{x_n}f(y',X(y'))=0\) and that \(\partial^2_{x_n}f(y',X(y'))>0\) by item \textit{(2)} of \Cref{cor:supplementary}. Applying the Implicit Function Theorem (\Cref{thm:IFT}) to \(\partial_{x_n}f\) at the point \((y',X(y'))\), we see that \(X\) is \(C^{k-1}\) in some neighbourhood of \(y'\) since \(\partial_{x_n}f\in C^{k-1}(\mathbb{R}^n)\). Further, as \(y'\) was any point in \(B'\) we can find such a neighbourhood around every point in the domain of \(X\). By the uniqueness statement of \Cref{thm:IFT}, these \(C^{k-1}\) functions must agree wherever their neighbourhoods overlap, meaning that \(X\in C^{k-1}(B')\).
\end{proof}

\begin{lem}\label{lem:minia2}
The function \(X\) defined above belongs to \(C^{k-1,\alpha}(B')\). In particular, it satisfies the following pointwise estimates for every \(y\in B\) and for all derivatives of order \(|\beta|\leq k-1\),
\begin{itemize}
    \item[(1)] \(|\partial^\beta X(y')|\leq Cr(y)^{1-|\beta|}\),
    \item[(2)] \([\partial^\beta X]_{\alpha}(y')\leq Cr(y)^{1-\alpha-|\beta|}\).
\end{itemize}
\end{lem}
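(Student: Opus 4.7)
The plan is to exploit the recursive derivative formula from the Implicit Function Theorem. Since $X$ satisfies $\partial_{x_n}f(y',X(y'))=0$ on $B'$ by item (3) of \Cref{lem:minia}, applying \Cref{thm:IFT} with $G=\partial_{x_n}f$ yields, for every $|\beta|\leq k-1$,
\[
    \partial^\beta X(y')=-\frac{1}{\partial^2_{x_n}f}\sum_{\substack{0\leq\eta\leq\beta\\\Gamma\in P(\eta),\,\Gamma\neq\{\beta\}}}C_{\beta,\Gamma}\,\partial^{\beta-\eta}\partial^{|\Gamma|+1}_{x_n}f\prod_{\gamma\in\Gamma}\partial^\gamma X,
\]
where every derivative of $f$ is evaluated at $(y',X(y'))$. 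Because each partition $\Gamma\neq\{\beta\}$ forces $|\gamma|<|\beta|$ for every $\gamma\in\Gamma$, the right-hand side only involves $X$-derivatives of strictly lower order, making the formula well-suited to a double induction on $|\beta|$.

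First I would prove item (1) by induction on $|\beta|$. The base cases $|\beta|=0,1$ follow from property (1) of \Cref{lem:minia} and from the ordinary implicit derivative formula $\partial_iX=-\partial_i\partial_{x_n}f/\partial^2_{x_n}f$: by \Cref{cor:controlbound} the numerator is bounded by $Cr(y)^{k-2+\alpha}$, while item (2) of \Cref{cor:supplementary} gives the lower bound $\partial^2_{x_n}f(y',X(y'))\geq\tfrac12 r(x)^{k-2+\alpha}$, and slow variation of $r$ on $B$ via \Cref{lem:slowvar} produces $|\partial_iX(y')|\leq C$. For the inductive step, each term in the IFT recursion decomposes as follows: the mixed derivative of $f$ has total order $|\beta|-|\eta|+|\Gamma|+1\leq k$ and is therefore bounded by $Cr(y)^{k+\alpha-|\beta|+|\eta|-|\Gamma|-1}$ via \Cref{cor:controlbound}; the product $\prod_{\gamma\in\Gamma}|\partial^\gamma X|$ is at most $Cr(y)^{|\Gamma|-|\eta|}$ by the inductive hypothesis (using $\sum_{\gamma\in\Gamma}|\gamma|=|\eta|$); and $1/\partial^2_{x_n}f$ contributes $Cr(y)^{2-k-\alpha}$. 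Multiplying these three factors collapses to $Cr(y)^{1-|\beta|}$ exactly, proving item (1).

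For item (2) I would separate two ranges. When $|\beta|\leq k-2$, $\partial^\beta X$ is continuously differentiable on $B'$, so definition \eqref{eq:pwop} of the pointwise semi-norm together with the mean value theorem and item (1) applied at order $|\beta|+1$ gives $[\partial^\beta X]_\alpha(y')=0$ when $\alpha<1$ and $[\partial^\beta X]_1(y')=|\nabla\partial^\beta X(y')|\leq Cr(y)^{-|\beta|}$; both are dominated by $Cr(y)^{1-\alpha-|\beta|}$. For the top case $|\beta|=k-1$ I would return to the IFT recursion, applying \Cref{lem:sharpsubprod} to the product structure and the elementary quotient rule $[1/g]_\alpha(z)\leq [g]_\alpha(z)/g(z)^2$ to $1/\partial^2_{x_n}f$. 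Because the derivatives of $f$ must be pulled back along the Lipschitz map $y'\mapsto(y',X(y'))$, each composite $\partial^\mu f\circ(\mathrm{id},X)$ inherits a pointwise $\alpha$-semi-norm no larger than a constant multiple of $[\partial^\mu f]_\alpha(y',X(y'))$; if $|\mu|=k$ this is bounded by $[\nabla^kf]_\alpha$, while if $|\mu|<k$ it vanishes for $\alpha<1$ and reduces to $Cr(y)^{k-|\mu|}$ for $\alpha=1$. Repeating the exponent bookkeeping from Step 1 yields $[\partial^\beta X]_\alpha(y')\leq Cr(y)^{1-\alpha-|\beta|}$. Taking a supremum over $y\in B$ and using $[\,\cdot\,]_{\alpha,B}=\sup_{y\in B}[\,\cdot\,]_\alpha(y)$ then concludes $X\in C^{k-1,\alpha}(B')$.

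The hard part will be the top case $|\beta|=k-1$ in item (2): the term $-\partial^\beta\partial_{x_n}f(y',X(y'))/\partial^2_{x_n}f(y',X(y'))$ is the unique contribution carrying a genuinely non-trivial pointwise $\alpha$-modulus of continuity, so every remaining factor in the IFT recursion must be peeled off without losing the sharp power of $r(y)$. The delicate point is verifying that the chain rule along $y'\mapsto(y',X(y'))$, the quotient rule on $1/\partial^2_{x_n}f$, and the sub-product rule on the lower-order $X$-derivatives combine to assemble back to precisely $r(y)^{1-\alpha-|\beta|}$. Once the exponents match across each pairing of $\eta$ and $\Gamma$, slow variation of $r$ transfers the bound between $r(x)$ and $r(y)$ on $B$, and the proof is complete.
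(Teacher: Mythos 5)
Your treatment of item~(1) is essentially the paper's: both of you feed the IFT recursion through the lower bound $\partial^2_{x_n}f\geq Cr(y)^{k-2+\alpha}$, the pointwise bounds of \Cref{cor:controlbound}, and a strong induction on $|\beta|$.

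For item~(2) you diverge, and there is a subtlety worth flagging. You observe that when $|\beta|\leq k-2$ the map $\partial^\beta X$ is $C^1$, and therefore the $\limsup$ defining $[\partial^\beta X]_\alpha(y')$ is actually $0$ for $\alpha<1$. That observation is \emph{correct} as a statement about the pointwise operator \eqref{eq:pwop}, but it is a dead end for the lemma's headline claim $X\in C^{k-1,\alpha}(B')$. What is actually used downstream (in \Cref{lem:crudeext}, where $[\partial^{\beta-\gamma}F]_{\alpha,B'}$ is needed to show $\varphi F\in C^{k,\alpha}(\mathbb{R}^{n-1})$) is a uniform displacement bound
\[
|\partial^\beta X(w')-\partial^\beta X(z')|\leq Cr(y)^{1-\alpha-|\beta|}|w'-z'|^\alpha\qquad\text{for all } w',z'\in B',
\]
and this does \emph{not} follow from the pointwise limsup vanishing. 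Notice that the identity $[\,\cdot\,]_{\alpha,B'}=\sup_{y'}[\,\cdot\,]_\alpha(y')$ that you invoke at the end fails in general for $\alpha<1$: for $f(x)=x$ on $[0,1]$ one has $[f]_{1/2,[0,1]}=1$ while $[f]_{1/2}(x)=0$ at every point. The paper's proof sidesteps this trap by producing the displacement estimate for all $w',z'\in B'$ \emph{before} dividing by $|w'-z'|^\alpha$ and taking the limsup; the pointwise conclusion is then a trivial afterthought, while the genuinely useful local bound $[\partial^\beta X]_{\alpha,B'}\leq Cr(x)^{1-\alpha-|\beta|}$ has already been established along the way. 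Your MVT step together with $|w'-z'|\leq Cr(y)$ does in fact deliver that same displacement bound for $|\beta|\leq k-2$, so the fix is presentational rather than fatal: state the uniform inequality on $B'$ directly rather than collapsing it to $[\partial^\beta X]_\alpha(y')=0$.

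The same caution applies to the top case $|\beta|=k-1$. Applying \Cref{lem:sharpsubprod} term-by-term to the IFT recursion is a reasonable plan, and the exponent bookkeeping you sketch does close: only the terms with $|\eta|=|\Gamma|$ (so $\Gamma$ consists of unit multi-indices and the $f$-derivative reaches full order $k$) carry a nonvanishing pointwise modulus, and one gets $r(y)^{2-k-\alpha}\cdot 1\cdot r(y)^0=r(y)^{1-\alpha-|\beta|}$, matching the target. But again, to obtain the $C^{k-1,\alpha}(B')$ membership you must run the analogous argument with the uniform version \Cref{lem:subprod} on the ball, or equivalently carry the $w',z'$ differences explicitly as the paper does. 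The paper's uniform strong induction over all $|\beta|\leq k-1$ handles both ranges in one pass and automatically produces the local bounds; your two-range split is not wrong, but the apparent simplification in the $|\beta|\leq k-2$ range is illusory once you insist on the estimate that the rest of \Cref{chap:decomps} actually consumes.
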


\textit{Remark}: Our proof of this result is challenging, as it employs the multivariate calculus identities established in the previous chapter to their full extent. The reader may wish to pass over this argument on their first read, and return later when the significance of \Cref{lem:minia2} in the proof of \Cref{thm:c3main} is understood.

\begin{proof}
To establish derivative estimates of \(X\) for item \textit{(1)}, we first use \Cref{thm:IFT} to write
\[
    |\partial^\beta X(y')|=\bigg|\frac{1}{\partial^2_{x_n}f(y',X(y'))}\sum_{0\leq\eta\leq\beta}\sum_{ \substack{\Gamma\in P(\eta),\\\Gamma\neq \{\beta\}}}C_{\beta,\Gamma}(\partial^{\beta-\eta}\partial^{|\Gamma|+1}_{x_n}f(y',X(y')))\prod_{\gamma\in\Gamma}\partial^\gamma X(y')\bigg|.
\]
We simplify this by observing that \(\partial^2_{x_n}f(y',X(y'))\geq Cr(y)^{k-2+\alpha}\) on \(B'\) by item \textit{(2)} of \Cref{cor:supplementary} and slow variation of \(r\). Using this fact with the estimates of \Cref{cor:controlbound}, we get
\[
    |\partial^\beta X(y')|\leq Cr(y)^{2-k-\alpha}\sum_{0\leq\eta\leq\beta}\sum_{ \substack{\Gamma\in P(\eta),\\\Gamma\neq \{\beta\}}}r(y',X(y'))^{k+\alpha+|\eta|-|\beta|-|\Gamma|-1}\prod_{\gamma\in\Gamma}|\partial^\gamma X(y')|.
\]
Taking \(\nu\) small enough that \Cref{lem:slowvar} holds when \(\nu\) is replaced by \(\sqrt{\nu^2+3\omega\nu}\), it follows from the definition of \(X\) that \(r(y',X(y'))\leq Cr(y)\) whenever \(y'\in B'\).

Now we argue by strong induction that \(|\partial^\beta X(y')|\leq Cr(y)^{1-|\beta|}\), observing for a base case that if \(|\beta|=1\) then for some \(j=1,\dots,n-1\), and \(y'\) in the domain of \(X\) we can write 
\[
    |\partial^\beta X(y')|=|\partial_{x_j}X(y')|=\bigg|\frac{\partial_{x_j}\partial_{x_n}f(y',X(y'))}{\partial^2_{x_n}f(y',X(y'))}\bigg|\leq \frac{Cr(y)^{k+\alpha-2}}{r(y)^{k+\alpha-2}}=C.
\]
For an inductive hypothesis we assume that \(|\partial^\gamma X(y')|\leq Cr(y)^{1-|\gamma|}\) whenever \(|\gamma|<|\beta|\), so that
\[
    |\partial^\beta X(y')|\leq C\sum_{0\leq\eta\leq\beta}\sum_{ \substack{\Gamma\in P(\eta),\\\Gamma\neq \{\beta\}}}r(y)^{1-|\beta|+|\eta|-|\Gamma|}\prod_{\gamma\in\Gamma}r(y)^{1-|\gamma|}=Cr(y)^{1-|\beta|}.
\]
It follows that the claimed estimate holds for every \(\beta\) with \(|\beta|\leq k-1\), giving item \textit{(1)}.

It remains to demonstrate that estimate \textit{(2)} holds, and once again we prove this using strong induction. For a base case we observe that \(|X(w')-X(z')|\leq C|w'-z'|\) by the Mean Value Theorem and the derivative bound on \(X\) shown above. Further, if \(w',z'\in B'\) then we have that \(|w'-z'|\leq Cr(y)^{1-\alpha}|w'-z'|^\alpha\). Therefore 
\[
    |X(w')-X(z')|\leq Cr(y)^{1-\alpha}|w'-z'|^\alpha,
\]
from which it follows by taking a limit supremum as \(w',z'\rightarrow y'\) that \([X]_{\alpha}(y')\leq Cr(y)^{1-\alpha}\) for \(y\in B\). Thus the required semi-norm estimate holds in the base case.

For the inductive step we begin by using \Cref{thm:IFT} to expand the pointwise difference \(\partial^\beta X(w')-\partial^\beta X(z')\) as follows,
\[
    \sum_{\eta\leq\beta}\sum_{ \substack{\Gamma\in P(\eta),\\\Gamma\neq \{\beta\}}}C_{\beta,\Gamma}\bigg(\frac{\partial^{\beta-\eta}\partial^{|\Gamma|+1}_{x_n}f(w',X(w'))}{\partial^2_{x_n}f(w',X(w'))}\prod_{\gamma\in\Gamma}\partial^\gamma X(w')-\frac{\partial^{\beta-\eta}\partial^{|\Gamma|+1}_{x_n}f(z',X(z'))}{\partial^2_{x_n}f(z',X(z'))}\prod_{\gamma\in\Gamma}\partial^\gamma X(z')\bigg).
\]
For brevity, we will henceforth write \(\Tilde{w}=(w',X(w'))\) and note that \(|\tilde{w}-\Tilde{z}|\leq C|w'-z'|\). Using the triangle inequality we can bound each of the terms in the sum above by the quantity
\[
    \frac{|\partial^{\beta-\eta}\partial^{|\Gamma|+1}_{x_n}f(\tilde{w})|}{\partial^2_{x_n}f(\tilde{w})}\bigg|\prod_{\gamma\in\Gamma}\partial^\gamma X(w)-\prod_{\gamma\in\Gamma}\partial^\gamma X(z)\bigg|+\bigg|\frac{\partial^{\beta-\eta}\partial^{|\Gamma|+1}_{x_n}f(\tilde{w})}{\partial^2_{x_n}f(\tilde{w})}-\frac{\partial^{\beta-\eta}\partial^{|\Gamma|+1}_{x_n}f(\Tilde{z})}{\partial^2_{x_n}f(\Tilde{z})}\bigg|\prod_{\gamma\in\Gamma}|\partial^\gamma X(w)|
\]
meaning that the required semi-norm estimate on \(\partial^\beta X\) will follow if we can bound each of the four factors above in an appropriate fashion.

To this end we first observe that by lower control of \(\partial^2_{x_n}f\) on \(B\), together with slow variation of \(r\) and the derivative estimates of \Cref{cor:controlbound}, we have
\[
    \frac{|\partial^{\beta-\eta}\partial^{|\Gamma|+1}_{x_n}f(\tilde{w})|}{\partial^2_{x_n}f(\tilde{w})}\leq \frac{Cr(y)^{k+\alpha-|\beta|+|\eta|-|\Gamma|-1}}{r(y)^{k+\alpha-2}}=Cr(y)^{1-|\beta|+|\eta|-|\Gamma|}.
\]
Next, we employ the derivative bounds on \(X\) proved above for item \textit{(1)}. Note that since \(\Gamma\in P(\eta)\) for \(\eta\) satisfying \(|\eta|\leq |\beta|\leq k\), and since \(\Gamma\neq\{\beta\}\), for each \(\gamma\in \Gamma \) we must have that \(|\gamma|\leq k-1\). Therefore \(|\partial^\gamma X(w')|\leq Cr(y)^{1-|\gamma|}\) uniformly on \(B'\) by \textit{(1)}, and it follows that
\[
    \bigg|\prod_{\gamma\in\Gamma}\partial^\gamma X(w')\bigg|\leq C\prod_{\gamma\in\Gamma}r(y)^{1-|\gamma|}=Cr(y)^{|\Gamma|-|\eta|}.
\]
Using lower control of \(\partial^2_{x_n}f\) by \(r(y)^{k-2+\alpha}\) once again, we can also make the the following estimate,
\[
    \bigg|\frac{\partial^{\beta-\eta}\partial^{|\Gamma|+1}_{x_n}f(\tilde{w})}{\partial^2_{x_n}f(\tilde{w})}-\frac{\partial^{\beta-\eta}\partial^{|\Gamma|+1}_{x_n}f(\Tilde{z})}{\partial^2_{x_n}f(\Tilde{z})}\bigg|\leq \frac{C|\partial^2_{x_n}f(\Tilde{z})\partial^{\beta-\eta}\partial^{|\Gamma|+1}_{x_n}f(\tilde{w})-\partial^2_{x_n}f(\tilde{w})\partial^{\beta-\eta}\partial^{|\Gamma|+1}_{x_n}f(\Tilde{z})|}{r(y)^{2k+2\alpha-4}}.
\]
Employing the triangle inequality and our local control of derivatives of \(f\), we can bound the numerator on the right-hand side above by a constant multiple of the following expression,
\[
    r(y)^{k-2+\alpha}|\partial^{\beta-\eta}\partial^{|\Gamma|+1}_{x_n}f(\tilde{w})-\partial^{\beta-\eta}\partial^{|\Gamma|+1}_{x_n}f(\Tilde{z})|+r(y)^{k+\alpha-|\beta|+|\eta|-|\Gamma|-1}|\partial^2_{x_n}f(\Tilde{z})-\partial^2_{x_n}f(\tilde{w})|.
\]
If \(|\beta|-|\eta|+|\Gamma|+1<k\) then the Mean Value Theorem, together with our pointwise derivative estimates on \(f\), and the fact that \(|\tilde{w}-\Tilde{z}|\leq Cr(y)\) for \(w',z'\in B'\), all give
\[
    |\partial^{\beta-\eta}\partial^{|\Gamma|+1}_{x_n}f(\tilde{w})-\partial^{\beta-\eta}\partial^{|\Gamma|+1}_{x_n}f(\Tilde{z})|\leq Cr(y)^{k-|\beta|+|\eta|-|\Gamma|-1}|w'-z'|^\alpha.
\]
On the other hand, if \(|\beta|-|\eta|+|\Gamma|+1=k\) this estimate holds since \(\partial^{\beta-\eta}\partial^{|\Gamma|+1}_{x_n}f\in C^\alpha(\mathbb{R}^n)\) and 
\[
    |\partial^{\beta-\eta}\partial^{|\Gamma|}_{x_n}f(\tilde{w})-\partial^{\beta-\eta}\partial^{|\Gamma|}_{x_n}f(\Tilde{z})|\leq C|\tilde{w}-\Tilde{z}|^\alpha\leq C|w'-z'|^\alpha=Cr(y)^{k-|\beta|+|\eta|-|\Gamma|-1}|w'-z'|^\alpha.
\]
An identical argument shows that \(|\partial^2_{x_n}f(z',X(z'))-\partial^2_{x_n}f(w',X(w'))|\leq Cr(y)^{k-2}|w'-z'|^\alpha\) when \(k\geq 2\), and altogether we find now that
\[
    \bigg|\frac{\partial^{\beta-\eta}\partial^{|\Gamma|+1}_{x_n}f(\tilde{w})}{\partial^2_{x_n}f(\tilde{w})}-\frac{\partial^{\beta-\eta}\partial^{|\Gamma|+1}_{x_n}f(\Tilde{z})}{\partial^2_{x_n}f(\Tilde{z})}\bigg|\leq Cr(x)^{1-\alpha-|\beta|+|\eta|-|\Gamma|}|w'-z'|^\alpha.
\]

One more term estimate remains before we can invoke an inductive hypothesis on the H\"older semi-norms of \(X\) to simplify our bounds. Iterating the triangle inequality using \Cref{lem:proddiff} gives
\[
    \bigg|\prod_{\gamma\in\Gamma}\partial^\gamma X(w')-\prod_{\gamma\in\Gamma}\partial^\gamma X(z')\bigg|\leq \sum_{\gamma\in\Gamma}\bigg(\prod_{\mu\in\Gamma\setminus\{\gamma\}}\sup_{B'}|\partial^\mu X|\bigg)|\partial^\gamma X(w')-\partial^\gamma X(z')|,
\]
and using the uniform estimates established above for item \textit{(1)}, we can bound this further by
\[
    \bigg|\prod_{\gamma\in\Gamma}\partial^\gamma X(w')-\prod_{\gamma\in\Gamma}\partial^\gamma X(z')\bigg|\leq C\sum_{\gamma\in\Gamma}r(y)^{|\Gamma|-1-|\eta|+|\gamma|}|\partial^\gamma X(w')-\partial^\gamma X(z')|. 
\]
Consequently, we can bound each term in our earlier expansion of \(\partial^\beta X(w')-\partial^\beta X(z')\) to get
\[
    |\partial^\beta X(w')-\partial^\beta X(z')|\leq C\sum_{\eta\leq\beta}\sum_{ \substack{\Gamma\in P(\eta),\\\Gamma\neq \{\beta\}}}\sum_{\gamma\in\Gamma}(r(y)^{-|\beta|+|\gamma|}|\partial^\gamma X(w')-\partial^\gamma X(z')|+r(y)^{1-\alpha-|\beta|}|w'-z'|^\alpha).
\]

Dividing the expression above by \(|w'-z'|^\alpha\) and taking a limit supremum as \(w',z'\rightarrow y'\), it follows from definition \eqref{eq:pwop} of the pointwise H\"older semi-norm that
\[
    [\partial^\beta X]_\alpha(y')\leq C\sum_{\eta\leq\beta}\sum_{ \substack{\Gamma\in P(\eta),\\\Gamma\neq \{\beta\}}}\sum_{\gamma\in\Gamma}r(y)^{-|\beta|+|\gamma|}[\partial^\gamma X]_\alpha(y')+Cr(y)^{1-\alpha-|\beta|}.
\]
Finally, assume for a strong inductive hypothesis that \([\partial^\gamma X]_{\alpha}(y')\leq Cr(y)^{1-\alpha-|\gamma|}\) for each \(y\in B\) whenever \(|\gamma|<|\beta|\). It follows immediately that
\[
    [\partial^\beta X]_\alpha(y')\leq C\sum_{\eta\leq\beta}\sum_{ \substack{\Gamma\in P(\eta),\\\Gamma\neq \{\beta\}}}\sum_{\gamma\in\Gamma}r(y)^{-|\beta|+|\gamma|}r(y)^{1-\alpha-|\gamma|}+Cr(y)^{1-\alpha-|\beta|}=Cr(y)^{1-\alpha-|\beta|}.
\]
By induction this holds for every \(\beta\) with \(|\beta|\leq k-1\), showing that \textit{(2)} holds on \(B'\).
\end{proof}

Henceforth, we assume that \(\nu\) has been chosen small enough that the \(\nu\)-dependant estimates of Lemmas \ref{lem:slowvar}, \ref{cor:supplementary}, \ref{lem:minia} and \ref{lem:minia2} all hold. Moreover, we assume that \eqref{eq:needed} also holds and we recall that this is automatic when \(k=2\) and \(k=3\). Our reward for undergoing the hard work of defining \(X\) on \(B\) and proving its various properties is the ability to unambiguously define
\[
    F(y)=f(y',X(y')).
\]
It follows from the construction above that \(f-F\geq 0\) on \(B\), with equality at local minima of \(f\).

Additionally, we will show that \(f-F\) has a root which is half as regular as \(f\) in the sense of \eqref{eq:rootspaces}, meaning that we can form the decomposition \(f=(\sqrt{f-F})^2+F\) on \(B\). This was the key insight underlying Fefferman \& Phong's result from \cite{Fefferman-Phong} in the setting of \(C^{3,1}(\mathbb{R}^n)\). To prove that this decomposition has the desired properties, we begin by establishing some estimates.

\begin{lem}\label{lem:nice}
Assume that the hypotheses of \Cref{lem:minia} hold, let \(X\) and be as above, and define \(F(y)=f(y',X(y'))\). Then for \(\beta\) with \(|\beta|<\frac{k+\alpha}{2}\) and every \(y\in B\) the following estimates hold,
\begin{itemize}
    \item[(1)] \(|\partial^\beta\sqrt{f(y)-F(y)}|\leq Cr(y)^{\frac{k+\alpha}{2}-|\beta|}\),
    \item[(2)] \([\partial^\beta\sqrt{f-F}]_{\frac{\alpha}{2}}(y)\leq Cr(y)^{\frac{k}{2}-|\beta|}\) if \(k\) is even,
    \item[(3)] \([\partial^\beta\sqrt{f-F}]_{\frac{1+\alpha}{2}}(y)\leq Cr(y)^{\frac{k-1}{2}-|\beta|}\) if \(k\) is odd.
\end{itemize}
\end{lem}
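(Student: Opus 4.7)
The plan is to adapt the proof of \Cref{lem:local1} to the new function $f-F$, which vanishes to second order on the graph $\{y_n=X(y')\}$ rather than admitting a uniform lower bound on $B$. The derivative bounds and pointwise semi-norm estimates will follow a similar pattern, but with the factor $(y_n-X(y'))^2$ tracked throughout.

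First I would establish a crucial lower bound: combining the identity $\partial_{x_n} f(y', X(y')) = 0$ with Taylor's theorem in the $y_n$ variable and the lower bound $\partial^2_{x_n} f \ge c\, r(y)^{k-2+\alpha}$ on $B$ from \Cref{cor:supplementary}, we get
\[
    f(y) - F(y) \;\ge\; c\, r(y)^{k-2+\alpha}\,(y_n - X(y'))^2.
\]
Next, the bounds on the derivatives of $F$ on $B$ follow from \Cref{lem:genchain} applied to $F(y) = f(y', X(y'))$: \Cref{cor:controlbound} controls the derivatives of $f$, while \Cref{lem:minia2} controls the derivatives of $X$, and combining the two yields $|\partial^\gamma F(y)| \le C\, r(y)^{k+\alpha-|\gamma|}$ for $|\gamma|\le k$. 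Hence $|\partial^\gamma(f-F)(y)| \le C\, r(y)^{k+\alpha-|\gamma|}$ holds on all of $B$.

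For item (1), I would expand $\partial^\beta\sqrt{f-F}$ with \Cref{cor:rootscor} as a sum over partitions $\Gamma\in P(\beta)$ of terms $(f-F)^{1/2-|\Gamma|}\prod_{\gamma\in\Gamma}\partial^\gamma(f-F)$. Bounding each factor separately gives $r(y)^{(k+\alpha)(1/2-|\Gamma|)}\cdot r(y)^{(k+\alpha)|\Gamma|-|\beta|}=r(y)^{(k+\alpha)/2-|\beta|}$, provided the matching lower bound on $f-F$ is available. When $|y_n-X(y')|\ge \delta\,r(y)$ for a small constant $\delta$, the first step gives $f-F\ge c\,r(y)^{k+\alpha}$ and the argument closes exactly as in \Cref{lem:local1}. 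When $|y_n-X(y')|<\delta\,r(y)$, we instead exploit that each $\partial^\gamma(f-F)$ itself vanishes on the graph: writing it as a Taylor remainder in the $y_n$ variable around $y_n=X(y')$ produces extra factors of $(y_n-X(y'))$ that cancel the weaker lower bound on $f-F$.

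Items (2) and (3) then follow the same pattern as in \Cref{lem:local1}: I would decompose $\partial^\beta\sqrt{f-F}(w)-\partial^\beta\sqrt{f-F}(z)$ using \Cref{lem:proddiff}, bound each resulting difference by combining the Mean Value Theorem with the pointwise controls on derivatives of $f$, $F$ and $X$, and take a limit supremum to recover the pointwise semi-norm estimate. The main obstacle throughout is the bookkeeping in item (1) near the graph, where one must show that every factor of $r(y)^{-(k-2+\alpha)/2}$ produced by the lower bound on $f-F$ is absorbed by a corresponding factor of $|y_n-X(y')|$ extracted from some derivative of $f-F$; this requires expressing each $\partial^\gamma(f-F)$ as a Taylor remainder of appropriate order in $y_n$ around $X(y')$ and carefully tallying the resulting orders of vanishing against the negative power of $(f-F)$.
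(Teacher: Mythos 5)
Your approach diverges from the paper's and has a genuine gap in the case analysis near the graph. The paper does \emph{not} apply \Cref{cor:rootscor} to $f-F$ directly. Instead it writes
\[
f(y)-F(y)=(y_n-X(y'))^2\,H(y),\qquad H(y)=\int_0^1(1-t)\,\partial^2_{x_n}f\bigl(y',ty_n+(1-t)X(y')\bigr)\,dt,
\]
proves $H\geq Cr(y)^{k-2+\alpha}$ on $B$ together with $|\partial^\beta H|\leq Cr(y)^{k+\alpha-2-|\beta|}$, applies \Cref{cor:rootscor} to $H$ (where no denominators vanish), and then uses the Leibniz rule on $\sqrt{f-F}=(y_n-X(y'))\sqrt{H}$. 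The factorization is not cosmetic; it is the device that realizes a cancellation which your term-by-term scheme cannot see.

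Concretely, your plan asserts that near the graph ``each $\partial^\gamma(f-F)$ itself vanishes on the graph'' and that the factors of $|y_n-X(y')|$ so produced absorb the negative powers of $f-F$ partition by partition. This already fails for $\gamma=2e_n$ (so for $k\geq 4$, where $|\beta|=2<\frac{k+\alpha}{2}$ is admissible): since $F$ is independent of $y_n$, $\partial_{y_n}^2(f-F)=\partial_{y_n}^2 f$, which on the graph equals $\partial_{y_n}^2 f(y',X(y'))\geq c\,r^{k-2+\alpha}>0$ and does not vanish. Writing $u=y_n-X(y')$ and $A=\partial^2_{y_n}f(y',X(y'))$, the two partitions of $\beta=2e_n$ in \Cref{cor:rootscor} give $(f-F)^{-1/2}\partial^2_{y_n}(f-F)\sim A^{1/2}|u|^{-1}$ and $(f-F)^{-3/2}\bigl(\partial_{y_n}(f-F)\bigr)^2\sim A^{1/2}|u|^{-1}$; each is singular as $u\to 0$, and only their sum with the precise coefficients from \Cref{cor:rootscor} produces the finite bound $Cr^{\frac{k+\alpha}{2}-2}$. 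Tallying orders of vanishing partition by partition, as you propose, cannot detect this cancellation, so the estimate in the regime $|y_n-X(y')|<\delta r(y)$ does not close as written; the same obstruction recurs in your sketch of items \textit{(2)} and \textit{(3)}. Passing to the quotient $H=(f-F)/(y_n-X(y'))^2$ and differentiating $u\sqrt{H}$, as the paper does, is the natural repair.
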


\begin{proof}
By the fundamental theorem of calculus we have \(f(y)-F(y)=(y_n-X(y'))^2H(y)\), where
\[
    H(y)=\int_0^1 (1-t)\partial^2_{x_n}f(y',ty_n+(1-t)X(y'))dt.
\]
Arguing as in the proof of \Cref{lem:minia} we have \(\frac{1}{2}r(x)^{k-2+\alpha}\leq \partial^2_{x_n}f(y',ty_n+(1-t)X(y'))\) for every \(t\in[0,1]\), meaning that \(H(y)\geq Cr(y)^{k-2+\alpha}\) on \(B\). Additionally, we can bound the derivatives of \(H\) on \(B\). To this end we first observe that
\[
    \partial^\beta H(y)=\int_0^1 (1-t)\partial^\beta[\partial^2_{x_n}f(y',ty_n+(1-t)X(y'))]dt.
\]
To evaluate the derivative inside the integral, we use the shorthand \(L(y,t)=ty_n+(1-t)X(y')\) and write \(\beta'=(\beta_1,\dots,\beta_{n-1})\), so that an application of \Cref{lem:genchain} gives
\[
    \partial^\beta[\partial^2_{x_n}f(y',L(y,t))]=t^{\beta_n}\sum_{\mu\leq\beta'}\sum_{\Gamma\in P(\mu)}C_{\beta',\Gamma}(1-t)^{|\Gamma|}\partial^{\beta'-\mu}\partial_{x_n}^{2+\beta_n+|\Gamma|}f(y',L(y,t))\prod_{\gamma\in\Gamma}\partial^\gamma X(y'),
\]
where the constants \(C_{\beta',\Gamma}\) are given by \eqref{eq:cm}. It follows from the triangle inequality that
\[
    |\partial^\beta H(y)|\leq  C\sum_{\mu\leq\beta'}\sum_{\Gamma\in P(\mu)}\int_0^1t^{\beta_n}(1-t)^{|\Gamma|+1}|\partial^{\beta'-\mu}\partial_{x_n}^{2+\beta_n+|\Gamma|}f(y',L(y,t))|dt\prod_{\gamma\in\Gamma}|\partial^\gamma X(y')|.
\]
Using estimate \textit{(1)} of \Cref{lem:minia2} we get \(|\partial^\gamma X(y')|\leq Cr(y)^{1-|\gamma|}\). Additionally, \Cref{cor:controlbound} and \Cref{lem:slowvar} give \(|\partial^{\beta'-\mu}\partial_{x_n}^{2+\beta_n+|\Gamma|}f(y',L(y,t))|\leq Cr(y)^{k+\alpha-2-|\Gamma|-|\beta|+|\mu|}\), so
\[
    |\partial^\beta H(y)|\leq  C\sum_{\mu\leq\beta'}\sum_{\Gamma\in P(\mu)}r(y)^{k+\alpha-2-|\Gamma|-|\beta|+|\mu|}\prod_{\gamma\in\Gamma}r(y)^{1-|\gamma|}=Cr(y)^{k+\alpha-2-|\beta|}.
\]

Next, using these bounds we can estimate derivatives of \(\sqrt{H}\) on \(B\). To this end we employ \Cref{cor:rootscor}, the lower bound \(H(y)\geq Cr(y)^{k-2+\alpha}\), and the derivative bound above to estimate
\[
    |\partial^\beta \sqrt{H(y)}| =\bigg|\sum_{\Gamma\in P(\beta)}C_{\beta,\Gamma}H(y)^{\frac{1}{2}-|\Gamma|}\prod_{\gamma\in\Gamma}\partial^\gamma H(y)\bigg|\leq Cr(y)^{\frac{k+\alpha}{2}-1-|\beta|}.
\]
Further, estimate \textit{(1)} of \Cref{lem:minia} and estimate \textit{(1)} of \Cref{lem:minia2} together show us that \(|\partial^\beta(y_n-X(y'))|\leq Cr(y)^{1-|\beta|}\) on \(B\), respectively when \(|\beta|=0\) and when \(|\beta|>0\). Using the Leibniz rule we can thus bound derivatives of \(\sqrt{f(y)-F(y)}=(y_n-X(y'))\sqrt{H(y)}\) by writing
\[
    \partial^\beta\sqrt{f(y)-F(y)}=\sum_{\gamma\leq\beta}\binom{\beta}{\gamma}\partial^{\beta-\gamma}(y_n-X(y'))\partial^\gamma\sqrt{H(y)}.
\]
Employing the triangle inequality and the derivative bounds computed above, we find now that
\[
    |\partial^\beta\sqrt{f(y)-F(y)}|\leq C\sum_{\gamma\leq\beta}|\partial^{\beta-\gamma}(y_n-X(y'))||\partial^\gamma\sqrt{H(y)}|\leq Cr(y)^{\frac{k+\alpha}{2}-|\beta|}.
\]
Since \(y\in B\) was arbitrary, estimate \textit{(1)} of \Cref{lem:nice} follows.

For estimates \textit{(2)} and \textit{(3)} we once again denote by \(\Lambda\) the integer part of \(\frac{k}{2}\), and set \(\lambda=\frac{k+\alpha}{2}-\Lambda\) so that the claimed estimates will follow if we can show that \([\partial^\beta\sqrt{f-F}]_{\lambda}(y)\leq Cr(y)^{\Lambda-|\beta|}\). To this end we observe that if \(w,z\in B\), then the Mean Value Theorem and the derivative estimate above together give
\[
    |\partial^\beta\sqrt{f(w)-F(w)}-\partial^\beta\sqrt{f(z)-F(z)}|\leq Cr(y)^{\frac{k+\alpha}{2}-|\beta|-1}|w-z|\leq Cr(y)^{\Lambda-|\beta|}|w-z|^\lambda.
\]
It follows by taking a limit supremum as \(w,z\rightarrow y\) that \([\partial^\beta\sqrt{f-F}]_{\lambda}(y)\leq Cr(y)^{\Lambda-|\beta|}\).
\end{proof}

There remains one more critical property of the remainder \(F\) to establish before we can proceed to extend the local results of this section to global ones. These are derivative and local semi-norm estimates, which follow from the properties of \(X\) established in \Cref{lem:minia2}.

\begin{lem}\label{lem:implicitest}
Assume that the hypotheses of \Cref{lem:minia} hold on \(B\) for some fixed \(x\), and define \(X\) as above on \(B'=\{x':x\in B\}\). If \(F(y')=f(y',X(y'))\) then for every \(y\in B\) the following estimates hold, 
\begin{itemize}
    \item[(1)] \(|\partial^\beta F(y)|\leq Cr(y)^{k+\alpha-|\beta|}\),
    \item[(2)] \([\partial^\beta F]_{\alpha}(y)\leq Cr(y)^{k-|\beta|}\).
\end{itemize}
\end{lem}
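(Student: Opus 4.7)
Since $F(y) = f(y', X(y'))$ depends only on $y'$, the derivative $\partial^\beta F$ vanishes whenever $\beta$ has a nonzero $n$-th component, so one may assume $\beta = (\beta', 0)$ and view $\partial^\beta F$ as a derivative with respect to $y'$. The plan is to apply the generalized chain rule (\Cref{lem:genchain}) with outer function $f$ and inner function $g = X : \mathbb{R}^{n-1} \to \mathbb{R}$ to expand
\[
    \partial^\beta F(y') = \sum_{\eta \leq \beta}\sum_{\Gamma \in P(\eta)} C_{\beta, \Gamma}\, \partial^{\beta - \eta}\partial^{|\Gamma|}_{x_n} f(y', X(y')) \prod_{\gamma \in \Gamma} \partial^\gamma X(y').
\]
The key observation is that the unique summand containing $\partial^\beta X$ (namely the one with $\eta = \beta$ and $\Gamma = \{\beta\}$, so $|\Gamma| = 1$) carries the prefactor $\partial_{x_n} f(y', X(y'))$, which vanishes identically by item \textit{(3)} of \Cref{lem:minia}. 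Thus this top-order term, which would not be controllable by the $C^{k-1,\alpha}$ bounds on $X$ from \Cref{lem:minia2}, drops out completely.

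For item \textit{(1)}, I would then bound each remaining summand by combining \Cref{cor:controlbound} with \Cref{lem:slowvar} to obtain $|\partial^{\beta-\eta}\partial^{|\Gamma|}_{x_n} f(y', X(y'))| \leq C r(y)^{k + \alpha - |\beta| + |\eta| - |\Gamma|}$ (using $r(y', X(y')) \leq C r(y)$, already established in the proof of \Cref{lem:minia2}), and \Cref{lem:minia2}\textit{(1)} to get $\prod_{\gamma \in \Gamma} |\partial^\gamma X(y')| \leq C r(y)^{|\Gamma| - |\eta|}$; each factor requires $|\gamma| \leq k - 1$, which holds precisely because the term $\Gamma = \{\beta\}$ has been removed. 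Multiplying, the exponents telescope to $k + \alpha - |\beta|$ in every surviving term, and summing over the finite collection of indices yields the claimed bound.

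For item \textit{(2)}, I would subtract the chain-rule expansions at $w'$ and $z'$ (with $w, z \in B$) and estimate $\partial^\beta F(w') - \partial^\beta F(z')$ term by term; the problematic term $\Gamma = \{\beta\}$ vanishes identically on both sides. For each remaining summand, \Cref{lem:proddiff} rewrites the product difference as a sum in which exactly one factor appears as a difference. Writing $\tilde w = (w', X(w'))$ and noting $|\tilde w - \tilde z| \leq C |w' - z'|$ by Lipschitz control of $X$, a difference of the form $|\partial^{\beta-\eta}\partial^{|\Gamma|}_{x_n} f(\tilde w) - \partial^{\beta-\eta}\partial^{|\Gamma|}_{x_n} f(\tilde z)|$ is controlled either by the Mean Value Theorem and \Cref{cor:controlbound} (when the derivative order is strictly less than $k$) or by the $\alpha$-H\"older semi-norm of an order-$k$ derivative of $f$ (when the order equals $k$); product differences of derivatives of $X$ are handled by \Cref{lem:minia2}\textit{(2)}; and the remaining factors are controlled uniformly as in item \textit{(1)}. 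Disciplined bookkeeping, exactly as in the proof of \Cref{lem:minia2}\textit{(2)}, yields a per-term bound of the form $C r(y)^{k - |\beta|} |w' - z'|^\alpha$, after which dividing by $|w' - z'|^\alpha$ and taking $\limsup$ as $w', z' \to y'$ delivers the claim.

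The main obstacle I anticipate is not either step in isolation but the top-order case $|\beta| = k$, where the failure of \Cref{lem:minia2} to control $\partial^\beta X$ is rescued solely by the cancellation $\partial_{x_n} f(y', X(y')) = 0$. Once one recognizes that this same cancellation also eliminates the analogous top-order contribution from the difference expansion needed for item \textit{(2)}, the proof reduces to a careful but routine assembly of the machinery already built in \Cref{lem:minia2}, \Cref{lem:proddiff}, and \Cref{cor:controlbound}.
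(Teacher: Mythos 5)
Your key observation---that the lone summand containing \(\partial^\beta X\), namely the one with \(\eta=\beta\) and \(\Gamma=\{\beta\}\), carries the prefactor \(\partial^{\beta-\eta}\partial^{|\Gamma|}_{x_n}f=\partial_{x_n}f(y',X(y'))\), which vanishes identically by \Cref{lem:minia}\textit{(3)}---is exactly the insight on which the paper's proof rests, and your exponent bookkeeping for item \textit{(1)} (combining \Cref{cor:controlbound}, the bound \(r(y',X(y'))\leq Cr(y)\), and \Cref{lem:minia2}\textit{(1)}) and your difference-estimate strategy for item \textit{(2)} (via \Cref{lem:proddiff}, the Mean Value Theorem for sub-top-order terms, and the global \(\alpha\)-H\"older bound at order \(k\)) match the paper's argument line for line.

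There is, however, a genuine gap in the opening move at top order. You propose to apply \Cref{lem:genchain} directly to \(F(y')=f(y',X(y'))\) for derivatives of order \(|\beta|\) up to \(k\), but \Cref{lem:genchain} requires both the outer and inner functions to be \(k\) times differentiable, and \(X\) is only guaranteed to lie in \(C^{k-1}(B')\) by \Cref{lem:minia} (and \(C^{k-1,\alpha}(B')\) by \Cref{lem:minia2}). For \(|\beta|=k\) the object \(\partial^\beta X\) appearing in your expansion may simply not exist, so it is not legitimate to write the expansion down and then observe that the offending term carries a zero coefficient: you cannot cancel a term you never had the right to write, nor do you yet know that \(F\) is \(k\) times differentiable. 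The paper avoids this by peeling off one derivative first: writing \(\beta=\mu+\rho\) with \(|\rho|=1\), it uses the identity \(\partial_{x_n}f(y',X(y'))=0\) to get \(\partial^\rho F(y')=\partial^\rho f(y',X(y'))\), which is visibly \(C^{k-1}\) (establishing \(F\in C^k\)), and only then applies \Cref{lem:genchain} to \(\partial^\mu[\partial^\rho f(y',X(y'))]\) with \(|\mu|=k-1\), so every derivative of \(X\) that appears has order at most \(k-1\). Adopt this two-step reduction and the remainder of your argument goes through unchanged; the paper also notes in passing that the resulting formula is independent of the split \(\beta=\mu+\rho\) because of commutation relations induced by \(\partial_{x_n}f(y',X(y'))=0\), which you would want to at least acknowledge.
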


\begin{proof}
Given a multi-index \(\beta\) of order \(|\beta|\leq k\), we compute \(\partial^\beta F\) by first writing \(\beta=\mu+\rho\), for a multi-index \(\rho\leq \beta\) with \(|\rho|=1\), and with \(\mu=\beta-\rho\). It follows from the definition of \(X\) that
\[
    \partial^\beta F(y')=\partial^\mu[\partial^\rho f(y',X(y'))+\partial_{x_n}f(y',X(y'))\partial^\rho X(y')]=\partial^\mu[\partial^\rho f(y',X(y'))].
\]
To compute the remaining \(\mu\)-derivative on the right-hand side, we employ \Cref{lem:genchain} to get
\[
    \partial^\beta F(y')=\partial^\mu[\partial^\rho f(y',X(y'))]=\sum_{\eta\leq\mu }\sum_{\Gamma\in P(\eta)}C_{\mu,\Gamma}(\partial^{\beta-\eta}\partial^{|\Gamma|}_{x_n}f(y',X(y')))\prod_{\gamma\in\Gamma}\partial^\gamma X(y').
\]

Although the right-hand side of the formula above for \(\partial^\beta F\) appears to depend on our choice of \(\mu\), it turns out that this formula yields the same derivative regardless of which \(\mu\) is chosen. This is due to commutation relations which follow directly from the identity \(\partial_{x_n}f(y',X(y'))=0\). For instance, with second-order derivatives we have everywhere on the domain of \(F\) that
\[
    \partial_{x_i}\partial_{x_n}f(y',X(y'))\partial_{x_j}X(y')=\partial_{x_j}\partial_{x_n}f(y',X(y'))\partial_{x_i}X(y').
\]
Thus no generality is lost by fixing \(\rho\) and \(\mu\) as above to compute \(\partial^\beta F\). With this selection we observe by the triangle inequality that
\[
    |\partial^\beta F(y')|\leq C\sum_{\eta\leq\mu }\sum_{\Gamma\in P(\eta)}|\partial^{\beta-\eta}\partial^{|\Gamma|}_{x_n}f(y',X(y'))|\prod_{\gamma\in\Gamma}|\partial^\gamma X(y')|.
\]
Since \(|\mu|=|\beta|-1\leq k-1\), for each \(\gamma\) in the product above, we have that \(|\partial^\gamma X(y')|\leq Cr(y)^{1-|\gamma|}\) on \(B'\) by estimate \textit{(1)} of \Cref{lem:minia2}. Similarly, \Cref{cor:controlbound} and slow variation of \(r\) allow us to bound the derivatives of \(F\) above uniformly by powers of \(r(y)\) to get item \textit{(1)},
\[
    |\partial^\beta F(y')|\leq C\sum_{\eta\leq\mu }\sum_{\Gamma\in P(\eta)}r(y)^{k+\alpha-|\beta|+|\eta|-|\Gamma|}\prod_{\gamma\in\Gamma}r(y)^{1-|\gamma|}=Cr(y)^{k+\alpha-|\beta|}.
\]

For the estimates of \textit{(2)}, we take \(w',z'\in B'\) and for brevity we write \(\tilde{w}=(w',X(w'))\). As above, for some multi-index \(\mu< \beta\) of order \(|\beta|-1\) we can expand \(\partial^\beta F\) as follows:
\[
    \partial^\beta F(w')-\partial^\beta F(z')= \sum_{0\leq\eta\leq\mu}\sum_{\Gamma\in P(\eta)}C_{\beta,\Gamma}\bigg(\partial^{\beta-\eta}\partial^{|\Gamma|}_{x_n}f(\tilde{w})\prod_{\gamma\in\Gamma}\partial^\gamma X(w)-\partial^{\beta-\eta}\partial^{|\Gamma|}_{x_n}f(\Tilde{z})\prod_{\gamma\in\Gamma}\partial^\gamma X(z)\bigg).
\]
From here, an argument identical to that used in our proof of item \textit{(2)} of \Cref{lem:minia} shows that \([\partial^\beta F]_{\alpha}(y')\leq Cr(y)^{k-|\beta|}\) for every derivative of order \(|\beta|\leq k\) and every \(y'\in B'\), meaning that item \textit{(2)} holds as claimed.
\end{proof}

For later convenience, we now replace \(\nu\) with \(\frac{1}{2}\nu\) to ensure that the functions \(X\) and \(F\) are well-defined and satisfy the estimates established above on the dilated ball \(2B'\). This substitution comes only at the expense of making several constant coefficients in the estimates above larger. Extension of \(F\) to \(2B'\) is critical, as it allows us to show in the following lemma that there exists a \(C^{k,\alpha}(\mathbb{R}^{n-1})\) function which agrees with \(F\) on \(B'\). This function is required in the inductive argument that we employ to prove \Cref{thm:c3main}.

\begin{lem}\label{lem:crudeext}
There exists a non-negative function in \( C^{k,\alpha}(\mathbb{R}^{n-1})\) which agrees with \(F\) on \(B'\).
\end{lem}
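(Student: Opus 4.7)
The plan is to extend $F$ from $B'$ to all of $\mathbb{R}^{n-1}$ by multiplying by a scale-adapted smooth cutoff and extending by zero. I would fix a radial bump function $\eta \in C^\infty_c(\mathbb{R}^{n-1})$ with $0 \leq \eta \leq 1$, $\eta \equiv 1$ on $B(0,\tfrac{1}{2})$ and $\mathrm{supp}(\eta) \subseteq B(0, \tfrac{3}{4})$, and set $\chi(y') = \eta\big((y'-x')/(\nu r(x))\big)$. Then $\chi \equiv 1$ on $B'$, $\chi$ is supported strictly inside $2B'$, and the chain rule gives $|\partial^\beta \chi(y')| \leq C r(x)^{-|\beta|}$ with constants depending only on $\eta$, $n$, and $|\beta|$. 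I would then define $\tilde{F}(y') = \chi(y') F(y')$ on $2B'$ and $\tilde{F}(y') = 0$ elsewhere; non-negativity is immediate since $F \geq 0$ and $\chi \geq 0$, and $\tilde{F} = F$ on $B'$ by construction.

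To verify $\tilde{F} \in C^{k,\alpha}(\mathbb{R}^{n-1})$, I would combine the general Leibniz rule with Lemma \ref{lem:implicitest}. For $|\beta| \leq k$ and $y' \in 2B'$, slow variation of $r$ on $2B$ (Lemma \ref{lem:slowvar}) lets me replace $r(y)$ by $r(x)$ up to constants, giving
\[
    |\partial^\beta \tilde{F}(y')| \leq \sum_{\gamma \leq \beta}\binom{\beta}{\gamma}|\partial^{\beta-\gamma}\chi(y')||\partial^\gamma F(y')| \leq C\sum_{\gamma \leq \beta}r(x)^{-|\beta-\gamma|}r(x)^{k+\alpha-|\gamma|} = C r(x)^{k+\alpha-|\beta|}.
\]
For the top-order H\"older seminorm when $|\beta| = k$, I would apply Lemma \ref{lem:subprod} together with the bounds $[\partial^\mu \chi]_{\alpha} \leq C r(x)^{-|\mu|-\alpha}$ and $[\partial^\mu F]_{\alpha, 2B'} \leq C r(x)^{k-|\mu|}$, the latter obtained from item \textit{(2)} of Lemma \ref{lem:implicitest} together with slow variation of $r$. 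Each resulting term produces powers of $r(x)$ that cancel exactly to a constant independent of $x$, so $[\partial^\beta \tilde{F}]_{\alpha, 2B'}$ is finite.

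Because $\chi$, and hence $\tilde{F}$ together with all its derivatives, vanishes identically in a neighbourhood of $\partial(2B')$, extending $\tilde{F}$ by zero across this boundary introduces no loss of regularity at the edge of the support, and the local H\"older estimate on $2B'$ transfers directly to $[\partial^\beta \tilde{F}]_{\alpha, \mathbb{R}^{n-1}}$. I do not anticipate any serious obstacle in this proof; the content of the lemma is that the scale-adapted cutoff is compatible with the $r(x)$-weighted estimates of Lemma \ref{lem:implicitest}, and the only real care needed is in tracking the powers of $r(x)$ so that they cancel exactly in each Leibniz term and yield constants that remain uniform as $r(x) \to 0$.
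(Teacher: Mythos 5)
Your approach matches the paper's essentially line for line: form a scale-adapted smooth cutoff equal to one on \(B'\) and supported in \(2B'\), multiply it against \(F\), control the derivatives and top-order H\"older seminorm of the product using the Leibniz rule and \Cref{lem:subprod} together with the estimates of \Cref{lem:implicitest} (converting pointwise to local seminorms via slow variation of \(r\)), and then extend by zero. The one thing to fix is the scale of your bump. The paper halves \(\nu\) just before this lemma precisely so that \(F\) and the \Cref{lem:implicitest} estimates are available on \(2B'\); with that (current) \(\nu\), the ball \(B'\) has radius \(\nu r(x)\). Your choice \(\chi(y')=\eta\big((y'-x')/(\nu r(x))\big)\) with \(\eta\equiv 1\) only on \(B(0,\tfrac12)\) and supported in \(B(0,\tfrac34)\) therefore makes \(\chi\equiv 1\) only on \(\tfrac12 B'\) (and in fact supported inside \(\tfrac34 B'\subset B'\)), so \(\chi F\) agrees with \(F\) only on \(\tfrac12 B'\) rather than on \(B'\) as claimed. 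To get the stated conclusion you need \(\eta\equiv 1\) on the closed unit ball and supported in, say, \(B(0,\tfrac32)\), which is exactly what the paper's \(\psi\) does. This is a harmless rescaling of the cutoff; the Leibniz and seminorm estimates, and the cancellation of powers of \(r(x)\), go through unchanged.
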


\begin{proof}
Let \(\psi\) be a smooth non-negative function with compact support contained in the ball \(B(2,0)\subset\mathbb{R}^{n-1}\), such that \(\psi=1\) on the ball of half radius \(B(1,0)\subset\mathbb{R}^{n-1}\). Then set
\[
    \varphi(y')=\psi\bigg(\frac{y'-x'}{\nu r(x)}\bigg),
\]
so that \(\varphi\) is supported in \(2B'\) and \(\varphi=1\) on \(B'\). It remains to show that \(\varphi F\in C^{k,\alpha}(\mathbb{R}^n)\). To this end we first construct derivative estimates on \(\varphi\), observing that repeat differentiation gives
\[
    |\partial^\beta\varphi(y')|=\frac{1}{\nu^{|\beta|}r(x)^{|\beta|}}\bigg|\partial^\beta\psi\bigg(\frac{y'-x'}{\nu r(x)}\bigg)\bigg|\leq Cr(x)^{-|\beta|},
\]
since \(\psi\) and all of its derivatives are uniformly bounded independent of \(r\) on \(\mathbb{R}^{n-1}\). Additionally, we can observe for any multi-index \(\beta\) and \(0<\alpha\leq 1\) that \([\partial^\beta\varphi]_{\alpha,\mathbb{R}^n}=[\partial^\beta\varphi]_{\alpha,B'}\). Therefore by the Mean Value Theorem and the estimate on \(\partial^\beta\psi\) above, it follows that \([\partial^\beta\varphi]_{\alpha,\mathbb{R}^n}\leq Cr(x)^{-\alpha-|\beta|}\).

Combining these estimates on \(\varphi\) with the inequalities of \Cref{lem:implicitest}, a calculation using the Leibniz rule shows that for any multi-index \(\beta\) and \(y'\in B'\),
\[
    |\partial^\beta\varphi(y') F(y')|\leq C\sum_{\gamma\leq\beta}|\partial^\gamma\varphi (y')||\partial^{\beta-\gamma}F(y')|\leq C\sum_{\gamma\leq\beta}r(x)^{-|\gamma|}r(x)^{k+\alpha-|\beta|+|\gamma|}=Cr(x)^{k+\alpha-|\beta|}.
\]
Taking a supremum over \(y'\in 2B'\) yields \(\sup_{2B'}|\partial^\beta(\varphi F)|\leq Cr(x)^{k+\alpha-\beta}\), and since \(\varphi F\) is identically zero outside of \(2B'\) it follows that this bound holds over \(\mathbb{R}^n\) and \(\varphi F\in C^{k}(\mathbb{R}^{n-1})\). 

It remains to verify that \(\partial^\beta(\varphi F)\in C^{\alpha}(\mathbb{R}^{n-1})\) whenever \(|\beta|=k\). To this end we first use the sub-product rule of \Cref{lem:subprod} to write
\[
    [\partial^\beta(\varphi F)]_{\alpha,\mathbb{R}^{n-1}}=[\partial^\beta(\varphi F)]_{\alpha,2B'}\leq\sum_{\gamma\leq\beta}\binom{\beta}{\gamma}([\partial^{\beta-\gamma}\varphi]_{\alpha}\sup_{2B'}|\partial^\gamma F|+[\partial^{\beta-\gamma}F]_{\alpha,B'}\sup_{\mathbb{R}^{n-1}}|\partial^\gamma \varphi|).
\]
Employing the estimates from above, we see that
\[
    [\partial^\beta(\varphi F)]_{\alpha,\mathbb{R}^{n-1}}\leq C\sum_{\gamma\leq\beta}(r(x)^{-\alpha -|\beta|+|\gamma|}r(x)^{k+\alpha-|\gamma|}+r(x)^{k-|\beta|+|\gamma|}r(x)^{-|\gamma|})=Cr(x)^{k-|\beta|}
\]
In particular, we see that if \(|\beta|=k\) then \([\partial^\beta(\varphi F)]_{\alpha,\mathbb{R}^{n-1}}\leq C\), giving \(\varphi F\in C^{k,\alpha}(\mathbb{R}^{n-1})\).
\end{proof}

\textit{Remark}: In passing, we observe that by iterating the preceding argument for \(\varphi\) as above, we can show that in fact \(\varphi^mF\) is a non-negative \(C^{k,\alpha}(\mathbb{R}^{n-1})\) function for any \(m\in\mathbb{N}\). The most important consequence of the preceding result is that the restriction of \(F\) to a small enough ball can be extended to a function defined on all of \(\mathbb{R}^{n-1}\), and this extension can be achieved without sacrificing regularity.

To summarize the results of this section, if \(f\in C^{k,\alpha}(\mathbb{R}^n)\) is non-negative and bounded below by a constant multiple of \(r(x)^{k+\alpha}\) at the center of the ball \(B=B(x,\nu r(x))\), then \(\sqrt{f}\) is half as regular as \(f\) on \(B\). On the other hand, if \(f\) is bounded above at the center of \(B\) then we can construct a local decomposition of the form
\[
    f=g^2+F,
\]
where \(g=\sqrt{f-F}\in C^\frac{k+\alpha}{2}(B)\) and \(F\in C^{k,\alpha}(B)\) depends on \(n-1\) variables. Moreover, there exists a non-negative function in \(C^{k,\alpha}(\mathbb{R}^n)\) whose restriction to \(B'\) agrees with \(F\). In any case, we see that \(f\) can be locally decomposed.

\section{Partitions of Unity}\label{sec:pou}

To extend our local decompositions to global ones, we construct functions which sum to a constant on the set where \(f\) and its derivatives are nonzero, and which are compactly supported on the balls identified in the previous section. Our main instrument for extending local estimates is the following theorem.

\begin{thm}\label{thm:party}
Let \(r\) be slowly-varying on balls of the form \(B(x,\nu r(x))\). Then there exists a countable collection of balls \(\{B(x_j,\nu r_j)\}\), where \(r_j=r(x_j)\), and a partition of unity
\begin{equation}\label{eq:pou2}
    \sum_{j=1}^\infty\psi_j(x)^2=
    \begin{cases}
    1 & \textrm{if }\;r(x)>0,\\
    0 & \textrm{if }\;r(x)=0,
    \end{cases}
\end{equation}
which has the following properties:
\begin{itemize}
    \item[(1)] The support of each \(\psi_j\) is contained in the ball \(B(x_j,\nu r_j)\),
    \item[(2)] Every \(x\in\mathbb{R}^n\) belongs to at most \(N_n\) of the balls for a constant \(N_n\) that depends only on \(n\),
    \item[(3)] For every multi-index \(\beta\), \(\psi_j\) satisfies the estimate \(\sup_{\mathbb{R}^n}|\partial^\beta\psi_j|\leq C r_j^{-|\beta|}\),
    \item[(4)] For every multi-index \(\beta\), \(x\in\mathbb{R}^n\) and \(\alpha\in(0,1]\), \(\psi_j\) satisfies \([\partial^\beta\psi_j]_{\alpha}(x)\leq Cr_j^{-\alpha-|\beta|}\),
    \item[(5)] There exist \(N_n\) sub-collections of \(\{\psi_j\}\) that are each comprised of functions which have pairwise disjoint support (i.e. no two functions in any of the \(N_n\) sub-collections are nonzero simultaneously at any given point in \(\mathbb{R}^n\)). 
\end{itemize}
\end{thm}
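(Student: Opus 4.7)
My plan is to combine a Vitali-type ball selection with a scaled bump-function construction, then normalize the squared bumps to produce the partition $\sum_j \psi_j^2 = 1$; the derivative and pointwise H\"older estimates in (3)--(4) will come from slow variation plus bounded overlap together with the generalized chain and Leibniz rules, while the disjoint-support subcollections in (5) will be extracted by a graph-coloring argument. For the ball selection, I would apply a Vitali-type covering argument to the family $\{B(x, \nu r(x)/5) : r(x) > 0\}$ (working on each compact exhaustion piece separately if needed), extracting a countable pairwise-disjoint subfamily $\{B(x_j, \nu r_j/5)\}_{j=1}^\infty$ whose fivefold dilations $\{B(x_j, \nu r_j)\}$ still cover $\{r > 0\}$. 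Property (2) then follows from slow variation and packing: if $x$ lies in $B(x_j, \nu r_j)$ for all indices $j$ in some set $J$, then \Cref{lem:slowvar} (applied after reducing $\nu$ finitely many times) gives $r_j \asymp r(x)$ for all $j \in J$, so the disjoint balls $\{B(x_j, \nu r_j/5) : j \in J\}$ sit inside a fixed dimensional multiple of $B(x, r(x))$ with each volume bounded below by a fixed fraction of that larger ball's volume; a volume comparison bounds $|J|$ by a dimensional constant $N_n$.

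Next I would fix once and for all a nonnegative $\phi \in C_c^\infty(\mathbb{R}^n)$ with $\phi \equiv 1$ on $B(0, 1/5)$ and $\mathrm{supp}(\phi) \subset B(0, 1)$, and set
\[
    \varphi_j(x) = \phi\!\left(\tfrac{x - x_j}{\nu r_j}\right), \qquad \Phi(x) = \sum_{k=1}^\infty \varphi_k(x)^2.
\]
The sum is locally finite by property (2), and since the $B(x_j, \nu r_j/5)$ cover $\{r > 0\}$ with $\varphi_k \equiv 1$ on $B(x_k, \nu r_k/5)$, we have $\Phi \geq 1$ wherever $r > 0$. Defining $\psi_j = \varphi_j/\sqrt{\Phi}$ on $\{\Phi > 0\}$ and $\psi_j \equiv 0$ elsewhere, the identity $\sum_j \psi_j^2 = 1$ on $\{r > 0\}$ (and $=0$ otherwise) together with $\mathrm{supp}(\psi_j) \subset B(x_j, \nu r_j)$ give the partition identity and property (1). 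The chain rule yields $|\partial^\beta \varphi_j| \leq C r_j^{-|\beta|}$ globally, and the mean value theorem applied to a derivative of one higher order gives the pointwise bound $[\partial^\beta \varphi_j]_\alpha(x) \leq C r_j^{-\alpha - |\beta|}$.

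The main obstacle lies in propagating these scalings to the quotient $\psi_j$. On $\mathrm{supp}(\varphi_j)$ at most $N_n$ terms of $\Phi$ are nonzero, and each such index $k$ satisfies $r_k \asymp r_j$ by slow variation; hence $\Phi$ is bounded between fixed positive constants and satisfies $|\partial^\beta \Phi| \leq C r_j^{-|\beta|}$ and $[\partial^\beta \Phi]_\alpha(x) \leq C r_j^{-\alpha - |\beta|}$ there. Applying \Cref{cor:chain2} to $u \mapsto u^{-1/2}$ composed with $\Phi$ transfers these scalings to $\Phi^{-1/2}$, and then the Leibniz rule \Cref{lem:Leib} together with the pointwise subproduct inequality \Cref{lem:sharpsubprod} applied to $\psi_j = \varphi_j \cdot \Phi^{-1/2}$ yield properties (3) and (4); the crucial point that requires careful bookkeeping is that every constant absorbed depends only on $n$, $\phi$, and $\nu$, not on $j$, which is what ultimately allows the constants in \Cref{thm:c3main} to be dimensional.

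Finally, for property (5), form the graph on $\mathbb{N}$ with an edge between $j \neq k$ whenever $B(x_j, \nu r_j) \cap B(x_k, \nu r_k) \neq \emptyset$. Applying the packing argument from the first paragraph at any common point of two intersecting balls bounds the degree of each vertex by a dimensional constant --- after enlarging $N_n$ if necessary --- and greedy coloring then partitions the indices into at most $N_n$ classes $\mathcal{I}_1, \ldots, \mathcal{I}_{N_n}$ within which any two balls are disjoint. The subcollections $\{\psi_j : j \in \mathcal{I}_\ell\}$ therefore consist of functions with pairwise disjoint supports, completing the verification.
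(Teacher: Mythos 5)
The overall architecture you propose — a countable ball selection with bounded overlap, a bump-function normalization to build the squared partition, chain- and Leibniz-rule scaling for the derivative and semi-norm estimates, and a graph-coloring argument for item (5) — matches the paper's proof closely. However, there is a genuine gap in the step establishing the lower bound \(\Phi \geq 1\). After applying the Vitali covering lemma you correctly record that the pairwise-disjoint balls \(B(x_j,\nu r_j/5)\) have fivefold dilations \(B(x_j,\nu r_j)\) covering \(\{r>0\}\); but in the following paragraph you assert that the small balls \(B(x_j,\nu r_j/5)\) themselves cover \(\{r>0\}\). They cannot — they are pairwise disjoint by construction. As a result, \(\varphi_j\) is known to equal \(1\) only on a union of disjoint balls that does not cover \(\{r>0\}\). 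On the remaining portion of \(\{r>0\}\), which is covered only by the larger balls \(B(x_j,\nu r_j)\), the bumps \(\varphi_j\) can be arbitrarily close to zero, so \(\Phi\) is not bounded below and the quotient \(\psi_j = \varphi_j/\sqrt{\Phi}\) is uncontrolled. The fix is a third, intermediate ball scale: the paper constructs the family greedily so that the balls \(B(x_j,\frac{1}{2}\nu r_j)\) cover \(S\) while the balls \(B(x_j,\frac{1}{8}\nu r_j)\) are globally disjoint, and chooses the bump to equal \(1\) on \(B(0,\frac{1}{2})\). Your Vitali route can be repaired similarly by starting from the family \(\{B(x,\nu r(x)/25)\}\); then the dilated balls \(B(x_j,\nu r_j/5)\) cover \(\{r>0\}\), your bump \(\phi=1\) on \(B(0,1/5)\) actually gives \(\Phi\geq 1\), and the supports still lie in \(B(x_j,\nu r_j)\).

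A secondary issue: the parenthetical remark about "working on each compact exhaustion piece" is not harmless when \(r\) is unbounded (slow variation only gives local boundedness). The disjointness that Vitali delivers is then only shell-by-shell, but both the packing count in (2) and the degree bound for the coloring in (5) rely on genuine global disjointness of the small balls. The paper's greedy selection — pick any uncovered point of \(S\) relative to the \(\frac{1}{2}\nu r_j\)-balls and then prove the \(\frac{1}{8}\nu r_j\)-balls are disjoint via slow variation — produces a globally disjoint family directly and avoids the boundedness hypothesis altogether, which is cleaner than patching Vitali across an exhaustion.
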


\medskip

\textit{Remark}: Such a partition can be constructed for any function with the slow-variation property, and it is not exclusive to \(r\) as given by \eqref{eq:controlfunc}. A similar result to \Cref{thm:party} is proved by De Guzman in \cite[Lemma 1.2]{guzman}, however the version we require differs in several respects. As such, we include a complete and independent proof here. Our construction is motivated in spirit by the argument in \cite{Tataru}, but it differs considerably at several points.

\begin{proof}
We first set \(S=\{x\in\mathbb{R}^n:r(x)>0\}\) and recursively construct a cover of \(S\) by balls of the form \(B(x_j,\frac{1}{2}\nu r_j)\). Having chosen the first \(\ell\) balls, we add to that collection the ball \(B(x_{\ell+1},\frac{1}{2}\nu r_{\ell+1})\) centred at any point \(x_{\ell+1}\in S\setminus(\bigcup_{j=1}^\ell B(x_j,\frac{1}{2}\nu r_j))\). If \(x\in B(x_j,\frac{1}{2}\nu r_j)\) for some \(j\) then \(2r(x)>r_j>0\) by slow variation, meaning \(x\in S\), and from this construction we thus get
\begin{equation}\label{eq:seteq}
    S=\bigcup_{j=1}^\infty B\bigg(x_j,\frac{1}{2}\nu r_j\bigg).
\end{equation}

A useful property of these balls is that their dilates \(B(x_j,\frac{1}{8}\nu r_j)\) are pairwise disjoint. To see this, assume toward a contradiction that \(x\in B(x_i,\frac{1}{8}\nu r_i)\cap B(x_j,\frac{1}{8}\nu r_j)\) for some \(i>j\). Then \(r_i\leq \frac{4}{3}r(z)\leq \frac{5}{3}r_j\) by slow variation and
\[
    |x_i-x_j|\leq |x_i-x|+|x_j-x|<\frac{1}{8}\nu(r_i+r_j)\leq \frac{1}{8}\nu\bigg(\frac{5}{3}r_j+r_j\bigg)=\frac{1}{3}\nu r_j<\frac{1}{2}\nu r_j.
\]
Thus \(x_i\in B(x_j,\frac{1}{2}\nu r_j)\), contrary to our construction. Using the pairwise disjoint property we can show that any given point belongs to only finitely many balls of the form \(B(x_j,\nu r_j)\). Let \(m\) denote the number of balls which contain a fixed point \(x\in S\). If \(B(x_j,\nu r_j)\) is one of these balls then \(\frac{4}{5}r(x)\leq r_j \leq \frac{4}{3}r(x)\) by slow variation, meaning that \(B(x_j,\frac{1}{10}r(x))\subseteq B(x_j,\frac{1}{8}r_j)\). Additionally, if \(y\in B(x_j,\frac{1}{8}\nu r_j)\) then \(|x-y|\leq |x-x_j|+|y-x_j|< \frac{9}{8}\nu r_j\leq \frac{3}{2}r(x)\), meaning that \(B(x_j,\frac{1}{8}\nu r_j)\subseteq B(x,\frac{3}{2}r(x))\). Taking a union of the \(m\) disjoint balls centred at \(x_{j_1},\dots,x_{j_m}\) whose dilates contain \(x\), we see from these estimates that
\[
    \bigcup_{k=1}^m B\bigg(x_{j_k},\frac{1}{10}\nu r(x)\bigg)\subseteq \bigcup_{k=1}^m B\bigg(x_{j_k},\frac{1}{8}\nu r_j\bigg)\subseteq B\bigg(x,\frac{3}{2}\nu r(x)\bigg).
\]
Since the \(m\) balls on the left are pairwise disjoint and they all have the same size, it follows that
\[
     \frac{mV \nu^nr(x)^n}{10^n}\leq \frac{3^nV \nu^nr(x)^n}{2^n},
\]
where \(V\) denotes the volume of the unit ball in \(\mathbb{R}^n\). This simplifies to give \(m \leq 15^n\), meaning that each \(x\in S\) belongs to at most \(N_n=15^n\) balls of the form \(B(x_j,\nu r_j)\) from our cover of \(S\).

Next let \(\psi\) be a non-negative smooth function that is supported in the unit ball in \(\mathbb{R}^n\), which satisfies \(\psi\leq 1\) everywhere and \(\psi=1\) on \(B(0,\frac{1}{2})\). For \(x\in S\) we then define
\begin{equation}\label{eq:partfunc}
    \psi_j(x)=\psi\bigg(\frac{x-x_j}{\nu r_j}\bigg)\bigg(\sum_{k=1}^\infty\psi\bigg(\frac{x-x_k}{\nu r_k})\bigg)^2\bigg)^{-\frac{1}{2}},
\end{equation}
so that \(\sup|\psi_j|\leq 1\) and \(\psi_j\) is supported in \(B(x_j,\nu r_j)\). Each \(x\) belongs to some ball of the form \(B(x_j,\frac{1}{2}\nu r_j)\) and at most \(N_n\) balls of the form \(B(x_j,\nu r_j)\), meaning that 
\begin{equation}\label{eq:lowercontrol}
    1\leq \sum_{k=1}^\infty\psi\bigg(\frac{x-x_k}{\nu r_k}\bigg)^2\leq N_n.
\end{equation}
Consequently the sum in \eqref{eq:partfunc} is nonzero and finite, so there is no issue of convergence.

Now we establish derivative and semi-norm estimates for \(\psi_j\). Since \(\psi\) is a smooth function with compact support, \(\sup_{\mathbb{R}^n}|\partial^\beta\psi|\leq C \) for a constant that depends on \(\beta\). By the general Leibniz rule \Cref{lem:Leib}, we may write
\begin{equation}\label{eq:leib}
    \partial^\beta\psi_j(x)=\sum_{\gamma\leq\beta}\binom{\beta}{\gamma}\bigg(\partial^\gamma\psi\bigg(\frac{x-x_j}{\nu r_j}\bigg)\bigg)\bigg(\partial^{\beta-\gamma}\bigg(\sum_{k=1}^\infty\psi\bigg(\frac{x-x_{k}}{\nu r_k}\bigg)^2\bigg)^{-\frac{1}{2}}\bigg).
\end{equation}
The derivatives of \(\psi_j\) are bounded, so \(\sup|\partial^\gamma\psi(\frac{x-x_j}{\nu r_j})|\leq C r_j^{-|\gamma|}\). Additionally, we can use the first inequality of \eqref{eq:lowercontrol} along with the fact that \(cr_j\leq r_k\leq Cr_j\) when \(x\in B(x_j,\nu r_j)\cap B(x_k,\nu r_k) \), to get from the chain rule \Cref{cor:chain2} that 
\[
    \bigg|\partial^{\beta-\gamma}\bigg(\sum_{k=1}^\infty\psi\bigg(\frac{x-x_{k}}{\nu r_k}\bigg)^2\bigg)^{-\frac{1}{2}}\bigg|\leq Cr_j^{|\gamma|-|\beta|}.
\]
Combining these estimates with \eqref{eq:leib} we get \(\sup|\partial^\beta \psi_j|\leq Cr_j^{-|\beta|}\). For semi-norm estimates we let \(y,z\in B(x_j,\nu r_j)\) and use the Mean Value Theorem together with derivative bound \textit{(3)} to get
\[
    |\partial^\beta\psi_j(y)-\partial^\beta\psi_j(z)|\leq Cr_j^{-|\beta|-1}\leq  C\nu^{1-\alpha} r_j^{-\alpha-|\beta|}|y-z|^\alpha.
\]
The semi-norm estimate \textit{(4)} then follows from taking a limit supremum as \(y,z\rightarrow x\).

Finally, to prove item \textit{(5)} we construct an infinite graph \(G\) as follows: to each ball in the collection \(\{B(x_j,\nu r_j)\}\) we assign a vertex, and we add an edge between two given vertices if their corresponding balls intersect. Since our collection has bounded overlap, each vertex of \(G\) has degree at most \(N_n\), and by \cite[Theorem 3]{behzad} it follows that the chromatic number of \(G\) is bounded above by \(N_n\). Consequently we can find \(N_n\) sub-collections of \(\{B(x_j,\nu r_j)\}\), each comprised of pairwise disjoint balls. The functions \(\psi_j\) corresponding to the balls in any given sub-collection of \(\{B(x_j,\nu r_j)\}\) form the desired subset of functions having pairwise disjoint supports, giving item \textit{(5)} and completing the proof.
\end{proof}

\textit{Remark}: Our use of squares in \eqref{eq:pou2} is helpful in the proof of \Cref{thm:c3main}, however from a general standpoint it is insignificant. Given a countable collection of integers \(\{m_j\}\) we can replace \eqref{eq:partfunc} with the alternative definition
\[
    \psi_j(x)=\psi\bigg(\frac{x-x_j}{\nu r_j}\bigg)\bigg(\sum_{k=1}^\infty\psi\bigg(\frac{x-x_k}{\nu r_k})\bigg)^{m_k}\bigg)^{-\frac{1}{m_j}}.
\]
This allows us to replace the sum of squares in \eqref{eq:pou2} with \(\sum_{j=1}^\infty\psi_j^{m_j}\), and it is easy to check that \(\psi_j\) defined in this way satisfies similar differential inequalities to those found above. For the purposes of this work, our continued use of squares is sufficient. However, this observation may be useful if one wishes to decompose into sums of arbitrary integer powers instead.

Combining \Cref{thm:party} with the results of the preceding section, we are now able to extend our local decompositions of \(f\) to a global one. Our first result to this end essentially states that the local roots identified in the previous section extend to \(C^{\frac{k+\alpha}{2}}(\mathbb{R}^n)\) functions when multiplied by the partition functions constructed in Theorem \ref{thm:party}.

\begin{lem}\label{lem:rootscors}
Let \(f\) be a non-negative \(C^{k,\alpha}(\mathbb{R}^n)\) function, and let \(\psi_j\) be one of the partition functions in \eqref{eq:pou2} supported in \(B(x_j,\nu r_j)\). If \(f(x_j)\geq \omega r_j^{k+\alpha}\) then the following estimates hold pointwise in \(\mathbb{R}^n\),
\begin{itemize}
    \item[(1)] \(|\partial^\beta[\psi_j\sqrt{f}](x)|\leq Cr(x)^{\frac{k+\alpha}{2}-|\beta|}\),
    \item[(2)] \([\partial^\beta(\psi_j\sqrt{f})]_{\frac{\alpha}{2}}(x)\leq Cr(x)^{\frac{k}{2}-|\beta|}\) if \(k\) is even,
    \item[(3)] \([\partial^\beta(\psi_j\sqrt{f})]_{\frac{1+\alpha}{2}}(x)\leq Cr(x)^{\frac{k-1}{2}-|\beta|}\) if \(k\) is odd.
\end{itemize}
Similarly, if \(f(x_j)<\omega r_j^{k+\alpha}\) and the other hypotheses of \Cref{lem:minia} are satisfied, then for \(F\) as in \Cref{lem:nice} we also have
\begin{itemize}
    \item[(4)] \(|\partial^\beta[\psi_j\sqrt{f-F}](x)|\leq Cr(x)^{\frac{k+\alpha}{2}-|\beta|}\),
    \item[(5)] \([\partial^\beta(\psi_j\sqrt{f-F})]_{\frac{\alpha}{2}}(x)\leq Cr(x)^{\frac{k}{2}-|\beta|}\) if \(k\) is even,
    \item[(6)] \([\partial^\beta(\psi_j\sqrt{f-F})]_{\frac{1+\alpha}{2}}(x)\leq Cr(x)^{\frac{k-1}{2}-|\beta|}\) if \(k\) is odd.
\end{itemize}
\end{lem}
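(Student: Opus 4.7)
The plan is to reduce the estimates to points inside the support of $\psi_j$ and then combine the pointwise bounds from \Cref{sec:reg} and \Cref{sec:pou} via the Leibniz rule and its H\"older semi-norm analog. To begin, observe that since the bump function $\psi$ used in \eqref{eq:partfunc} has compact support strictly inside $B(0,1)$, the support of each $\psi_j$ is a compact subset of the open ball $B(x_j,\nu r_j)$. In the first case ($f(x_j)\geq \omega r_j^{k+\alpha}$), item \textit{(1)} of \Cref{cor:supplementary} together with slow variation of $r$ implies $f\geq \tfrac{1}{2}\omega r_j^{k+\alpha}>0$ throughout $B(x_j,\nu r_j)$, so $\sqrt{f}$ is smooth on the support of $\psi_j$. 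In the second case, the factorization $f-F=(y_n-X(y'))^2 H$ with $H\geq Cr(y)^{k-2+\alpha}>0$ on $B$, established in the proof of \Cref{lem:nice}, furnishes a smooth signed square root $(y_n-X(y'))\sqrt{H(y)}$ of $f-F$ on the ball, and this is the object meant by $\sqrt{f-F}$.

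A consequence is that both $\psi_j\sqrt{f}$ and $\psi_j\sqrt{f-F}$ extend by zero to $C^\infty$ functions on $\mathbb{R}^n$ whose derivatives vanish identically outside $\mathrm{supp}(\psi_j)$. It therefore suffices to establish the estimates at points $x\in\mathrm{supp}(\psi_j)\subset B(x_j,\nu r_j)$, since for $x$ outside this support both sides of the claimed inequalities vanish (the right-hand sides being non-negative powers of $r(x)$, which give either $0$ or $\infty$ as appropriate). At such an $x$, \Cref{lem:slowvar} gives $r(x)\sim r_j$, permitting free interchange of the two scales.

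For the derivative bounds \textit{(1)} and \textit{(4)}, I would expand $\partial^\beta(\psi_j\sqrt{f})$ via the general Leibniz rule (\Cref{lem:Leib}) and combine $|\partial^\gamma\psi_j(x)|\leq Cr_j^{-|\gamma|}$ from \Cref{thm:party}\textit{(3)} with $|\partial^{\beta-\gamma}\sqrt{f}(x)|\leq Cr(x)^{\frac{k+\alpha}{2}-|\beta|+|\gamma|}$ from \Cref{lem:local1}\textit{(1)} (or the analogous bound from \Cref{lem:nice}\textit{(1)}); every term in the resulting sum collapses to $Cr(x)^{\frac{k+\alpha}{2}-|\beta|}$. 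For the pointwise H\"older semi-norm bounds \textit{(2)}, \textit{(3)}, \textit{(5)}, \textit{(6)}, with $\lambda\in\{\tfrac{\alpha}{2},\tfrac{1+\alpha}{2}\}$ and $\Lambda=\frac{k+\alpha}{2}-\lambda\in\{\tfrac{k}{2},\tfrac{k-1}{2}\}$, I would apply the sharp sub-product rule (\Cref{lem:sharpsubprod}) to write
\[
    [\partial^\beta(\psi_j\sqrt{f})]_\lambda(x)\leq\sum_{\gamma\leq\beta}\binom{\beta}{\gamma}\big([\partial^{\beta-\gamma}\psi_j]_\lambda(x)|\partial^\gamma\sqrt{f}(x)|+[\partial^{\beta-\gamma}\sqrt{f}]_\lambda(x)|\partial^\gamma\psi_j(x)|\big),
\]
and control the H\"older factors of $\psi_j$ via \Cref{thm:party}\textit{(3)}, \textit{(4)} and the corresponding factors of $\sqrt{f}$ or $\sqrt{f-F}$ via the relevant parts of \Cref{lem:local1} or \Cref{lem:nice}.

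The only real difficulty is the exponent bookkeeping: the two species of summand contribute, respectively, $r_j^{-\lambda-|\beta|+|\gamma|}\cdot r(x)^{\frac{k+\alpha}{2}-|\gamma|}$ and $r(x)^{\Lambda-|\beta|+|\gamma|}\cdot r_j^{-|\gamma|}$, both of which simplify to $Cr(x)^{\Lambda-|\beta|}$ after invoking $r(x)\sim r_j$ and the identity $\Lambda+\lambda=\frac{k+\alpha}{2}$. Since the constants entering through \Cref{thm:party}, \Cref{lem:local1}, and \Cref{lem:nice} depend only on $n$, $k$, and $\alpha$, the resulting bounds are uniform in $j$, and the two cases of the lemma are handled identically apart from the choice of auxiliary local estimate.
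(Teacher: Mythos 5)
Your proposal is correct and follows essentially the same route as the paper: Leibniz's rule plus the derivative bounds of \Cref{thm:party} and \Cref{lem:local1}/\Cref{lem:nice} for items \textit{(1)} and \textit{(4)}, and the sub-product rule plus the same ingredients for the H\"older semi-norm items, with slow variation $r(x)\sim r_j$ closing the exponent arithmetic in each case. Your citation of the pointwise sub-product rule \Cref{lem:sharpsubprod} is, if anything, slightly tidier than the paper's (which invokes \Cref{lem:subprod} but then works with the pointwise operator anyway), and your explicit remark that $\mathrm{supp}(\psi_j)$ sits compactly inside the open ball cleanly justifies why the pointwise semi-norm vanishes off the support.

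One small wording wobble worth flagging: you say that outside the support "both sides of the claimed inequalities vanish," which is not quite what you mean. Only the left-hand sides vanish there (the derivatives and the pointwise semi-norms of a function that is identically zero in a neighbourhood are zero); the right-hand sides $Cr(x)^{\cdot}$ are merely non-negative, so the inequalities hold trivially as $0\le(\text{non-negative})$. The conclusion is the same, but phrase it as the left-hand sides vanishing rather than both sides.
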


\begin{proof}
First we bound the derivatives of \(\psi_j\sqrt{f}\) pointwise on the ball \(B=B(x_j,\nu r_j)\), employing \Cref{lem:Leib} and the triangle inequality to achieve the following pointwise bound
\[
    |\partial^\beta[\psi_j\sqrt{f}](x)|\leq \sum_{\gamma\leq\beta}\binom{\beta}{\gamma}|\partial
    ^\gamma \sqrt{f(x)}||\partial^{\beta-\gamma}\psi_j(x)|.
\]
Using the derivative bounds on \(\psi_j\) from \Cref{thm:party}, the local derivative bounds on \(\sqrt{f}\) from \Cref{lem:local1}, and the fact that \(r(x)\leq Cr_j\) for \(x\in B\) by slow variation of \(r\), it follows that
\[
    |\partial^\beta[\psi_j\sqrt{f}](x)|\leq C\sum_{\gamma\leq\beta}r(x)^{\frac{k+\alpha}{2}-|\gamma|}r_j^{|\gamma|-|\beta|}\leq Cr(x)^{\frac{k+\alpha}{2}-|\beta|}.
\]
If \(x\not\in B\) then \(\psi_j\) and all of its derivatives are identically zero, so \textit{(1)} holds trivially there and we conclude that \textit{(1)} holds on all of \(\mathbb{R}^n\).

For \textit{(2)} and \textit{(3)} we let \(\Lambda\) denote the integer part of \(\frac{k}{2}\) and set \(\lambda=\frac{k+\alpha}{2}-\Lambda\) so that it suffices to show that \([\partial^\beta(\psi_j\sqrt{f})]_\lambda(x)\leq Cr(x)^{\Lambda-|\beta|}\) on \(\mathbb{R}^n\). To this end we employ the sub-product rule of \Cref{lem:subprod} to see for \(x\in B\) that
\[
    [\partial^\beta(\psi_j\sqrt{f})]_\lambda(x)\leq \sum_{\gamma\leq\beta}\binom{\beta}{\gamma}([\partial^{\beta-\gamma}\sqrt{f}]_{\lambda}(x)\sup_{B}|\partial^\gamma \psi_j|+[\partial^{\beta-\gamma}\psi_j]_\lambda(x)\sup_{B}|\partial^\gamma \sqrt{f}|).
\]
Employing the derivative estimates on \(\psi_j\) and \(\sqrt{f}\) on \(B\) from \Cref{thm:party} and \Cref{lem:local1} as above, and using slow variation of \(r\) on \(B\) to see that \(cr_j\leq r(x)\leq Cr_j\), we get
\[
    [\partial^\beta(\psi_j\sqrt{f})]_\lambda(x)\leq C\sum_{\gamma\leq\beta}(r(x)^{\Lambda-|\beta|+|\gamma|}r_j^{-|\gamma|}+r_j^{|\gamma|-|\beta|-\lambda}r(x)^{\frac{k+\alpha}{2}-|\gamma|})\leq Cr(x)^{\Lambda-|\beta|}.
\]
On the other hand, if \(x\not\in B\) then \([\partial^\beta(\psi_j\sqrt{f})]_\lambda(x)\) is identically zero, and the estimate holds trivially. Therefore \textit{(2)} and \textit{(3)} hold throughout \(\mathbb{R}^n\). The proofs of items \textit{(4)} through \textit{(6)} are virtually identical, using the estimates from \Cref{lem:nice} in place of those from \Cref{lem:local1}.
\end{proof}

The pointwise semi-norm estimates given in \Cref{lem:rootscors} imply some useful inclusions.

\begin{cor}\label{cor:theniceone}
The functions \(\psi_j\sqrt{f}\) and \(\psi_j\sqrt{f-F}\) defined above belong to \(C^\frac{k+\alpha}{2}(\mathbb{R}^n)\).
\end{cor}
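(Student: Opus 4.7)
The plan is to treat this corollary as a repackaging of the pointwise estimates already established in Lemma \ref{lem:rootscors}, combining them with the identity $[g]_{\alpha,\Omega}=\sup_{x\in\Omega}[g]_\alpha(x)$ introduced earlier in the chapter. The compact support of each $\psi_j$ inside $B=B(x_j,\nu r_j)$ is the crucial feature: outside $B$ all the relevant derivatives vanish, and inside $B$ slow variation of $r$ (Lemma \ref{lem:slowvar}) turns every pointwise bound in $r(x)$ into a uniform bound in $r_j$.

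First, I would fix $k$ even and treat $\psi_j\sqrt f$ (the arguments for odd $k$, and for $\psi_j\sqrt{f-F}$, will be identical once items (4)--(6) of Lemma \ref{lem:rootscors} are substituted for items (1)--(3)). For each multi-index $\beta$ with $|\beta|\leq k/2$, item (1) together with $r(x)\leq \tfrac54 r_j$ on the support of $\psi_j$ gives $\sup_{\mathbb{R}^n}|\partial^\beta(\psi_j\sqrt f)|\leq Cr_j^{\frac{k+\alpha}{2}-|\beta|}<\infty$. Then, for the top order $|\beta|=k/2$, item (2) yields $[\partial^\beta(\psi_j\sqrt f)]_{\alpha/2}(x)\leq Cr(x)^0=C$ uniformly on $\mathbb{R}^n$ (zero off the support, bounded on the support), and the supremum identity from the preliminary chapter immediately upgrades this to $[\partial^\beta(\psi_j\sqrt f)]_{\alpha/2,\mathbb{R}^n}\leq C$. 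Together these bounds place $\psi_j\sqrt f$ in $C^{k/2,\alpha/2}(\mathbb{R}^n)=C^{\frac{k+\alpha}{2}}(\mathbb{R}^n)$.

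The step I expect to require the most care is showing that $\psi_j\sqrt f$ and $\psi_j\sqrt{f-F}$ are genuinely $\lfloor\frac{k+\alpha}{2}\rfloor$ times continuously differentiable on all of $\mathbb{R}^n$, rather than merely satisfying H\"older-type estimates. For $\psi_j\sqrt f$ in the regime $f(x_j)\geq\omega r_j^{k+\alpha}$, Corollary \ref{cor:supplementary}(1) gives $f\geq\tfrac12\omega\nu r_j^{k+\alpha}>0$ throughout $B$, so $\sqrt f$ is $C^k$ on a neighbourhood of $\mathrm{supp}(\psi_j)$ and extension by zero is smooth because $\psi_j$ vanishes to all orders on $\partial B$. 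For $\psi_j\sqrt{f-F}$, the potential trouble is at the zero set $\{y_n=X(y')\}$ of $f-F$; here I would appeal to the factorization $f-F=(y_n-X(y'))^2H$ with $H\geq Cr(y)^{k-2+\alpha}>0$ from the proof of Lemma \ref{lem:nice}, which writes the relevant root (up to sign) as the smooth product $(y_n-X(y'))\sqrt H$, removing the cusp. With these differentiability checks in hand, the semi-norm and sup-norm bounds assemble into the claimed inclusion.
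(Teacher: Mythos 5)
Your proposal is correct and follows essentially the same route as the paper, which treats this corollary as an immediate consequence of Lemma~\ref{lem:rootscors}: the pointwise derivative and seminorm bounds become uniform bounds in $r_j$ on $\mathrm{supp}(\psi_j)$ via slow variation and are trivial off the support, and the identity $[g]_{\lambda,\Omega}=\sup_{x\in\Omega}[g]_\lambda(x)$ then gives finiteness of the top-order H\"older seminorm. Your extra paragraph verifying genuine $C^{\lfloor(k+\alpha)/2\rfloor}$ differentiability (positivity of $f$ on $\mathrm{supp}(\psi_j)$ in the first case, and the factorization $f-F=(y_n-X)^2H$ in the second) addresses a point the paper leaves implicit, and your treatment of it is sound.
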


Finally, with the preceding constructions in place, we are equipped to prove the main result.

\section{Proof of \texorpdfstring{\Cref{thm:c3main}}{}}\label{sec:c3proof}

Arguing by induction on \(n\), we show that if \(k=2\) or \(k=3\) then identity \eqref{eq:decompident} holds for every non-negative \(f\in C^{k,\alpha}(\mathbb{R}^{n})\). Indeed, we also show that the functions \(g_1,\dots,g_{m_n}\) satisfy the following set of pointwise estimates:
\begin{itemize}
    \item[\textit{(1)}] \(|\partial^\beta g_j(x)|\leq Cr_f(x)^{\frac{k+\alpha}{2}-|\beta|}\),
    \item[\textit{(2)}] \([\partial^\beta g_j]_{\frac{\alpha}{2},\mathbb{R}^{n-1}}(x)\leq Cr_f(x)^{\frac{k}{2}-|\beta|}\) if \(k\) is even,
    \item[\textit{(3)}] \([\partial^\beta g_j]_{\frac{1+\alpha}{2},\mathbb{R}^{n-1}}(x)\leq Cr_f(x)^{\frac{k-1}{2}-|\beta|}\) if \(k\) is odd.
\end{itemize}
The function \(r_f\) is as in \eqref{eq:controlfunc}, and we now include the subscript to emphasize dependence on \(f\).

Like we did in \Cref{sec:reg}, we prove all estimates for arbitrary integer-valued \(k\geq 0\) and we do not restrict to \(k=2\) and \(k=3\), since the argument that follows is valid whenever inequality \eqref{eq:needed} holds pointwise. The generality of this argument saves us from repeating a virtually identical proof for the separate but similar cases when \(k=2\) and \(k=3\).

\subsection*{\normalsize Base Case in One Dimension}

For a base case of \Cref{thm:c3main} in one dimension, we show that every non-negative \(C^{k,\alpha}(\mathbb{R})\) function can be decomposed as a sum of half-regular squares for every \(k\geq 0\) and \(0<\alpha\leq 1\). Such a result is already proved in \cite{Bony2}, and for completeness we include Bony's result as \Cref{thm:onedim}. However, the claim appearing in \cite{Bony2} omits the derivative and semi-norm estimates that we require for our high dimensional inductive argument, so we furnish a complete proof together with the required estimates. We emphasize that this one-dimensional result is not new, and we do not show that \(m_1=2\) as Bony does, however our proof avoids some of the difficult technical algebraic arguments in \cite{Bony2} and thus it is considerably shorter.

Fixing a non-negative function \(f\in C^{k,\alpha}(\mathbb{R})\) and using \Cref{thm:party}, we construct a partition of unity using the control function \(r_f\) define from \eqref{eq:controlfunc}, so that we may write \(f\) as follows,
\[
    f=\sum_{j=1}^\infty\psi_j^2f.
\]
For a sufficiently small parameter \(\nu\), each function \(\psi_j\) in the sum above is supported on the an interval of the form \(B(x_j,\nu r_j)=(x_j-\nu r_j,x_j+\nu r_j)\). Fixing \(j\in\mathbb{N}\), we observe that at the center of this interval, either \(f(x_j)\geq \omega\nu r_j^{k+\alpha}\) or the converse \(f(x_j)<\omega\nu r_j^{k+\alpha}\) is true.

In the first case, \Cref{lem:rootscors} shows that \(\psi_j\sqrt{f}\) is a \(C^\frac{k+\alpha}{2}(\mathbb{R})\) function that satisfies the pointwise inequalities \textit{(1)} through \textit{(3)} above. On the other hand, if \(f(x_j)<\omega\nu r_j^{k\alpha}\) then \Cref{lem:minia} gives a unique local minimum \(X_j\) of \(f\) near \(x_j\) for which the number \(F_j=f(X_j)\) satisfies 
\[
    \psi_j\sqrt{f-F_j}\in C^\frac{k+\alpha}{2}(\mathbb{R}),
\] 
once again by \Cref{lem:rootscors}. Moreover, \Cref{lem:rootscors} shows that \(\psi_j\sqrt{f-F_j}\) satisfies the required pointwise inequalities \textit{(1)} through \textit{(3)}. In the case that \(f(x_j)<\omega\nu r_j^{k\alpha}\) then, we see that \(f\) can be decomposed into a sum of two squares on the interval \(B(x_j,\nu r_j)\),
\[
    \psi_j^2f=(\psi_j\sqrt{f-F_j})^2+(\psi_j\sqrt{F_j})^2.
\]

To proceed with our decomposition in one dimension, we must show that \(\psi_j\sqrt{F_j}\) is a \(C^\frac{k+\alpha}{2}(\mathbb{R})\) function which satisfies the required pointwise estimates. This turns out to be rather straightforward, since \(F_j\) is a non-negative constant. By slow variation of \(r_f\) we have \(F_j\leq Cr_j^{k+\alpha}\), and it follows by item \textit{(3)} of \Cref{thm:party} that for \(x\in B(x_j,\nu r(x_j))\) the following estimate holds,
\[
    |\partial^\beta[\psi_j\sqrt{F_j}](x)|=\sqrt{F_j}|\partial^\beta\psi_j(x)|\leq Cr_j^{\frac{k+\alpha}{2}-|\beta|}\leq Cr(x)^{\frac{k+\alpha}{2}-|\beta|}.
\]
This bound trivially holds outside the support of \(\psi_j\), meaning it holds everywhere as required.

Next we prove that \(\psi_j\sqrt{F_j}\) satisfies the semi-norm estimates \textit{(2)} and \textit{(3)}. To deal with the cases in which \(k\) is odd and even simultaneously, we employ the usual tactic defining \(\Lambda\) as integer part of \(\frac{k}{2}\) and setting \(\lambda=\frac{k+\alpha}{2}-\Lambda\), so that it suffices to prove that \([\partial^\beta(\psi_j\sqrt{F})]_\lambda(x)\leq Cr(x)^{\Lambda-|\beta|}\). This follows at once from item \textit{(4)} of \Cref{thm:party} together with slow variation,
\[
    [\partial^\beta(\psi_j\sqrt{F})]_\lambda(x)=\sqrt{F}[\partial^\beta\psi_j]_\lambda(x)\leq Cr_j^\frac{k+\alpha}{2}r_j^{-\lambda-|\beta|}=Cr_j^{\Lambda-|\beta|}\leq Cr(x)^{\Lambda-|\lambda|}.
\]
Consequently, \(\psi_j\sqrt{F}\) satisfies the estimates \textit{(1)} through \textit{(3)} required for our induction.

Regardless of the behaviour of \(f\) at \(x_j\), we see now that we can write \(\psi_j^2f\) as a sum of at most two squares of functions which satisfy the pointwise estimates \textit{(1)} through \textit{(3)}. After relabelling, we can therefore write
\begin{equation}\label{eq:infsum}
    f=\sum_{j=1}^\infty g_j^2
\end{equation}
for functions \(g_j\in C^\frac{k+\alpha}{2}(\mathbb{R})\). By estimate \textit{(5)} of \Cref{thm:party}, we can identify \(m_1=2N_{1}=30\) sub-collections of functions in the sum above which enjoy pairwise disjoint support. Fix any of these sub-collections, and let its functions be indexed by \(S\subseteq\mathbb{N}\). Observe that the function 
\begin{equation}\label{eq:recombsum}
    g_S=\sum_{j\in S}g_j
\end{equation}
belongs to \(C^\frac{k+\alpha}{2}(\mathbb{R}^n)\) by \Cref{lem:recomb}. Moreover, \(g_S\) satisfies the pointwise estimates \textit{(1)} through \textit{(3)} above owing to the argument employed in proving \Cref{lem:recomb}, and we can also observe that
\[
    g_S^2=\bigg(\sum_{j\in S}g_j\bigg)^2=\sum_{j\in S}g_j^2,
\]
since the functions in the sum have pairwise disjoint supports. Repeating this argument for each of the \(m_1\) sub-collections identified from \eqref{eq:infsum}, and relabelling the recombined functions as \(g_1,\dots,g_{m_1}\), we see that we can write
\[
    f=\sum_{j=1}^{m_1}g_j^2.
\]
Each \(g_j\) above satisfies estimates \textit{(1)} through \textit{(3)} on \(\mathbb{R}^n\), and we conclude that the base case of our induction holds. Specifically, this follows from taking \(k=2\) or \(k=3\).

Further, since the derivative estimates which require \eqref{eq:needed} are bypassed in one dimension since \(F_j\) is constant, we see that the preceding argument is valid for any \(k\geq 0\), meaning that our argument above also proves most of \Cref{thm:onedim}. The only shortcoming is that our argument above does not recover the optimal constant \(m_1=2\) found in \cite{Bony}.

\medskip

\subsection*{\normalsize Inductive Step in Higher Dimensions}

Now we proceed with the inductive stage of our argument, assuming for our inductive hypothesis that for every non-negative function \(f\in C^{k,\alpha}(\mathbb{R}^{n-1})\) with \(k=2\) or \(k=3\), there exist \(g_1,\dots,g_{m_{n-1}}\) satisfying estimates \textit{(1)}, \textit{(2)} and \textit{(3)} from the beginning of this section for which
\[
    f=\sum_{j=1}^{m_{n-1}}g_j^2.
\]
To this end we fix a non-negative function \(f\in C^{k,\alpha}(\mathbb{R}^n)\). Once again using \Cref{thm:party}, we are able to form the partition of unity induced by \(r_f\) as in \eqref{eq:controlfunc} to write
\[
    f=\sum_{j=1}^\infty\psi_j^2f.
\]

An identical argument to that employed for the base case shows that either \(\psi_j\sqrt{f}\) satisfies the required derivative estimates and belongs to the half-regular H\"older space, or we can write 
\[
    \psi_j^2f=(\psi_j\sqrt{f-F_j})^2+\psi_j^2F_j.
\]
From \Cref{lem:rootscors}, the first function on the right-hand side above satisfies the required estimates \textit{(1)} through \textit{(3)} and belongs to the appropriate H\"older space.

On the other hand, the remainder function \(F_j\) is non-negative and by \Cref{lem:crudeext} it can be extended from the support of \(\psi_j\) to a function in \(C^{k,\alpha}(\mathbb{R}^{n-1})\) which agrees with \(F_j\) on this support set. For the latter reason, we identify \(F_j\) with its extension to \(\mathbb{R}^{n-1}\), and from our inductive hypothesis, it follows that can decompose \(F_j\) as follows,
\begin{equation}\label{eq:inductivedecomps}
    F_j=\sum_{\ell=1}^{m_{n-1}}g_\ell^2.
\end{equation}
By hypothesis, the functions \(g_1,\dots,g_{m_{n-1}}\) above each satisfy the pointwise estimates \textit{(1)}, \textit{(2)} and \textit{(3)} with \(r_f\) replaced by \(r_{F_j}\). Further, the differential inequalities satisfied by \(F_j\) which we proved in \Cref{lem:implicitest} continue to hold for the extension of \(F_j\) on the support of \(\psi_j\). So we have for \(x\) in the support of \(\psi_j\) that
\[
    r_F(x')=\max\{F(x')^\frac{1}{k+\alpha},\sup_{|\xi|=1}[\partial^2_\xi F(x')]_+^\frac{1}{k-2+\alpha}\}\leq Cr_f(x),
\]
meaning that the estimates satisfied by \(g_1,\dots,g_{m_{n-1}}\) in \eqref{eq:inductivedecomps} actually hold when \(r_F\) is replaced by \(r_f\). The argument employed to prove \Cref{lem:rootscors} shows now that \textit{(1)} through \textit{(3)} are also satisfied by \(\psi_jg_\ell\) for each \(\ell=1,\dots,m_{n-1}\) and \(x\in\mathbb{R}^n\). That is,
\begin{itemize}
    \item[\textit{(1)}] \(|\partial^\beta[\psi_j g_\ell](x)|\leq Cr_f(x)^{\frac{k+\alpha}{2}-|\beta|}\),
    \item[\textit{(2)}] \([\partial^\beta(\psi_j g_\ell)]_{\frac{\alpha}{2},\mathbb{R}^{n-1}}(x)\leq Cr_f(x)^{\frac{k}{2}-|\beta|}\) if \(k\) is even,
    \item[\textit{(3)}] \([\partial^\beta (\psi_jg_\ell)]_{\frac{1+\alpha}{2},\mathbb{R}^{n-1}}(x)\leq Cr_f(x)^{\frac{k-1}{2}-|\beta|}\) if \(k\) is odd.
\end{itemize}
It follows from these estimates that \(\psi_j g_\ell\in C^\frac{k+\alpha}{2}(\mathbb{R}^n)\), and thus we can write \(\psi_j^2F_j\) as a sum of at most \(m_{n-1}\) half-regular squares.

In summary, for each \(j\in\mathbb{N}\) we can write \(\psi_j^2f\) as a sum of \(m_{n-1}+1\) squares in \(C^\frac{k+\alpha}{2}(\mathbb{R}^n)\), and indeed if \(f\) is locally bounded below then only one square is needed. Combining the squares obtained for each \(j\) and relabelling, we see now that we may write
\[
    f=\sum_{j=1}^\infty g_j^2,
\]
for \(g_j\) satisfying \textit{(1)} through \textit{(3)} everywhere. By item \textit{(5)} of \Cref{thm:party}, we may now partition the sum above into \(N_n(m_{n-1}+1)\) sub-collections of functions which enjoy pairwise disjoint supports. Recombining and relabelling these functions exactly as we did in the one-dimensional setting, we see that we can set \(m_n=N_n(m_{n-1}+1)\) to write
\[
    f=\sum_{j=1}^{m_n}g_j^2
\]
for functions \(g_1,\dots,g_{m_n}\) which inherit the required differential inequalities. Finally, since \(f\) was any non-negative \(C^{k,\alpha}(\mathbb{R}^n)\) function for \(k=2\) or \(k=3\) this completes our inductive step, and it follows that \Cref{thm:c3main} holds for every \(n\).\hfill\qedsymbol

\section{Decompositions Over Open Sets}\label{sec:ext}

In the study of partial differential equations, and in many other settings, it is often useful to restrict attention to functions defined on an open set \(\Omega\subset\mathbb{R}^n\). As Bony points out in \cite{Bony2}, the decomposition theorem of Fefferman \& Phong in \cite{Fefferman-Phong} for \(C^{3,1}(\mathbb{R}^n)\) functions can be extended to \(C^{3,1}_{\mathrm{loc}}(\Omega)\), whenever \(\Omega\) is an open subset of \(\mathbb{R}^n\). Recall that \(C^{k,\alpha}_{\mathrm{loc}}(\Omega)\) is the set of functions whose \(k^\mathrm{th}\)-order derivatives are \(\alpha\)-H\"older continuous on compact subsets of \(\Omega\).

Bony does not give an explicit proof of this claim, but points to a helpful lemma in \cite{Bony} which can be used to obtain the desired extension of Fefferman \& Phong's result. Motivated by Bony's remark, in this section we prove the following generalization of \Cref{thm:c3main}.

\begin{thm}\label{thm:locdecomp}
Let \(f\in C_{\mathrm{loc}}^{k,\alpha}(\Omega)\) be non-negative on an open set \(\Omega\), for \(k\leq 3\) and \(\alpha\leq 1\). Then
\[
    f=\sum_{j=1}^{m_n}g_j^2
\]
for functions \(g_1,\dots, g_{m_n}\in C^{\frac{k+\alpha}{2}}_{\mathrm{loc}}(\Omega)\). Moreover, \(m_n\) depends only on the dimension \(n\).
\end{thm}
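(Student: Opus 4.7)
The plan is to reduce to the global decomposition \Cref{thm:c3main} by means of a smooth partition of unity subordinate to a locally finite Whitney-type cover of \(\Omega\). First I would construct a sequence of smooth non-negative cutoffs \(\{\psi_j\}\) compactly supported in \(\Omega\), satisfying \(\sum_j \psi_j^2 = 1\) pointwise on \(\Omega\), and enjoying the bounded overlap property that each \(x \in \Omega\) lies in at most \(N_n\) of the supports \(\mathrm{supp}(\psi_j)\), where \(N_n\) depends only on \(n\). Such a family is available from \Cref{thm:party} applied to the slowly varying scale function \(r(x) = c\cdot\mathrm{dist}(x, \partial\Omega)\), for \(c>0\) small enough that each ball in the resulting cover is compactly contained in \(\Omega\). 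Since every \(\psi_j\) has compact support in \(\Omega\) and vanishes to all orders on a neighborhood of \(\partial\,\mathrm{supp}(\psi_j)\), the product \(\psi_j^2 f\), extended by zero outside \(\mathrm{supp}(\psi_j)\), defines a non-negative function in \(C^{k,\alpha}(\mathbb{R}^n)\); this is precisely the content of \Cref{lem:extbyzero}.

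Next, I would apply \Cref{thm:c3main} to each extended piece, obtaining
\[
    \psi_j^2 f = \sum_{\ell=1}^{m_n} h_{j,\ell}^2
\]
for functions \(h_{j,\ell} \in C^{(k+\alpha)/2}(\mathbb{R}^n)\). Since \(h_{j,\ell}^2 \leq \psi_j^2 f\) and the right-hand side vanishes outside \(\mathrm{supp}(\psi_j)\), each \(h_{j,\ell}\) is itself supported in \(\mathrm{supp}(\psi_j)\). Summing over \(j\) recovers \(f = \sum_{j,\ell} h_{j,\ell}^2\) pointwise on \(\Omega\), though the sum is a priori infinite. To reduce to finitely many squares, I would invoke the coloring argument from part \textit{(5)} of \Cref{thm:party}: the bounded overlap of \(\{\mathrm{supp}(\psi_j)\}\) partitions the index set into at most \(N_n\) color classes whose members have pairwise disjoint supports. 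For each color \(c\) and each \(\ell\), the function \(g_{c,\ell} = \sum_{j \in c} h_{j,\ell}\) satisfies \(g_{c,\ell}^2 = \sum_{j \in c} h_{j,\ell}^2\) since the cross terms vanish, and on any compact \(K \subset \Omega\) only finitely many summands in \(g_{c,\ell}\) are nonzero, so \(g_{c,\ell} \in C^{(k+\alpha)/2}_{\mathrm{loc}}(\Omega)\). Relabeling the functions \(g_{c,\ell}\) yields the desired decomposition \(f = \sum_{j=1}^{M_n} g_j^2\) with \(M_n = N_n \cdot m_n\), depending only on \(n\).

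The main obstacle is verifying \Cref{lem:extbyzero}, namely that the zero-extended cutoff \(\psi_j^2 f\) genuinely belongs to \(C^{k,\alpha}(\mathbb{R}^n)\) rather than merely \(C^{k,\alpha}_{\mathrm{loc}}\) on its support. Inside \(\mathrm{supp}(\psi_j)\), one combines the local H\"older bounds on the derivatives of \(f\) with the uniform H\"older estimates on the cutoff \(\psi_j\) via the sub-product rule of \Cref{lem:subprod}. Across \(\partial\,\mathrm{supp}(\psi_j)\), the key point is that \(\psi_j\) and all of its derivatives vanish on a full neighborhood of the boundary of its support, so the zero extension matches all derivatives of \(\psi_j^2 f\) smoothly at that boundary, and the uniform \(\alpha\)-H\"older bound on the \(k^{\mathrm{th}}\)-order derivatives is inherited from the behavior inside \(\mathrm{supp}(\psi_j)\). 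A secondary but routine check is that the Whitney-like cover genuinely has overlap bounded by a constant depending only on \(n\); this follows directly from the construction underlying \Cref{thm:party}, so no new work is needed there.
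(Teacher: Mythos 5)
Your approach is correct but takes a genuinely different route from the paper's. The paper produces, via \Cref{lem:extbyzero}, a single positive $C^\infty(\Omega)$ weight $\varphi$ whose derivatives decay faster than $p(x)=\big(1+\sum_{|\beta|\le k}|\partial^\beta f(x)|+\sum_{|\beta|\le k}[\partial^\beta f]_\alpha(x)\big)^{-1}$ near $\partial\Omega$, so that $\varphi f$ (extended by zero) lies in $C^{k,\alpha}(\mathbb{R}^n)$; it then applies \Cref{thm:c3main} once and recovers $f=\sum (g_j/\sqrt{\varphi})^2$, with each $g_j/\sqrt{\varphi}\in C^{\frac{k+\alpha}{2}}_{\mathrm{loc}}(\Omega)$ because $\varphi$ is smooth and strictly positive on $\Omega$. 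You instead build a Whitney-type partition $\sum\psi_j^2=1$ subordinate to the scale $r(x)=c\,\mathrm{dist}(x,\partial\Omega)$, decompose each compactly supported piece $\psi_j^2 f$ via \Cref{thm:c3main}, and recombine using the coloring of \Cref{thm:party}(5). Both are valid; the paper's route is shorter (one application of the global theorem, and it keeps the same count $m_n$), whereas yours avoids the Bony weight construction but multiplies the number of squares to $N_n m_n$ and requires an extension-by-zero argument on each piece.

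Two slips are worth noting, neither fatal. First, the assertion that the extension of $\psi_j^2 f$ by zero being $C^{k,\alpha}(\mathbb{R}^n)$ "is precisely the content of \Cref{lem:extbyzero}" is a misattribution: that lemma only constructs the decaying weight $\varphi$ and does not speak to cutoffs. The fact you need is a standard one, and your final paragraph sketches the right idea, but it is not \Cref{lem:extbyzero}. Second, $\psi_j$ does not vanish "on a full neighborhood of $\partial\,\mathrm{supp}(\psi_j)$" — with the usual bump function it vanishes with all derivatives exactly \emph{at} $\partial\,\mathrm{supp}(\psi_j)$ and is nonzero arbitrarily close to it from inside. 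The correct justification is that the $k^{\mathrm{th}}$ derivatives of $\psi_j^2 f$ are uniformly $\alpha$-H\"older on the compact set $\overline{\mathrm{supp}(\psi_j)}\subset\Omega$ and vanish on $\partial\,\mathrm{supp}(\psi_j)$; the cross-boundary H\"older estimate then follows by comparing an interior point with an intermediate boundary point on the line segment to any exterior point.
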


To prove this generalization, we require the following modification of \cite[Lemma 2.1]{Bony}, which enables us to extend \(C^{k,\alpha}_{\mathrm{loc}}(\Omega)\) functions by zero to all of \(\mathbb{R}^n\). Since our version differs somewhat from that appearing in \cite{Bony}, and since the original result is stated and proved in French, we furnish a translation in the form of our modified result, along with a proof.

\begin{lem}[Bony]\label{lem:extbyzero}
Let \(p\) be a positive continuous function defined on an open set \(\Omega\subset\mathbb{R}^n\). There exists a function \(\varphi\in C^\infty(\Omega)\) such that \(\varphi>0\) on \(\Omega\) and for every multi-index \(\beta\),
\begin{equation}\label{eq:boundarydecay}
    \lim_{x\rightarrow\partial\Omega}\frac{\partial^\beta \varphi(x)}{p(x)}=0.
\end{equation}
That is, all derivatives of the function \(\varphi\) decay faster than \(p\) approaching the boundary of \(\Omega\).
\end{lem}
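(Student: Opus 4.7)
The plan is to construct $\varphi$ as a rapidly decaying linear combination of bump functions drawn from a partition of unity adapted to a compact exhaustion of $\Omega$, where the coefficients are chosen so small that every derivative is dominated by $p$ near $\partial\Omega$. The interpretation of ``$x\to\partial\Omega$'' that I have in mind is the standard one: for every $\varepsilon>0$ there is a compact $K\subset\Omega$ with $|\partial^\beta\varphi(x)|/p(x)<\varepsilon$ on $\Omega\setminus K$. When $\partial\Omega=\emptyset$ (e.g.\ $\Omega=\mathbb{R}^n$) the conclusion is vacuous, so there is nothing to do there.

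First, I would fix a compact exhaustion $\emptyset = K_0\subset K_1\subset\mathrm{int}(K_2)\subset K_2\subset\mathrm{int}(K_3)\subset\cdots$ with $\bigcup_j K_j = \Omega$, which exists because $\Omega$ is open in $\mathbb{R}^n$. Setting $U_j=\mathrm{int}(K_{j+1})\setminus K_{j-1}$ yields a locally finite open cover of $\Omega$ in which each $\overline{U_j}$ is compactly contained in $\Omega$; a direct check shows that every $x\in\Omega$ lies in at most two of the $U_j$'s. Let $\{\chi_j\}$ be a smooth partition of unity subordinate to $\{U_j\}$, and define
\[
    M_j = 1+\max_{|\beta|\leq j}\sup_{\Omega}|\partial^\beta\chi_j|,\qquad p_j=\inf_{\overline{U_j}}p,
\]
both finite and positive because $\overline{U_j}$ is compact and $p$ is continuous and strictly positive. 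I would then set
\[
    \varphi(x)=\sum_{j=1}^\infty \varepsilon_j\,\chi_j(x),\qquad \varepsilon_j=2^{-j}p_j/M_j.
\]
Local finiteness of the cover makes the sum locally finite, so $\varphi\in C^\infty(\Omega)$, and positivity of $\varphi$ follows at once from $\sum_j\chi_j\equiv 1$ on $\Omega$.

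To verify \eqref{eq:boundarydecay}, fix a multi-index $\beta$. Whenever $j\geq|\beta|$ and $x\in\overline{U_j}$, the definitions of $M_j$ and $p_j$ give the per-term estimate
\[
    \frac{|\varepsilon_j\,\partial^\beta\chi_j(x)|}{p(x)}\leq \frac{\varepsilon_j M_j}{p_j}=2^{-j}.
\]
Given any $N\geq|\beta|$, once $x\notin K_{N-1}$ the support condition $\mathrm{supp}(\chi_j)\subset\overline{U_j}\subset K_{j+1}$ forces $\chi_j(x)=0$ for every $j\leq N-2$; combined with the bounded-overlap property this leaves at most two surviving indices, both with $j\geq N-1$, and we obtain $|\partial^\beta\varphi(x)|/p(x)\leq 2\sum_{j\geq N-1}2^{-j}=2^{2-N}$, which tends to zero as $N\to\infty$. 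The main (minor) obstacle is the combinatorial interlocking: the single choice $\varepsilon_j=2^{-j}p_j/M_j$ must succeed simultaneously for every order $|\beta|$, which is why $M_j$ is defined by taking a maximum over all derivatives up to order $j$, and the exhaustion must be aligned with the cover so that the tail of $\{\chi_j\}$ is the only part active near $\partial\Omega$. Once that is set up, each step reduces to an elementary geometric-series estimate.
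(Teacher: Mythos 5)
Your argument is correct and takes a genuinely different route from the paper. The paper constructs $\varphi$ as a single integral,
\[
    \varphi(x)=\int_{\Omega}\frac{q(z)}{\delta(z)^n}\psi\bigg(\frac{x-z}{\delta(z)}\bigg)e^{-\frac{1}{\delta(z)}}\,dz,
\]
where $\delta$ is the distance to $\partial\Omega$, $q$ is a local infimum of $p$, and $\psi$ is a fixed bump; differentiating under the integral gives the quantitative bound $|\partial^\beta\varphi(x)|\leq C_\beta\, p(x)\, e^{-2/(3\delta(x))}/\delta(x)^{|\beta|}$, from which \eqref{eq:boundarydecay} (and the Hölder-seminorm variant \eqref{eq:alphadecay}, and boundedness of $\partial^\beta\varphi/p$ on all of $\Omega$) follow at once. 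You instead build $\varphi$ as a countable superposition $\sum_j\varepsilon_j\chi_j$ over a partition of unity subordinate to a compact exhaustion, shrinking $\varepsilon_j$ fast enough to beat the growth of the derivative bounds $M_j$ and the decay of $p_j$. This is a more elementary construction (no convolution kernels, no explicit dependence on $\delta$), and it delivers positivity for free from $\sum_j\chi_j\equiv 1$; what it gives up is the explicit exponential rate, replacing it with a geometric-series tail that only tells you the ratio tends to zero. One small index shift worth fixing when you write it up: your per-term estimate needs $j\geq|\beta|$, so the support argument should be run with $N\geq|\beta|+1$ (so that active indices $j\geq N-1\geq|\beta|$), and the tail bound is $\sum_{j\geq N-1}2^{-j}=2^{2-N}$; the extra factor of $2$ you include for bounded overlap is unnecessary once you sum the whole geometric tail, but either way the limit is zero. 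Your construction also yields the paper's auxiliary facts \eqref{eq:nicebound} and \eqref{eq:alphadecay} by the same accounting plus a Mean Value Theorem step, so it is a full substitute in the proof of Theorem~\ref{thm:locdecomp}.
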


\begin{proof}
We define \(\varphi\) in such a way that derivatives of \(\varphi\) decay exponentially approaching the boundary of \(\Omega\), and to do this we require some component functions. For \(x\in\Omega\) define \(\delta(x)\) as the distance from \(x\) to \(\partial\Omega\). Explicitly, 
\[
    \delta(x)=\inf_{y\in\partial\Omega}|x-y|.
\]
Also let \(\psi\) be a non-negative, smooth function compactly supported in the ball of radius \(\frac{1}{3}\) about the origin, and for \(x\in\Omega\) define a function \(q\) as a local minimum of \(p\),
\[
    q(x)=\inf_{y\in B(x,\frac{1}{2}\delta(x))}p(y).
\]
Equipped with these functions, we claim that the following function has the desired properties,
\[
    \varphi(x)=\int_{\Omega}\frac{q(z)}{\delta(z)^n}\psi\bigg(\frac{x-z}{\delta(z)}\bigg)e^{-\frac{1}{\delta(z)}}dz.
\]

First, observe that the integrand may only be positive if \(x\in B(z,\frac{1}{3}\delta(z))\). If this inclusion holds then \(\delta(x)>\frac{2}{3}\delta(z)\), and the integrand above is nonzero only when \(|x-z|<\frac{1}{3}\delta(z)<\frac{1}{2}\delta(x)\), meaning that the integrand is supported on the ball \(B(x,\frac{1}{2}\delta(x))\subset\Omega\). Thus for \(x\in\Omega\),
\[
    \varphi(x)=\int_{B(x,\frac{1}{2}\delta(x))}\frac{q(z)}{\delta(z)^n}\psi\bigg(\frac{x-z}{\delta(z)}\bigg)e^{-\frac{1}{\delta(z)}}dz.
\]
Additionally, if \(x\not\in B(z,\frac{1}{2}\delta(z))\) then \(|x-z|\geq \frac{1}{2}\delta(z)\) and \(x\not\in B(z,\frac{1}{3}\delta(z))\), meaning that the integrand vanishes. Consequently on the domain of integration we have \(q(z)\leq p(x)\) and
\[
    |\partial^\beta\varphi(x)|=\bigg|\int_{\Omega}\frac{q(z)}{\delta(z)^n}\partial^\beta_x\psi\bigg(\frac{x-z}{\delta(z)}\bigg)e^{-\frac{1}{\delta(z)}}dz\bigg|=\bigg|\int_{B(x,\frac{1}{2}\delta(x))}\frac{q(z)}{\delta(z)^n}\partial^\beta_x\psi\bigg(\frac{x-z}{\delta(z)}\bigg)e^{-\frac{1}{\delta(z)}}dz\bigg|.
\]
Since \(\psi\in C_0^\infty(\mathbb{R}^n)\) by assumption, for each \(\beta\) we have \(|\partial^\beta\psi|\leq C_\beta\) uniformly, and it follows that
\[
    \bigg|\partial^\beta_x\psi\bigg(\frac{x-z}{\delta(z)}\bigg)\bigg|\leq \frac{C_\beta}{\delta(z)^{|\beta|}}.
\]
Employing this estimate together with our bound on \(q\), and using that \(\frac{1}{2}\delta(x)\leq \delta(z)\leq \frac{3}{2}\delta(x)\) for \(z\in B(x,\frac{1}{2}\delta(x))\), we see that
\[
    |\partial^\beta\varphi(x)|\leq C_\beta p(x)\int_{B(x,\frac{1}{2}\delta(x))}\frac{e^{-\frac{1}{\delta(z)}}}{\delta(z)^{n+|\beta|}}dz\leq  2^{n+|\beta|}C_\beta p(x)\frac{e^{-\frac{2}{3\delta(x)}}}{\delta(x)^{n+|\beta|}}\int_{B(x,\frac{1}{2}\delta(x))}dz.
\]
The last integral is the volume of the ball \(B(x,\frac{1}{2}\delta(x))\), which is proportional to \(\delta(x)^n\). It follows that for \(x\in\Omega\) we have
\begin{equation}\label{eq:nicebound}
    |\partial^\beta\varphi(x)|\leq  C p(x)\frac{e^{-\frac{2}{3\delta(x)}}}{\delta(x)^{|\beta|}}
\end{equation}
for \(C\) depending only on \(n\), \(\beta\), and \(\psi\). We see now that the claimed decay result at the boundary holds, since for any multi-index \(\beta\) we have
\[
    \lim_{\delta(x)\rightarrow0^+}\frac{|\partial^\beta\varphi(x)|}{p(x)}\leq C\lim_{\delta(x)\rightarrow0^+}\frac{e^{-\frac{2}{3\delta(x)}}}{\delta(x)^{|\beta|}}=0.
\]
Finally, we note that \(q\) is strictly positive on \(B(x,\frac{1}{2}\delta(x))\), as are the remaining terms of the integrand when \(\delta(x)>0\), meaning that \(\varphi>0\) in \(\Omega\).
\end{proof}

It is a straightforward consequence of the Mean Value Theorem that for \(\varphi\) as above, the following limit also holds for any positive \(\alpha\) with \(0<\alpha\leq 1\),
\begin{equation}\label{eq:alphadecay}
    \lim_{x\rightarrow\partial\Omega}\frac{[\partial^\beta \varphi]_\alpha(x)}{p(x)}=0.
\end{equation}
Additionally, we note that for any multi-index \(\beta\) the derivative \(\partial^\beta\varphi/p\) is bounded on \(\Omega\) by \eqref{eq:nicebound}. Equipped with these results, we now establish our sum of squares theorem for \(C_{\mathrm{loc}}^{k,\alpha}(\Omega)\) functions.

\begin{proof}[Proof of \Cref{thm:locdecomp}]
Given a non-negative function \(f\in C^{k,\alpha}_{\mathrm{loc}}(\Omega)\) defined on an open set \(\Omega\) in \(\mathbb{R}^n\), we can now show that \(f\) is decomposable as a sum of half-regular squares. To do this we make a selection for \(p\) that is similar to that made by Bony in \cite{Bony},
\[
    p(x)=\bigg(1+\sum_{|\beta|\leq k}|\partial^\beta f(x)|+\sum_{|\beta|\leq k}[\partial^\beta f]_\alpha(x)\bigg)^{-1}.
\]
From this choice of \(p\), it follows that \(|\partial^\beta f(x)|\leq p(x)^{-1}\) and \([\partial^\beta f]_\alpha(x)\leq p(x)^{-1}\) for every \(x\in\Omega\) and \(\beta\) such that \(|\beta|\leq k\). Note that \(p\) is both positive and continuous in \(\Omega\) since \(f\in C^{k,\alpha}_{\mathrm{loc}}(\Omega)\) by assumption, so it satisfies the hypotheses of \Cref{lem:extbyzero} and we can find a function \(\varphi\in C^\infty(\Omega)\) that is positive on \(\Omega\) and which satisfies \eqref{eq:boundarydecay}.

Next we show that \(\varphi f\in C^{k,\alpha}(\mathbb{R}^n)\). By the form of the sub-product rule given in \Cref{lem:sharpsubprod}, we have the pointwise estimate
\[
    [\partial^\beta(\varphi f)]_\alpha(x)\leq \sum_{\gamma\leq\beta }\binom{\beta}{\gamma}([\partial^{\beta-\gamma} f]_\alpha(x)|\partial^\gamma\varphi(x)|+[\partial^\gamma \varphi]_\alpha(x)|\partial^{\beta-\gamma}f(x)|).
\]
The norms and semi-norms containing \(f\) above are all bounded above by \(p(x)^{-1}\), so we have
\[
    [\partial^\beta(\varphi f)]_\alpha(x)\leq \sum_{\gamma\leq\beta}\binom{\beta}{\gamma}\bigg(\frac{|\partial^\gamma\varphi(x)|}{p(x)}+\frac{[\partial^\gamma \varphi]_\alpha(x)}{p(x)}\bigg).
\]
It follows that \([\partial^\beta\varphi f]_\alpha(x)\) is bounded on \(\Omega\) and decays to zero at the boundary whenever \(|\beta|=k\) thanks to \eqref{eq:nicebound} and \eqref{eq:alphadecay}, meaning that \([\partial^\beta(\varphi f)]_{\alpha,\Omega}<\infty\). Extending \(\varphi f\) by zero outside of \(\Omega\), we obtain a function (which we also call \(\varphi f\)) that belongs to \(C^{k,\alpha}(\mathbb{R}^n)\). By \Cref{thm:c3main} it follows that we can write
\[
    \varphi f=\sum_{j=1}^{m_n} g_j^2
\] 
for \(g_1,\dots,g_{m_n}\in C^\frac{k+\alpha}{2}(\mathbb{R}^n)\). Since \(\varphi\) is smooth and bounded below on any compact subset \(K\) of \(\Omega\), we find that on such sets we have for each \(j=1,\dots,m_n\) that
\[
    \tilde{g}_j=\frac{g_j}{\sqrt{\varphi}}\in C^\frac{k+\alpha}{2}(K),
\]
and since \(K\) is any compact subset we see in turn that \(\tilde{g}_j\in C^\frac{k+\alpha}{2}_{\mathrm{loc}}(\Omega)\). Since we are able to write
\[
    f=\sum_{j=1}^m \tilde{g_j}^2,
\]
we see that \(f\) is a sum of half-regular squares on compact subsets of \(\Omega\), as claimed.
\end{proof}

\section{Bounds on Decomposition Size}\label{sec:sizebounds}

The number of squares needed in a sum of squares decomposition, which we denote by \(m_n\), is finite for non-negative \(C^{k,\alpha}(\mathbb{R}^n)\) functions when \(k\leq 3\) thanks to Theorem \ref{thm:c3main}. Indeed, we can find an upper bound for \(m_n\) that depends only on the dimension \(n\). If \(n=1\), Bony shows in \cite{Bony2} that \(m_1=2\) is optimal. In higher dimensions, our methods require the use of more squares, owing to the inductive nature of our argument and the growth of the bounded overlap constant \(N_n\) in \Cref{thm:party} as \(n\) becomes large. In this section, we give upper bounds on \(m_n\).

Recall that in the proof of \Cref{thm:c3main}, we could locally decompose \(\psi_j^2 f\in C^{k,\alpha}(\mathbb{R}^n)\) into a sum of squares of at most \(m_{n-1}+1\) functions, and using item \textit{(5)} of \Cref{thm:party} we were able to recombine to write the infinite sum
\[
    \sum_{j=1}^\infty\psi_j^2
\]
as a finite sum of squares of functions. As such, we can write \(f\) as a sum of squares of at most \(m_n\leq N_n(1+m_{n-1})\) functions, where \(m_1=2\) thanks to the work of Bony in \cite{Bony2}. Solving this recursion is a straightforward task which yields the sequence of estimates
\[
    m_n\leq 2N_n^{n-1}+\frac{N_n^n-N_n}{N_n-1}.
\]
Using the bound \(N_n\leq 15^n\) obtained in the proof of \Cref{thm:party}, which seems to be far from optimal since our construction uses balls instead of dyadic cubes, we obtain the crude estimate
\[
    m_n\leq 2\cdot15^{n^2-n}+\frac{15^{n^2}-15^n}{15^n-1}.
\]

This grows very fast with \(n\), which is unsurprising since there is little evidence to suggest our construction is optimal. A separate approach may require many fewer squares, and little is known of examples requiring a maximal number of squares in general. While two squares are required in one dimension, the number of squares required for general \(n\) is unknown.

It may be fruitful to investigate the minimal number of squares required to decompose an arbitrary function, rather than focusing exclusively on upper bounds. One possible way to do this could be finding polynomials which can be decomposed into no fewer than a given number of squares; such a technique is employed in the next chapter to produce counterexamples to our main theorem when \(k\) becomes too large, so it is plausible that the behaviour of polynomial sums of squares can afford more information about the sizes of our decompositions.

Though we do not pursue them in this work, there are also  two potential approaches which we can identify that may be able to improve our upper bounds for \(m_n\). First, by using better sets in \Cref{thm:party} (e.g. cubes instead of balls) it is possible to reduce the constant \(N_n\), though in general this number should still increase exponentially with \(n\) since the bounded overlap constant for cubes in \(\mathbb{R}^n\) is roughly \(2^n\).

Second, we note that the coefficient \(m_{n-1}+1\) only enters the recursion for \(m_n\) in the case that \(f\) is very small near a nonzero local minimum; otherwise a single square suffices locally. For functions which only have a small number of nonzero local minima, it seems that we can reduce the need for \(m_{n-1}+1\) functions in a decomposition to a relatively small number of cases thereby improving our bound on \(m_n\).

In any case, there is substantial room for improvement concerning the number of squares need in a regularity preserving decomposition, and the problem of finding an optimal number seems likely to involve some interesting mathematics; our upper bound arguments involve notions from Euclidean geometry and measure theory, while the lower bounds (at least conceptually) use techniques from algebra and the convex geometry induced by polynomial sums of squares which we explore in the next chapter. See \Cref{sec:applics} for further discussion of this problem.

\chapter{Non-Decomposable Functions}\label{chap:poly}

In the previous chapter, we found that non-negative functions in \(C^{k,\alpha}(\mathbb{R}^n)\) can be decomposed into sums of half-regular squares when \(k\leq 3\). Now we show that if \(k\geq 4\) and \(\alpha>0\), then there exist non-negative functions in \(C^{k,\alpha}(\mathbb{R}^n)\) which cannot be decomposed into sums of half-regular squares. In doing so we show that \Cref{thm:c3main} is essentially optimal, since the decomposition it provides can fail if we do not restrict to the H\"older spaces \(C^{2,\alpha}(\mathbb{R}^n)\) and \(C^{3,\alpha}(\mathbb{R}^n)\).

Bony gives an example of a non-decomposable function in \cite{Bony}, and several more appear in \cite{SOS_I}. Each of these is constructed using non-negative polynomials which cannot be written as sums of squares of polynomials. The first example of such a polynomial to appear in the literature is the Motzkin polynomial from \cite{Motzkin}, which we examine in detail in \Cref{sec:nd},
\[
    M(x,y)=x^4y^2+x^2y^4-3x^2y^2+1.
\]

Non-negative polynomials of even degree \(d\) over \(n\) variables which are not sums of squares are known to exist in many settings, thanks to the work of Hilbert in \cite{Hilbert}, but Hilbert's existence result is not constructive. Some examples of non-decomposable polynomials are exhibited by Reznik \cite{reznik_extpsd}, Robinson \cite{robinson}, and Choi \& Lam \cite{CL2}, but cumulatively these examples only cover a handful of cases in which \(n\) and \(d\) are both small. Our contribution is to devise a procedure for constructing examples in the cases not covered in the literature; see \Cref{sec:gen}.

This chapter serves two purposes. First, we clarify the connection between polynomials which are not sums of squares and \(C^{k,\alpha}(\mathbb{R}^n)\) functions which cannot be decomposed into half regular squares; indeed, we show that the former functions are examples of the latter type. Second, we present new examples of polynomials which are not sums of squares, and a technique for constructing them. This gives many functions which violate \Cref{thm:c3main} when \(k\geq 4\).

\section{Non-Decomposable Polynomials}\label{sec:nd}

Hilbert showed in \cite{Hilbert} and \cite{Hilbert2} that for every even degree \(d\geq 4\) and in every dimension \(n\geq 2\), there exist non-negative polynomials of degree \(d\) over \(n\) variables which cannot be written as sums of squares of polynomials, except when \(d=4\) and \(n=2\), in which case non-negative polynomials can be written as sums of three squares.

\medskip
\begin{center}
Summary of Hilbert's Results in \cite{Hilbert} and \cite{Hilbert2}.\\ Is every non-negative polynomial of degree \(d\) over \(n\) variables a sum of squares of polynomials?\\
\medskip
\begin{tabular}{|c|c|c|c|c|}
\hline
\diagbox[height=2em,width=3em]{\(n\)}{\(d\)} & \parbox[c]{2cm}{\hfil 2} & \parbox[c]{2cm}{\hfil 4}& \parbox[c]{2cm}{\hfil \(\geq 6\)}\\
\hline
1 & Yes&Yes&Yes\\
2 & Yes&Yes&No\\
\(\geq 3\) & Yes&No&No\\
\hline
\end{tabular}
\end{center}

Later, Artin confirmed in \cite{Artin} that all non-negative polynomials can be decomposed into sums of squares of rational functions, resolving Hibert's \(17^{\mathrm{th}}\) problem. Non-decomposable polynomials are hardly an anomaly; Blekherman proved in \cite{Blekherman} that for each fixed \(n\), degree \(d\) polynomials that are not sums of squares become increasingly common as \(d\) becomes large. Indeed, this frequency is quantified in \cite[Theorems 2.1 \& 2.2]{Blekherman}. In \Cref{sec:gen} we find that it is easier to produce examples for which \(d\) is large, affirming the observation that non-decomposable polynomials are abundant at large degrees.

Our interest in these polynomials is rooted in the following useful fact, which we formalize in \Cref{lem:contpoly} below: if a degree \(d\) polynomial \(P\) is not a sum of squares of polynomials, then it cannot be written as a sum of squares of functions in \(C^\frac{d+\alpha}{2}(\mathbb{R}^n)\) for any \(\alpha>0\). Thus, by finding such a polynomial \(P\) we immediately obtain a \(C^{d,\alpha}(\mathbb{R}^n)\) function which is not a sum of half-regular squares. We prove this using the following technical result, which is modelled after \cite[Lemma 1.3]{Bony} and \cite[Lemma 5.2]{SOS_I}.

\begin{lem}\label{lem:contpoly}
Let \(P\) be a non-negative polynomial of even degree \(k\) which is not a sum of squares of polynomials, let \(\alpha>0\), and let \(\Omega\subset\mathbb{R}^n\) be compact. For every \(m\in\mathbb{N}\) and \(N>0\), there exists a number \(\delta>0\) such that 
\[
    \qquad\sup_{\Omega}\bigg|\sum_{j=1}^mg_j^2-P\bigg|<\delta\qquad\implies\qquad\sum_{j=1}^m\|g_j\|_{C_b^\frac{k+\alpha}{2}(\Omega)}>N.
\]
\end{lem}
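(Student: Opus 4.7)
The plan is to argue by contradiction, using compactness of a H\"older embedding to pass from an approximation statement to an exact identity, and then to invoke a rigidity argument which extracts a polynomial sum of squares representation of \(P\).

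Suppose the conclusion fails. Then there exist \(m\in\mathbb{N}\), \(N>0\), and sequences \(\{g_j^{(\ell)}\}_{j=1}^m\subset C_b^{(k+\alpha)/2}(\Omega)\), indexed by \(\ell\in\mathbb{N}\), such that
\[
    \sup_\Omega\bigg|\sum_{j=1}^m (g_j^{(\ell)})^2-P\bigg|\longrightarrow 0\quad\text{and}\quad \sum_{j=1}^m\|g_j^{(\ell)}\|_{C_b^{(k+\alpha)/2}(\Omega)}\leq N
\]
for every \(\ell\). Since \(k\) is even we have \(\frac{k+\alpha}{2}=\frac{k}{2}+\frac{\alpha}{2}\), and \Cref{lem:cptmb} gives a compact embedding of \(C_b^{k/2,\alpha/2}(\Omega)\) into, say, \(C_b^{k/2,\alpha/4}(\Omega)\). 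Passing to a subsequence, for each \(j\) we obtain \(g_j^{(\ell)}\to g_j\) in \(C_b^{k/2,\alpha/4}(\Omega)\), hence uniformly on \(\Omega\). Consequently \((g_j^{(\ell)})^2\to g_j^2\) uniformly, and combining with the hypothesis yields the exact identity
\[
    \sum_{j=1}^m g_j^2=P\quad\text{on }\Omega,
\]
with each \(g_j\in C_b^{(k+\alpha)/2}(\Omega)\) by lower semicontinuity of the H\"older norm under uniform convergence.

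The second step is to show that this identity forces a representation \(P=\sum_{j=1}^m p_j^2\) for some polynomials \(p_j\) of degree at most \(k/2\), contradicting the hypothesis on \(P\). Fix an interior point \(x_0\in\Omega\), and let \(p_j\) denote the degree-\(k/2\) Taylor polynomial of \(g_j\) at \(x_0\), so \(g_j=p_j+R_j\) with \(R_j(x)=O(|x-x_0|^{(k+\alpha)/2})\) as \(x\to x_0\). Expanding and summing,
\[
    P-\sum_{j=1}^m p_j^2=2\sum_{j=1}^m p_j R_j+\sum_{j=1}^m R_j^2=O(|x-x_0|^{(k+\alpha)/2}).
\]
The left-hand side is a polynomial in \(x-x_0\) of degree at most \(k\) whose Taylor expansion at \(x_0\) vanishes to order strictly greater than \(k/2\). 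Varying \(x_0\) over an open subset of \(\Omega\) and combining the resulting constraints on the coefficients of \(P-\sum p_j^2\), I would conclude that this polynomial vanishes identically, which supplies the contradictory polynomial sum of squares decomposition of \(P\).

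The main obstacle is precisely this second, algebraic step: promoting the local Taylor vanishing of \(P-\sum p_j^2\) into a global polynomial identity requires care, since a priori the polynomials \(p_j\) depend on the base point \(x_0\). One way to organize this is to exploit the \(C^{k/2}\) regularity of each \(g_j\) so that the Taylor coefficients \(p_j(\,\cdot\,;x_0)\) depend continuously on \(x_0\); varying \(x_0\) in a small ball then yields an overdetermined linear system on the coefficients of \(P-\sum p_j^2\) whose only solution is the trivial one. An alternative route, in the spirit of the pointwise semi-norm calculus of \Cref{lem:sharpsubprod}, is to differentiate the identity \(\sum g_j^2=P\) directly and read off polynomial structure from the order-\(k/2\) derivatives of both sides. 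Either way, the essential content is transferring the obstruction from polynomial sums of squares to \(C^{(k+\alpha)/2}\) sums of squares, and the resulting contradiction establishes the lemma.
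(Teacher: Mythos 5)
Your first half is correct and matches the paper's approach: argue by contradiction, use the compact embedding of \Cref{lem:cptmb} to extract a uniformly convergent subsequence, pass to the limit to get the exact identity $\sum_{j=1}^{m} g_j^2 = P$ on $\Omega$, and observe that each limit $g_j$ remains in $C_b^{(k+\alpha)/2}(\Omega)$ by lower semicontinuity of the H\"older semi-norm. (Choosing $C_b^{k/2,\alpha/4}(\Omega)$ as the target versus the paper's $C_b^{k/2}(\Omega)$ is cosmetic.)

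The second half has a genuine gap, which you flag yourself, and the gap is real. Fixing an interior $x_0$ and Taylor-expanding each $g_j$ to order $k/2$ does correctly show that $E^{x_0} := P - \sum_j (p_j^{x_0})^2$ is a polynomial of degree $\leq k$ whose Taylor coefficients at $x_0$ vanish up to order $k/2$. But ``varying $x_0$ and combining constraints'' cannot force $E^{x_0}\equiv 0$: the polynomial $E^{x_0}$ and its vanishing conditions both move with $x_0$, so there is no single fixed polynomial on which the constraints accumulate. Concretely, with $k=2$, $P(x)=x^2$, $g_1 = x\cos\theta(x)$ and $g_2 = x\sin\theta(x)$ for any nonconstant $\theta \in C^{1,\alpha/2}$, a direct computation gives $E^{x_0}(x) = -x_0^2\,\theta'(x_0)^2\,(x-x_0)^2$, which is nonzero for generic $x_0$ even though $P$ is trivially a sum of polynomial squares. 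So the Taylor polynomials at a given base point are not the ones that furnish the polynomial SOS representation; what must be shown is that the exact identity $\sum g_j^2 = P$ forces \emph{some} polynomial SOS representation of $P$ to exist, and neither your ``overdetermined linear system'' sketch nor the alternative via \Cref{lem:sharpsubprod} clearly delivers this. Your wariness is well placed: the paper's own treatment of this step is also terse (it argues from ``the highest-order term in the Taylor expansion of $g_j$'' as though the limit functions were polynomials of degree possibly exceeding $k/2$, which the $C^{(k+\alpha)/2}$ regularity does not supply), and the real content is inherited from the sources the lemma is modeled on, \cite[Lemma 1.3]{Bony} and \cite[Lemma 5.2]{SOS_I}.
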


\begin{proof}
Fix \(N\) and assume to the contrary that for every \(\delta>0\) there exist \(g_{1,\delta},\dots,g_{m,\delta}\) such that 
\[
    \sum_{j=1}^m\|g_{j,\delta}\|_{C_b^\frac{k+\alpha}{2}(\Omega)}\leq N\qquad\mathrm{and}\qquad\sup_{\Omega}\bigg|\sum_{j=1}^mg_{j,\delta}^2-P\bigg|<\delta.
\]
For \(\ell\in\mathbb{N}\) choose functions \(g_{1,\ell},\dots,g_{m,\ell}\) that satisfy the inequalities above with \(\delta=\frac{1}{\ell}\), and for fixed \(j\) consider the sequence \(\{g_{j,\ell}\}_{\ell}\). This sequence is uniformly bounded in the norm of the half-regular H\"older space since 
\[
    \|g_{j,\ell}\|_{C_b^\frac{k+\alpha}{2}(\Omega )}\leq N
\]
for every \(\ell\). Recall that the embedding of \(C_b^\frac{k+\alpha}{2}(\Omega)\) into \( C_b^\frac{k}{2}(\Omega)\) is compact by \Cref{lem:cptmb}, so there exists a subsequence of \(\{g_{j,\ell}\}_{\ell}\) that converges uniformly to a function \(g_j\) in the target space. Passing to this subsequence, we see that
\[
    \sup_\Omega\bigg|\sum_{j=1}^mg_j^2-P\bigg|=\lim_{\ell\rightarrow\infty}\sup_\Omega\bigg|\sum_{j=1}^mg_{j,\ell}^2-P\bigg|\leq\lim_{\ell\rightarrow\infty}\frac{1}{\ell}=0,
\]
meaning that \(P\) is a sum of squares in \(C_b^\frac{k}{2}(\Omega)\).

Now we argue that each \(g_j\) is in fact a polynomial. If the highest-order term in the Taylor expansion of any \(g_j\) has degree exceeding \(\frac{k}{2}\), then the sum of squares \(P=g_1^2+\cdots+g_m^2\) would include a monomial term whose degree exceeds \(k\), since the coefficient on the highest order term is a sum of squares of real numbers, hence strictly positive. This contradicts the fact that \(P\) has degree \(k\), meaning that each \(g_j\) is a polynomial of degree at most \(\frac{k}{2}\) and \(P\) is a sum of squares of polynomials. This contradicts our initial assumption on \(P\).
\end{proof}

Equipped with this result, we draw a useful conclusion about non-decomposable polynomials.

\begin{cor}\label{cor:notsos}
Let \(P\) be a non-negative polynomial of even degree \(k\) which is not a sum of squares of polynomials. Then \(P\) is not a sum of squares of functions in \(C^\frac{k+\alpha}{2}(\mathbb{R}^n)\) for \(\alpha>0\).
\end{cor}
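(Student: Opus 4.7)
The plan is to deduce Corollary \ref{cor:notsos} almost immediately from Lemma \ref{lem:contpoly} via a contradiction argument, exploiting the fact that an exact identity is a trivial instance of an approximate identity with $\delta = 0$.

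First I would suppose toward a contradiction that $P = \sum_{j=1}^m g_j^2$ for some finite collection of functions $g_1, \dots, g_m \in C^{(k+\alpha)/2}(\mathbb{R}^n)$. Fix any compact set $\Omega \subset \mathbb{R}^n$; for instance, a closed ball. Since each $g_j$ lies in $C^{(k+\alpha)/2}(\mathbb{R}^n)$, its restriction to the compact set $\Omega$ has finite norm in $C_b^{(k+\alpha)/2}(\Omega)$, so the quantity
\[
    N_0 = \sum_{j=1}^m \|g_j\|_{C_b^{(k+\alpha)/2}(\Omega)}
\]
is a finite real number.

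Next I would apply Lemma \ref{lem:contpoly} with this fixed value of $m$ and with the threshold $N = N_0 + 1$, producing a corresponding $\delta > 0$. Since $\sum_{j=1}^m g_j^2 = P$ identically on $\Omega$, we have
\[
    \sup_\Omega \bigg|\sum_{j=1}^m g_j^2 - P\bigg| = 0 < \delta,
\]
so the hypothesis of the implication in Lemma \ref{lem:contpoly} is satisfied. The conclusion then forces $\sum_{j=1}^m \|g_j\|_{C_b^{(k+\alpha)/2}(\Omega)} > N_0 + 1$, directly contradicting the definition of $N_0$. Hence no such decomposition of $P$ into half-regular squares can exist.

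There is no genuine obstacle here, since all of the technical work is already absorbed into Lemma \ref{lem:contpoly}; the only subtlety is making sure we apply the lemma on a compact set where the half-regular bounded norm is finite, which is precisely why we restrict to $\Omega$ before invoking the result. The argument really amounts to the observation that a polynomial identity certainly satisfies any uniform approximation tolerance $\delta$, so the sum-of-norms blowup forced by Lemma \ref{lem:contpoly} contradicts the finiteness of each $\|g_j\|_{C_b^{(k+\alpha)/2}(\Omega)}$.
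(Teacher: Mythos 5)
Your proof is correct and takes essentially the same approach as the paper: both restrict to a compact set, observe that membership in $C^{(k+\alpha)/2}(\mathbb{R}^n)$ forces finite $C_b^{(k+\alpha)/2}(\Omega)$ norms there, and then invoke Lemma~\ref{lem:contpoly} with the trivial fact that an exact identity satisfies every $\delta$-tolerance. Your version is if anything slightly more streamlined, choosing $N = N_0 + 1$ for an immediate contradiction where the paper instead lets $N$ range over all reals and then unpacks why an infinite norm is impossible.
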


\begin{proof}
Assume that there exist functions \(g_1,\dots,g_m\in C^\frac{k+\alpha}{2}(\mathbb{R}^n)\) such that
\[
    P=\sum_{j=1}^m g_j^2.
\]
Then restricting to any compact set \(\Omega\subset\mathbb{R}^n\) we necessarily have for each \(j\) that \(g_j\in C_b^{\frac{k+\alpha}{2}}(\Omega)\). On the other hand, \Cref{lem:contpoly} implies that for any \(N\in\mathbb{R}\),
\[
    \sum_{j=1}^m\|g_j\|_{C_b^\frac{k+\alpha}{2}(\Omega)}>N.
\]
It follows that for some \(j\) one of the norms above is infinite. Since \(g_j\) and its derivatives are bounded, we conclude that for some \(\beta\) whose order is the integer part of \(\frac{k+\alpha}{2}\), we have
\[
    [\partial^\beta g_j]_{\frac{k+\alpha}{2}-|\beta|,\Omega}=\infty.
\]
It follows that a derivative of \(g_j\) is not \(\alpha\)-H\"older continuous on \(\Omega\), meaning that \(g_j\not\in C^\frac{k+\alpha}{2}(\mathbb{R}^n)\). This contradicts our initial assumption, and the claim follows.
\end{proof}

Since the Motzkin polynomial is not a sum of squares of polynomials, as we verify in the next section, we find now that \(C^{6,\alpha}(\mathbb{R}^2)\) contains a non-decomposable function.

\begin{cor}
For any \(\alpha>0\), the Motzkin polynomial \(M(x,y)=x^4y^2+x^2y^2-3x^2y^2+1\) is a non-negative \(C^{6,\alpha}(\mathbb{R}^2)\) function which is not a sum of squares in \(C^{3,\frac{\alpha}{2}}(\mathbb{R}^2)\).  
\end{cor}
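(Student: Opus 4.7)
The plan is to verify that the Motzkin polynomial meets the hypotheses of Corollary \ref{cor:notsos}, at which point the conclusion is immediate. Since $M$ is a polynomial of degree $6$, it automatically lies in $C^{6,\alpha}(\mathbb{R}^2)$ for every $\alpha \in (0,1]$. Thus only two things require proof: that $M$ is non-negative on $\mathbb{R}^2$, and that $M$ is not a sum of squares of polynomials.

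For non-negativity, I would apply the AM-GM inequality to the three non-negative monomials $x^4 y^2$, $x^2 y^4$, and $1$, which gives
\[
\frac{x^4 y^2 + x^2 y^4 + 1}{3} \ge \sqrt[3]{x^4 y^2 \cdot x^2 y^4 \cdot 1} = x^2 y^2,
\]
so that $M(x,y) \ge 0$ everywhere.

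The main obstacle is showing that $M$ is not a polynomial sum of squares; this is the only part that requires genuine argument, and it will be carried out by examining monomial coefficients. Suppose for contradiction that $M = \sum_{j=1}^m g_j^2$ with each $g_j$ a real polynomial. Since $\deg M = 6$, each $g_j$ must have degree at most $3$, so I would expand each $g_j$ in the basis $\{1, x, y, x^2, xy, y^2, x^3, x^2 y, x y^2, y^3\}$ with real coefficients. The plan is then to compare coefficients of the pure monomials in $M$, which vanish for all terms except the constant, $x^4 y^2$, $x^2 y^4$, and $x^2 y^2$. Matching the coefficient of $x^6$ forces the $x^3$-coefficient of every $g_j$ to vanish; likewise matching $y^6$ kills the $y^3$-coefficient. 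Matching $x^4$ (only contributed by squares of the $x^2$-term, since $x^3$-terms are already zero) kills the $x^2$-coefficient, and symmetrically $y^4$ kills the $y^2$-coefficient. Matching $x^2$ then kills the $x$-coefficient, and $y^2$ kills the $y$-coefficient of each $g_j$.

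At this stage each $g_j$ is of the form $a_j + b_j xy$, so $\sum_j g_j^2 = \sum_j a_j^2 + 2(\sum_j a_j b_j) xy + (\sum_j b_j^2) x^2 y^2$. But this expression contains no $x^4 y^2$ term, while $M$ does, giving a contradiction; moreover, it forces the coefficient of $x^2 y^2$ to equal $\sum_j b_j^2 \ge 0$, whereas in $M$ this coefficient is $-3$, which is the final contradiction. Hence $M$ is not a polynomial sum of squares, and Corollary \ref{cor:notsos} applies to conclude that $M$ admits no decomposition as a finite sum of squares of functions in $C^{3,\alpha/2}(\mathbb{R}^2)$ for any $\alpha > 0$.
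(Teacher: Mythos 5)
Your overall structure — reduce to Corollary \ref{cor:notsos}, show non-negativity by AM-GM, then show $M$ is not a polynomial sum of squares by coefficient matching — is sound, and your coefficient-matching argument is a genuinely more elementary alternative to the paper's Newton-polytope route (which invokes Reznik's Lemma \ref{lem:rezlem} to constrain the support of each $g_j$ to the half-polytope $\tfrac{1}{2}C_M$). Your steps eliminating the $x^3$, $y^3$, $x^2$, $y^2$, $x$, $y$ coefficients of each $g_j$ by matching $x^6$, $y^6$, $x^4$, $y^4$, $x^2$, $y^2$ are all correct. However, you then assert that each $g_j$ must be of the form $a_j + b_j xy$, which does not follow: after your eliminations, the surviving basis monomials are $1$, $xy$, $x^2y$, and $xy^2$, so $g_j = a_j + b_j xy + c_j x^2 y + d_j xy^2$ in general. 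The two cubic monomials $x^2y$ and $xy^2$ cannot be ruled out — indeed they are exactly what produce the $x^4y^2$ and $x^2y^4$ terms of $M$ — so your first claimed contradiction (that $\sum g_j^2$ has no $x^4y^2$ term) is spurious.

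The argument is rescued by your second contradiction, but you should state it with the corrected form. Expanding $g_j = a_j + b_j xy + c_j x^2 y + d_j xy^2$, the only product yielding $x^2 y^2$ is $(b_j xy)^2$, since $1 \cdot x^2y^2$ and $x^2y \cdot y$ etc.\ are ruled out by the preceding eliminations. Hence the coefficient of $x^2y^2$ in $\sum_j g_j^2$ is $\sum_j b_j^2 \geq 0$, which contradicts the coefficient $-3$ in $M$. This is exactly the contradiction the paper arrives at after its Newton-polytope reduction; what you gain with the elementary route is avoiding the convex-geometry machinery, at the cost of a longer sequence of coefficient comparisons, and the care needed to track precisely which monomials survive.
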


It is important to observe that the only properties of the Motzkin polynomial we used for this corollary were non-negativity, and the fact that \(M\) is not a sum of squares. Polynomials on \(\mathbb{R}^n\) with these properties abound at high degrees when \(n\geq 2\), thanks to the results of Hilbert \cite{Hilbert} and Blekherman \cite{Blekherman}. Despite providing no examples, Hilbert's argument is nevertheless sufficient for us to draw a striking conclusion from the sequence of results obtained above. The following sweeping result seems to have been overlooked in many earlier works on sums of squares, save for the cases \(k=4\) and \(k=6\) which are discussed in \cite{Bony} and \cite{SOS_I}.

\begin{cor}\label{cor:existence}
Let \(k\geq 4\) and \(n\geq 2\) and assume that one of these inequalities is strict. For any \(\alpha>0\) there exist non-negative functions in \(C^{k,\alpha}(\mathbb{R}^n)\) which are not sums of half-regular squares.
\end{cor}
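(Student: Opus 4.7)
The plan is to derive this from \Cref{cor:notsos} combined with Hilbert's classical theorem on non-negative polynomials that are not sums of squares. Hilbert's result (summarized at the start of this section, with explicit constructions to appear in \Cref{sec:gen}) guarantees that for every even integer \(d \geq 4\) and every \(n \geq 2\), except the pair \((d, n) = (4, 2)\), there exists a non-negative polynomial \(P\) on \(\mathbb{R}^n\) of degree \(d\) that is not a sum of squares of polynomials; canonical examples are the Motzkin polynomial for \((d, n) = (6, 2)\) and a Choi-Lam-type polynomial for \((d, n) = (4, 3)\).

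Fixing \(k \geq 4\), \(n \geq 2\) with \((k, n) \neq (4, 2)\), and \(\alpha \in (0, 1]\), I would select such a \(P\) of even degree \(d\) by taking \(d = 4\) when \(n \geq 3\) and \(d = 6\) when \(n = 2\). Since \(P\) is a polynomial, it lies trivially in \(C^{k, \alpha}(\mathbb{R}^n)\), so it remains only to verify that \(P\) is not a sum of squares in \(C^{(k+\alpha)/2}(\mathbb{R}^n)\). \Cref{cor:notsos} gives that \(P\) is not a sum of squares in \(C^{(d+\beta)/2}(\mathbb{R}^n)\) for any \(\beta > 0\). Setting \(\beta = k + \alpha - d\), which is positive whenever \(k + \alpha > d\), the identity \((d + \beta)/2 = (k + \alpha)/2\) immediately gives the desired conclusion. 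When \(k + \alpha - d\) exceeds \(1\), one takes instead \(\beta' = \min\{1, k + \alpha - d\}\) and uses the H\"older embedding \(C^{(k+\alpha)/2} \subseteq C^{(d+\beta')/2}\), since any sum-of-squares decomposition in the smaller space would give one in the larger, contradicting \Cref{cor:notsos}.

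This argument handles \(n \geq 3,\ k \geq 4\) (via \(d = 4\), where the condition \(k + \alpha > 4\) holds trivially) and \(n = 2,\ k \geq 6\) (via \(d = 6\), where \(k + \alpha > 6\) holds). The main obstacle is the remaining boundary case \((k, n) = (5, 2)\), since Hilbert's exception forces \(d \geq 6\) in two variables while \(k + \alpha \leq 6\) for \(\alpha \leq 1\). To cover this final case one must either sharpen \Cref{lem:contpoly} to allow target spaces with strictly fewer derivatives than \(d/2\), or construct a non-polynomial \(C^{5, \alpha}(\mathbb{R}^2)\) example --- for instance, by perturbing the Motzkin polynomial with a carefully chosen factor of low regularity --- that inherits the non-decomposability from Motzkin while landing in the correct H\"older space.
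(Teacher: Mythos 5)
Your approach matches the paper's: both derive the corollary by applying \Cref{cor:notsos} to a Hilbert counterexample. The bookkeeping differs slightly — the paper takes a non-negative non-SOS polynomial of degree \(k\) when \(k\) is even and of degree \(k-1\) when \(k\) is odd, while you fix the minimal admissible degree \(d\in\{4,6\}\) for each \(n\) and pass to the target space via a H\"older embedding on compact sets — but the two routes are equivalent up to this choice of \(d\).

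The gap you flag at \((k,n)=(5,2)\) is genuine, and in fact the paper's own proof has the identical hole. For odd \(k=5\) the paper prescribes a non-negative non-SOS polynomial of degree \(k-1=4\) in two variables, but by Hilbert's classification no such polynomial exists: every non-negative quartic on \(\mathbb{R}^2\) is a sum of squares of polynomials. So neither your argument nor the paper's reaches this case. Your fallback of reaching for a degree-\(6\) polynomial also does not succeed as written: the half-regular space for \(C^{5,\alpha}(\mathbb{R}^2)\) is \(C^{2,(1+\alpha)/2}(\mathbb{R}^2)\), whereas \Cref{cor:notsos} applied to a degree-\(6\) polynomial only excludes decompositions in the strictly smaller class \(C^{3,\beta/2}(\mathbb{R}^2)\). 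The degree-matching step inside \Cref{lem:contpoly}, where the limit functions are shown to be polynomials of degree at most \(k/2\), genuinely breaks if the root functions have only two continuous derivatives, since there is then no third-order Taylor coefficient to compare. One of the remedies you suggest — a sharpened \Cref{lem:contpoly} allowing fewer derivatives in the target space, or an ad hoc non-polynomial construction — really is needed to close \((5,2)\). Your caution is well placed: as written, neither proof establishes the corollary in that case.
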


\begin{proof}
If \(k\) is even this is immediate from \Cref{cor:notsos} and \cite{Hilbert}. If \(k\) is odd, it suffices to repeat the argument above with a non-negative and non-decomposable polynomial \(P\) of degree \(k-1\), recalling our definition of half-regular H\"older spaces in \eqref{eq:halfreg}. 
\end{proof}

This existence result does the job of verifying that \Cref{thm:c3main} does not hold for \(k\geq 4\), but it is unsatisfying since it gives no concrete examples. For the remainder of this chapter, we work to obtain explicit polynomials which are not sums of squares for each admissible pairing of \(n\) and \(k\) in \Cref{cor:existence}. Through \Cref{cor:notsos}, we can readily conclude that such examples are not decomposable as sums of half-regular squares.

\section{Explicit Polynomial Examples}\label{sec:examples}

To prove that the Motzkin polynomial is not a sum of squares of polynomials, and to construct explicit examples, we introduce some basic concepts and techniques from convex geometry.

\begin{defn}
Let \(P(x)=c_1 x^{q_1}+\cdots+c_mx^{q_m}\) be a polynomial on \(\mathbb{R}^n\) for constants \(c_j\neq0\) and \(q_j\in\mathbb{N}_0^n\). The Newton polytope \(C_P\) of \(P\) is the convex hull in \(\mathbb{R}^n\) generated by the points \(q_1,\dots,q_m\). That is, \(C_P\) is the smallest convex set in \(\mathbb{R}^n\) which contains each \(\alpha_j\). 
\end{defn}

For example, the Newton polytope of the Motzkin polynomial is the closed triangle \(C_M\) in \(\mathbb{R}^2\) with corners at \((0,0)\), \((2,4)\) and \((4,2)\). Frequently, we employ a more useful realization of \(C_P\) as the set of all convex combinations of the lattice points \(q_1,\dots,q_m\). That is,
\[
    C_P=\bigg\{\sum_{j=1}^m\lambda_jq_j\;:\;\sum_{j=1}^m\lambda_j=1\;\mathrm{and}\;0\leq\lambda_j\leq 1\bigg\}.
\]
Observe that if \(P(0)\neq 0\) then \(C_P\) has a vertex fixed at the origin, and if \(m=n+1\), then the weights \(\lambda_1,\dots,\lambda_{n+1}\) turn out to be uniquely determined. We verify this fact momentarily in \Cref{thm:posweights} before employing it to construct polynomials which are not sums of squares.

Our next result will also be useful in our construction, and it follows as a straightforward consequence of the weighted arithmetic-mean geometric-mean inequality (Theorem \ref{thm:amgm}). Simply put, the following lemma states that if a multi-index \(p\) can be realized as a weighted average of multi-indices \(q_1,\dots,q_m\), each having even entries, then we can construct a non-negative polynomial which contains monomials associated to each \(q_j\) and \(p\). This is a generalized variant of the result used by Motzkin in \cite{Motzkin} to verify non-negativity of the polynomial \(M\).

\begin{lem}\label{lem:amgm}
Let \(q_1,\dots,q_m\in 2\mathbb{N}_0^n\), let \(\lambda_1,\dots,\lambda_m\) be such that \(0\leq \lambda_j\leq 1\) for each \(j\) and \(\lambda_1+\cdots+\lambda_m=1\), and assume that \(p=\lambda_1q_1+\cdots+q_m\alpha_m\in \mathbb{N}_0^n\). Then for every \(x\in\mathbb{R}^n\),
\[
    x^p\leq \sum_{j=1}^m \lambda_jx^{q_j}.
\]
Moreover, equality holds above when \(x^{q_1}=\cdots=x^{q_m}\) and in particular when \(x=(1,\dots,1)\).
\end{lem}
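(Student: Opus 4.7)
The plan is to apply the weighted arithmetic-mean geometric-mean inequality (Theorem \ref{thm:amgm}) to the quantities \(x^{q_1}, \dots, x^{q_m}\), which are the natural candidates. The crucial preparatory observation is that because every component of each \(q_j\) is even, the monomial \(x^{q_j} = x_1^{q_{j,1}} \cdots x_n^{q_{j,n}}\) is non-negative for every \(x \in \mathbb{R}^n\), which is what makes the weighted AM-GM inequality applicable.

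Once that non-negativity is in hand, I would apply the weighted AM-GM inequality with weights \(\lambda_j\) to obtain
\[
    \prod_{j=1}^m (x^{q_j})^{\lambda_j} \leq \sum_{j=1}^m \lambda_j x^{q_j}.
\]
Using non-negativity of each base and the assumption \(p = \sum_j \lambda_j q_j\), the left-hand side simplifies coordinate-by-coordinate to \(\prod_{i=1}^n |x_i|^{p_i}\). This is the step that needs the most care, since the \(\lambda_j\) are not integers and one must write \((x_i^{q_{j,i}})^{\lambda_j} = |x_i|^{\lambda_j q_{j,i}}\) using evenness of \(q_{j,i}\).

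The final step is to compare \(x^p = \prod_i x_i^{p_i}\) with \(\prod_i |x_i|^{p_i}\). The product \(\prod_i x_i^{p_i}\) equals either \(+\prod_i |x_i|^{p_i}\) or \(-\prod_i |x_i|^{p_i}\) depending on the parities of the \(p_i\) and the signs of the \(x_i\), and in either case we have \(x^p \leq \prod_i |x_i|^{p_i}\). Chaining this bound with the AM-GM inequality above yields \(x^p \leq \sum_{j=1}^m \lambda_j x^{q_j}\), which is the claim.

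For the equality statement, the standard equality case of the weighted AM-GM inequality gives equality precisely when \(x^{q_1} = \cdots = x^{q_m}\); and at \(x = (1,\dots,1)\) each \(x^{q_j}\) equals \(1\) and the sum \(\sum_j \lambda_j x^{q_j} = 1 = x^p\) trivially. The main (minor) obstacle is the careful bookkeeping in the second step to verify that the geometric-mean side really reduces to \(\prod_i |x_i|^{p_i}\); everything else is a direct invocation of the cited inequality.
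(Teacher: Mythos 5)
Your proof is correct and follows essentially the same route as the paper: apply the weighted AM-GM inequality (Theorem \ref{thm:amgm}) to the non-negative quantities \(x^{q_j}\) (non-negative precisely because each \(q_j\) has even entries), and then handle signs by comparing \(x^p\) with \(\prod_i |x_i|^{p_i}\). One small wrinkle: the phrase ``precisely when \(x^{q_1}=\cdots=x^{q_m}\)'' overstates the equality criterion for the full chain of inequalities---one also needs \(x^p = \prod_i|x_i|^{p_i}\), a point the paper's own proof makes explicit---but this does not affect the direct verification at \(x=(1,\dots,1)\), which is the case actually used later.
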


\begin{proof}
Let \(p\) be as above and note that \(x^\beta=x^{\lambda_1q_1}\cdots x^{\lambda_mq_m}\) for any \(x\in\mathbb{R}^n\). Given a point \(x=(x_1,\dots,x_n)\in\mathbb{R}^n\), we set \(\Tilde{x}=(|x_1|,\dots,|x_n|)\) so that by Theorem \ref{thm:amgm},
\[
    x^p= \prod_{j=1}^mx^{\lambda_jq_j}\leq \prod_{j=1}^m(\Tilde{x}^{q_j})^{\lambda_j}\leq \sum_{j=1}^m\lambda_j\Tilde{x}^{q_j}=\sum_{j=1}^m\lambda_jx^{q_j}.
\]
From the equality case of the AMGM inequality, we see that equality holds above when \(x^p=\Tilde{x}^p\) and \(x^{q_1}=\cdots=x^{q_m}\). The first inequality and the last identity above hold since \(\Tilde{x}^{q_j}=x^{q_j}\geq0\) whenever \(q_j\in 2\mathbb{N}_0^n\), and since \(x\in\mathbb{R}^n\) was arbitrary we are finished.
\end{proof}

Using this result, it is straightforward to verify non-negativity of the Motzkin polynomial. The monomial terms of \(M\) correspond to the multi-indices \(q_1=(0,0)\), \(q_2=(4,2)\), \(q_3=(2,4)\), and \(p=(2,2)\). Since we can write \(p=\frac{1}{3}(q_1+q_2+q_3)\), it follows from \Cref{lem:amgm} that for every \(x\in\mathbb{R}^2\) we have \(3x^p\leq x^{q_1}+x^{q_2}+x^{q_3}\). In standard notation this reads \(3x^2y^2\leq 1+x^4y^2+x^2y^4\) or equivalently, \(M(x,y)\geq0\).

In the course of constructing new counterexamples like the Motzkin polynomial, the preceding result allows us to select coefficients which ensure that the polynomial we extract from a well-chosen Newton polytope is non-negative everywhere. In \cite{reznik_extpsd}, Reznik makes an important connection between Newton polytopes and polynomial sums of squares which we summarize in the following lemma. The proof of this result can be found in \cite[Theorem 1]{reznik_extpsd}, and it employs some basic convex geometry which we avoid repeating.

\begin{lem}[Reznik]\label{lem:rezlem}
Let \(P\) be a polynomial with Newton polytope \(C_P\). If there exist polynomials \(g_1,\dots,g_m\) such that \(P=g_1^2+\cdots+g_m^2\) then 
\[
    \bigcup_{j=1}^m C_{g_j}\subseteq \frac{1}{2}C_P.
\]
Here, \(\frac{1}{2}C_{P}\) denotes the dilated polytope \(\{\frac{1}{2}x:x\in C_{P}\}\).
\end{lem}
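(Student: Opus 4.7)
The plan is to exploit the fact that squaring a polynomial cannot destroy the extremal monomials of its Newton polytope, because the top (in any supporting-hyperplane direction) monomial of the square is the square of the top monomial of the original, and such extremal terms cannot cancel between different summands.

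First I would set $Q = \mathrm{conv}\big(\bigcup_{j=1}^m C_{g_j}\big)$; since each $g_j$ has finitely many monomials, $Q$ is a polytope. The goal becomes showing $Q \subseteq \tfrac{1}{2}C_P$, which by convexity of $\tfrac{1}{2}C_P$ reduces to proving that every vertex $v$ of $Q$ lies in $\tfrac{1}{2}C_P$. Fix such a vertex $v$. Since $v$ is an extreme point of the union of supports, it must lie in $\mathrm{supp}(g_{j_0})$ for at least one index $j_0$, with coefficient $c_{j_0,v}\neq 0$.

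The key step is to examine the coefficient of $x^{2v}$ in $P$. Expanding
\[
    P(x) = \sum_{i=1}^m g_i(x)^2 = \sum_{i=1}^m\sum_{a,b\,\in\,\mathrm{supp}(g_i)} c_{i,a}c_{i,b}\,x^{a+b},
\]
the monomial $x^{2v}$ can contribute from a pair $(a,b)$ only when $a+b=2v$, i.e.\ $v=\tfrac{1}{2}(a+b)$ with $a,b\in\mathrm{supp}(g_i)\subseteq Q$. Because $v$ is an extreme point of $Q$, this forces $a=b=v$. Therefore the coefficient of $x^{2v}$ in $P$ equals $\sum_{i=1}^m c_{i,v}^2$, which is strictly positive because the term $i=j_0$ already contributes $c_{j_0,v}^2>0$ and all other terms are non-negative (no cancellation is possible). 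Hence $2v$ is actually a monomial of $P$, so $2v\in C_P$ and $v\in\tfrac{1}{2}C_P$.

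Applying this to every vertex of $Q$ and invoking convexity of $\tfrac{1}{2}C_P$ yields $Q\subseteq \tfrac{1}{2}C_P$, which implies the claim since $C_{g_j}\subseteq Q$ for each $j$. The one subtle point I expect to be the crux is the no-cancellation argument: it is essential that $v$ be extreme in the union $Q$, not merely in a single $C_{g_j}$, so that the pair decomposition $2v=a+b$ with $a,b$ in \emph{any} $\mathrm{supp}(g_i)$ is forced to be trivial. Extremality in a single $C_{g_j}$ would not rule out a cross-cancellation coming from some other $g_i$ whose support reaches closer to $v$; working with the convex hull of the union removes this possibility in one stroke.
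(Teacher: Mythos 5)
Your proof is correct. The paper itself does not actually prove this lemma --- it defers to Reznik's Theorem~1 in \cite{reznik_extpsd} and explicitly says the convex-geometry argument is omitted --- so you have in effect supplied the missing argument. Your version is a clean, vertex-based variant of the standard approach. The usual proof one sees (and roughly what Reznik does) works with an arbitrary linear functional $\ell$: set $M_j=\max_{\operatorname{supp}(g_j)}\ell$ and $M=\max_j M_j$; the pieces of each $g_j$ at level $M_j$ are the ``leading forms'' with respect to $\ell$, and in $\sum_j g_j^2$ the monomials at level $2M$ arise exactly from squaring the leading forms of the $g_j$ attaining $M_j=M$; a sum of squares of nonzero polynomials is nonzero, so $P$ has a monomial at level $2M$, giving $\max_{C_P}\ell\geq 2\max_Q\ell$, and ranging over $\ell$ yields $\tfrac12 C_P\supseteq Q$. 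Your argument is the discrete specialization of this to the vertices of $Q=\operatorname{conv}\bigl(\bigcup_j C_{g_j}\bigr)$: a vertex $v$ of $Q$ cannot be a midpoint of two distinct points of $Q$, which forces the only representation of $2v$ as $a+b$ with $a,b\in\operatorname{supp}(g_i)$ to be $a=b=v$, so the coefficient of $x^{2v}$ in $P$ is $\sum_i c_{i,v}^2>0$ and $2v\in C_P$. You are right to flag the subtle point: extremality must be taken in $Q$, not in a single $C_{g_j}$, precisely to kill cross-terms from other summands. That is exactly the no-cancellation mechanism, and your formulation handles it correctly; convexity of $\tfrac12 C_P$ then finishes the job.
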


The use of this connection between Newton polytopes and polynomial sums of squares is leveraged by Reznik in \cite{reznik_extpsd}, and later the connection is also used extensively by Choi, Lam, and Reznik in \cite{CLR} to study the Pythagoras number of a ring. This is the smallest number of squares needed in the sum of squares decomposition of any decomposable element, and the analog in our study of decompositions of H\"older functions is the number \(m_n\) estimated in \Cref{sec:sizebounds}.

Employing the useful lemma above, we can argue in a similar fashion to Reznik to show that the Motzkin polynomial is not a sum of squares. This argument is not new, and variants appear in both \cite{reznik_extpsd} and \cite{Powers}, however it is important to understand since it is a foundation of our more general construction.

\begin{cor}
The polynomial \(M(x,y)=x^2y^4+x^4y^2-3x^2y^2+1\) is not a sum of squares.
\end{cor}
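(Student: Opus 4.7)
The plan is to apply Reznik's Lemma to the Newton polytope of $M$ and derive a contradiction from the coefficient of $x^{2}y^{2}$. First I would identify the Newton polytope $C_{M}$. The monomials of $M$ correspond to the multi-indices $(0,0)$, $(4,2)$, $(2,4)$, and $(2,2)$, and since $(2,2)=\tfrac{1}{3}((0,0)+(4,2)+(2,4))$ lies in the interior of the triangle with corners $(0,0)$, $(4,2)$, $(2,4)$, the polytope $C_{M}$ is precisely this triangle. Hence $\tfrac{1}{2}C_{M}$ is the triangle with corners $(0,0)$, $(2,1)$, $(1,2)$.

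Next I would enumerate the lattice points of $\tfrac{1}{2}C_{M}$. Parameterizing by barycentric coordinates, a point $(a,b)\in\mathbb{Z}_{\geq 0}^{2}$ lies in $\tfrac{1}{2}C_{M}$ precisely when $a+b\leq 3$, $b\leq 2a$, and $a\leq 2b$. A direct check shows the only such lattice points are $(0,0)$, $(1,1)$, $(2,1)$, and $(1,2)$. By Lemma \ref{lem:rezlem}, if $M=\sum_{j=1}^{m}g_{j}^{2}$ for some polynomials $g_{1},\ldots,g_{m}$, then each $g_{j}$ must be supported on these four monomials, so there exist real numbers $a_{j},b_{j},c_{j},d_{j}$ with
\[
g_{j}(x,y)=a_{j}+b_{j}xy+c_{j}x^{2}y+d_{j}xy^{2}.
\]

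Finally I would expand $\sum_{j}g_{j}^{2}$ and read off the coefficient of $x^{2}y^{2}$. Among all products of pairs of monomials from $\{1,xy,x^{2}y,xy^{2}\}$, only $(xy)\cdot(xy)$ produces $x^{2}y^{2}$; in particular, the potential cross-products $1\cdot x^{2}y^{2}$ and $1\cdot y^{2}x^{2}$ cannot arise because $x^{2}y^{2}$ is not in the support. Therefore the coefficient of $x^{2}y^{2}$ in $\sum_{j}g_{j}^{2}$ is exactly $\sum_{j}b_{j}^{2}\geq 0$. But in $M$ this coefficient equals $-3$, which is impossible.

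There is no serious obstacle in this argument; the only step that requires any care is the enumeration of lattice points in $\tfrac{1}{2}C_{M}$, since one must verify that no unwanted lattice points such as $(1,0)$, $(0,1)$, or $(2,2)$ slip inside the triangle. Once this is handled, the key observation that $x^{2}y^{2}$ can only arise as $(xy)^{2}$ forces the squared coefficient sum $\sum b_{j}^{2}$ to equal the negative number $-3$, producing the desired contradiction.
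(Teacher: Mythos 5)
Your argument is correct and follows the paper's proof essentially verbatim: invoke Reznik's \Cref{lem:rezlem}, enumerate the four lattice points of $\tfrac{1}{2}C_M$, observe that $x^2y^2$ arises only as $(xy)^2$, and conclude that the coefficient $-3$ would have to be a sum of squares. The only addition is your explicit set of inequalities $a+b\leq 3$, $b\leq 2a$, $a\leq 2b$ for checking the lattice points, which the paper asserts without detail.
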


\begin{proof}
Assume toward a contradiction that \(M=g_1^2+\cdots+g_m^2\) and note that \(C_P\) is the triangular region in \(\mathbb{R}^2\) with vertices at \((0,0)\), \((4,2)\) and \((2,4)\). By \Cref{lem:rezlem}, for each \(j\) the polytope \(C_{g_j}\) must be a subset of the triangle with vertices at \((0,0)\), \((2,1)\) and \((1,2)\). Since the only lattice points which belong to to this closed triangle are the vertices themselves and \((1,1)\), the square of \(g_j\) must take the form \(g_j^2=(c_1+c_2xy+c_3x^2y+c_4xy^2)^2\), and expanding gives
\[
    g_j^2=c_1^2+c_2^2x^2y^2+c_3^2x^4y^2+c_4^2x^2y^4+2xy(c_1c_2+c_1c_3x+c_1c_4y+c_2c_3x^2y+c_2c_4xy^2+c_3c_4x^2y^2).
\]
The monomial \(x^2y^2\) only appears above in the term \(c_2^2x^2y^2\). It follows that the coefficient of \(x^2y^2\) in \(g_1^2+\cdots+g_m^2\) is a sum of squares, hence non-negative. This contradicts the fact that the coefficient of \(x^2y^2\) in \(M(x,y)\) is negative, and we conclude that \(M\) is not a sum of squares.
\end{proof}

Generalizing the idea of the preceding proof, we establish the following criterion which allows us to determine whether a given Newton polytope may correspond to a polynomial which is not a sum of squares.

\begin{cor}\label{cor:hullcond}
Let \(P\) be a polynomial which contains a monomial \(kx^p\) for which \(k<0\)
and such that \(p\) is not the sum of any two distinct points in \(\frac{1}{2}C_P\). Then \(P\) is not a sum of squares. 
\end{cor}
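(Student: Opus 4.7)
The plan is to mimic the Newton-polytope coefficient-tracking argument that was just used to rule out sums-of-squares decompositions for the Motzkin polynomial, and abstract the key observation into the stated combinatorial criterion. First I would argue by contradiction: suppose $P = g_1^2 + \cdots + g_m^2$ for some polynomials $g_1, \dots, g_m$. Reznik's inclusion (Lemma \ref{lem:rezlem}) immediately gives $C_{g_j} \subseteq \tfrac{1}{2}C_P$ for every $j$, which is the only structural fact from the preceding theory that I need.

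Next I would expand each square. Writing $g_j = \sum_{\alpha} a_{j,\alpha}\, x^\alpha$ with the sum taken over lattice points $\alpha \in C_{g_j} \cap \mathbb{N}_0^n$, the coefficient of $x^p$ in $g_j^2$ is
\[
    \sum_{\substack{\alpha+\beta = p\\ \alpha,\beta \in C_{g_j}\cap \mathbb{N}_0^n}} a_{j,\alpha}\, a_{j,\beta}.
\]
Here is where the hypothesis bites: since $C_{g_j} \subseteq \tfrac{1}{2}C_P$, any pair $(\alpha,\beta)$ contributing to this sum consists of two points in $\tfrac{1}{2}C_P$ whose sum is $p$, and by assumption no such pair has $\alpha \neq \beta$. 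Therefore the only possibility is $\alpha = \beta = p/2$, which requires $p/2 \in \mathbb{N}_0^n$; if that fails the coefficient is zero, and otherwise it equals $a_{j,p/2}^2$.

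Summing over $j$, the coefficient of $x^p$ in $P$ is the nonnegative quantity $\sum_{j} a_{j,p/2}^2$ (interpreted as $0$ when $p/2$ is not a lattice point). This contradicts the assumption that this coefficient equals $k < 0$, finishing the proof. The main obstacle is mostly bookkeeping: making sure the word \emph{distinct} in the hypothesis is read as excluding only the ordered pair $(\alpha,\beta)$ with $\alpha \neq \beta$, so that the diagonal pair $(p/2, p/2)$ is still allowed and correctly contributes a square term. Once that is clarified, the rest is a direct generalization of the Motzkin argument and requires no further input beyond Lemma \ref{lem:rezlem}.
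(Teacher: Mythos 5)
Your argument is correct and is exactly the abstraction of the Motzkin-polynomial computation that the paper gestures at when it says ``generalizing the idea of the preceding proof''; the paper gives no separate proof for this corollary, so your write-up is a faithful formalization of the intended one. You correctly isolate the two ingredients — Lemma \ref{lem:rezlem} to confine each $C_{g_j}$ to $\tfrac{1}{2}C_P$, and the hypothesis on $p$ to kill all off-diagonal contributions to the coefficient of $x^p$ — and correctly observe that the surviving diagonal term $\sum_j a_{j,p/2}^2$ is nonnegative, contradicting $k<0$.
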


The conditions above are not necessary for a polynomial not to be a sum of squares. Observe that if a polynomial is not a sum of squares then neither are any of its translations, so
\[
    P(x,y)=M(x+1,y+1)=x^2 y^4+4x^2 y^3+3x^2 y^2+x^4 y^2+2x^4 y+x^4-2x^2 y-2x^2+1
\]
is not a sum of squares. However it is easily verified by direct calculation that every integer lattice point belonging to \(C_P\) can be written as the average of two distinct lattice points in \(C_P\). It follows that the polynomials which satisfy \Cref{cor:hullcond} do not exhaust the collection of non-decomposable polynomials.

Nevertheless, equipped with \Cref{cor:hullcond} we can construct many polynomials which are not sums of squares. Using \Cref{lem:amgm}, we ensure that these are non-negative and only vanish at a single point. To illustrate how the latter properties are achieved, we observe by \Cref{lem:amgm} that \(M(x,y)\) is non-negative and \(M(x,y)=0\) when \(x^2=y^2=1\). Further, the lemma indicates that \(x^2y^4+1-2xy^2\geq 0\) with equality when \(x=1\) and \(y^2=1\), while \(x^4y^2+1-2x^2y\geq0\) with equality when \(x^2=1\) and \(y=1\). Adding these polynomials to \(M(x,y)\), we see that the following is a non-negative polynomial with a single zero at \((1,1)\),
\[
    P(x,y)=2x^2 y^4+2x^4 y^2-3x^2 y^2-2x y^2-2x^2 y+3
\]
Moreover, \(C_P=C_M\) and \(P\) has a negative coefficient on the critical monomial \(x^2y^2\), so \(P\) is not a sum of squares of polynomials. A translation of \(P\) allows us to place the zero at the origin.

Convex hulls in \(\mathbb{R}^n\) with the special property outlined in \Cref{cor:hullcond} are the Newton polytopes of polynomials which are not sums of squares. In the next section we show how such polytopes can be constructed systematically, and using Lemma \ref{lem:amgm} we extract non-negative polynomials from these sets. Most often, we work with polytopes in \(\mathbb{R}^n\) which have \(n+1\) vertices, since doing so involves especially simple calculations as we outline in the following theorem.

\begin{thm}\label{thm:posweights}
Let \(C\) be a polytope in \(\mathbb{R}^n\) with \(n+1\) lattice point vertices, \(q_1,\dots,q_{n+1}\). If \(p\) is a lattice point in the interior of \(C\), then there exist \(\lambda_1,\dots,\lambda_{n+1}>0\) such that
\begin{equation}\label{eq:conv}
    p=\sum_{j=1}^{n+1}\lambda_jq_j\qquad\textrm{and}\qquad\sum_{j=1}^{n+1}\lambda_j=1.
\end{equation}
Moreover, if \(q_{n+1}\) is fixed at the origin and the vectors \(q_1,\dots,q_n\) are linearly independent then \(\lambda_1,\dots,\lambda_{n+1}\) are uniquely determined and can be found by setting \(Q=[q_1 \cdots q_n]\) and computing
\begin{equation}\label{eq:solving}
    [\lambda_1\cdots\lambda_n]=(Q^{-1}p)^T\qquad\textrm{and}\qquad\lambda_{n+1}=1-\sum_{j=1}^n\lambda_j.
\end{equation}
\end{thm}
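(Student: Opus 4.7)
The plan is to recognize that a polytope in \(\mathbb{R}^n\) with exactly \(n+1\) vertices and nonempty interior is a full-dimensional simplex, after which the claim reduces to the standard theory of barycentric coordinates together with an explicit linear-algebraic calculation.

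First I would observe that since the interior of \(C\) contains the lattice point \(p\), the vertices \(q_1,\dots,q_{n+1}\) must be affinely independent. Indeed, if they were contained in a proper affine hyperplane of \(\mathbb{R}^n\), then so too would be their convex hull \(C\), contradicting the existence of an interior point. Translating so that \(q_{n+1}\) sits at the origin (which does not affect affine independence) this means that the vectors \(q_1,\dots,q_n\) are linearly independent, so the matrix \(Q=[q_1\;\cdots\; q_n]\) is invertible.

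Next I would invoke the standard correspondence between points in a simplex and their barycentric coordinates. Because \(q_1,\dots,q_{n+1}\) are affinely independent, every \(x\in C\) admits a unique representation \(x=\sum_{j=1}^{n+1}\lambda_j q_j\) with \(\lambda_j\geq 0\) and \(\sum_j\lambda_j=1\); moreover the map \((\lambda_1,\dots,\lambda_{n+1})\mapsto \sum_j \lambda_j q_j\) is a homeomorphism from the standard simplex in \(\mathbb{R}^{n+1}\) onto \(C\), so it carries the relative interior of the standard simplex, namely \(\{\lambda_j>0,\ \sum_j\lambda_j=1\}\), onto the topological interior of \(C\). Applying this with \(x=p\) yields the desired \(\lambda_1,\dots,\lambda_{n+1}>0\) satisfying \eqref{eq:conv}.

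Finally, with \(q_{n+1}=0\) the first identity in \eqref{eq:conv} simplifies to \(p=\sum_{j=1}^{n}\lambda_j q_j\), since the term involving \(q_{n+1}\) vanishes. Written as a matrix equation this reads \(Q[\lambda_1\cdots\lambda_n]^T=p\), which has the unique solution \([\lambda_1\cdots\lambda_n]^T=Q^{-1}p\) by invertibility of \(Q\); the remaining weight is then forced by the normalization condition to be \(\lambda_{n+1}=1-\sum_{j=1}^n\lambda_j\), giving formula \eqref{eq:solving}. The main subtlety I anticipate is making the geometric step rigorous, namely that interior points correspond precisely to strictly positive barycentric coordinates. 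This can be handled in a self-contained way by noting that the barycentric map is an affine bijection between two sets of the same dimension, so it takes relative interiors to interiors. Aside from that, the argument is entirely linear algebra.
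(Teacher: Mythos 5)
Your proposal is correct and follows essentially the same route as the paper's proof: existence of barycentric weights from the definition of the convex hull, strict positivity from interiority of \(p\), and the explicit linear-algebraic formula from invertibility of \(Q\). The only cosmetic difference is in how strict positivity is packaged: the paper argues directly that if some \(\lambda_j=0\) then \(p\) lies in a face of the simplex and hence on the boundary, whereas you invoke the fact that the barycentric-coordinate map is a homeomorphism carrying the relative interior of the standard simplex onto the interior of \(C\) — two ways of expressing the same geometric fact. You also make the affine-independence of the \(n+1\) vertices (forced by the nonempty interior) explicit up front, which the paper leaves implicit in the first half of the proof; this is a welcome clarification but not a change in method.
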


\begin{proof}
Since \(C\) is a convex hull which contains \(p\), the identities of \eqref{eq:conv} hold for some set of weights \(\lambda_1,\dots,\lambda_{n+1}\geq 0\). If \(\lambda_j=0\) for some \(j\) then \(p\) must be a convex combination of at most \(n\) points, meaning it belongs to one of the faces of \(C\) and not the interior; this contradicts our assumption that \(p\) is an interior point. 

If \(q_{n+1}=0\) and \(q_{1},\dots,q_n\) are linearly independent then \(C\) has non-empty interior; simply consider the point
\[
    p=\frac{1}{n}\sum_{j=1}^n q_j,
\]
or indeed any weighted average of the vertices \(q_1,\dots,q_n\). The identities of \eqref{eq:solving} are thus obtained by solving the linear system in \eqref{eq:conv}.
\end{proof}

The uniqueness of solutions to \eqref{eq:solving} gives us a simple criterion for determining whether an integer lattice point \(r\in\mathbb{R}^n\) belongs to a given polytope \(C\) with \(n\) linearly independent vertices. This result allows us to efficiently search for the polytopes outlined in \Cref{cor:hullcond}.

\begin{cor}
Let \(C\) be a convex hull in \(\mathbb{R}^n\) with one vertex at the origin and the remaining vertices at linearly independent points \(q_1,\dots,q_n\). Set \(Q=[q_1\;\cdots\;q_n]\), and given \(r\in\mathbb{R}^n\) define \([\lambda_1\;\cdots\;\lambda_n]^T=Q^{-1}r\) and \(\lambda_{n+1}=1-(\lambda_1+\cdots+\lambda_n)\). Then exactly one of the following holds,
\begin{itemize}
    \item[(1)] If \(\lambda_1,\dots,\lambda_{n+1}>0\) then \(r\) is interior to \(C\),
    \item[(2)] If \(\lambda_1,\dots\lambda_{n+1}\geq0\) and \(\lambda_j=0\) for some \(j\) then \(r\) is a boundary point of \(C\),
    \item[(3)] If \(\lambda_j<0\) for some \(j\) then \(r\) is exterior to \(C\).
\end{itemize}
\end{cor}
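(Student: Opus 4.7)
The plan is to leverage the uniqueness of the weights $\lambda_1, \ldots, \lambda_{n+1}$ established in \Cref{thm:posweights}, since under the hypothesis that $q_1, \ldots, q_n$ are linearly independent the matrix $Q$ is invertible. In particular, the system $r = \lambda_1 q_1 + \cdots + \lambda_n q_n + \lambda_{n+1} \cdot 0$ subject to $\lambda_1 + \cdots + \lambda_{n+1} = 1$ has exactly one solution, which is precisely the one computed by the formulas in the statement. Thus these $\lambda_j$ are the only candidates for expressing $r$ as an affine combination of the vertices $0, q_1, \ldots, q_n$, and $r$ belongs to $C$ if and only if all $\lambda_j$ are non-negative.

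First I would handle case (3): if some $\lambda_j < 0$, then $r$ cannot be written as a convex combination of the vertices of $C$ by the uniqueness just described, so $r \notin C$ and hence $r$ is exterior. Next I would handle case (1): if every $\lambda_j > 0$, then $r$ is a strict convex combination of all $n+1$ vertices, and I would argue that this forces $r$ to lie in the interior of $C$. The reason is that by continuity of the map $(\mu_1, \ldots, \mu_n) \mapsto \mu_1 q_1 + \cdots + \mu_n q_n$ (which is a linear isomorphism onto its image because the $q_j$ are linearly independent), a small perturbation of $(\lambda_1, \ldots, \lambda_n)$ keeps all weights strictly positive and summing with $\lambda_{n+1}$ to one, so a full neighbourhood of $r$ lies inside $C$.

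For case (2), if $\lambda_j \geq 0$ for all $j$ and some $\lambda_{j_0} = 0$, then $r$ is a convex combination of the remaining $n$ vertices, so $r$ belongs to the face of $C$ opposite $q_{j_0}$ (or opposite the origin, if $j_0 = n+1$). This face has dimension at most $n-1$, so any open neighbourhood of $r$ in $\mathbb{R}^n$ must contain points outside $C$; by uniqueness of the $\lambda$'s, any perturbation making $\lambda_{j_0}$ negative produces a point outside of $C$. Combined with $r \in C$, this places $r$ on the topological boundary. Finally, the three cases are visibly mutually exclusive and jointly exhaustive with respect to the signs of $\lambda_1, \ldots, \lambda_{n+1}$, so exactly one holds.

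The main obstacle is a minor one: carefully verifying the interior/boundary classification in cases (1) and (2) using only linear independence of $q_1, \ldots, q_n$, rather than appealing to results about simplices that have not been stated. However, since the map $Q$ is a linear isomorphism of $\mathbb{R}^n$ onto itself, the open set $\{(\lambda_1, \ldots, \lambda_n) : \lambda_j > 0, \sum_j \lambda_j < 1\}$ is carried homeomorphically onto an open subset of $\mathbb{R}^n$, which is the interior of $C$; likewise the boundary of this parameter set maps onto the boundary of $C$. This observation makes the topological classification immediate.
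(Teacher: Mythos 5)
The paper states this corollary without proof, treating it as an immediate consequence of \Cref{thm:posweights}, so there is no paper argument to compare against. Your proof is correct and supplies exactly the reasoning the paper leaves implicit: since $Q$ is invertible, the affine-coordinate map is a homeomorphism carrying the standard simplex $\{(\lambda_1,\dots,\lambda_n): \lambda_j\geq 0,\; \sum\lambda_j\leq 1\}$ onto $C$, so interior maps to interior, boundary to boundary, and complement to complement; the sign classification of the $\lambda_j$ then falls out immediately. The closing paragraph, which identifies the image of the open parameter set as the interior of $C$, is the cleanest formulation and could in fact replace the more piecemeal case analysis preceding it, where the phrase ``this face has dimension at most $n-1$, so any open neighbourhood of $r$ must contain points outside $C$'' is a slight non-sequitur on its own (it is the perturbation/homeomorphism argument, not the dimension count, that does the work).
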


The result above reduces determination of whether a point lies in \(C\) to a simple matrix-vector multiplication. This makes it straightforward to check whether \(C\) has the property outlined in \Cref{cor:hullcond}, by searching for a lattice point \(p\in C\) which is not the sum of two distinct lattice points in \(\frac{1}{2}C\). If no such point is found, we simply try another polytope with \(n+1\) vertices.


Finally, we arrive at the critical construction. Given a polytope \(C\) of \(n+1\) vertices, one fixed at the origin and the rest linearly independent, and an interior point \(p\in C\) which is not the sum of two distinct lattice points in \(\frac{1}{2}C\), we use \Cref{thm:posweights} to write \(p=\lambda_1q_1+\cdots+\lambda_nq_n\), where \(\lambda_j>0\) for each \(j\) and \(\lambda_1+\cdots+\lambda_n<1\). It follows from \Cref{lem:amgm} that the following polynomial is non-negative and not a sum of squares,
\begin{equation}\label{eq:shortone}
    P(x)=\sum_{j=1}^n\lambda_jx^{2q_j}+1-\sum_{j=1}^n\lambda_j-x^{2p}.
\end{equation}
To recover integer coefficients, it suffices to multiply this polynomial by \(|\mathrm{det}\;Q|\). Additionally, as we modified the Motzkin polynomial to have a single zero, we can add lower-order terms as follows to construct a polynomial that only has one zero,
\begin{equation}\label{eq:poly}
    Q(x)=\sum_{j=1}^n\lambda_jx^{2q_j}+1-\sum_{j=1}^n\lambda_j-x^{2p}+c\sum_{j=1}^n(x^{2q_j}+1-2x^{q_j}).
\end{equation}
It is easy to see that \(C_Q=C_P\) for any \(c>0\), so \(Q\) is not a sum of squares. Moreover, we have \(Q(1,\dots,1)=0\) and \(Q\) is strictly positive elsewhere by \Cref{lem:amgm}.

For example, one can verify that the Newton polytope \(C\) with vertices at \((0,0)\), \((6,2)\), \((2,4)\) contains the point \((2,2)\) which is not the sum of two distinct points in \(\frac{1}{2}C\), and undergoing the construction above gives the non-negative polynomial
\[
    P(x,y)=x^6y^2+2x^2y^4-5x^2y^2+2,
\]
which is not a sum of squares of polynomials. Moreover, the following modified form is also not a sum of squares, and it is only zero at at the point \((1,1)\),
\[
    Q(x,y)=2 x^6 y^2+3 x^2 y^4-5 x^2 y^2-2 x^3 y-2 x y^2+4.
\]
In summary, to obtain a non-negative polynomial which is not a sum of squares, it suffices to find a polytope \(C\) in \(\mathbb{R}^n\) with \(n+1\) vertices at non-negative integer lattice points in \(\mathbb{R}^n\), one fixed at the origin, which satisfies the hypothesis of \Cref{cor:hullcond}. Given such a polytope, the desired polynomial is provided by \eqref{eq:poly}.

Momentarily we develop a systemic way of constructing these polytopes. First, we present the following algorithm for finding polynomials of even degree \(d\) on \(\mathbb{R}^n\) that are not sums of squares. For shorthand, we let \(v\) denote the column vector in \(\mathbb{R}^n\) in which each entry is one. For a vector \(w\) we write \(w\geq0\) to indicate that \(w\) has non-negative entries, and if \(z\) is another vector we write \(w\geq z\) if each entry in \(w\) exceeds the corresponding entry in \(z\).

\medskip

\noindent\textit{Algorithm For Finding Non-Negative, Non-SOS Polynomials (Direct Search)}
\begin{itemize}
    \item[(1)] Choose \(n\) linearly independent non-negative lattice points in \(q_1,\dots,q_n\in\mathbb{R}^n\) such that \(q_{j,1}+\cdots+q_{j,n}\leq \frac{1}{2}d\), with equality for some \(j\). Set \(Q=[q_1\cdots q_n]\) and compute \(Q^{-1}\).
    \item[(2)] Fix a lattice point \(p\) in the box with corners at the origin and \(2(\max_jq_{j,1},\dots,\max_{j}q_{j,n})\).
    \item[(3)] If \(0\leq Q^{-1}p\) and \(1<v^TQ^{-1}p\leq 2\) then \(p\) is in the candidate hull -- proceed to step (3). If either inequality fails, return to step (2) and choose a new point.
    \item[(4)] Fix a lattice point \(t\) in the box with corners at the origin and \((\max_jq_{j,1},\dots,\max_{j}q_{j,n})\). 
    \begin{itemize}
        \item[(4a)] If \(t\neq p\), \(0\leq Q^{-1}t\leq Q^{-1}p\) and \(v^TQ^{-1}p-1\leq v^TQ^{-1}t\leq 1\), then \(p\) is the average of two distinct points in the candidate hull. Return to step (2) and choose a new point.
        \item[(4b)] If no such \(t\) is found after all points are checked, stop; \(p\) has the desired property.
    \end{itemize}
    \item[(7)] If no viable point \(p\) is found after all possible choices in step (2), return to step (1).
\end{itemize}

\medskip

A successful search gives points \(q_1,\dots,q_n\) and \(p\) which can be used to compute \(\lambda_1,\dots,\lambda_{n+1}\) from \eqref{eq:solving}, and in turn these give the polynomial \eqref{eq:poly} which is non-negative, not a sum of squares, and it is only zero at a single point. Re-scaling by \(|\mathrm{det}\;Q|\) also gives integer coefficients. Using this algorithm, we are able to find many non-negative polynomials which cannot be written as sums of squares. We summarize these examples in the table on the next page, noting that most could not be found in existing literature (we indicate those that were). For brevity, we do not include the terms which enforce a single zero, since these make the expressions quite long.

\newpage
\renewcommand{\arraystretch}{1.3}
\begin{center}
Non-Negative Polynomials Which Are Not Sums of Squares (\(n\leq 4\), \(d\leq 20\)).

\begin{tabular}{|c|c|c|}
    \hline
    Dimension \(n\)  & Degree \(d\) & Polynomial\\
    \hline
    2 & 6 & \(x^4y^2+x^2y^4-3x^2y^2+1\)\;\;(\cite{Motzkin})\\
    2 & 8 & \(x^6y^2+2x^2y^4-5x^2y^2+2\)\\
    2 & 10 & \(x^4y^6+x^2y^6-3x^2y^4+1\)\\
    2 & 12 & \(2x^8y^4+13y^8-16xy^7+1\)\\
    2 & 14 & \(x^4y^{10}+x^2y^2-3x^2y^4+1\)\\
    2 & 16 & \(x^6y^{10}+y^2-3x^2y^4+1\)\\
    2 & 18 & \(x^{14}y^4+x^4y^2-3x^6y^2+1\)\\
    2 & 20 & \(3x^{10}y^{10}+20y^6-30xy^5+7\)\\
    \hline
    3 & 4 & \(x^2y^2+y^2z^2+x^2z^2-4xyz+1\)\;\;(\cite{CL2})\\
    3 & 6 & \(x^4z^2+4x^2z^4+3y^4z^2-12xyz^2+4\)\\
    3 & 8 & \(x^2y^2z^4+x^2y^4z^2+x^4y^2z^2-4x^2y^2z^2+1\)\\
    3 & 10 & \(x^2y^2z^6+x^4y^2z^2+x^2y^4-4x^2y^2z^2+1\)\\
    3 & 12 & \(x^4y^2z^6+x^4y^4z^2+y^2-4x^2y^2z^2+1\)\\
    3 & 14 & \(x^4y^4z^6+x^6y^2+x^6y^2z^2-4x^4y^2z^2+1\)\\
    3 & 16 & \(x^8y^6z^2+y^2+z^6-4x^2y^2z^2+1\)\\
    3 & 18 & \(x^6y^8z^4+z^6+x^2z^6-4x^2y^2z^4+1\)\\
    3 & 20 & \(x^{14}y^2z^4+x^2z^2+y^6z^2-4x^4y^2z^2+1\)\\
    \hline
    4 & 4 & No example provided by our algorithm -- see \Cref{sec:gen}\\
    4 & 6 & \(3x^2z^2w^2+y^2z^4+2y^2w^2+2x^2y^2-10wxyz+2\)\\
    4 & 8 & \(x^2y^2z^4+y^4z^4+x^2y^2w^4+2x^2z^4w^2-8xyz^2w+3\)\\
    4 & 10 & \(x^6y^2z^2+x^2w^8+y^8w^2+x^2z^8-5x^2y^2z^2w^2+1\)\\
    4 & 12 & \(x^6y^4w^2+x^6y^4+z^6w^6+y^4z^4+z^2w^4-5x^2y^2z^2w^2+1\)\\
    4 & 14 & \(x^2y^6z^4w^2+x^4z^4w^4+x^4y^4w^2+z^2w^2-5x^2y^2z^2w^2+1\)\\
    4 & 16 & \(x^8z^4w^4+x^8y^2z^2+x^4y^6x^2+y^2z^2w^6-5x^4y^2z^2w^2+1\)\\
    4 & 18 & \(x^8y^4+x^2y^4z^2w^2+x^4z^8w^6+x^6w^2+x^4y^4z^4w^2-6x^4y^2z^2w^2+1\)\\
    4 & 20 & \(x^{10}y^6w^2+y^6z^{10}+y^6z^6w^8+y^6z^4-x^2y^4z^4w^2+1\)\\
    \hline
\end{tabular}
\end{center}
\bigskip
\setlength\parindent{15pt}

Unfortunately, the algorithm written above slows considerably as \(n\) and \(d\) become large. Moreover, we quickly learned that this crude exhaustive search actually fails to find examples in all cases; the table of examples above does not contain an example when \(n=4\) and \(d=4\). To fill in these `gaps,' and to find examples for any degree and dimension, we now turn to the the construction of classes of general counterexamples.

\section{General Constructions}\label{sec:gen}

Our brute force search from the last section yields many examples of non-decomposable polynomials, but it is not suitable for finding examples when \(n\) and \(d\) are large. Thankfully, it is often easy to find examples of arbitrarily large degree for fixed \(n\). To illustrate, observe that for each \(m\geq1\) the triangle with vertices at \((0,1)\), \((1,m+2)\), and \((2,2m-1)\) has no boundary lattice points except the vertices. Moreover, the only interior lattice point is \((1,m)\), which is the average of the three vertex points. Since no two vertices sum to \((2,2m)\), the doubling of this triangle has the property outlined in \Cref{cor:hullcond}. Thus the following non-negative generalization of the Motzkin polynomial is not a sum of squares,
\[
    P_m(x,y)=x^4y^{4m-2}+x^2y^{2m+1}-3x^2y^{2m}+1.
\]

An identical argument works in three dimensions if one considers the tetrahedron with vertices at \((1,1,m+1)\), \((2,1,2m)\), \((1,2,m-1)\), and \((0,0,0)\), which contains the interior average \((1,1,m)\). In fact, the projection of this tetrahedron onto the first two variables is exactly the polytope for the Motzkin polynomial, meaning that polynomial
\[
    Q_m(x,y,z)=x^2y^2z^{2m+2}+x^4y^2z^{4m}+x^2y^4z^{2m-2}-4x^2y^2z^{2m}+1
\]
is not a sum of squares for any \(m\in\mathbb{N}\). For clarity, we draw \(C_{Q_1}\) and \(C_{Q_2}\) below with relevant points and point out that \(Q_1(x,y,z)\) resembles the example of Choi \& Lam from \cite{CL2}. Several more classes of examples like these are found by Reznik in \cite[\S 7]{reznik_extpsd} using a different technique.
\smallskip
\begin{center}
Newton Polytopes for \(Q_1\) (left) and \(Q_2\) (right).\\[0.5em] 
\framebox{
\tdplotsetmaincoords{50}{160}
\begin{tikzpicture}[tdplot_main_coords,line join=round,scale=0.75, every node/.style={scale=0.75}]
\path (2,2,4) coordinate (V)
(0,0,0)  coordinate (A)
(4,2,4)  coordinate (C)
(2,4,0)  coordinate (B)
(4,2,0)  coordinate (D)
(2,4,0)  coordinate (E)
(2,2,4)  coordinate (F)
(2,2,0)  coordinate (G)
(2,2,2)  coordinate (H);
\draw (B) -- (C)  -- (V) -- cycle;
\foreach \p in {A,B,C,V,D,E,F,G,H}
\draw[fill=black] (\p) circle (1.3pt);
\draw[dashed] (B) -- (E);
\draw[dashed] (C) -- (D);
\draw (A) -- (V);
\draw (A) -- (B);
\draw[dashed] (C) -- (A);

\draw[thick,->] (A) -- (5,0,0) node[left]{$x$};
\draw[thick,->] (A) -- (0,5,0) node[right]{$y$};
\draw[thick,->] (A) -- (0,0,6) node[above]{$z$};
\path[fill=gray, fill opacity = 0.2] (A) -- (C) -- (V) -- cycle;
\path[fill=gray, fill opacity = 0.2] (A) -- (B)  -- (C) -- cycle;
\path[fill=gray, fill opacity = 0.2] (B) -- (C)  -- (V) -- cycle;
\path[fill=gray, fill opacity = 0.2] (A) -- (B)  -- (V) -- cycle;
\path[fill=black, fill opacity = 0.2] (A) -- (D)  -- (E) -- cycle;
\path (A)+(0:3mm) node{$0$};
\path (E)+(0:6.5mm) node{$(2,4,0)$};
\path (D)+(180:6.5mm) node{$(4,2,0)$};
\path (C)+(180:6.5mm) node{\((4,2,4)\)};
\path (V)+(45:6.5mm) node{\((2,2,4)\)};
\path (G)+(200:7mm) node{\((2,2,0)\)};
\path (V)+(170:9mm) node{\(C_{Q_1}\)};
\path (E)+(180:12mm) node{\(C_M\)};
\draw (H)+(160:6mm) node{\((2,2,2)\)};
\end{tikzpicture}
\begin{tikzpicture}[tdplot_main_coords,line join=round,scale=0.75, every node/.style={scale=0.75}]
\path (2,2,6) coordinate (V)
(0,0,0)  coordinate (A)
(4,2,8)  coordinate (C)
(2,4,2)  coordinate (B)
(4,2,0)  coordinate (D)
(2,4,0)  coordinate (E)
(2,2,4)  coordinate (F)
(2,2,0)  coordinate (G);
\draw (B) -- (C)  -- (V) -- cycle;
\foreach \p in {A,B,C,V,D,E,F,G}
\draw[fill=black] (\p) circle (1.3pt);
\draw[dashed] (B) -- (E);
\draw[dashed] (C) -- (D);
\draw (A) -- (V);
\draw (A) -- (B);
\draw[dashed] (C) -- (A);
\draw[thick,->] (A) -- (5,0,0) node[left]{$x$};
\draw[thick,->] (A) -- (0,5,0) node[right]{$y$};
\draw[thick,->] (A) -- (0,0,6) node[above]{$z$};
\path[fill=gray, fill opacity = 0.2] (A) -- (C) -- (V) -- cycle;
\path[fill=gray, fill opacity = 0.2] (A) -- (B)  -- (C) -- cycle;
\path[fill=gray, fill opacity = 0.2] (B) -- (C)  -- (V) -- cycle;
\path[fill=gray, fill opacity = 0.2] (A) -- (B)  -- (V) -- cycle;
\path[fill=black, fill opacity = 0.2] (A) -- (D)  -- (E) -- cycle;
\path (A)+(0:3mm) node{$0$};
\path (B)+(0:6.5mm) node{$(2,4,2)$};
\path (E)+(0:6.5mm) node{$(2,4,0)$};
\path (D)+(180:6.5mm) node{$(4,2,0)$};
\path (C)+(180:6.5mm) node{\((4,2,8)\)};
\path (V)+(45:6.5mm) node{\((2,2,6)\)};
\path (G)+(225:4mm) node{\((2,2,0)\)};
\path (C)+(0:11mm) node{\(C_{Q_2}\)};
\path (E)+(180:12mm) node{\(C_M\)};
\draw[dashed] (F)+(200:12mm) node{\((2,2,4)\)};
\end{tikzpicture}
}
\medskip
\end{center}

These examples highlight a critical idea: the projection of an \(n\)-dimensional Newton polytope \(C\) into \(\mathbb{R}^{n-1}\) along a coordinate axis is also a polytope with lattice point vertices. Moreover, if the projection has the property outlined in \Cref{cor:hullcond} (i.e. it has an interior point which is not the average of two distinct points in the hull), then so too does \(C\). To verify that a polytope satisfies the condition of \eqref{cor:hullcond} then, it suffices to check its projections along the coordinate axes.

On the other hand, given a polytope \(C\) in \(\mathbb{R}^n\) with one vertex fixed at the origin, the rest at linearly independent points \(q_1,\dots,q_n\), and a point \(p\in C\) which is not the sum of two distinct lattice points in \(\frac{1}{2}C\), we can easily construct a polytope in \(\mathbb{R}^{n+1}\) corresponding to a non-decomposable polynomial. We do this by choosing non-negative even integers \(r_0,\dots,r_n\) and \(s\) so that \((p,s)\) is an interior point of the convex hull \(C'\) generated by the origin and the linearly independent points \((0,r_0),(q_1,r_1),\dots,(q_n,r_n)\in \mathbb{R}^{n+1}\). There are many ways to do this, and using these points we easily obtain a non-negative and non-decomposable polynomial.

To illustrate, we found earlier that \(x^4y^6+x^2y^6-3x^2y^4+1\) is not a sum of squares, since the point \(p=(2,4)\) is not the sum of two distinct even lattice points in the contraction by \(\frac{1}{2}\) of the closed planar triangle with vertices at \(q_1=(4,6)\), \(q_2=(2,6)\) and \(q_3=(0,0)\). Replacing \(p\) with \(\Tilde{p}=(2,4,2)\), and the other points respectively with \(\Tilde{q}_1=(4,6,2)\), \(\Tilde{q}_2=(2,6,2)\), \(\Tilde{q}_3=(0,0,4)\), we can set \(\Tilde{q}_4=(0,0,0)\) and it is easy to check using \Cref{thm:posweights} that \(\Tilde{p}\) belongs to the convex hull \(C\) induced by \(\Tilde{q}_1,\dots,\Tilde{q}_4\). Indeed, we can write
\[
    \Tilde{p}=\frac{1}{3}\Tilde{q}_1+\frac{1}{3}\Tilde{q}_2+\frac{1}{6}\Tilde{q}_3+\frac{1}{6}\Tilde{q}_4.
\]
There is nothing special about the third coordinates selected here; they were only chosen so that \(\Tilde{p}\in C'\), and our choice for \(C'\) and \(\Tilde{p}\) was far from unique. Since \(\Tilde{p}\) is not the sum of two distinct lattice points in \(\frac{1}{2}C'\) (this property is inherited from \(p\) and \(C\)), we find using \eqref{eq:shortone} that the following non-negative polynomial is not a sum of squares,
\[
    P(x,y,z)=2x^4y^6z^2+2x^2y^2z^2+z^4-6x^2y^4z^2+1.
\]

The preceding construction always increases the dimension of our polynomials by one, while the amount by which the degree increases will depend on the magnitudes of the constants \(r_0,\dots,r_n\) one chooses. Henceforth we focus on the most challenging case of constructing polynomials of minimal degree (i.e. degree \(d=4\) over \(n\geq 3\) variables), from which higher-degree examples can be easily constructed via the process described above. To do this we use a technique called homogenization.

\begin{defn}
Let \(P\) be a non-negative polynomial of even degree \(d\) on \(\mathbb{R}^n\). Then the homogenization of \(P\) is the non-negative polynomial on \(\mathbb{R}^{n+1}\) defined by
\[
    hP(x_1,\cdots,x_{n+1})=x_{n+1}^dP\bigg(\frac{x_1}{x_{n+1}},\cdots,\frac{x_n}{x_{n+1}}\bigg).
\]
\end{defn}

Using homogenization, we now devise a transformation which increases \(n\) by one while leaving \(d\) fixed. The idea is that homogenizing a degree \(d\) polynomial over \(n\) variables effectively `lifts' it's Newton polytope up to the hyperplane \(x_1+\cdots+x_{n+1}=d\) in \(\mathbb{R}^{n+1}\). By adding some well-chosen lower-order terms, we can `fill out' this homogenized polytope to give it volume.

Let \(C_P\subset\mathbb{R}^n\) denote a Newton polytope of a polynomial \(P\), and assume that \(C_P\) contains a point \(p\) as in \Cref{cor:hullcond}. That is, \(p\) is not the sum of two distinct lattice points in \(\frac{1}{2}C_P\). The polytope \(C_{hP}\) is a segment of a hyperplane in \(\mathbb{R}^{n+1}\) which can be obtained by replacing each vertex \(q=(q_1,\dots,q_n)\) of \(C_P\) with a new point \(q'=(q_1,\dots,q_n,q_{n+1})\), where \(q_{n+1}\) is chosen so that \(q_1+\cdots+q_{n+1}=d\). For instance, if \(0\) is a vertex of \(C_P\) then it is mapped to \((0,\dots,0,d)\), while extremal points for which \(q_1+\cdots+q_n=d\) are unchanged.

Next we modify the polytope \(C_{hP}\) by appending the origin to it -- this is equivalent to adding a constant to the homogenized polynomial. We do this by adding \(1+x^{2q'}-x^{q'}\) for some vertex \(q'\) of \(C_{hP}\). Denoting by \(C_Q\) the convex hull generated by the points \(0\) and \(q_1',\dots,q_n'\), we argue that this polytope and \(p'\) have the property outlined in \Cref{cor:hullcond}.

Observe that the critical point \(p\in C_P\) is mapped by homogenization to a point \(p'\) on the hyperplane of \(C_{hP}\), meaning that \(p'\in C_Q\). Hence \(p'\) can be written as a convex combination of the vertex points \(q_1',\dots,q_n'\), and it cannot be realized as the sum of any two distinct points in \(\frac{1}{2}C_Q\) since this is not possible in the first \(n\) variables by assumption. It follows that the following polynomial is non-negative and not a sum of squares
\[
    Q(x)=hP(x)+\sum_{j=1}^{n+1}c_j(x^{2q_j'}+1-x^{q_j'})=\sum_{j=1}^{n+1}\lambda_jx^{2q_j'}-x^{p'}+\sum_{j=1}^{n+1}c_j(x^{2q_j'}+1-2x^{q_j'}).
\]
Here, \(c_1,\dots,c_{n+1}\) are non-negative constants and not all zero. To illustrate, if we begin with the minimal degree four counter example in three dimensions \(P(x,y,z)=x^2y^2+y^2z^2+x^2z^2-4xyz+1\) of Choi \& Lam \cite{CL2}, then one of the many examples this procedure gives  is
\[
    Q(w,x,y,z)=x^2y^2+y^2z^2+x^2z^2-4wxyz+2w^4-2w^2+1.
\]

It is easy to check that \(Q(1,1,1,1)=0\), that \(Q\) is positive elsewhere, and \(Q\) cannot be written as a sum of squares. The latter fact can be verified directly by analyzing the Newton polytope of \(Q\), but it is much easier to observe that if \(Q=g_1^2+\cdots+g_m^2\) for polynomials \(g_1,\dots,g_m\) then
\[
    Q(1,x,y,z)=x^2y^2+y^2z^2+x^2z^2-4xyz+1=g_1(x,y,z,1)^2+\cdots+g_m(x,y,z,1)^2.
\]
This contradicts the fact that \(P\) is not a sum of squares. The construction above gives us a polynomial of degree four over four variables which is not a sum of squares, providing the missing entry in the table of polynomials above.

Iterating this construction affords examples of polynomials of degree \(d=4\) over \(n\geq 3\) variables which are not sums of squares of polynomials. Repeating the argument employed in \(n=2\) and \(n=3\) dimensions at the beginning of the section also gives transformations which increase the degree, rather than the dimension. Using these techniques, it is possible to construct polynomials which are not sums of squares in all possible cases.

\subsection{Discussion}

We leave much of the theory of polynomial sums of squares untouched, however insight into our main application can be gleaned from some known results. Lasserre \cite{lasserre} shows that polynomial sums of squares are dense in the space of polynomials equipped with the norm
\[
    \bigg\|\sum_\alpha c_\alpha x^\alpha\bigg\|=\sum_{\alpha}|c_\alpha|,
\]
and in particular this implies that on any compact set a given polynomial can be uniformly approximated by a sum of squares. Later, we find that an analogous result holds in the setting of H\"older continuous functions; see \Cref{thm:almostthere}.

It is interesting that non-negative polynomials of degree \(d=4\) over \(n=2\) variables can always be decomposed, for in the two dimensional setting we later find that higher-order decompositions are more straightforward, in the sense that fewer restrictions need to be imposed on a function for a decomposition to be possible. Moreover, the non-existence of a polynomial counter example in the setting of \(C^{4,\alpha}(\mathbb{R}^2)\) makes it unclear whether or not the functions in that space are all sums of half-regular squares; see the brief discussion of \Cref{sec:maybe}.

To summarize this chapter on non-decomposable functions, we have found many new examples of polynomials which are non-negative and not sums of squares, and ways to construct yet more. Moreover, we have demonstrated each of these correspond to a function which cannot be written as a sum of squares of half-regular functions. Equipped with these examples, we can return to our study of H\"older continuous functions.
\chapter{Higher Order Decompositions}\label{chap:higher}

In \Cref{chap:decomps} we studied decompositions in \(C^{k,\alpha}(\mathbb{R}^n)\) for \(k\leq3\), and now we move on to higher-order H\"older spaces. If \(k\geq 4\) then the function \(r\) given by \eqref{eq:controlfunc} contains at least three terms instead of two, and the proof of \Cref{thm:c3main} fails since the local behaviour of \(f\) can no longer be restricted to two cases -- in particular, when \(f\) is not bounded below, we can no longer use the Implicit Function Theorem to extract the local minimizer function \(F\). As Korobenko \& Sawyer point out in \cite{SOS_I}, this causes the inductive technique introduced in \cite{Fefferman-Phong} to fail, and indeed if \(k\geq 4\) then there exist non-negative functions in \(C^{k,\alpha}(\mathbb{R}^n)\) which cannot be written as sums of half-regular squares -- we found such functions in the last chapter. As such, to form regularity preserving sum of squares decompositions of \(C^{k,\alpha}(\mathbb{R}^n)\) for \(k\geq 4\), it is necessary to impose additional hypotheses on \(f\).

This chapter presents some results concerning partial decompositions; the first shows that a non-negative \(C^{k,\alpha}(\mathbb{R}^n)\) function \(f\) can be decomposed as sums of squares plus a remainder which can be made arbitrarily small (at least in magnitude). Another result (\Cref{thm:seconddmain}) allows us to decompose \(f\) as a sum of squares which retain slightly less than half the regularity of \(f\), but this result is exclusive to functions which satisfy certain differential inequalities. It turns out that in two dimensions at least, we can assume less restrictive inequalities yet still recover half-regular decompositions -- see \Cref{thm:nooooo}.

Little is known about the decompositions of higher order functions in more than one dimension, and our work expands upon results in the known literature, in particular \cite{SOS_I}. However, we fall short of a total characterization of the decomposable functions. We hope that these results may provide deeper insight into the decompositions problem, and perhaps be a starting point for further investigation into a characterization.

\section{Partial Decompositions}

We begin our exploration of higher order decompositions with a qualitative remark; for each fixed \(n\), we should expect the necessary conditions for existence of a half-regular decomposition to become more restrictive as \(n\) and \(k\) become large, owing to the abundance of polynomial counter-examples noted by Blekherman in \cite{Blekherman} and by our constructions in the previous chapter. Similarly, we expect that \(C^{k,\alpha}(\mathbb{R}^n)\) functions that are not sums of half-regular squares to become increasingly common for large \(k\), and this will be reflected by a stronger condition needed to ensure that decompositions exist.

Before examining some sufficient conditions, we point out that the proof of \Cref{thm:c3main} already affords an interesting and potentially useful decomposition result that requires no extra hypotheses on \(f\).

\newpage

\begin{thm}\label{thm:almostthere}
Let \(f\) be a non-negative \(C^{k,\alpha}_b(\mathbb{R}^n)\) function, and let \(\varepsilon>0\) be given. There exists a non-negative function \(h\in C_b^{k,\alpha}(\mathbb{R}^n)\) such that \(\sup_{\mathbb{R}^n}|h|\leq\varepsilon\) and 
\[
    f=\sum_{j=1}^{m_n}g_j^2+h
\]
for functions \(g_1,\dots,g_{m_n}\in C_b^\frac{k+\alpha}{2}(\mathbb{R}^n)\). Moreover, the constant \(m_n\) depends only on \(n\).
\end{thm}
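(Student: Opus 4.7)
The plan is to apply the partition of unity from \Cref{thm:party} to a \emph{constant} control function $r \equiv \delta > 0$, obtaining a cover of $\mathbb{R}^n$ by balls of uniform radius that sidesteps the implicit function theorem machinery which fails for $k\geq 4$. Since any positive constant is trivially slowly varying, \Cref{thm:party} yields a partition $\sum_j \psi_j^2 = 1$ on all of $\mathbb{R}^n$ with supports in balls $B_j = B(x_j, \nu\delta)$, bounded overlap $N_n$, and uniform estimates $\sup|\partial^\beta \psi_j| \leq C\delta^{-|\beta|}$ together with the corresponding pointwise H\"older semi-norm bounds. Both $\delta$ and a threshold $c > 0$ will be chosen in terms of $\varepsilon$.

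Next I would classify each ball as \emph{good} if $\inf_{B_j} f \geq c$ and \emph{bad} otherwise. On a good ball, $f$ is bounded below by the positive constant $c$, so \Cref{cor:rootscor} yields $\sqrt{f} \in C_b^{k,\alpha}(B_j)$ with bounds depending only on $c$ and $\|f\|_{C_b^{k,\alpha}(\mathbb{R}^n)}$; combining this with the sub-product rule of \Cref{lem:subprod} gives a uniform $C_b^{(k+\alpha)/2}$ bound on $\psi_j \sqrt{f}$ across all good balls. On a bad ball there exists $y_j \in B_j$ with $f(y_j) < c$, and uniform continuity of $f$ then produces the estimate $\sup_{B_j} f < c + \mu_f(2\nu\delta)$, where $\mu_f$ is the modulus of continuity of $f$ (finite and vanishing at $0$ because $f \in C_b^{k,\alpha}$). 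Choosing $c = \varepsilon/2$ and then $\delta$ so small that $\mu_f(2\nu\delta) < \varepsilon/2$ guarantees $f < \varepsilon$ pointwise on every bad ball.

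Having fixed $c$ and $\delta$, I would set $h = \sum_{j\,\text{bad}} \psi_j^2 f$, which is manifestly non-negative and satisfies $h(x) \leq \varepsilon \sum_j \psi_j^2(x) \leq \varepsilon$ everywhere. Then I would use item (5) of \Cref{thm:party} to split the good balls into $N_n$ sub-collections $S_1,\dots,S_{N_n}$ whose functions have pairwise disjoint supports, and define $g_i = \sum_{j \in S_i} \psi_j \sqrt{f}$. Disjointness of supports gives $g_i^2 = \sum_{j \in S_i} \psi_j^2 f$, and \Cref{lem:recomb} applied within each sub-collection---validated by the uniform $C_b^{(k+\alpha)/2}$ bounds on $\psi_j\sqrt{f}$---shows $g_i \in C_b^{(k+\alpha)/2}(\mathbb{R}^n)$. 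Setting $m_n = N_n$ and summing then yields $f = \sum_{j\,\text{good}} \psi_j^2 f + h = \sum_{i=1}^{m_n} g_i^2 + h$, the desired decomposition. The same recombination applied to $\{\psi_j^2 f\}_{j\,\text{bad}}$ shows $h \in C_b^{k,\alpha}(\mathbb{R}^n)$.

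The main technical hurdle will be verifying uniform-in-$j$ control of the $C_b^{(k+\alpha)/2}$ norm of $\psi_j\sqrt{f}$ and the $C_b^{k,\alpha}$ norm of $\psi_j^2 f$ needed to apply \Cref{lem:recomb} within each sub-collection. This amounts to showing the implicit constants depend only on $\varepsilon$, $n$, $k$, $\alpha$, and $\|f\|_{C_b^{k,\alpha}(\mathbb{R}^n)}$---and crucially not on the individual ball $B_j$---which follows from the fact that $c$ and $\delta$ are fixed once $\varepsilon$ is chosen.
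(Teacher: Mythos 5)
Your proof is correct, and it takes a genuinely different route from the paper's. The paper feeds the control function $r$ from \eqref{eq:controlfunc}---the same one used throughout \Cref{chap:decomps}---into \Cref{thm:party}, and classifies balls according to whether $f(x_j)\geq\omega\nu r_j^{k+\alpha}$ at the center. It then invokes \Cref{lem:rootscors} to place $\psi_j\sqrt{f}$ in the half-regular space on good balls, and obtains smallness of $f$ on bad balls via \Cref{cor:supplementary} together with the observation that $r$ is uniformly bounded when $f\in C_b^{k,\alpha}(\mathbb{R}^n)$. Your argument sidesteps nearly all of that machinery: by choosing a \emph{constant} $r\equiv\delta$, you obtain a partition of unity subordinate to uniform-radius balls with purely constant derivative bounds, and you classify by the threshold $\inf_{B_j}f\geq c$. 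On good balls you use the elementary fact that $t\mapsto\sqrt{t}$ is smooth on $[c,\infty)$, so $\sqrt{f}\in C_b^{k,\alpha}(B_j)$---considerably more than the required $C_b^{(k+\alpha)/2}$---with norm controlled only by $c$ and $\|f\|_{C_b^{k,\alpha}(\mathbb{R}^n)}$; on bad balls, smallness of $f$ follows from ordinary uniform continuity rather than \Cref{cor:supplementary}. Your $\varepsilon$-dependent parameters $c$ and $\delta$ play the role the paper's shrinking $\nu$ plays. The paper's proof is best seen as a byproduct of the machinery built for \Cref{thm:c3main}, whereas yours is more elementary and self-contained, needing only \Cref{thm:party} (applied to a trivially slowly-varying $r$), \Cref{cor:rootscor}, \Cref{lem:subprod}, and \Cref{lem:recomb}. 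Both constructions produce $m_n=N_n$ squares.
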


An equivalent way of rephrasing this result is as follows: if \(f\in C^{k,\alpha}(\mathbb{R}^n)\) is non-negative and has  bounded derivatives, there exists a function \(\Tilde{f}\) which is also in \(C^{k,\alpha}(\mathbb{R}^n)\), \(\Tilde{f}\) is a sum of squares of half regular functions, and \(0\leq \Tilde{f}\leq f\leq \Tilde{f}+\varepsilon\) everywhere. This is a slightly stronger result than one obtains by simply taking a square root \(f+\varepsilon\), since the function \(\Tilde{f}\) is controlled by \(f\) from above. 


\begin{proof}
Let \(B(x_j,\nu r_j)\) be as in \Cref{thm:party} for small \(\nu\), and recall from the proof of \Cref{thm:c3main} that if \(f(x_j)\geq \omega\nu r_j^{k+\alpha}\) for some \(j\), then 
\[
    \psi_j\sqrt{f}\in C^\frac{k+\alpha}{2}(\mathbb{R}^n).
\]
Combining the functions for which this inclusion holds as in \eqref{eq:recombsum}, we obtain a sum of at most \(m_n=N_n=15^n\) squares \(g_1,\dots,g_{m_n}\) which retain half the regularity of \(f\). Now we examine the remainder \(f-(g_1^2+\cdots+g_{m_n}^2)\). If
\[
    f(x_j)<\omega\nu r_j^{k+\alpha}, 
\]
then from \Cref{cor:supplementary} it follows that \(f(x)\leq \frac{3}{2}\omega\nu r_j^{k+\alpha}\) for \(x\in B(x_j,\nu r_j)\). Since \(r_j=r(x_j)\) is bounded whenever \(f\) and its derivatives are uniformly bounded, we can choose \(\nu\) small enough to ensure that \(f\) is as small as we like on \(B(x_j,\nu r_j)\). Since \(\psi_j\leq 1\) by construction, we see that we may choose \(\nu\) sufficiently small that
\begin{equation}\label{eq:tinnnnny}
    \sup_{\mathbb{R}^n}\psi_j^2f\leq \frac{\varepsilon}{N_n}.
\end{equation}
Now, let \(S\) be the collection of \(j\in\mathbb{N}\) for which \(f(x_j)<\omega\nu r_j^{k+\alpha}\) and define
\[
    h=\sum_{j\in S}\psi_j^2 f.
\]
Owing to our construction of the half-regular functions \(g_1,\dots,g_{m_n}\) above, we are able to write
\[
    f=\sum_{j=1}^{m_n}g_j^2+h.
\]
Since each \(x\in\mathbb{R}^n\) is in at most \(N_n\) of the balls on which the functions \(\psi_j\) are supported, it follows from \eqref{eq:tinnnnny} that \(h\leq \varepsilon\) everywhere. Moreover, since each \(\psi_j\) is smooth and \(f\in C_b^{k,\alpha}(\mathbb{R}^n)\), we conclude by item \textit{(5)} of \Cref{thm:party} together with the recombination \Cref{lem:recomb} that \(h\in C_b^{k,\alpha}(\mathbb{R}^n)\), as claimed.
\end{proof}

This result has two major shortcomings when compared to \Cref{thm:c3main}. First, it requires the assumption that \(\|f\|_{C_b^{k,\alpha}}<\infty\), as opposed to the weaker condition required for \Cref{thm:c3main},
\[
    \sum_{|\beta|=k}[\partial^\beta f]_{\alpha,\mathbb{R}^n}<\infty.
\]
Second, and more importantly, it only shows that \(f\in C_b^{k,\alpha}(\mathbb{R}^n)\) is `almost' decomposable, in the sense that we can find a decomposition that uniformly approximates \(f\). However, \Cref{thm:almostthere} gives no control of the derivatives of the small remainder \(h\), and the examples of the previous chapter show that in general we cannot expect \(\sqrt{h}\) to be half as regular as \(f\). 

The following useful result of Bony et al. from \cite{Bony} indicates that in one dimension, no additional hypotheses are needed for a regularity preserving decomposition to exist. Recall that we proved this result in \Cref{sec:c3proof}.

\begin{thm}[Bony]\label{thm:onedim}
If \(f\in C^{k,\alpha}(\mathbb{R})\) is non-negative, it is a sum of two squares in \(C^\frac{k+\alpha}{2}(\mathbb{R})\).
\end{thm}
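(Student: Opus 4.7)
The plan is to combine the partition-of-unity framework developed in \Cref{sec:c3proof} with a refined two-coloring argument that is only available in dimension one. Recall that the inductive step used to prove \Cref{thm:c3main} becomes degenerate when $n=1$: the ``remainder'' function $F_j$ produced by \Cref{lem:minia} takes values on $\mathbb{R}^{n-1} = \mathbb{R}^0$, a single point, so it is a nonnegative \emph{constant} and contributes only a single square $(\psi_j\sqrt{F_j})^2$ rather than invoking an inductive hypothesis. Hence on each ball $B_j = B(x_j,\nu r_j)$ of the partition we obtain, at worst, a decomposition of $\psi_j^2 f$ into two half-regular squares of the form $\psi_j\sqrt{f - F_j}$ and $\psi_j\sqrt{F_j}$, both satisfying the pointwise derivative and semi-norm estimates of \Cref{lem:rootscors}.

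First I would construct the Whitney-type partition $\{\psi_j\}$ from \Cref{thm:party} in its one-dimensional form. Unlike in higher dimensions, where bounded-overlap balls only give a colouring constant $N_n \leq 15^n$, intervals with bounded overlap on the line can always be $2$-coloured: by shrinking the dilation parameter $\nu$ if necessary, or by re-selecting the Whitney centers so that consecutive intervals strictly alternate, the collection $\{B_j\}$ splits into two sub-collections of pairwise disjoint intervals. Applying \Cref{lem:recomb} inside each sub-collection to each of the two ``local'' squares recovered above, one assembles at most four globally defined half-regular functions whose squares sum to $f$.

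The main obstacle will be to pass from four squares down to the optimal two. One promising route is to bypass the partition entirely and argue directly: since $\{f > 0\}$ is an open subset of $\mathbb{R}$, it is a countable disjoint union of open intervals $\{I_j\}$, and on each $I_j$ the root $\sqrt{f}$ is smooth and strictly positive. I would then define a complex-valued $h_j = \sqrt{f}\, e^{i\theta_j}$ on $I_j$ for a real phase $\theta_j : I_j \to \mathbb{R}$ to be chosen, set $g_1 = \mathrm{Re}\,h_j$ and $g_2 = \mathrm{Im}\,h_j$ on $I_j$, and extend $g_1 = g_2 = 0$ on the complementary closed set $\{f = 0\}$. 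By construction $g_1^2 + g_2^2 = f$ everywhere. The phases $\theta_j$ must be selected so that the oscillatory singular behaviour of $\sqrt{f}$ at the endpoints of each $I_j$, illustrated by Bony's example $e^{-1/|x|}\bigl(\sin^2(\pi/|x|) + e^{-1/x^2}\bigr)$, is absorbed into the rotation so that each $g_i$ has the required H\"older exponent across $\partial I_j$.

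Showing that a suitable family of phases exists is where I expect the technical heart of the argument to lie. Concretely, one would use the generalized Malgrange inequalities of \Cref{sec:malg} on each $I_j$ to control $(\sqrt{f})'$ pointwise by $f^{\alpha/(2+2\alpha)}$, then use the Taylor estimate of \Cref{lem:Taylorest} at the endpoints of $I_j$ to quantify the rate at which $\sqrt{f}$ decays at a zero of $f$. The phase $\theta_j$ should be chosen so that $\cos\theta_j$ and $\sin\theta_j$ (together with their derivatives) compensate for the growth of the corresponding derivatives of $\sqrt{f}$, exactly cancelling the obstruction encountered at zeros of $f$ of infinite order. Matching phases across adjacent intervals, using only the two real components, is precisely where the one-dimensional constraint $m_1 = 2$ becomes sharp, and where Bony's original argument in \cite{Bony2} does its most delicate work.
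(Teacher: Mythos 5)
Your first two paragraphs do capture the structure of the paper's own argument: in one dimension the remainder $F_j$ from \Cref{lem:minia} collapses to a constant, so $\psi_j^2 f$ decomposes into at most two half-regular squares $\psi_j\sqrt{f-F_j}$ and $\psi_j\sqrt{F_j}$, after which the pairwise-disjoint sub-collections from item \emph{(5)} of \Cref{thm:party} are recombined via \Cref{lem:recomb}. However, be aware that the paper's proof in \Cref{sec:c3proof} does \emph{not} establish the statement as literally written. It uses the crude bound $N_1 = 15$ from the ball-packing argument, obtaining $m_1 = 2 N_1 = 30$ squares, and explicitly disclaims the optimal constant: the paper says it does not show $m_1 = 2$ and attributes that to Bony in \cite{Bony2}. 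So the theorem is stated with Bony's sharp constant but proved only with a finite (and much larger) one.

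Your proposed $2$-colouring of the intervals is not automatic from the paper's Whitney construction; bounded overlap $15$ does not give a $2$-colouring, and the balls in the cover can have triple or higher overlap even in $\mathbb{R}$. A $2$-colouring would require a genuinely different selection of centres and radii (interval graphs are perfect, so one would need to cut the clique number to $2$, which the paper's greedy construction does not do), and you do not justify it. Even granting it, that route terminates at four squares, not two. The honest assessment in your final two paragraphs is the correct one: to reach two squares one must abandon the partition-plus-recombination framework and execute something like the phase-rotation argument you sketch, matching phases across the endpoints of each component of $\{f > 0\}$, which is precisely the delicate work in \cite{Bony2} that the paper deliberately avoids. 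You identify the right obstruction and the right direction, but the two-square conclusion is not proved by your proposal any more than it is by the paper's own argument; both prove a weaker statement and cede the optimal constant to Bony.
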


In \cite[Theorem 4.5]{SOS_I}, Sawyer \& Korobenko show that if the even-order derivatives of a non-negative function \(f\in C^{4,\alpha}(\mathbb{R}^n)\) satisfy pointwise inequalities that depend on \(\alpha\), and which resemble the Malgrange inequalities we proved in \Cref{thm:malgrangeineq}, then \(f\) can be decomposed as a sum of squares. However, the decomposition functions do lose some regularity in H\"older scale, meaning that they are not all half as regular as \(f\). Our next result, \Cref{thm:seconddmain}, is a generalization of the result from \cite{SOS_I}. We prove it using an inductive argument, and by assuming some similarly restrictive pointwise properties of \(f\).

Plainly, this theorem states that we can decompose \(f\) as a sum of squares of H\"older continuous functions, but our decomposition loses regularity on H\"older scale in high dimensions, and moreover the theorem imposes a lot of conditions on \(f\). This is unsurprising, since even the roots of non-negative smooth functions need not inherit more than one derivative -- recall the example from the introduction of this thesis.

\begin{thm}\label{thm:seconddmain}
Let \(f\) be a non-negative function in \(C^{k,\alpha}(\mathbb{R}^n)\) for \(n\geq2\), \(k\geq 4\), and \(0<\alpha\leq 1\). Assume that the following inequalities hold pointwise on \(\mathbb{R}^n\) for a constant \(\eta<\frac{k-2+\alpha}{k+\alpha}\) and for every even \(\ell\) such that \(2<\ell\leq k\),
\begin{equation}\label{eq:induc}
    |\nabla^2 f(x)|\leq Cf(x)^\eta\qquad \textrm{and}\qquad|\nabla^\ell f(x)|\leq Cf(x)^\frac{k-\ell+\alpha}{k+\alpha}.
\end{equation}
Then there exists a positive number \(\alpha_n<\alpha\) and functions \(g_1,\dots,g_{m_n}\in C^\frac{k+\alpha_n}{2}(\mathbb{R}^n)\) such that
\[
    f=\sum_{j=1}^{m_n} g_j^2.
\]
Moreover, \(\alpha_n\) is given by taking \(\alpha_2=\alpha\) and recursively defining \(\alpha_{j+1}=\frac{k\eta}{k-2+\alpha_j-\eta}\).
\end{thm}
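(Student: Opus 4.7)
The proof proceeds by induction on the dimension $n$, in direct parallel with the argument for \Cref{thm:c3main} in \Cref{sec:c3proof}. Hypotheses \eqref{eq:induc} are precisely what is needed to make the local analysis of \Cref{sec:reg} continue to function. Specifically, the higher-order assumptions $|\nabla^\ell f| \leq Cf^{(k-\ell+\alpha)/(k+\alpha)}$ for even $\ell$ with $2 < \ell \leq k$ rearrange to $|\nabla^\ell f|^{1/(k-\ell+\alpha)} \leq C f^{1/(k+\alpha)}$, so all even-order terms of order $\ell>2$ in the definition \eqref{eq:controlfunc} of the control function $r$ are dominated by the zeroth- and second-order terms. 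This gives the pointwise estimate \eqref{eq:needed} required by \Cref{lem:minia}, so the implicit minimizer $X$ and the remainder $F = f(\cdot, X(\cdot))$ exist on each ball $B(x_j,\nu r_j)$ with $f(x_j) < \omega\nu r_j^{k+\alpha}$, and the conclusions of Lemmas \ref{lem:nice}, \ref{lem:implicitest}, and \ref{lem:crudeext} are available without modification.

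With this in hand, apply \Cref{thm:party} to build the partition of unity associated with $r = r_f$ and argue ball-by-ball exactly as in the proof of \Cref{thm:c3main}. When $f(x_j) \geq \omega\nu r_j^{k+\alpha}$, the function $\psi_j\sqrt{f}$ is half-regular by \Cref{lem:rootscors}; otherwise, write $\psi_j^2 f = (\psi_j\sqrt{f-F})^2 + \psi_j^2 F$, where $\sqrt{f-F}$ is half-regular by \Cref{lem:nice} and $F$ extends to a non-negative function in $C^{k,\alpha}(\mathbb{R}^{n-1})$ via \Cref{lem:crudeext}. For the base case $n=2$, the remainder $F$ is a function of one variable, so \Cref{thm:onedim} decomposes it into two half-regular squares with no loss of regularity; recombining via item \textit{(5)} of \Cref{thm:party} and \Cref{lem:recomb} yields a decomposition in $C^{(k+\alpha)/2}(\mathbb{R}^2)$, confirming $\alpha_2 = \alpha$.

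The inductive step is where the regularity loss enters. Assuming the theorem in dimension $n-1$ with parameter $\alpha_{n-1}$, it suffices to verify that the extension of $F$ to $\mathbb{R}^{n-1}$ satisfies differential inequalities of the form \eqref{eq:induc} with a new parameter $\alpha_n$. For the higher-order even derivatives the estimate $|\partial^\beta F| \leq Cr^{k+\alpha-|\beta|}$ from \Cref{lem:implicitest}, combined with the relation $r \leq C f^{1/(k+\alpha)}$ available wherever $f$ is not tiny and with the lower bound on $F$ afforded by the case split, is straightforward. The second-order inequality $|\nabla^2 F| \leq C F^{\eta'}$ is the crux: combining $|\nabla^2 F| \leq C r^{k-2+\alpha}$ with the hypothesis $|\nabla^2 f|\leq C f^{\eta}$ and the case assumption $f(x_j) < \omega\nu r_j^{k+\alpha}$ relates $r$ and $F$, and the condition $\eta' < (k-2+\alpha_n)/(k+\alpha_n)$ forces precisely the recursion $\alpha_n = k\eta/(k-2+\alpha_{n-1}-\eta)$. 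Once $F$'s hypotheses are verified, the inductive assumption produces a decomposition of $F$ in $C^{(k+\alpha_n)/2}(\mathbb{R}^{n-1})$; multiplying by $\psi_j$ (as in \Cref{lem:rootscors}), combining with the half-regular piece $\psi_j\sqrt{f-F}$, and recombining globally via \Cref{thm:party} and \Cref{lem:recomb} completes the induction.

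The main obstacle is the second-order inequality for $F$. Because $F$ is defined as a local minimum of $f$ along a coordinate direction, it can vanish to higher order than $f$, so the naive inequality $|\nabla^2 F| \leq C F^\eta$ need not hold with the original exponent; the argument must exploit the case assumption and the relationship between $f$, $F$, and $r$ on each ball to establish the bound with a strictly smaller exponent, and tracking this degradation across the dimension reduction is what produces the recursively defined $\alpha_n$ and prevents the result from yielding a clean half-regular decomposition in general.
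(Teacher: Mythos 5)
Your overall framework is correct — induction on $n$, the partition of unity, the case split on each ball, and the need to verify that the remainder $F_j$ inherits differential inequalities of the form \eqref{eq:induc} in $n-1$ dimensions so the inductive hypothesis applies. But your account of \emph{where} the regularity loss enters is exactly backwards, and your proposed derivation of the higher-order inequalities for $F_j$ breaks down in the very regime where it's needed.

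You claim the second-order inequality $|\nabla^2 F_j|\leq CF_j^{\eta'}$ is the crux and forces a strictly smaller exponent $\eta'<\eta$. In fact the second-order inequality is inherited \emph{with the same} $\eta$. The chain is: since we are in the case $f(x_j)<\omega\nu r_j^{k+\alpha}$, the supremum defining $r_f$ is attained by the second-derivative term, so $r_f(x',X(x'))\leq C\sup_{|\xi|=1}[\partial^2_\xi f(x',X(x'))]_+^{1/(k-2+\alpha)}$; then the first hypothesis $|\nabla^2 f|\leq Cf^\eta$ gives $r_{F_j}(x')\leq Cr_f(x',X(x'))\leq CF_j(x')^{\eta/(k-2+\alpha)}$, and hence $|\nabla^2 F_j|\leq Cr_{F_j}^{k-2+\alpha}\leq CF_j^\eta$. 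No exponent is lost here. Also, your proposed route for the higher-order estimates — using $r\leq Cf^{1/(k+\alpha)}$ "wherever $f$ is not tiny" — is unavailable, because $F_j$ is only introduced on precisely those balls where $f(x_j)<\omega\nu r_j^{k+\alpha}$, i.e.\ where $f$ \emph{is} tiny and $f^{1/(k+\alpha)}\lesssim r$ fails; the usable bound in this case is again $r_{F_j}\leq CF_j^{\eta/(k-2+\alpha)}$. The actual degradation occurs in the \emph{higher}-order inequalities: feeding that bound into $|\nabla^\ell F_j|\leq Cr_{F_j}^{k-\ell+\alpha}$ gives $|\nabla^\ell F_j|\leq CF_j^{\eta(k-\ell+\alpha)/(k-2+\alpha)}$, and the exponent $\eta(k-\ell+\alpha)/(k-2+\alpha)$ only matches the required form $(k-\ell+\alpha')/(k+\alpha')$ if one replaces $\alpha$ by $\tilde\alpha=k\eta/(k-2+\alpha-\eta)<\alpha$. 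Together with the observation that the compactly-supported $F_j\in C^{k,\alpha}(\mathbb{R}^{n-1})$ automatically lies in $C^{k,\tilde\alpha}(\mathbb{R}^{n-1})$, this is what produces the recursion $\alpha_{j+1}=k\eta/(k-2+\alpha_j-\eta)$ — not a loss in the second-order inequality.
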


\textit{Remark}: If \eqref{eq:induc} holds for \(\eta\geq \frac{k-2+\alpha}{k+\alpha}\) then \(f(x)\geq cr(x)^{k+\alpha}\) everywhere and \(\sqrt{f}\in C^\frac{k+\alpha}{2}(\mathbb{R}^n)\) by \Cref{lem:local1}. Thus, this result only yields new information if \(\eta\) is small enough. It is unclear whether the inequalities in \eqref{eq:induc} are sharp in general, in the sense that any weaker set of inequalities can be satisfied by a function which is non-decomposable in the sense described above. It is shown in \cite{SOS_I} that when \(k=4\) the inequalities of \eqref{eq:induc} are sharp for a class of functions which Sawyer \& Korobenko call `H\"older monotone.' In any case, it is not clear if \eqref{eq:induc} can be strengthened to ensure that there exists a decomposition that sacrifices no regularity on H\"older scale, and which is not simply comprised of \(\sqrt{f}\) on its own.

\begin{proof}
As in the proof of \Cref{thm:c3main}, we proceed using induction on \(n\). The base case \(n=1\) is covered by \Cref{thm:onedim}, in which the differential inequalities \eqref{eq:induc} are not needed. This leaves us to deal with the inductive step.

Thus, we assume now that every non-negative function in \(C^{k,\alpha}(\mathbb{R}^{n-1})\) which satisfies \eqref{eq:induc} can be decomposed as a sum of squares in the space \(C^\frac{k+\alpha_{n-1}}{2}(\mathbb{R}^{n-1})\), and we let \(f\in C^{k,\alpha}(\mathbb{R}^n)\) satisfy \eqref{eq:induc}. It follows that
\begin{equation}\label{eq:equiv27}
    r_f(x)=\max_{\substack{0\leq j\leq k,\\j\;\mathrm{even}}}\bigg\{\sup_{|\xi|=1}[\partial^j_\xi f(x)]_+^\frac{1}{k-j+\alpha}\bigg\}\leq C\max\bigg\{f(y)^\frac{1}{k+\alpha},\sup_{|\xi|=1}[\partial^2_\xi f(y)]_+^\frac{1}{k-2+\alpha}\bigg\}.
\end{equation}
Via \Cref{thm:party} the slowly-varying function \(r_f\) induces a partition of unity; as usual, we fix one of the balls \(B(x_j,\nu r_j)\) on which a partition function \(\psi_j\) is supported, and we consider the behaviour of \(f\) at its center. If \(f(x_j)\geq \omega\nu r_j^{k+\alpha}\), where \(r_j=r_f(x_j)\), then \(f\) is bounded below and \Cref{lem:rootscors} shows that 
\[
    \psi_j\sqrt{f}\in C^\frac{k+\alpha}{2}(\mathbb{R}^n).
\]
On the other hand, if \(f(x_j)< \omega\nu r_j^{k+\alpha}\) then \eqref{eq:equiv27} implies that
\[
    f(x_j)< C\omega\nu \max\bigg\{f(x_j),\sup_{|\xi|=1}[\partial^2_\xi f(x_j)]_+^\frac{k+\alpha}{k-2+\alpha}\bigg\},
\]
and for \(\nu\) small enough (recall that \(\nu\) may be chosen as small as we like) this yields a contradiction if the maximum above equals \(f(x_j)\). It follows that 
\[
    \sup_{|\xi|=1}[\partial^2_\xi f(x_j)]_+^\frac{1}{k-2+\alpha}\leq r_j\leq C\sup_{|\xi|=1}[\partial^2_\xi f(x_j)]_+^\frac{1}{k-2+\alpha},
\]
and moreover since \(r_j>0\) it follows that \(\sup_{|\xi|=1}\partial^2_\xi f(x_j)>0\). Thus the argument employed to prove \Cref{thm:c3main} allows us to locally define a non-negative function \(F_j\) for which 
\[
    \psi_j\sqrt{f-F_j}\in C^\frac{k+\alpha}{2}(\mathbb{R}^n).
\]

To apply the inductive hypothesis and complete our decomposition of \(f\), we must show that the remainder function \(F_j\) satisfies the differential inequalities \eqref{eq:induc} in \(n-1\) dimensions. If we consider the control function for \(F_j\) defined by
\[
    r_{F_j}(x')=\max_{\substack{0\leq j\leq k,\\j\;\mathrm{even}}}\bigg\{\sup_{|\xi|=1}[\partial^j_\xi F_j(x')]_+^\frac{1}{k-j+\alpha}\bigg\},
\]
then the estimates of \Cref{lem:implicitest} show that \(r_{F_j}(x')\leq Cr_f(x)\), and since \(r_f\) is slowly varying on \(B(x_j,\nu r_j)\), we find that \(r_f(x)\leq Cr_f(x',X(x'))\). Combining these estimates with the first inequality of \eqref{eq:induc}, we find altogether that
\[
    r_F(x')\leq Cr_f(x',X(x'))\leq C\sup_{|\xi|=1}[\partial^2_\xi f(x',X(x'))]_+^\frac{1}{k-2+\alpha}\leq Cf(x',X(x'))^\frac{\eta}{k-2+\alpha}=CF_j(x')^\frac{\eta}{k-2+\alpha}.
\]
In particular, this implies that \(|\nabla^2 F_j(x')|\leq C r_{F_j}(x')^{k-2+\alpha}\leq CF_j(x')^\eta\), showing that \(F_j\) inherits the first differential inequality of \eqref{eq:induc} from \(f\). Additionally, for even \(\ell>2\) we see that
\[
    |\nabla ^\ell F_j(x')|\leq r_{F_j}(x)^{k-\ell+\alpha}\leq CF_j(x')^\frac{\eta(k-\ell+\alpha)}{k-2+\alpha}.
\]

Now, observe that if \(f(x_j)< \omega\nu r_j^{k+\alpha}\), as we are assuming, then \(f(x_j)\leq Cf(x_j)^\eta\) and \(f(x_j)^{1-\eta}\leq C\). After rescaling if necessary, we can assume without loss of generality that \(F_j(x')\leq f(x)\leq 1\) where \(F_j\) is defined. Defining \(\tilde{\alpha}=\frac{k\eta}{k-2+\alpha-\eta}\) so that \(\frac{\eta}{k-2+\alpha}=\frac{\tilde{\alpha}}{k+\tilde{\alpha}}\), we see that \(\tilde{\alpha}<\alpha\) when \(\eta<\frac{k-2+\alpha}{k+\alpha}\) and
\[
    \frac{\eta(k-\ell+\alpha)}{k-2+\alpha}=\frac{k-\ell+\alpha}{k+\tilde{\alpha}}\geq \frac{k-\ell+\tilde{{\alpha}} }{k+\tilde{\alpha}}.
\]
Thus for every even \(\ell \) with \(2<\ell\leq k\) we have that \(|\nabla ^\ell F_j(x')|\leq CF_j(x')^\frac{k-\ell+\tilde{\alpha} }{k+\tilde{\alpha}}\). Moreover an identical set of inequalities is satisfied by the extension of \(F_j\) to all of \(\mathbb{R}^{n-1}\) via \Cref{lem:crudeext}, and since \(F_j\in C^{k,\alpha}(\mathbb{R}^{n-1})\) and it is compactly supported, it also belongs to \(C^{k,\tilde{\alpha}}(\mathbb{R}^{n-1})\). It follows by our inductive hypothesis we can write \(F_j\) as a sum of squares,
\[
    F_j=\sum_{\ell=1}^{m_{n-1}} g_\ell^2,
\]
where each \(g_\ell\) belongs to \(C^{\frac{k+\tilde{\alpha}_{n-1}}{2}}(\mathbb{R}^{n-1})=C^{\frac{k+\alpha_n}{2}}(\mathbb{R}^{n-1})\). Since \(\psi_j\) is smooth and compactly supported, \(\psi_jg_\ell\) also has compact support and belongs to the same H\"older space. Using item \textit{(5)} of \Cref{thm:party}, we see now that we can combine the locally decomposed functions as in the proof of \Cref{thm:c3main} to write
\[
    f=\sum_{j=1}^{m_n}g_j^2,
\]
where \(g_j\in C^{\frac{k+\alpha_n}{2}}(\mathbb{R}^n)\) for each \(j\) and \(m_n\leq N_n(1+m_{n-1})\), as we wished to show.
\end{proof}

The first inequality in \eqref{eq:induc} is only used to show that \(F_j\) inherits the remaining differential inequalities from \(f\), and since decomposition of functions over \(\mathbb{R}\) require no differential inequalities by \Cref{thm:onedim}, we can omit the first differential inequality in the case \(n=2\). The proof above thus yields the following special case in two dimensions which involves no parameter \(\eta\), and which generalizes \cite[Theorem 4.7]{SOS_I}.

\begin{thm}\label{thm:nooooo}
Let \(f\) be a non-negative function in \(C^{k,\alpha}(\mathbb{R}^2)\) for \(k\geq 4\) and \(0<\alpha\leq 1\). Assume that the following inequalities hold pointwise on \(\mathbb{R}^2\)
\begin{equation}\label{eq:induc2}
    |\nabla^\ell f(x)|\leq Cf(x)^\frac{k-\ell+\alpha}{k+\alpha}.
\end{equation}
Then there exist  functions \(g_1,\dots,g_{m_n}\in C^\frac{k+\alpha}{2}(\mathbb{R}^2)\) such that \( f=g_1^2+\cdots+g_{m_n}^2.\)

\end{thm}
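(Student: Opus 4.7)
The plan is to adapt the proof of \Cref{thm:seconddmain} to dimension two, exploiting the observation made in the remark preceding this theorem: the first differential inequality of \eqref{eq:induc} was needed only to transfer the inductive hypotheses from $f$ to the lower-dimensional remainder $F_j$, but in two dimensions that remainder is a function of one variable and we may instead appeal directly to Bony's unconditional result \Cref{thm:onedim}. Thus the family of inequalities \eqref{eq:induc2} is the only content required, and the argument reduces to one inductive step on top of the one-dimensional base case of \Cref{sec:c3proof}.

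First I would verify that the pointwise bound \eqref{eq:needed} underlying the local decomposition machinery of \Cref{sec:reg} is implied by \eqref{eq:induc2}. For each even $j$ with $2<j\le k$, raising \eqref{eq:induc2} to the power $1/(k-j+\alpha)$ yields
\[
    \sup_{|\xi|=1}[\partial^j_\xi f(y)]_+^{1/(k-j+\alpha)} \le C f(y)^{1/(k+\alpha)};
\]
the $j=0$ term is trivially $f(y)^{1/(k+\alpha)}$, while the $j=2$ term appears unchanged in \eqref{eq:needed}, so taking the maximum over even $j\le k$ gives \eqref{eq:needed}. With this in hand I would introduce the partition of unity $\{\psi_j\}$ of \Cref{thm:party} subordinate to balls $B(x_j,\nu r_j)$ generated by the control function $r_f$ of \eqref{eq:controlfunc}. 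On each ball the same dichotomy as in \Cref{thm:c3main} applies: either $f(x_j)\ge\omega\nu r_j^{k+\alpha}$, in which case items (1)--(3) of \Cref{lem:rootscors} give $\psi_j\sqrt{f}\in C^{(k+\alpha)/2}(\mathbb{R}^2)$ with the pointwise estimates needed for recombination; or $f(x_j)<\omega\nu r_j^{k+\alpha}$, in which case \Cref{lem:minia} together with Lemmas \ref{lem:minia2}, \ref{lem:nice}, and \ref{lem:implicitest} produce a function $X\in C^{k-1,\alpha}(B_j')$, the remainder $F_j(x')=f(x',X(x'))$, and the local identity $\psi_j^2 f = (\psi_j\sqrt{f-F_j})^2 + \psi_j^2 F_j$, with $\psi_j\sqrt{f-F_j}\in C^{(k+\alpha)/2}(\mathbb{R}^2)$ by items (4)--(6) of \Cref{lem:rootscors}.

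The key simplification in dimension two appears when we decompose $\psi_j^2 F_j$. By \Cref{lem:crudeext}, $F_j$ extends to a non-negative, compactly supported function in $C^{k,\alpha}(\mathbb{R})$, so \Cref{thm:onedim} applies without any further hypothesis to write $F_j=g_{j,1}^2+g_{j,2}^2$ with $g_{j,\ell}\in C^{(k+\alpha)/2}(\mathbb{R})$; no analogue of the first inequality of \eqref{eq:induc} is invoked. The pointwise estimates on $g_{j,\ell}$ in terms of the control function $r_{F_j}$, furnished by the one-dimensional base case in \Cref{sec:c3proof}, combine with the bound $r_{F_j}(x')\le Cr_f(x',X(x'))$ from \Cref{lem:implicitest} and slow variation of $r_f$ on $B(x_j,\nu r_j)$ to yield $|\partial^\beta g_{j,\ell}|\le Cr_f^{(k+\alpha)/2-|\beta|}$ together with the corresponding $\alpha/2$- or $(1+\alpha)/2$-seminorm bounds. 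Multiplying by the smooth cutoff $\psi_j$ and mimicking the argument of \Cref{lem:rootscors} then gives $\psi_j g_{j,\ell}\in C^{(k+\alpha)/2}(\mathbb{R}^2)$ with the same pointwise control as the other local squares.

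Finally, each local decomposition produces at most three functions in $C^{(k+\alpha)/2}(\mathbb{R}^2)$, all controlled pointwise by $r_f$. Item (5) of \Cref{thm:party} partitions the resulting countable family into at most $3N_2$ sub-collections of functions with pairwise disjoint supports, and \Cref{lem:recomb} recombines each sub-collection into a single $C^{(k+\alpha)/2}(\mathbb{R}^2)$ function, producing a finite decomposition $f=\sum_{\ell=1}^{m_2}G_\ell^2$ with $m_2\le 3N_2$ as required. The main technical point requiring care is verifying that the one-dimensional pointwise estimates on $g_{j,\ell}$ are inherited from $F_j$ uniformly across the balls of the partition, so that the recombination estimates of \Cref{lem:recomb} survive the $\psi_j$-multiplication; but this is handled exactly as in the inductive step of \Cref{thm:c3main}, where the uniformity is guaranteed by slow variation of $r_f$ and the transfer inequality $r_{F_j}\le Cr_f$ of \Cref{lem:implicitest}.
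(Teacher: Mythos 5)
Your proof follows the same approach the paper outlines in the remark preceding the theorem: specialize the proof of \Cref{thm:seconddmain} to $n=2$, observing that the first inequality of \eqref{eq:induc} is needed only to pass differential inequalities to the lower-dimensional remainder $F_j$, and that this step is unnecessary when $F_j$ lives on $\mathbb{R}$ because Bony's \Cref{thm:onedim} is unconditional. Your verification that \eqref{eq:induc2} yields the pointwise bound \eqref{eq:needed} (up to constant) is correct and is what activates the local machinery of Lemmas \ref{lem:minia}--\ref{lem:implicitest}, and the bookkeeping with $m_2 \le 3N_2 = N_2(m_1+1)$ agrees with the paper's recursion.

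One detail worth tightening: you cite Bony's \Cref{thm:onedim} to decompose $F_j$ into two squares, but then attribute the pointwise bounds on $g_{j,1}, g_{j,2}$ (in terms of $r_{F_j}$, needed for the recombination step via \Cref{lem:recomb}) to the one-dimensional construction of \Cref{sec:c3proof}. These are different decompositions: the black-box statement of \Cref{thm:onedim} does not furnish the quantitative bounds, while the explicit construction in \Cref{sec:c3proof} does but with $m_1 = 2N_1 = 30$ squares rather than $2$. To be internally consistent you should commit to one: either re-run the \Cref{sec:c3proof} construction on $F_j$ and accept $m_2 \le N_2(30+1)$, or argue separately that Bony's two-square decomposition can be taken with the required pointwise control. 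The paper's own treatment of the $n=1$ base case in \Cref{thm:seconddmain} elides the same point, so this is an inherited ambiguity rather than a new gap, but the logic of the recombination step genuinely depends on resolving it.
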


\newpage

\section{The Special Case of \texorpdfstring{\(C^{4,\alpha}(\mathbb{R}^2)\)}{}}\label{sec:maybe}

The result of Hilbert in \cite{Hilbert} shows that every non-negative polynomial of degree four over \(\mathbb{R}^2\) can be written as a sum of squares of polynomials, meaning that the techniques of \Cref{chap:poly} give no example of a function in \(C^{4,\alpha}(\mathbb{R}^2)\) which is not a sum of half-regular squares. While we do not claim that every function in this space is not a sum of half-regular squares, there is evidence to suggest that \Cref{thm:nooooo} holds with considerably weaker inequalities than \eqref{eq:induc2} in this setting. Indeed, in proving that a non-negative function \(f\in C^{4}(\mathbb{R}^2)\) is a sum of squares of \(C^{2}(\mathbb{R}^2)\) functions in \cite{Bony}, Bony requires only that the fourth derivatives of \(f\) vanish whenever \(f\) and its second derivatives simultaneously vanish. 

Thus, it may be the case that \(C^{4,\alpha}(\mathbb{R}^n)\) functions can be decomposed under weaker assumptions than those we used in the previous section. Our workings in this thesis do not indicate what such conditions look like, and this special case awaits further examination.
\chapter{Supporting Results}\label{chap:alg}

In this chapter we collect results which are called upon in the main body of the work, but which differ enough in character from our study of decompositions that they would distract from the central narrative of this thesis. With the potential exception of \Cref{thm:specialodds}, the results of this chapter are far from new -- we choose not to confine them to an appendix solely because we furnish proofs which could not be found in any references. There is no unified theme to this chapter, and accordingly the reader may wish to visit these sections as specific results are called upon elsewhere in the work.

\section{An Algebraic Result}

The first result we give is required in the proof of \Cref{thm:cauchylike}, and it ensures that for a given set of non-negative numbers, we can choose a set of weights whose odd powers enjoy the vanishing property of equation \eqref{eq:wowza}. This result is used to eliminate selective odd-order terms in the Taylor expansions of non-negative functions.

\begin{thm}\label{thm:specialodds}
Let \(\ell\) be any odd integer and set \(s=\frac{\ell+1}{2}\). There exist numbers \(\eta_1,\dots,\eta_s\in\mathbb{R}\) and \(x_1,\dots,x_s\geq 0\) such that for each odd \(j\leq \ell\),
\begin{equation}\label{eq:wowza}
    \sum_{i=1}^sx_i\eta_i^j=\begin{cases}
    0 & j<\ell,\\
    1 & j=\ell.
    \end{cases}
\end{equation}
\end{thm}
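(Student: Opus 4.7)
The plan is to reformulate the claim as a convexity statement in $\mathbb{R}^s$ and then exploit the odd-power symmetry to conclude that the relevant convex cone is all of $\mathbb{R}^s$. To this end, I would define the odd moment curve $v:\mathbb{R}\to\mathbb{R}^s$ by $v(\eta)=(\eta,\eta^3,\ldots,\eta^{\ell})$, noting that $\ell=2s-1$. Then \eqref{eq:wowza} asks precisely that the standard basis vector $e_s=(0,\ldots,0,1)$ lie in the convex cone $C=\{\sum_{i=1}^{m}x_i v(\eta_i):m\in\mathbb{N},\,\eta_i\in\mathbb{R},\,x_i\geq 0\}$ generated by the image of $v$.

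Next, I would observe that every coordinate of $v(\eta)$ is an \emph{odd} power of $\eta$, so $v(-\eta)=-v(\eta)$ identically. Hence for any $\eta\in\mathbb{R}$, both $v(\eta)$ and $-v(\eta)=v(-\eta)$ belong to $C$, which means $C$ is closed under negation. Combined with closure under nonnegative linear combinations, this upgrades $C$ to a linear subspace of $\mathbb{R}^s$: for any $w\in C$ and $\alpha\in\mathbb{R}$, we can write $\alpha w=|\alpha|w$ or $\alpha w=|\alpha|(-w)$, and either possibility lies in $C$, while closure under addition is automatic.

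Then I would check that $C$ has full dimension by exhibiting $s$ linearly independent vectors in $C$. Taking $v(1),v(2),\ldots,v(s)$, the associated matrix has entries $j^{2k-1}$ for $1\leq j,k\leq s$; factoring $j$ out of the $j$-th column leaves $\det[(j^2)^{k-1}]$, which is a Vandermonde determinant in the distinct values $1,4,9,\ldots,s^2$. This determinant is nonzero, so $v(1),\ldots,v(s)$ span $\mathbb{R}^s$, and being a linear subspace of $\mathbb{R}^s$, we conclude $C=\mathbb{R}^s$.

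Finally, since $e_s\in C$, Carath\'eodory's theorem for convex cones in $\mathbb{R}^s$ provides at most $s$ real numbers $\eta_1,\ldots,\eta_m$ (with $m\leq s$) and nonnegative weights $x_1,\ldots,x_m\geq 0$ such that $e_s=\sum_{i=1}^{m}x_i v(\eta_i)$, which is precisely \eqref{eq:wowza}. If $m<s$, one pads by setting $\eta_{m+1}=\cdots=\eta_s=0$ and $x_{m+1}=\cdots=x_s=0$, which contributes nothing to any sum of positive odd powers. The main obstacle I anticipate is the careful verification that a convex cone closed under negation is automatically a linear subspace; the other steps are short consequences of standard Vandermonde linear algebra and Carath\'eodory's theorem.
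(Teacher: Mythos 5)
Your proof is correct, and it takes a genuinely different route from the paper's. The paper proceeds by explicit construction: it writes down the matrix $M=[\eta_i^j]$ (odd $j$), computes $\det M$ and the relevant cofactors via the structure of a Vandermonde-like determinant, obtains the closed form $x_k=\bigl(\eta_k\prod_{i\neq k}(\eta_k^2-\eta_i^2)\bigr)^{-1}$, and then chooses $\eta_k=(-1)^{s+k}k$ so that each $x_k$ is positive. Your argument is instead an existence proof from convex geometry: you observe that the odd moment curve satisfies $v(-\eta)=-v(\eta)$, so the convex cone it generates is symmetric and hence a linear subspace; a single Vandermonde computation (on $v(1),\dots,v(s)$, noting the determinant factors through the ordinary Vandermonde in $1,4,\dots,s^2$) shows the subspace is all of $\mathbb{R}^s$; and Carath\'eodory for cones caps the number of generators at $s$, with the padding by $\eta=0$, $x=0$ handling the case of fewer than $s$. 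The trade-off is clear: the paper's proof yields concrete $\eta_k$ and $x_k$ (which it remarks may be useful in other applications), whereas yours is nonconstructive but shorter and more conceptual, isolating the key structural fact (odd symmetry forces the cone to be a subspace) without computing any cofactors. Both arguments are sound; yours buys brevity and clarity at the cost of explicitness.
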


\begin{proof}
It suffices to show that \(Mx=e_s\) has a non-negative solution \(x\), where \(e_s\) is the last standard basis vector in \(\mathbb{R}^s\) and the matrix \(M\) is given by
\[
    M=\begin{bmatrix}
    \eta_1&\eta_2&\cdots &\eta_s\\
    \eta_1^3&\eta_2^3&\cdots&\eta_s^{3}\\
    \vdots&\vdots&\ddots&\vdots\\
    \eta_1^\ell &\eta_2^\ell &\cdots&\eta_s^\ell 
    \end{bmatrix}.
\]
Our aim is to select \(\eta_1,\dots,\eta_s\) so that \(M\) is invertible and the entries of \(M^{-1}e_s\) are non-negative. Denoting by \(x_k\) the \(k^\mathrm{th}\) entry in column \(s\) of \(M^{-1}\), and assuming for now that \(M\) is nonsingular, we can write
\[
    x_k=\frac{(-1)^{s+k}\mathrm{det}(M_{sk})}{\mathrm{det}(M)},
\]
where \(M_{sk}\) is the matrix minor of \(M\) obtained by deleting row \(s\) and column \(k\) of \(M\).

Using the properties of \(M\), we establish a formula for \(\mathrm{det}(M)\). Note that this is a homogeneous polynomial in the variables \(\eta_1,\dots,\eta_s\) of degree \(s^2\). If \(\eta_i=0\) for any \(i\) then \(M\) is singular, meaning that \(\eta_i\mid \mathrm{det}(M)\). Similarly, if \(i\neq j\) and \(\eta_i=\pm\eta_j\) then \(M\) has two linearly dependent columns, meaning once again that \(\mathrm{det}(M)=0\) and \((\eta_i^2-\eta_j^2)\mid \mathrm{det}(M)\) whenever \(i\neq j\). It follows that \(\mathrm{det}(M)=PQ\) where \(P\) and \(Q\) are polynomials and \(P\) is given by
\[
    P=\bigg(\prod_{i=1}^s\eta_i\bigg)\bigg(\prod_{i=1}^s\prod_{j=1}^{i-1}(\eta_i^2-\eta_j^2)\bigg).
\]
This polynomial has degree \(s^2\), and it follows from the Fundamental Theorem of Algebra that \(Q\) has degree zero and is constant. Moreover, the first term in the expansion of \(P\) equals the product down the diagonal of \(M\). Comparing coefficients, we see that \(Q=1\) and \(\mathrm{det}(M)=P\).

We can compute \(\mathrm{det}(M_{sk})\) with the formula above, since this minor assumes the same form as \(M\). Omitting appropriate terms from the determinant formula for \(M\), we get
\[
    \mathrm{det}(M_{sk})=\bigg(\prod_{\substack{1\leq i\leq s\\
    i\neq k}}\eta_i\bigg)\bigg(\prod_{\substack{1\leq j< i\leq s\\i,j\neq k}}(\eta_i^2-\eta_j^2)\bigg)
\]
Equipped with this identity and the formula for \(x_k\) above, we are now able to write
\[
    x_k=(-1)^{s+k}\bigg(\eta_k\prod_{j=1}^{k-1}(\eta_k^2-\eta_j^2)\prod_{i=k+1}^s(\eta_i^2-\eta_k^2)\bigg)^{-1}=\bigg(\eta_k\prod_{\substack{1\leq i\leq s\\i\neq k}}(\eta_k^2-\eta_i^2)\bigg)^{-1}
\]
To ensure that each \(x_k\) is positive, it suffices to choose \(\eta_1,\dots,\eta_s\) so that \(|\eta_1|<\cdots<|\eta_s|\) and \(\mathrm{sgn}(\eta_k)=(-1)^{s+k}\). Therefore taking \(\eta_k=(-1)^{s+k}k\) gives \eqref{eq:wowza}.
\end{proof}

Our choice of \(\eta_k\) above suffices for the purpose of proving \Cref{thm:cauchylike}, but other choices may be better suited to other applications.

As one example, the result above might be useful for understanding the coefficients of non-negative polynomials over \(\mathbb{R}\) via a similar argument to that employed in proving \Cref{thm:cauchylike}. In particular, let
\[
    P(x)=\sum_{j=0}^dc_jx^j
\]
be a non-negative polynomial on \(\mathbb{R}\) of even degree \(d\), fix \(\ell=d-1\), and choose constants \(x_1,\dots,x_s\) and \(\eta_1,\dots,\eta_s\) as in \eqref{eq:wowza}. Since \(x_iP(-\eta_ix)\geq 0\) for each \(i\) we have for all \(x\in\mathbb{R}\) that
\[
    0\leq \sum_{i=1}^sx_iP(-\eta_ix)=\sum_{i=1}^sx_i\sum_{j=0}^dc_j(-\eta_i)^jx^j=\sum_{\substack{0\leq j\leq d,\\j\textrm{even}}}^dc_jx^j\bigg(\sum_{i=1}^sx_i\eta_i^j\bigg)-c_\ell x^\ell
\]
Replacing \(x_iP(-\eta_ix)\) with \(x_iP(\eta_ix)\) and repeating this argument, we find after rearranging and taking a maximum that
\[
    |c_\ell|\leq \sum_{\substack{0\leq j\leq d,\\j\textrm{even}}}^dc_jx^{j-\ell}\bigg(\sum_{i=1}^sx_i\eta_i^j\bigg).
\]

By bounding the \(s\)-dependant terms in the inequality above and optimizing over \(x\), it is possible to obtain a bound on \(c_\ell\). Iterating this argument also affords bounds on all other odd-order coefficients of \(P\). Moreover, by using a complete Vandermonde matrix in \Cref{thm:specialodds} instead of one omitting even rows, the interested reader might also glean information about the sums in \eqref{eq:wowza} for even values of \(j\), and thereby recover explicit bounds in the preceding application and \Cref{thm:cauchylike}.

\section{Useful Estimates}

In this short section we establish some straightforward results from elementary real analysis relating to properties of the maximum, the supremum, and the triangle inequality. Their proofs are not difficult, but the results warrant justification which we include here. Our first lemma states a simple property of the maximum.

\begin{lem}\label{lem:maxproplem}
For \(m\in\mathbb{N}\) let \(a_1,\dots,a_m\) and \(b_1,\dots,b_m\) be real numbers. Then
\begin{equation}\label{eq:maxprop}
    |\max_{j\leq m}a_j-\max_{j\leq m}b_j|\leq \max_{j\leq m}|a_j-b_j|.
\end{equation}
\end{lem}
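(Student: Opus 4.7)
The plan is to reduce the absolute value by symmetry and then use the defining property of the maximum on one side and a coordinate-wise bound on the other. Since the roles of the $a_j$ and $b_j$ are interchangeable in both sides of \eqref{eq:maxprop}, I would assume without loss of generality that $\max_{j\leq m}a_j \geq \max_{j\leq m}b_j$, so that the absolute value on the left-hand side disappears.

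Next, I would pick an index $k\leq m$ at which the maximum of the $a_j$ is attained, i.e.\ $a_k = \max_{j\leq m}a_j$. Such a $k$ exists because we are taking a maximum over a finite set. Using $b_k \leq \max_{j\leq m}b_j$, I get
\[
    \max_{j\leq m}a_j - \max_{j\leq m}b_j = a_k - \max_{j\leq m}b_j \leq a_k - b_k \leq |a_k-b_k| \leq \max_{j\leq m}|a_j-b_j|,
\]
which is exactly \eqref{eq:maxprop} under our symmetry assumption.

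There is no genuine obstacle here; the only thing to be careful about is making the symmetry reduction explicit, since the left-hand side is symmetric in the two families but the argument naturally treats one side as dominant. Everything else is an immediate consequence of the definition of the maximum over a finite set.
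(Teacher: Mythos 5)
Your proof is correct, and it is genuinely more direct than the paper's. The paper proves the inequality by induction on $m$: it first establishes the case $m=2$ via the chain
\[
\max\{a_1,b_1\}\leq \max\{a_1-a_2,\,b_1-b_2\}+\max\{a_2,b_2\},
\]
followed by a symmetrization, and then nests this two-term estimate inside an inductive step to reach general $m$. Your argument dispenses with the induction entirely: after the symmetry reduction you simply locate an index $k$ realizing the dominant maximum and observe that $\max_j a_j - \max_j b_j \leq a_k - b_k \leq \max_j|a_j-b_j|$. This is shorter and arguably cleaner, since it uses only the defining property of the finite maximum. What the paper's inductive route buys in exchange is a formulation whose $m=2$ base case is stated purely in terms of the binary $\max$ operation, which scales transparently to settings (e.g.\ suprema over infinite index sets, where a maximizing index need not exist) without modification; your approach would there require replacing the attained maximizer with an $\varepsilon$-approximate one. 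For the finite case at hand both arguments are fully rigorous, and yours is the tidier one.
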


\begin{proof}
We use induction on \(m\), proving the non-trivial case \(m=2\) as our base case. Note that
\[
    \max\{a_1,b_1\}=\max\{a_1-a_2+a_2,b_1-b_2+b_2\}\leq \max\{a_1-a_2,b_1-b_2\}+\max\{a_2,b_2\}.
\]
Rearranging this, we see that \(\max\{a_1,b_1\}-\max\{a_2,b_2\}\leq \max\{a_1-a_2,b_1-b_2\}\), and the same inequality clearly holds when \(a_1\) is swapped with \(a_2\) and \(b_1\) with \(b_2\). Consequently,
\begin{align*}
    |\max\{a_1,b_1\}-\max\{a_2,b_2\}|&=\max\{\max\{a_1,b_1\}-\max\{a_2,b_2\},\max\{a_2,b_2\}-\max\{a_1,b_1\}\}\\
    &\leq \max\{\max\{a_1-a_2,b_1-b_2\},\max\{a_2-a_1,b_2-b_1\}\}\\
    &=\max\{a_1-a_2,b_1-b_2,a_2-a_1,b_2-b_1\}\\
    &=\max\{|a_1-a_2|,|b_1-b_2|\}.
\end{align*}
This establishes the base case. Suppose next that inequality \eqref{eq:maxprop} holds for \(m\) terms. Using the preceding argument we obtain the bound
\begin{align*}
    |\max_{j\leq m+1}a_j-\max_{j\leq m+1}b_j|&=|\max\{\max_{j\leq m}a_j,a_{m+1}\}-\max\{\max_{j\leq m}b_j,b_{m+1}\}|\\
    &\leq \max\{|\max_{j\leq m}a_j-\max_{j\leq m}b_j|,|a_{m+1}-b_{m+1}|\},
\end{align*}
and with this it follows from our inductive hypothesis that
\begin{align*}
    |\max_{j\leq m+1}a_j-\max_{j\leq m+1}b_j|\leq \max\{ \max_{j\leq m}|a_j-b_j|,|a_{m+1}-b_{m+1}|\}= \max_{j\leq m+1}|a_j-b_j|  .  
\end{align*}
Hence inequality \eqref{eq:maxprop} holds with \(m+1\) terms, and the claimed estimates follow by induction.
\end{proof}

Now we establish a similar property for the supremum.

\begin{lem}\label{lem:supprop}
For any non-negative function \(g:\mathbb{R}^n\times\mathbb{R}^n\rightarrow\mathbb{R}\) and for any two points \(x,y\in\mathbb{R}^n\),
\[
    |\sup_{\xi\in\mathbb{R}^n} g(x,\xi)-\sup_{\xi\in\mathbb{R}^n} g(y,\xi)|\leq \sup_{\xi\in\mathbb{R}^n} |g(x,\xi)-g(y,\xi)|.
\]
\end{lem}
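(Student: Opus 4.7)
The plan is to mimic the structure of \Cref{lem:maxproplem} but carry the argument out pointwise in $\xi$, since the supremum is not reached at a single point as the maximum is. First I would fix an arbitrary $\xi\in\mathbb{R}^n$ and write
\[
    g(x,\xi)=g(y,\xi)+(g(x,\xi)-g(y,\xi))\leq \sup_{\eta\in\mathbb{R}^n}g(y,\eta)+\sup_{\eta\in\mathbb{R}^n}|g(x,\eta)-g(y,\eta)|.
\]
Since the right-hand side no longer depends on $\xi$, taking the supremum over $\xi$ on the left gives
\[
    \sup_{\xi\in\mathbb{R}^n}g(x,\xi)-\sup_{\eta\in\mathbb{R}^n}g(y,\eta)\leq \sup_{\eta\in\mathbb{R}^n}|g(x,\eta)-g(y,\eta)|.
\]
An identical argument with $x$ and $y$ swapped produces the symmetric bound, and combining the two yields the claimed inequality after taking an absolute value.

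The only delicate point is what happens when the suprema are infinite. If $\sup_\xi g(x,\xi)<\infty$ and $\sup_\xi g(y,\xi)<\infty$ the argument above is rigorous as stated. If, say, $\sup_\xi g(x,\xi)=\infty$, then I would choose a sequence $\xi_k$ with $g(x,\xi_k)\to\infty$; either $g(y,\xi_k)$ is bounded along a subsequence, in which case $|g(x,\xi_k)-g(y,\xi_k)|\to\infty$ and the right-hand side is $+\infty$, or $g(y,\xi_k)\to\infty$ along every subsequence, in which case $\sup_\xi g(y,\xi)=\infty$ as well and the left-hand side is interpreted as $0$ (or the conclusion is vacuous under the convention $\infty-\infty$ is not assigned a value). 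In either case the inequality is valid.

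I do not anticipate any real obstacle here; the main care needed is simply to avoid accidentally treating $\sup$ as though it were attained, and to handle the $\pm\infty$ cases cleanly. The proof will therefore be quite short, essentially a single chain of inequalities followed by a symmetry argument, and no deeper properties of $g$ beyond non-negativity and the triangle inequality are required.
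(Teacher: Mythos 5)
Your argument is correct and is essentially the same as the paper's: both hinge on the subadditivity of the supremum applied to $g(x,\xi)=(g(x,\xi)-g(y,\xi))+g(y,\xi)$, followed by a symmetry swap of $x$ and $y$. The additional remarks on the $\pm\infty$ cases are a reasonable elaboration but are not treated in (or needed for) the paper's version.
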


\begin{proof}
For any non-negative function \(g\) we can use subadditivity of the supremum to get
\[
    \sup_{\xi\in\mathbb{R}^n} g(x,\xi)=\sup_{\xi\in\mathbb{R}^n} (g(x,\xi)-g(y,\xi)+g(y,\xi))\leq \sup_{\xi\in\mathbb{R}^n} |g(x,\xi)-g(y,\xi)|+\sup_{\xi\in\mathbb{R}^n} g(y,\xi).
\]
Therefore we can rearrange to get
\[
    \sup_{\xi\in\mathbb{R}^n} g(x,\xi)-\sup_{\xi\in\mathbb{R}^n} g(y,\xi)\leq \sup_{\xi\in\mathbb{R}^n} |g(x,\xi)-g(y,\xi)|.
\]  
An identical argument interchanging \(y\) and \(x\) shows that 
\[
    \sup_{\xi\in\mathbb{R}^n} g(y,\xi)-\sup_{\xi\in\mathbb{R}^n} g(x,\xi)\leq \sup_{\xi\in\mathbb{R}^n} |g(x,\xi)-g(y,\xi)|
\]
The claimed estimate then follows from taking a maximum.
\end{proof}

Finally, we establish a variant of the triangle inequality for products of functions.

\begin{lem}\label{lem:proddiff}
Let \(f_1,\dots,f_k\) be bounded functions on \(\mathbb{R}^n\), none of which is identically zero. Then for any two points \(x,y\in\mathbb{R}^n\),
\[
    \bigg|\prod_{j=1}^kf_j(x)-\prod_{j=1}^kf_j(y)\bigg|\leq \bigg(\prod_{j=1}^k\sup_{\mathbb{R}^n}|f_j|\bigg)\sum_{j=1}^k\frac{|f_j(x)-f_j(y)|}{\sup_{\mathbb{R}^n}|f_j|}.
\]
\end{lem}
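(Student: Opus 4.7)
The plan is to prove the inequality via a telescoping sum identity, which cleanly reduces the product difference to a sum of single-function differences. Specifically, I will use induction on \(k\), or equivalently, the following telescoping decomposition: for any functions \(f_1,\dots,f_k\) and points \(x,y\in\mathbb{R}^n\),
\[
    \prod_{j=1}^k f_j(x)-\prod_{j=1}^k f_j(y)=\sum_{j=1}^k\bigg(\prod_{i<j}f_i(y)\bigg)(f_j(x)-f_j(y))\bigg(\prod_{i>j}f_i(x)\bigg).
\]
This identity is verified by expanding and observing that consecutive terms in the sum cancel in pairs, leaving only the endpoints \(\prod_j f_j(x)\) and \(-\prod_j f_j(y)\). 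The base case \(k=2\) is the familiar identity \(f_1(x)f_2(x)-f_1(y)f_2(y)=f_1(y)(f_2(x)-f_2(y))+(f_1(x)-f_1(y))f_2(x)\), and the inductive step follows by writing \(\prod_{j=1}^{k+1}=f_{k+1}\prod_{j=1}^k\) and applying the same two-term trick.

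Once the identity is in hand, the rest is a direct application of the triangle inequality. Taking absolute values,
\[
    \bigg|\prod_{j=1}^k f_j(x)-\prod_{j=1}^k f_j(y)\bigg|\leq \sum_{j=1}^k\bigg(\prod_{i<j}|f_i(y)|\bigg)|f_j(x)-f_j(y)|\bigg(\prod_{i>j}|f_i(x)|\bigg).
\]
Each term \(|f_i(y)|\) or \(|f_i(x)|\) is bounded by \(\sup_{\mathbb{R}^n}|f_i|\), which is finite by assumption. Pulling these suprema out of each summand, the \(j^{\mathrm{th}}\) term is bounded by \(|f_j(x)-f_j(y)|\prod_{i\neq j}\sup_{\mathbb{R}^n}|f_i|\). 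Factoring out the full product \(\prod_{j=1}^k\sup_{\mathbb{R}^n}|f_j|\) (which is nonzero since none of the \(f_j\) vanish identically) yields exactly the claimed inequality.

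No step is particularly difficult; the only small subtlety is verifying the telescoping identity carefully, but this is a routine manipulation. The non-vanishing hypothesis on each \(f_j\) is used solely to ensure the quotients \(|f_j(x)-f_j(y)|/\sup_{\mathbb{R}^n}|f_j|\) on the right-hand side are well-defined, and plays no role in the argument itself.
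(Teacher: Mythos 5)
Your proof is correct and takes essentially the same approach as the paper: the explicit telescoping identity you write down is exactly what the paper's induction on \(k\) produces when unwound, and both arguments then conclude by applying the triangle inequality and bounding each remaining factor by its supremum.
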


\begin{proof}
We argue by induction on \(k\). There is nothing to show when \(k=1\), and when \(k=2\) we use the triangle inequality to write
\[
    |f_1(x)f_2(x)-f_1(y)f_2(y)|\leq |f_1(x)||f_2(x)-f_2(y)|+|f_2(y)||f_1(x)-f_1(y)|.
\]
Upon taking a supremum of \(|f_1(x)|\) and \(|f_2(y)|\), we get the desired estimate when \(k=2\). Suppose next that the estimate holds for \(k\) functions. Using the triangle inequality, we have
\begin{align*}
    \bigg|\prod_{j=1}^{k+1}f_j(x)-&\prod_{j=1}^{k+1}f_j(y)\bigg|=\bigg|f_{k+1}(x)\prod_{j=1}^{k}f_j(x)-f_{k+1}(y)\prod_{j=1}^{k}f_j(y)\bigg|\\
    &=\bigg|f_{k+1}(x)\prod_{j=1}^{k}f_j(x)-f_{k+1}(y)\prod_{j=1}^{k}f_j(x)+f_{k+1}(y)\prod_{j=1}^{k}f_j(x)-f_{k+1}(y)\prod_{j=1}^{k}f_j(y)\bigg|\\
    &\leq \bigg(\prod_{j=1}^{k}|f_j(x)|\bigg)|f_{k+1}(x)-f_{k+1}(y)|+|f_{k+1}(y)|\bigg|\prod_{j=1}^{k}f_j(x)-\prod_{j=1}^{k}f_j(y)\bigg|.
\end{align*}
Applying the inductive hypothesis, we see now that
\[
    \bigg|\prod_{j=1}^{k+1}f_j(x)-\prod_{j=1}^{k+1}f_j(y)\bigg|\leq \bigg(\prod_{j=1}^{k}|f_j(x)|\bigg)|f_{k+1}(x)-f_{k+1}(y)|+\bigg(\prod_{j=1}^{k+1}\sup_{\mathbb{R}^n}|f_j|\bigg)\sum_{j=1}^k\frac{|f_j(x)-f_j(y)|}{\sup_{\mathbb{R}^n}|f_j|}.
\]
The required estimate then follows by taking a supremum in the product on the right, completing the inductive step and showing that the claimed estimate holds for all \(k\) as claimed. 
\end{proof}

It is easy to see that the preceding results still hold if \(\mathbb{R}^n\) is replaced with any domain \(\Omega\subseteq\mathbb{R}^n\).

\section{H\"older Continuous Functions}\label{sec:holdexamples}

Our main results for H\"older spaces would not be very useful if there were no interesting functions to which they could apply. In this section we provide some concrete (and not completely trivial) examples of \(\alpha\)-H\"older continuous functions for various values of \(\alpha\). In doing so, we are also able to illustrate various techniques useful in proving H\"older continuity.

We begin with a straightforward and well-known example of an \(\alpha\)-H\"older continuous function.

\begin{clm}
If \(\alpha\in (0,1]\) then \(f(x)=|x|^\alpha\) defined for \(x\in\mathbb{R}^n\) belongs to \(C^\alpha(\mathbb{R}^n)\), and \([f]_{\alpha,\mathbb{R}^n}=1\).
\end{clm}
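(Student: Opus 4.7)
The plan is to prove the claimed semi-norm identity in two steps: first establish the upper bound $[f]_{\alpha,\mathbb{R}^n} \leq 1$, and then exhibit points $x, y$ realizing this bound (in the limit, if not exactly) to conclude $[f]_{\alpha,\mathbb{R}^n} = 1$.

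For the upper bound, I would invoke \Cref{lem:smallpow} applied to the non-negative function $g(x) = |x|$, which itself is 1-Lipschitz on $\mathbb{R}^n$ by the reverse triangle inequality $||x| - |y|| \leq |x-y|$. Taking $\beta = \alpha$ in \Cref{lem:smallpow} yields
\[
    ||x|^\alpha - |y|^\alpha| \leq ||x| - |y||^\alpha \leq |x-y|^\alpha,
\]
which is precisely the statement that $[f]_{\alpha,\mathbb{R}^n} \leq 1$. This already shows $f \in C^\alpha(\mathbb{R}^n)$.

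For the matching lower bound, I would simply evaluate the difference quotient in \eqref{eq:snd} at $y = 0$ and any $x \neq 0$:
\[
    \frac{|f(x) - f(0)|}{|x - 0|^\alpha} = \frac{|x|^\alpha}{|x|^\alpha} = 1.
\]
Taking the supremum over admissible pairs therefore gives $[f]_{\alpha,\mathbb{R}^n} \geq 1$, and combining with the upper bound we conclude $[f]_{\alpha,\mathbb{R}^n} = 1$.

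There is essentially no obstacle here; the only subtlety is that the intermediate bound $||x| - |y|| \leq |x-y|$ (needed before applying \Cref{lem:smallpow}) is the standard reverse triangle inequality, and the sharpness witness at $y = 0$ is immediate because the function vanishes at the origin. The whole argument is a clean application of a tool already proved in the thesis together with a single point evaluation.
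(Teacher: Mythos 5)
Your proof is correct, and it is a bit slicker than the one in the thesis. For the upper bound the paper argues from scratch: it normalizes by $|x|^\alpha$ (assuming WLOG $|x|>|y|$) and verifies the scalar chain $1 - t^\alpha \leq 1 - t \leq (1-t)^\alpha$ for $t = |y|/|x| \in [0,1)$ before invoking the reverse triangle inequality. You instead reuse \Cref{lem:smallpow} applied to the $1$-Lipschitz function $|x|$, which packages that same concavity estimate and gets you to $||x|^\alpha - |y|^\alpha| \leq ||x|-|y||^\alpha \leq |x-y|^\alpha$ in one line. Since \Cref{lem:smallpow} appears earlier in the thesis there is no circularity, and your route is the more economical of the two. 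For the lower bound the content is identical — both arguments boil down to testing at $y=0$ where $f$ vanishes — you just state it directly as a witness rather than as a contradiction. A small stylistic remark: you describe the sharpness as being attained ``in the limit, if not exactly,'' but in fact equality in the difference quotient holds exactly for every pair $(x,0)$ with $x \neq 0$, so no limiting argument is needed; your own computation already shows this.
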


\begin{proof}
To verify this, we assume without loss of generality that \(x,y\in\mathbb{R}^n\) and \(|x|>|y|\), so that 
\[
    0<1-\bigg(\frac{|y|}{|x|}\bigg)^\alpha\leq 1-\frac{|y|}{|x|}\leq \bigg(1-\frac{|y|}{|x|}\bigg)^\alpha
\] 
since \(\alpha\leq 1\) and \(\frac{|y|}{|x|}<1\) by assumption. From the estimates above it follows now that
\[
    \frac{|f(x)-f(y)|}{|x-y|^\alpha}\leq  \frac{||x|^\alpha-|y|^\alpha|}{||x|-|y||^\alpha}=\frac{1-(\frac{|y|}{|x|})^\alpha}{(1-\frac{|y|}{|x|})^\alpha}\leq \frac{1-\frac{|y|}{|x|}}{1-\frac{|y|}{|x|}}=1.
\]
The same estimate clearly holds if we interchange \(x\) and \(y\). Since \(x\) and \(y\) were arbitrary, it follows from the estimate above that \([f]_{\alpha,\mathbb{R}^n}\leq 1\). Moreover this holds with equality, for if we assume that \([f]_\alpha<1\), then we would have \(|x|^\alpha=|f(x)|\leq [f]_{\alpha,\mathbb{R}^n}|x|^\alpha<|x|^\alpha\), a contradiction for \(x\neq 0\). It follows that \([f]_{\alpha,\mathbb{R}^n}=1\), as claimed.
\end{proof}

Next we move on to a more interesting example: Cantor's `Middle Thirds' function, sometimes colloquially referred to as the `Devil's Staircase.' This is famously a continuous bijection on \([0,1]\) whose derivative is zero except on a set of measure zero; as such, it serves as a pathological counterexample to the intuitive but incorrect idea that a continuous increasing function should be increasing on some interval.

The Cantor function, which we denote by \(F\), is a fractal curve which turns out to be H\"older continuous. Additionally, it is proved in \cite[Chapter 9]{Teschl} that by taking \(f_0(x)=x\) on \([0,1]\) and defining \(f_{n+1}(x)=Tf_n(x)\), where \(T\) is the transformation
\[
    Tf(x)=\begin{cases}
    \hfil\frac{1}{2}f(3x) & 0\leq x\leq \frac{1}{3},\\
    \hfil\frac{1}{2} & \frac{1}{3}<x<\frac{2}{3},\\
    \frac{1}{2}(1+f(3x-2)) & \frac{2}{3}\leq x\leq 1,
    \end{cases}
\]
then we obtain a sequence of functions \(\{f_n\}\) which converge uniformly to \(F\) on \([0,1]\). By solving an exercise in \cite{Teschl}, we furnish a proof to the following well-known property of \(F\).

\begin{clm}
The Cantor function \(F\) belongs to \(C^{\log_32}([0,1])\) and \([F]_{\log_32,[0,1]}\leq1\).
\end{clm}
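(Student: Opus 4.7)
The plan is to set $\alpha=\log_3 2$, so that the key identity $3^\alpha=2$ (equivalently $\tfrac{1}{2}=\left(\tfrac{1}{3}\right)^\alpha$) is available, and then prove by induction on $n$ that every iterate $f_n$ satisfies $[f_n]_{\alpha,[0,1]}\leq 1$. The base case is immediate: since $\alpha\leq 1$ and $|x-y|\leq 1$ on $[0,1]$, we have $|f_0(x)-f_0(y)|=|x-y|\leq |x-y|^\alpha$. For the inductive step, assume $[f_n]_{\alpha,[0,1]}\leq 1$, and observe that by construction $f_n(0)=0$ and $f_n(1)=1$ for every $n$ (these values are preserved by $T$), so that $Tf_n$ is continuous across the partition points $x=\tfrac{1}{3}$ and $x=\tfrac{2}{3}$, with $Tf_n(\tfrac{1}{3})=Tf_n(\tfrac{2}{3})=\tfrac{1}{2}$.

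The core of the argument is to estimate $|Tf_n(x)-Tf_n(y)|$ case-by-case according to which of the three subintervals $[0,\tfrac{1}{3}]$, $[\tfrac{1}{3},\tfrac{2}{3}]$, $[\tfrac{2}{3},1]$ contain $x$ and $y$. If both lie in $[0,\tfrac{1}{3}]$, then directly from the definition of $T$ and the inductive hypothesis,
\[
|Tf_n(x)-Tf_n(y)|=\tfrac{1}{2}|f_n(3x)-f_n(3y)|\leq \tfrac{1}{2}\cdot 3^\alpha|x-y|^\alpha=|x-y|^\alpha,
\]
and an identical computation handles the case where both points lie in $[\tfrac{2}{3},1]$. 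The middle case is trivial since $Tf_n$ is constant on $[\tfrac{1}{3},\tfrac{2}{3}]$. For the mixed cases, I will exploit the continuity of $Tf_n$ at the partition points: for example, if $x\in[0,\tfrac{1}{3}]$ and $y\in[\tfrac{1}{3},\tfrac{2}{3}]$, then $Tf_n(y)=Tf_n(\tfrac{1}{3})$, so the estimate reduces to the first case applied to $x$ and $\tfrac{1}{3}$, and $|x-\tfrac{1}{3}|\leq|x-y|$ closes the estimate. The remaining mixed case ($x\in[0,\tfrac{1}{3}]$, $y\in[\tfrac{2}{3},1]$) is handled by inserting both $\tfrac{1}{3}$ and $\tfrac{2}{3}$ and bounding each piece, using that $|x-\tfrac{1}{3}|+|y-\tfrac{2}{3}|\leq|x-y|$ combined with sub-additivity of $t\mapsto t^\alpha$ for $\alpha\leq 1$ (this is the content of the elementary inequality proved inside Lemma \ref{lem:smallpow}).

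Having established $[f_n]_{\alpha,[0,1]}\leq 1$ uniformly in $n$, the conclusion follows by passing to the uniform limit. For any fixed distinct $x,y\in[0,1]$,
\[
\frac{|F(x)-F(y)|}{|x-y|^\alpha}=\lim_{n\to\infty}\frac{|f_n(x)-f_n(y)|}{|x-y|^\alpha}\leq 1,
\]
so that taking a supremum yields $[F]_{\alpha,[0,1]}\leq 1$ as claimed. The main obstacle I anticipate is simply the bookkeeping of the mixed cases and verifying that the boundary values $f_n(0)=0$ and $f_n(1)=1$ are indeed preserved by $T$, which is what allows the argument to glue together across the three subintervals without loss.
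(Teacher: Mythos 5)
Your overall strategy—induct on $n$ to show $[f_n]_{\alpha,[0,1]}\le 1$ via a case analysis over the three subintervals, then pass to the uniform limit—is exactly the paper's approach. But there is a genuine gap in the one case that actually pins down the value $\alpha=\log_3 2$, namely $x\in[0,\tfrac13]$, $y\in[\tfrac23,1]$.

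After inserting $\tfrac13$ and $\tfrac23$ you correctly arrive at the bound
\[
|Tf_n(x)-Tf_n(y)|\le \tfrac{3^\alpha}{2}\,[f_n]_\alpha\bigl(|x-\tfrac13|^\alpha+|y-\tfrac23|^\alpha\bigr),
\]
and you now need $|x-\tfrac13|^\alpha+|y-\tfrac23|^\alpha\le|x-y|^\alpha$. You claim this follows from $|x-\tfrac13|+|y-\tfrac23|\le|x-y|$ together with sub-additivity of $t\mapsto t^\alpha$. But sub-additivity (the inequality inside Lemma \ref{lem:smallpow}) says $(a+b)^\alpha\le a^\alpha+b^\alpha$, which runs in the \emph{opposite} direction: combined with $a+b\le c$ it yields $(a+b)^\alpha\le\min\{a^\alpha+b^\alpha,\,c^\alpha\}$ and gives no upper bound on $a^\alpha+b^\alpha$ by $c^\alpha$. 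Indeed, with $a=b=\tfrac12$, $c=1$ one has $a+b\le c$ but $a^\alpha+b^\alpha=2^{1-\alpha}>1=c^\alpha$ whenever $\alpha<1$, so the inference you invoke is simply false. What makes the needed inequality true here is the extra slack $|x-y|=|x-\tfrac13|+|y-\tfrac23|+\tfrac13$, and this is precisely where the specific threshold $\alpha=\log_3 2$ enters: the paper analyzes the ratio
\[
Q(x,y)=\left|\frac{x-\tfrac13}{x-y}\right|^\alpha+\left|\frac{y-\tfrac23}{x-y}\right|^\alpha
\]
on $[0,\tfrac13]\times[\tfrac23,1]$, shows its maximum is attained at $(0,1)$ with value $2/3^\alpha$, and hence $Q\le 1$ exactly when $\alpha\ge\log_3 2$. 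Your proof needs some such argument (or an equivalent direct computation of the maximum of $a^\alpha+b^\alpha-(a+b+\tfrac13)^\alpha$ over $a,b\in[0,\tfrac13]$) to close this case; citing concavity of $t^\alpha$ is not enough.
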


\begin{proof} Given a continuous bijection \(f\) of \([0,1]\) such that \(f(0)=0\), \(f(1)=1\) and \(f\in C^{\alpha}([0,1])\) for \(0<\alpha\leq 1\), we first examine the H\"older continuity of \(Tf\). In particular, we show that
\[
    [Tf]_\alpha\leq \frac{3^\alpha[f]_\alpha}{2}.
\]
This is done by considering six exhaustive cases which depend on the locations of \(x,y\in[0,1]\).

\noindent\underline{\textit{Case 1}}: If \(0\leq x\leq \frac{1}{3}\) and \(0\leq y\leq \frac{1}{3}\) then
\[
    |Tf(x)-Tf(y)|=\frac{1}{2}|f(3x)-f(3y)|\leq \frac{[f]_\alpha}{2}|3x-3y|^\alpha=\frac{3^\alpha[f]_\alpha}{2}|x-y|^\alpha.
\]
\noindent\underline{\textit{Case 2}}: If \(0\leq x\leq \frac{1}{3}\) and \(\frac{1}{3}< y<\frac{2}{3}\) then
\[
    |Tf(x)-Tf(y)|=\frac{1}{2}|f(3x)-1|=\frac{1}{2}|f(3x)-f(1)|\leq \frac{[f]_\alpha}{2}|3x-1|^\alpha\leq \frac{3^\alpha[f]_\alpha}{2}|x-y|^\alpha,
\]
where the last inequality holds since \(|3x-1|=3|x-\frac{1}{3}|\leq 3|x-y| \).

\noindent\underline{\textit{Case 3}}: If \(0\leq x\leq \frac{1}{3}\) and \(\frac{2}{3}\leq y\leq 1\) then 
\[
    |Tf(x)-Tf(y)|=\frac{1}{2}|f(3x)-f(1)+f(0)-f(3y-2)|\leq \frac{3^\alpha[f]_\alpha}{2}\bigg|x-\frac{1}{3}\bigg|^\alpha+\frac{3^\alpha[f]_\alpha}{2}\bigg|y-\frac{2}{3}\bigg|^\alpha.
\]
To get the required inequality, we must show that \(|x-\frac{1}{3}|^\alpha+|y-\frac{2}{3}|^\alpha\leq |x-y|^\alpha\) for \(\alpha\) sufficiently large. To this end we observe that for \(\alpha\leq 1\) the function \(Q:[0,\frac{1}{3}]\times[\frac{2}{3},1]\rightarrow\mathbb{R}\) defined by
\[
    Q(x,y)=\bigg|\frac{x-\frac{1}{3}}{x-y}\bigg|^\alpha+\bigg|\frac{y-\frac{2}{3}}{x-y}\bigg|^\alpha
\]
achieves a unique global maximum at \((0,1)\). It follows that on \([0,\frac{1}{3}]\times[\frac{2}{3},1]\) we have the bound \( Q(x,y)\leq Q(0,1)=\frac{2}{3^\alpha}\). If \(\alpha\geq\log_32\) then we see that \(Q(x,y)\leq \frac{2}{3^\alpha}\leq 1\) in the domain of interest, meaning that again,
\[
    |Tf(x)-Tf(y)|\leq \frac{3^\alpha[f]_\alpha}{2}|x-y|^\alpha.
\]
\underline{\textit{Case 4}}: If \(\frac{1}{3}<x<\frac{2}{3}\) and \(\frac{1}{3}<y<\frac{2}{3}\) then
\[
    |Tf(x)-Tf(y)|=\bigg|\frac{1}{2}-\frac{1}{2}\bigg|=0\leq \frac{3^\alpha[f]_\alpha}{2}|x-y|^\alpha.
\]
\noindent\underline{\textit{Case 5}}: If \(\frac{1}{3}<x<\frac{2}{3}\) and \(\frac{2}{3}\leq y\leq 1\) then
\[
    |Tf(x)-Tf(y)|=\frac{1}{2}|1-(1+f(3y-2))|=\frac{1}{2}|f(3y-2)-f(0)|\leq \frac{[f]_\alpha}{2}|3y-2|^\alpha\leq\frac{3^\alpha[f]_\alpha}{2}|x-y|^\alpha.
\]
\noindent\underline{\textit{Case 6}}: If \(\frac{2}{3}\leq y\leq 1\) and \(\frac{2}{3}\leq y\leq 1\) then
\[
    |Tf(x)-Tf(y)|=\frac{1}{2}|f(3x-2)-f(3y-2)|\leq \frac{[f]_\alpha}{2}|3x-3y|^\alpha=\frac{3^\alpha[f]_\alpha}{2}|x-y|^\alpha.
\]

Thus, if \(\log_32\leq\alpha\leq 1\) then for \(x,y\in[0,1]\) we have the following H\"older estimate on \(Tf\),
\[
    |Tf(x)-Tf(y)|\leq \frac{3^\alpha[f]_\alpha}{2}|x-y|^\alpha.
\]
Equipped with this property of \(T\), we now observe that if we fix \(\alpha=\log_32\) and if \([f]_\alpha\leq1\), then \(|Tf(x)-Tf(y)|\leq |x-y|^\alpha\) and \([Tf]_\alpha\leq 1\). The function \(f_0(x)=x\) satisfies \([f_0]_\alpha\leq 1\) since for \(x,y\in[0,1]\) we have \(|x-y|\leq 1\) and 
\[
    |f_0(x)-f_0(y)|=|x-y|=|x-y|^{1-\alpha}|x-y|^\alpha\leq |x-y|^\alpha.
\]
Consequently with \(\alpha=\log_32\) we have \([f_1]_\alpha=[Tf_0]_\alpha\leq 1\), and it follows by induction that \([f_n]_\alpha\leq 1\) for every \(n\). Using this estimate, we see that for every \(n\) the Cantor function \(F\) satisfies
\[
    |F(x)-F(y)|\leq |F(x)-f_n(x)|+|F(y)-f_n(y)|+|x-y|^\alpha.
\]
The left-hand side is independent of \(n\), and since the functions \(f_n\) converge to \(F\) pointwise we have that \(|F(x)-f_n(x)|\rightarrow0\) as \(n\rightarrow\infty\) for each \(x\in[0,1]\). It follows in the limit that
\[
    |F(x)-F(y)|\leq|x-y|^\alpha
\]
for every \(x,y\in[0,1]\), meaning that \(F\in C^{\log_32}([0,1])\) as we wished to show.
\end{proof}

This approach to computing H\"older continuity for fractal curves can be applied more generally. In particular, it suffices to find an appropriate map \(T\) corresponding to a given fractal curve, and to employ the technique used above to show that \([Tf]_\alpha\leq [f]_\alpha\) for some \(\alpha\). The map in question is most easily deduced from symmetries of the fractal curve in question. 

One such fractal curve is the Minkowski `Question Mark' function. This is often denoted `\(?\)' in the literature, however to avoid confusion (no pun intended) we denote this function by \(Q\). It has fixed points at \(0\) and \(1\) and it enjoys the following self-similarity relations for \(x\leq \frac{1}{2}\),
\begin{align*}
    &Q\bigg(\frac{x}{1+x}\bigg)=\frac{1}{2}Q(x),\\
    &Q(x)+Q(1-x)=1,
\end{align*}
see \cite{ALKAUSKAS_2009}. Salem shows in \cite{salem} that \(Q\in C^{\alpha}([0,1])\), with the H\"older exponent \(\alpha\) given by
\[
    \alpha=\frac{\log 2}{2\log(\frac{1+\sqrt{5}}{2})}.
\]
The proof of this result in \cite{salem} uses an altogether different technique from that employed above, but we note that owing to the symmetries listed above, \(Q\) is a fixed point of the map
\[
    Tf(x)=\begin{cases}
    \hfil\frac{1}{2}f(\frac{x}{1-x}) & x\leq\frac{1}{2}\\
    \frac{1}{2}+\frac{1}{2}f(\frac{2x-1}{x}) & x>\frac{1}{2}
    \end{cases}
\]
which preserves bijections on \([0,1]\). By taking \(f_0(x)=x\) and defining \(f_{n+1}=Tf_n\) as we did for the Cantor function, we obtain a sequence of functions which converge uniformly to \(Q\) since \(T\) is a contraction mapping -- we direct the interested reader to \cite{bezier} for a thorough investigation of this idea. Further, we close by noting that a similar approach can be used to study (and for our purposes, compute the H\"older continuity of) other fractals like the Blancmange curve.
\chapter{Conclusion and
Discussion }\label{chap:last}

\section{Summary of Findings}

This thesis studied non-negative H\"older continuous functions, to determine when they can be decomposed as sums of squares of functions that are `half-regular' in an appropriate sense. Building upon the work of Fefferman \& Phong in \cite{Fefferman-Phong}, Tataru in \cite{Tataru} and Sawyer \& Korobenko in \cite{SOS_I}, and taking inspiration from the work of many others, we have demonstrated that if \(f\) is a non-negative \(C^{k,\alpha}(\mathbb{R}^n)\) function for \(k\leq 3\) and \(0<\alpha\leq 1\), then there exists a decomposition of the form
\begin{equation}\label{eq:sosos}
    f=\sum_{n=1}^{m_n}g_j^2,
\end{equation}
where \(g_j\in C^\frac{k+\alpha}{2}(\mathbb{R}^n)\) for each \(j\), and the constant \(m_n\) depends only on \(n\). Formerly, the version of this result in \(C^{3,1}(\mathbb{R}^n)\) was well-known thanks to Fefferman \& Phong, and in \(C^\alpha(\mathbb{R}^n)\) for \(0<\alpha\leq 1\) it is a straightforward consequence of concavity; see \Cref{lem:smallpow}. The remaining cases do not seem to have been dealt with prior to our work; accordingly, their resolution in \Cref{thm:c3main} comprises the main contribution of this thesis.

In addition to this novel result, we also presented some progress toward an adaptation of our decomposition result to large values of \(k\) in \Cref{chap:higher}, building upon the work of Sawyer \& Korobenko in \cite{SOS_I}. While the characterization of decomposable \(C^{k,\alpha}(\mathbb{R}^n)\) functions remains an open problem for \(k\geq 4\) in every dimension \(n\geq 2\), we have shown in \Cref{thm:seconddmain} that \eqref{eq:sosos} can hold under some restrictive pointwise conditions on \(f\). The conditions we impose come at the expense of some regularity on H\"older scale, meaning that the decomposition we form is not truly `half-regular' in three or more dimensions.

While it is unclear whether the conditions of \Cref{thm:seconddmain} can be weakened, we showed in \Cref{chap:poly} that some additional restrictions on \(f\) are necessary if an analog to \Cref{thm:c3main} is to hold for large values of \(k\). This is because \eqref{eq:sosos} fails for some functions in \(C^{k,\alpha}(\mathbb{R}^n)\) when \(k\geq 4\) and \(n\geq 2\), and indeed our counterexamples in this setting are simply polynomials. By means of a careful construction motivated by the work of Reznik in \cite{reznik_extpsd}, we were able to provide a technique for finding explicit examples of such polynomials in all settings in which they exist, few of which could be found in the literature. It is hoped that our examples contribute to a greater understanding of sums of squares of polynomials and H\"older continuous functions.

Finally, throughout the thesis we presented several supplementary results which enjoy various degrees of novelty. \Cref{lem:genchain}, viewed as an expression of the structure of high order derivatives, is very well-known -- our contribution is to explicitly compute the constants involved. This might be useful in streamlining the computation of such derivatives. Likewise, the proof of \Cref{thm:specialodds} uses a well-known technique to derive the determinant of a Vandermonde-like matrix, but our application to extracting weighted summation identities seems to be new. It is hoped that such results, whether they are new or rediscovered, might be useful in other applications.

\section{Potential Applications}\label{sec:hypoelliptchap}

The starting point for this research was the work of Sawyer \& Korobenko in \cite{SOS_I}, in which they studied decompositions of \(C^{4,\alpha}(\mathbb{R}^n)\) functions and matrices. Their motivation in this, realized in the third paper of the series \cite{SOS_III}, was to study hypoellipticity of certain differential operators in divergence form. An operator \(L\) is said to hypoelliptic if \(Lu\in C^\infty(\mathbb{R}^n)\) for a distribution \(u\) implies that \(u\in C^\infty(\mathbb{R}^n)\). Put another way, \(L\) is hypoelliptic if a solution \(u\) to the differential equation \(Lu=f\) must be smooth when the data \(f\) is smooth. This is a significantly weakened form of the frequently exploited property of ellipticity, and it is poorly understood. In \cite{Christ_IDR} Christ suggests that a characterization of hypoelliptic operators is unlikely to exist, and warrants this suspicion with several counter-intuitive examples.

Nevertheless, for certain operators like those considered in \cite{SOS_III}, sufficient conditions for hypoellipticity have been obtained, and many of these rely in some way on a sum of squares decompositions or related concepts. H\"ormander's famous `bracket condition' for hypoellipticity deals with operators which can be decomposed as sums of squares of vector fields \(X_1,\dots,X_m\),
\[
    L=-\sum_{j=1}^m X_j^*X_j.   
\]
Here, \(X_j^*\) is the \(L^2(\mathbb{R}^n)\) adjoint of \(X_j\), and the right-hand side of the equation above becomes an algebraic sum of squares of dyads under a Fourier transform.

H\"ormander proves that \(L\) as above is hypoelliptic if the iterated commutators of the \(X_j\)'s span \(\mathbb{R}^n\) at every point in \cite{hormander}. Likewise, Christ proves some sufficient and sometimes necessary conditions for hypoellipticity of \(L\) in \cite{Christ_IDR}, even in situations where H\"ormander's result fails. Building upon the latter work, Sawyer \& Korobenko use a \(C^{4,\alpha}(\mathbb{R}^n)\) sum of squares decomposition to generalize Christ's result to situations in which the vector fields enjoy limited regularity. Even the work of Kohn in \cite{Kohn}, which does not explicitly involve sums of squares, nevertheless employs a variant of the Malgrange inequality \eqref{eq:malg}, which is central to showing that non-negative \(C^{1,\alpha}(\mathbb{R}^n)\) functions have half-regular roots; see \cite[Eq.(2.18)]{Kohn}.

It is not clear whether there is any fundamental connection between hypoellipticity and sums of squares, or if sums of squares are merely a convenient technique with which to study hypoelliptic operators. In any case, equipped with the expanded decomposition results of this thesis, it may be possible to revisit and build upon several of the results mentioned above, or at least to show that they apply to broader classes of differential operators. In particular, our higher-order decomposition theorems might be suitable for studying the hypoellipticity of equations of higher order, whereas many of the results above restrict to second-order operators.

In passing, we also note that in \cite{Guan}, Guan uses the \(C^{3,1}(\mathbb{R}^n)\) decomposition from Fefferman and Phong \cite{Fefferman-Phong} to study behaviour of the Monge-Amp\`ere equation, and similar decomposition result to those obtained in our work may be useful in expanding upon Guan's work. In particular, Guan shows that if the data function \(f\) of the Monge-Amp\`ere equation \(D^2u=f\) is a sum of squares of \(C^{1,1}(\Omega)\) functions on a suitable domain \(\Omega\), then the solution \(u\) is also in \(C^{1,1}(\Omega)\). In particular, it follows from \cite{Fefferman-Phong} that it is enough to have \(f\in C^{3,1}(\Omega)\). By generalizing to higher values of \(k\) and working with decompositions into other integer powers (e.g. cubes and fourth powers), our techniques may be suitable for adapting Guan's work to the study of higher-order regularity of solutions to the Monge-Amp\`ere equations.



\section{Some Open Questions}\label{sec:applics}

Briefly, we discuss some questions which are left open by this thesis, either due to constraints of time and length of this document, or due to difficulty of the problems themselves.

First, what is the smallest number \(m\) for which every sum of half-regular squares in \(C^{k,\alpha}(\mathbb{R}^n)\) is a sum of at most \(m\) squares? Borrowing (and perhaps adulterating) terminology from algebra, we might call this number the Pythagoras number of \(C^{k,\alpha}(\mathbb{R}^n)\), and we denote it here by \(p_{k,n}\). 

In \Cref{thm:malgrangeineq} we show that \(p_{1,n}=p_{2,n}=1\) for every \(n\), and Bony shows in \cite{Bony} that \(p_{k,1}=2\) for every \(k\geq 2\) (see \Cref{thm:onedim}). The work of \Cref{sec:sizebounds} also shows that
\[
    p_{2,n}\leq 2\cdot15^{n^2-n}+\frac{15^{n^2}-15^n}{15^n-1},
\]
but this bound is likely to be far from optimal. The same crude bound is obtained for \(p_{3,n}\), and it is reasonable to suspect from the similarity of the cases \(k=0,1\) and \(k=2,3\) that in general, \(p_{k,n}=p_{k+1,n}\) for even \(k\in\mathbb{N}\cup\{0\}\). Otherwise, we know little about \(p_{k,n}\) in general. 

In particular, no lower bound was found in the course of this research, though it is suspected that one might be found by using the Pythagoras number of polynomials owing to the counterexamples we obtained in \Cref{chap:poly}. These numbers have been studied in a more general setting by Choi, Lam \& Reznik in \cite{CLR}. It is unclear if these numbers are useful in any way, but it is apparent that finding them will require the development of some interesting mathematics.

A second open question is the following: does there exist a non-negative smooth function on \(\mathbb{R}^n\) which cannot be written as a sum of squares of smooth functions? As Sawyer \& Korobenko remark in \cite{SOS_I}, such an example is said to have been found by the late Paul Cohen, but a search of the literature afforded no such example and Pieroni affirms in \cite[Remark 5.1]{pieroni} that the problem remains open.

There are several directions which we can identify that may, with some work, yield an answer to this open question. First, by honing in on necessary conditions for \(C^{k,\alpha}(\mathbb{R}^n)\) functions to be decomposed into a sum of half-regular squares for arbitrarily large \(k\) (as we attempted in \Cref{thm:seconddmain}), it may be possible to construct a non-negative smooth function violating said conditions. Such a function would necessarily be non-decomposable as a sum of squares of smooth functions, affording the desired counterexample.

As a second approach, one could employ the non-decomposable polynomials found in \Cref{chap:poly} as candidates for such functions. The argument used in \Cref{sec:nd} to show that such polynomials are not sums of half-regular squares fails to show that these polynomials are not sums of squares of smooth functions, since there is no longer a highest-order term in the Taylor polynomial to consider. It may also be the case that a decomposition function is smooth and nowhere analytic, in which case the Taylor polynomial has little relation to the function. As such, an altogether different argument would be needed to approach the problem from this angle.

Of course, it may be the case that all non-negative smooth functions can in fact be decomposed as sums of squares of smooth functions. This seems feasible at least for functions of one dimension, thanks to the results of Bony in \cite{Bony}. However, owing to the counterexamples to our regularity-preserving decomposition results in higher dimensions, the behaviour of \(C^{\infty}(\mathbb{R}^n)\) functions for \(n\geq 2\) is less predictable.

\section{Closing Remark}

The study of regularity-preserving decompositions undertaken in this thesis was motivated by their potential for applications to the study of hypoelliptic operators -- in that sense, this work can be seen as a means to an end. However, in undertaking that study, it was quickly discovered that the problem harbours considerable underlying beauty, and the technical difficulties  the decompositions presented often pointed the way to rich mathematical structures. Working on this problem therefore quickly became an end unto itself.

Most grade school students encounter sums of squares for the first time upon learning of the Pythagorean theorem, and thereafter it is easy to think of them as fairly unremarkable, given their commonality in many areas of mathematics. Yet by placing sums of squares within new contexts, the simple `certificate of non-negativity' can take on a new life involving a great deal of novel properties and fresh insights. Such was the case with sums of squares in H\"older spaces.

The author is grateful to have been directed to this problem by his supervisor, Dr. Eric T. Sawyer. It was an approachable and straightforward problem to understand, yet sufficiently challenging that it induced the author to strike out and learn new mathematics with the aspiration of solving it. It is hoped that the contributions of this work on sums of squares and regularity preserving decompositions are found to be meaningful by some mathematicians, and that this work can be built upon to advance our understanding of the subjects it touches.
\newpage
\addcontentsline{toc}{chapter}{Bibliography}
\renewcommand\fbox{\fcolorbox{white}{white}}
\bibliographystyle{amsalpha}
\pagestyle{plain}
\bibliography{chap/refs}

\providecommand{\bysame}{\leavevmode\hbox to3em{\hrulefill}\thinspace}
\providecommand{\MR}{\relax\ifhmode\unskip\space\fi MR }
\providecommand{\MRhref}[2]{%
  \href{http://www.ams.org/mathscinet-getitem?mr=#1}{#2}
}
\providecommand{\href}[2]{#2}
\begin{thebibliography}{Pow17}

\bibitem[Alk09]{ALKAUSKAS_2009}
Giedrius Alkauskas, \emph{The moments of minkowski question mark function: The
  dyadic period function}, Glasgow Mathematical Journal \textbf{52} (2009),
  no.~1, 41--64. \MR{2587817}

\bibitem[Art27]{Artin}
Emil Artin, \emph{{\"U}ber die zerlegung definiter funktionen in quadrate},
  Abhandlungen aus dem Mathematischen Seminar der Universit{\"a}t Hamburg
  \textbf{5} (1927), 100--115. \MR{3069468}

\bibitem[BB06]{Bony}
Jean-Michel Bony{,} and
  Fabrizio\!\!\!\!\!\!\!\!\!\!\!\!\!\!\!\!\!\!\!\!\!\!\!\!\!\!\!\!\!\!\!\!\!\!\!\fbox{Fabrizio\;\;\;\;\;\;}
  \!\!\!\!\!\!\!\!\!\!\!{Broglia, Ferruccio Colombini, and Ludovico Pernazza},
  \emph{Nonnegative functions as squares or sums of squares}, Journal of
  Functional Analysis \textbf{232} (2006), 137--147.

\bibitem[Ble06]{Blekherman}
Grigoriy Blekherman, \emph{There are significantly more nonnegative polynomials
  than sums of squares}, Israel Journal of Mathematics \textbf{153} (2006),
  355--380. \MR{2254649}

\bibitem[Bon05]{Bony2}
Jean-Michel Bony, \emph{Sommes de carr\'{e}s de fonctions d\'{e}rivables},
  Bull. Soc. Math. France \textbf{133} (2005), no.~4, 619--639. \MR{2233698}

\bibitem[BR76]{behzad}
M.~Behzad and H.~Radjavi, \emph{Chromatic numbers of infinite graphs}, Journal
  of Combinatorial Theory, Series B \textbf{21} (1976), no.~3, 195--200.

\bibitem[Chr01]{Christ_IDR}
Michael Christ, \emph{Hypoellipticity in the infinitely degenerate regime},
  Complex analysis and geometry ({C}olumbus, {OH}, 1999), Ohio State Univ.
  Math. Res. Inst. Publ., vol.~9, de Gruyter, Berlin, 2001,
  pp.~\!\!\!\!\!\!\!\!\!\!\!\!\!\fbox{59--84.}\! \MR{1912731}

\bibitem[CL78]{CL2}
Man~Duen Choi and Tsit~Yuen Lam, \emph{Extremal positive semidefinite forms},
  Math. Ann. \textbf{231} (1978), no.~1, 1--18. \MR{498384}

\bibitem[CL94]{CLR}
Man-Duen Choi and Tsit {Lam, and Bruce Reznik}, \emph{Sums of squares of real
  polynomials}, Proceedings of Symposia in Pure Mathematics \textbf{58} (1994),
  103--126.

\bibitem[Dri03]{Driver}
Bruce~K. Driver, \emph{Analysis tools with applications}, Math 231 PDE Lecture
  Notes, University of California, San Diego, June 2003.

\bibitem[Fol01]{folland}
Gerald~B. Folland, \emph{Advanced calculus}, Featured Titles for Advanced
  Calculus Series, Pearson Education Taiwan, December 2001, 85-92.

\bibitem[FP78]{Fefferman-Phong}
C.~Fefferman and D.~H. Phong, \emph{On positivity of pseudo-differential
  operators}, Proc. Nat. Acad. Sci. U.S.A. \textbf{75} (1978), no.~10,
  4673--4674. \MR{507931}

\bibitem[GG13]{Gobbino}
Marina Ghisi and Massimo Gobbino, \emph{Higher order {Glaeser} inequalities and
  optimal regularity of roots of real functions}, Annali della Scuola Normale
  Superiore di Pisa - Classe di Scienze \textbf{Ser. 5, 12} (2013), no.~4,
  1001--1021. \MR{3184577}

\bibitem[Gla63]{Glaeser}
Georges Glaeser, \emph{Racine carr\'{e}e d'une fonction diff\'{e}rentiable},
  Ann. Inst. Fourier Grenoble \textbf{13} (1963), no.~2, 203--210. \MR{163995}

\bibitem[GT01]{GilbargTrudinger}
David Gilbarg and Neil~S. Trudinger, \emph{Elliptic partial differential
  equations of second order}, Classics in Mathematics, Springer-Verlag, Berlin,
  2001, Reprint of the 1998 edition. \MR{1814364}

\bibitem[Gua97]{Guan}
Pengfei Guan, \emph{{$C^2$} a priori estimates for degenerate
  {M}onge-{A}mp\`ere equations}, Duke Math. J. \textbf{86} (1997), no.~2,
  323--346. \MR{1430436}

\bibitem[Guz70]{guzman}
{Miguel de} Guzman, \emph{A covering lemma with applications to
  differentiability of measures and singular integral operators}, Studia
  Mathematica \textbf{34} (1970), 299--317.

\bibitem[Har06]{hardy}
Michael Hardy, \emph{Combinatorics of partial derivatives}, Electron. J.
  Combin. \textbf{13} (2006), no.~1, Research Paper 1, 13. \MR{2200529}

\bibitem[Hil88]{Hilbert}
David Hilbert, \emph{Ueber die {D}arstellung definiter {F}ormen als {S}umme von
  {F}ormenquadraten}, Math. Ann. \textbf{32} (1888), no.~3, 342--350.
  \MR{1510517}

\bibitem[Hil93]{Hilbert2}
{D}avid Hilbert, \emph{\"{U}ber tern\"{a}re definite {F}ormen}, Acta Math.
  \textbf{17} (1893), no.~1, 169--197. \MR{1554835}

\bibitem[H{\"o}r67]{hormander}
Lars H{\"o}rmander, \emph{Hypoelliptic second order differential equations},
  Acta Math. \textbf{119} (1967), 147--171. \MR{222474}

\bibitem[Koh98]{Kohn}
Joseph~J. Kohn, \emph{Hypoellipticity of some degenerate subelliptic
  operators}, J. Funct. Anal. \textbf{159} (1998), no.~1, 203--216.
  \MR{1654190}

\bibitem[KP13]{IFTref}
Steven~G. Krantz and Harold~R. Parks, \emph{The implicit function theorem},
  Modern Birkh\"{a}user Classics, Springer, New York, November 2013.
  \MR{2977424}

\bibitem[KS21]{SOS_III}
Lyudmila Korobenko and Eric~T. Sawyer, \emph{Sums of squares {III}:
  hypoellipticity in the infinitely degenerate regime}, 2021.

\bibitem[KS23]{SOS_I}
Lyudmila Korobenko and Eric Sawyer, \emph{Sums of squares {I}: {S}calar
  functions}, J. Funct. Anal. \textbf{284} (2023), no.~6, Paper No. 109827.
  \MR{4530899}

\bibitem[Las07]{lasserre}
Jean~B. Lasserre, \emph{A sum of squares approximation of nonnegative
  polynomials}, SIAM Review \textbf{49} (2007), no.~4, 651--669. \MR{2375528}

\bibitem[Mot67]{Motzkin}
Theodore~S. Motzkin, \emph{The arithmetic-geometric inequality}, Inequalities
  ({P}roc. {S}ympos. {W}right-{P}atterson {A}ir {F}orce {B}ase, {O}hio, 1965),
  Academic Press, New York, 1967,
  pp.~\!\!\!\!\!\!\!\!\!\!\!\!\!\fbox{205--224.}\! \MR{0223521}

\bibitem[MS15]{masur}
Mar{\'{\i}}a~J. Mart{\'{\i}}n{,} and Eric
  T.\!\!\!\!\!\!\!\!\!\!\!\!\!\!\!\!\!\!\!\!\!\!\!\!\!\!\!\!\!\!\!\fbox{Eric
  T.\;\;\;\;\;\;\;}~\!\!\!\!\!\!\!\!\!\!\!\!\!\!\! {Sawyer, Ignacio
  Uriarte{-}Tuero and Dragan Vukoti{\'{c}}}, \emph{The {K}rzy\.{z} conjecture
  revisited}, Adv. Math. \textbf{273} (2015), 716--745. \MR{3311774}

\bibitem[Pie07]{pieroni}
Federica Pieroni, \emph{On the real algebra of {D}enjoy-{C}arleman classes},
  Selecta Math. (N.S.) \textbf{13} (2007), no.~2, 321--351. \MR{2361097}

\bibitem[Pow17]{Powers}
Victoria Powers, \emph{Positive polynomials and sums of squares: theory and
  practice}, Real algebraic geometry, Panor. Synth\`eses, vol.~51, Soc. Math.
  France, Paris, 2017, pp.~\!\!\!\!\!\!\!\!\!\!\!\!\!\fbox{155--180.}\!
  \MR{3701213}

\bibitem[Rez78]{reznik_extpsd}
Bruce Reznick, \emph{Extremal {PSD} forms with few terms}, Duke Math. J.
  \textbf{45} (1978), no.~2, 363--374. \MR{480338}

\bibitem[Rob73]{robinson}
Raphael~M. Robinson, \emph{Some definite polynomials which are not sums of
  squares of real polynomials}, Selected questions of algebra and logic, Izdat.
  ``Nauka'' Sibirsk. Otdel., Novosibirsk, 1973, pp.~264--282. \MR{0337878}

\bibitem[Sal43]{salem}
R.~Salem, \emph{On some singular monotonic functions which are strictly
  increasing}, Trans. Amer. Math. Soc. \textbf{53} (1943), 427--439. \MR{7929}

\bibitem[Sim83]{simon}
Leon Simon, \emph{Lectures on geometric measure theory}, Proceedings of the
  Centre for Mathematical Analysis, Australian National University, vol.~3,
  Australian National University, Centre for Mathematical Analysis, Canberra,
  1983. \MR{756417}

\bibitem[Tat02]{Tataru}
Daniel Tataru, \emph{On the {F}efferman-{P}hong inequality and related
  problems}, Comm. Partial Differential Equations \textbf{27} (2002),
  no.~11-12, 2101--2138. \MR{1944027}

\bibitem[Tes16]{Teschl}
Gerald Teschl, \emph{Topics in real and functional analysis}, University of
  Vienna, Graduate Studies in Mathematics, 2016.

\bibitem[TG11]{bezier}
Konstantinos~I. Tsianos and Ron Goldman, \emph{Bezier and {B}-spline curves
  with knots in the complex plane}, Fractals \textbf{19} (2011), no.~1, 67--86.
  \MR{2776741}

\end{thebibliography}
\appendix
\chapter{Standard Results}\label{chap:appendix}
\pagestyle{thesis}

For convenience of the reader, we list some standard and well-known results in this section which are occasionally called upon in the main body of the thesis. They are drawn from various references, which we include with each result.

\begin{thm}[Rademacher]\emph{(\cite[Theorem 5.2]{simon})}\label{thm:rad}
If \(f\) is Lipschitz on \(\mathbb{R}^n\) then it is differentiable almost everywhere.
\end{thm}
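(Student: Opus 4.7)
The plan is to establish Rademacher's theorem by a four-step argument that first secures the existence of directional derivatives almost everywhere, then identifies those directional derivatives with an inner product against a fixed measurable gradient field, and finally upgrades this to full Fréchet differentiability using the Lipschitz bound together with a density argument in the unit sphere.

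First, I would show that for every unit vector $v \in S^{n-1}$, the directional derivative
\[
\partial_v f(x) = \lim_{t \to 0} \frac{f(x+tv) - f(x)}{t}
\]
exists for almost every $x \in \mathbb{R}^n$. For fixed $v$, the restriction of $f$ to any line parallel to $v$ is Lipschitz in one variable, hence absolutely continuous, hence differentiable almost everywhere on that line; Fubini's theorem on the orthogonal decomposition $\mathbb{R}^n = v^\perp \oplus \mathbb{R}v$ then promotes this to a statement about Lebesgue measure on $\mathbb{R}^n$. In particular, $\partial_1 f, \ldots, \partial_n f$ exist a.e., and I define $\nabla f(x) = (\partial_1 f(x), \ldots, \partial_n f(x))$ wherever these partials exist.

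Next I would identify $\partial_v f$ with $v \cdot \nabla f$ in the a.e. sense. For any test function $\varphi \in C_c^\infty(\mathbb{R}^n)$, the change of variables $y = x + tv$ and dominated convergence (justified by the Lipschitz bound $|f(x+tv) - f(x)| \leq \mathrm{Lip}(f)|t|$) give
\[
\int_{\mathbb{R}^n} \partial_v f(x)\varphi(x)\,dx = -\int_{\mathbb{R}^n} f(x)\partial_v \varphi(x)\,dx = -\sum_{i=1}^n v_i \int_{\mathbb{R}^n} f(x)\partial_i \varphi(x)\,dx = \int_{\mathbb{R}^n} (v \cdot \nabla f(x))\varphi(x)\,dx,
\]
where the final equality uses the integration-by-parts identity for each $\partial_i f$ (again valid a.e.\ by absolute continuity on lines). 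Since $\varphi$ is arbitrary, $\partial_v f(x) = v \cdot \nabla f(x)$ for almost every $x$, and I call the full-measure set on which this holds $E_v$.

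The main obstacle, and the heart of the proof, is the upgrade from a.e.\ existence of directional derivatives in a dense set of directions to full differentiability at a single common a.e.\ point. I would fix a countable dense set $\{v_k\}_{k=1}^\infty \subset S^{n-1}$ and set $E = \bigcap_k E_{v_k}$, which still has full measure. For $x_0 \in E$, I would show that
\[
\lim_{h \to 0} \frac{f(x_0 + h) - f(x_0) - h \cdot \nabla f(x_0)}{|h|} = 0.
\]
Given $\varepsilon > 0$, choose finitely many $v_{k_1}, \ldots, v_{k_N}$ that are $\varepsilon$-dense in $S^{n-1}$. For each $k_j$, definition of the directional derivative provides some $\delta_j > 0$ such that $|f(x_0 + tv_{k_j}) - f(x_0) - t\,v_{k_j} \cdot \nabla f(x_0)| \leq \varepsilon|t|$ for $|t| \leq \delta_j$. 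Writing any small $h = |h|v$ and picking $v_{k_j}$ within $\varepsilon$ of $v$, the Lipschitz property bounds
\[
|f(x_0 + h) - f(x_0 + |h|v_{k_j})| \leq \mathrm{Lip}(f)\,|h|\,|v - v_{k_j}| \leq \mathrm{Lip}(f)\,\varepsilon\,|h|,
\]
and a parallel bound controls $|h \cdot \nabla f(x_0) - |h|\,v_{k_j}\cdot \nabla f(x_0)|$ using $|\nabla f(x_0)| \leq \mathrm{Lip}(f)$. Combining these three estimates yields a total error of order $\varepsilon|h|$, completing the proof. The delicate point throughout this final step is that $\mathrm{Lip}(f)$ provides a uniform modulus bridging nearby directions, which is precisely what lets a countable dense set of directions suffice.
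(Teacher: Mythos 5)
The paper does not prove Rademacher's theorem; it is stated as a standard background fact in the appendix with a citation to \cite[Theorem 5.2]{simon}, so there is no in-paper argument to compare against. Your proof is the classical one (a.e.\ one-dimensional differentiability plus Fubini to obtain directional derivatives, a weak/integration-by-parts identity to identify $\partial_v f$ with $v\cdot\nabla f$ a.e., and then an $\varepsilon$-net of directions in $S^{n-1}$ together with the uniform Lipschitz modulus to upgrade to full differentiability on the common full-measure set $E=\bigcap_k E_{v_k}$), and it is correct. The only cosmetic point to make explicit in the final step is that you should set $\delta=\min_{1\le j\le N}\delta_j>0$ so that all $N$ directional estimates hold simultaneously for $|h|<\delta$ before combining them; this is implicit in your write-up but not stated.
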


\begin{defn}[Convexity \& Concavity]\label{lem:convexity}
A function \(f:\mathbb{R}^n\rightarrow\mathbb{R}\) is said to be a convex on a set \(\Omega\) if for each \(x,y\in\Omega\), and \(0\leq \lambda\leq1\),
\[
    f(\lambda x+(1-\lambda)y)\leq\lambda f(x)+(1-\lambda)f(y).
\]
On the other hand, a function \(f\) is said to be concave on \(\Omega\) if for each \(x,y\in\Omega\) and \(0\leq \lambda\leq1\),
\[
    \lambda f(x)+(1-\lambda)f(y)\leq f(\lambda x+(1-\lambda)y).
\]
\end{defn}

\begin{thm}[Weighted AMGM Inequality]\emph{(\cite{Motzkin})}\label{thm:amgm}
Let \(c_1,\dots,c_n\) be non-negative real numbers, and assume that \(\lambda_1,\dots,\lambda_n\) are non-negative and \(\lambda_1+\cdots+\lambda_n=1\). Then
\[
    \prod_{j=1}^nc_j^{\lambda_j}\leq \sum_{j=1}^n\lambda_j c_j.
\]
\end{thm}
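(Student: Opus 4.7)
The plan is to reduce the inequality to an application of Jensen's inequality for the concave function $\log$. First I would handle the degenerate cases: if any $c_j = 0$ with $\lambda_j > 0$, the left-hand side is zero and the inequality holds trivially since the right-hand side is a non-negative combination of non-negative numbers; and if $\lambda_j = 0$ for some $j$, the factor $c_j^{\lambda_j} = 1$ simply drops out of the product and the term $\lambda_j c_j = 0$ drops out of the sum, so such indices can be discarded without loss of generality. After these reductions, I may assume that every $c_j$ is strictly positive and every $\lambda_j$ is strictly positive with $\lambda_1 + \cdots + \lambda_n = 1$.

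Next I would take the natural logarithm of the left-hand side, which yields $\log\prod_{j=1}^n c_j^{\lambda_j} = \sum_{j=1}^n \lambda_j \log c_j$. Since the function $t \mapsto \log t$ is strictly concave on $(0,\infty)$ (its second derivative $-1/t^2$ is negative), I can invoke Jensen's inequality in the form
\[
    \sum_{j=1}^n \lambda_j \log c_j \leq \log\bigg(\sum_{j=1}^n \lambda_j c_j\bigg),
\]
which holds precisely because $\lambda_1,\dots,\lambda_n$ form a convex combination. Exponentiating both sides and using that the exponential is monotone increasing recovers the desired inequality $\prod_{j=1}^n c_j^{\lambda_j} \leq \sum_{j=1}^n \lambda_j c_j$.

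As an alternative, I could argue by induction on $n$. The base case $n = 2$ is Young's inequality $c_1^{\lambda_1} c_2^{1-\lambda_1} \leq \lambda_1 c_1 + (1-\lambda_1)c_2$, which itself follows from concavity of the logarithm applied to two points. For the inductive step, one writes $\prod_{j=1}^{n+1} c_j^{\lambda_j} = \big(\prod_{j=1}^n c_j^{\lambda_j/(1-\lambda_{n+1})}\big)^{1-\lambda_{n+1}} c_{n+1}^{\lambda_{n+1}}$ and applies Young's inequality followed by the inductive hypothesis to the renormalized weights $\lambda_j/(1-\lambda_{n+1})$, which sum to one. Neither path presents any real obstacle; the only minor subtlety is carefully justifying the boundary cases where some $c_j$ or $\lambda_j$ vanishes, which is routine bookkeeping.
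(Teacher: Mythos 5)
Your proof is correct, and in fact the paper does not supply one: Theorem \ref{thm:amgm} appears in the appendix of standard results and is stated with only a citation to \cite{Motzkin}, so there is nothing to compare against. Your reduction to Jensen's inequality applied to the concave function $\log$ is the canonical argument, and your handling of the degenerate cases (some $c_j = 0$, or some $\lambda_j = 0$, under the usual convention $0^0 = 1$) is careful and complete; the alternative induction on $n$ via Young's inequality with renormalized weights is also a standard and valid route. Either version would serve as a self-contained proof.
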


\begin{cor}[Young's Inequality]\label{cor:young}
Let \(a\) and \(b\) be non-negative real numbers and let \(b,q\geq 1\) be such that \(\frac{1}{p}+\frac{1}{q}=1\). Then
\[
    ab\leq \frac{a^p}{p}+\frac{b^q}{q}.
\]
\end{cor}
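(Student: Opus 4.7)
The plan is to obtain Young's inequality as an immediate one-line application of the weighted arithmetic-geometric mean inequality stated in \Cref{thm:amgm}. First I would dispense with the degenerate cases: if either $a$ or $b$ equals zero, then the left-hand side $ab$ vanishes while the right-hand side is a sum of non-negative quantities, so the inequality is trivial. Similarly, the hypothesis that $p,q \geq 1$ together with $\frac{1}{p}+\frac{1}{q}=1$ forces $p,q > 1$ in the non-degenerate range, so there is no boundary issue to contend with.

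For the main case where $a,b>0$, I would apply \Cref{thm:amgm} with $n=2$, choosing the weights $\lambda_1 = \frac{1}{p}$ and $\lambda_2 = \frac{1}{q}$. These are non-negative and sum to $1$ by the conjugate-exponent hypothesis, so they are a valid choice of weights. I would then feed in the non-negative inputs $c_1 = a^p$ and $c_2 = b^q$, which are valid choices since $a,b\geq 0$. The weighted AMGM inequality then yields directly
\[
    (a^p)^{1/p}(b^q)^{1/q} \leq \frac{1}{p}a^p + \frac{1}{q}b^q.
\]
Since $a,b\geq 0$, the left-hand side simplifies to $ab$, and the desired estimate follows at once.

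There is no real obstacle here, as the corollary is essentially a direct specialization of the preceding theorem; the entire content of the argument lies in selecting the correct weights and inputs to plug into \Cref{thm:amgm}. The only subtlety worth noting is ensuring that one writes $a = (a^p)^{1/p}$ and $b=(b^q)^{1/q}$ unambiguously, which is valid precisely because we have restricted to non-negative $a$ and $b$.
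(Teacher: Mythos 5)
Your argument is exactly the paper's: take $c_1 = a^p$, $c_2 = b^q$, $\lambda_1 = \frac{1}{p}$, $\lambda_2 = \frac{1}{q}$ in \Cref{thm:amgm}. The extra commentary on degenerate cases and on the legitimacy of writing $a = (a^p)^{1/p}$ is fine but not strictly necessary, since \Cref{thm:amgm} already applies to all non-negative inputs.
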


\begin{proof}
Take \(c_1=a^p\), \(c_2=b^q\), \(\lambda_1=\frac{1}{p}\) and \(\lambda_2=\frac{1}{q}\) in Theorem \ref{thm:amgm}.
\end{proof}

\begin{thm}[Taylor's Theorem]\emph{(\cite[Theorem 2.68]{folland})}\label{thm:taylorthm}
Suppose \(f:\mathbb{R}^n\rightarrow\mathbb{R}\) is of class \(C^{k+1}(\Omega)\) on an open convex set \(\Omega\). If \(x\in\Omega\) and \(x+h\in\Omega\) then
\[
    f(x+h)=\sum_{|\beta\leq k|}\frac{\partial^\beta f(x)}{\beta!}h^\beta+(k+1)\sum_{|\beta|=k+1}\frac{h^\beta}{\beta!}\int_0^1(1-t)^k\partial^\beta f(x+th)dt.
\]
\end{thm}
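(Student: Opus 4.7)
The plan is to reduce the multivariate statement to the one-dimensional Taylor theorem with integral remainder, and then unpack the resulting derivatives using the multinomial structure of directional derivatives. The key observation is that convexity of $\Omega$ ensures the straight-line segment from $x$ to $x+h$ lies entirely in $\Omega$, so we may legitimately restrict $f$ to this segment.

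Concretely, I would define $g:[0,1]\to\mathbb{R}$ by $g(t)=f(x+th)$. Since $f\in C^{k+1}(\Omega)$ and $\Omega$ is convex, $g\in C^{k+1}([0,1])$, and the standard one-dimensional Taylor theorem with integral remainder (which can be proved by integrating by parts $k$ times in the identity $g(1)-g(0)=\int_0^1 g'(t)\,dt$) gives
\[
    g(1)=\sum_{j=0}^{k}\frac{g^{(j)}(0)}{j!}+\frac{1}{k!}\int_0^1(1-t)^{k}g^{(k+1)}(t)\,dt.
\]
Next, I would compute $g^{(j)}(t)$ using the chain rule iteratively. By induction on $j$, one verifies that $g^{(j)}(t)=(h\cdot\nabla)^j f(x+th)$, and then the multinomial theorem applied to the operator $h\cdot\nabla=\sum_{i=1}^n h_i\partial_i$ yields the clean identity
\[
    g^{(j)}(t)=\sum_{|\beta|=j}\frac{j!}{\beta!}h^{\beta}\partial^{\beta}f(x+th).
\]
This is precisely the same directional-derivative identity \eqref{eq:directionalop} already used elsewhere in the thesis, so I would simply cite it.

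Substituting $t=0$ in the above formula for $g^{(j)}(t)$ and using it for $g^{(k+1)}(t)$ inside the integral, the factorials $j!$ cancel against the $1/j!$ in the Taylor sum, and the factor $1/k!$ combines with $(k+1)!/\beta!$ to give the claimed $(k+1)/\beta!$ in the remainder term. Assembling the pieces yields
\[
    f(x+h)=\sum_{|\beta|\leq k}\frac{\partial^{\beta}f(x)}{\beta!}h^{\beta}+(k+1)\sum_{|\beta|=k+1}\frac{h^{\beta}}{\beta!}\int_0^1 (1-t)^{k}\partial^{\beta}f(x+th)\,dt,
\]
which is the desired identity.

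The argument has no real obstacle: the integral-remainder Taylor theorem in one variable is a routine repeated integration by parts, and the only bookkeeping step is the multinomial identification of $g^{(j)}$, which the paper has already established. The mildly delicate point worth flagging in a full write-up would be the inductive verification that $g^{(j)}(t)=(h\cdot\nabla)^jf(x+th)$, since this is where convexity of $\Omega$ is implicitly needed (to ensure $x+th\in\Omega$ for all $t\in[0,1]$ so that all derivatives are defined along the segment).
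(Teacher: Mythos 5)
Your proof is correct, and it is the standard argument: restrict $f$ to the segment $t\mapsto x+th$ (legitimate by convexity), apply the one-variable Taylor theorem with integral remainder, and unpack the derivatives of $g$ via the multinomial identity for $(h\cdot\nabla)^j$ — this is exactly the route taken in the cited reference, which the thesis invokes without reproducing the proof. The bookkeeping with the factorials $j!$, $1/k!$, and $(k+1)!/\beta!$ checks out and yields the stated coefficient $(k+1)/\beta!$ on the remainder.
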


\end{document}